\newcommand{\bydef}{:=} 
\newcommand{\defby}{=:}
\newcommand{\veps}{\varepsilon}
\newcommand{\ul}[1]{\underline{#1}}
\newcommand{\wh}[1]{\widehat{#1}}
\newcommand{\wt}[1]{\widetilde{#1}}
\newcommand{\wb}[1]{\overline{#1}}
\DeclareMathOperator{\Skew}{\mathrm{Skew}} 
\newcommand{\id}{\mathrm{id}}
\newcommand{\ex}{\mathrm{ex}}
\newcommand{\lspan}[1]{\mathrm{span}\left\{#1\right\}}
\newcommand{\diag}{\mathrm{diag}}
\DeclareMathOperator*{\ot}{\otimes}
\newcommand{\gr}{\mathrm{gr}} 
\newcommand{\op}{\mathrm{op}}
\newcommand{\Cl}{\mathfrak{Cl}} 
\newcommand{\Gal}{\mathrm{Gal}} 
\newcommand{\Br}{\mathrm{Br}} 
\DeclareMathOperator{\supp}{\mathrm{Supp}}
\DeclareMathOperator{\Cent}{\mathrm{Cent}}
\DeclareMathOperator{\Int}{\mathrm{Int}} 
\DeclareMathOperator{\Arf}{\mathrm{Arf}} 
\newcommand{\bi}{\mathbf{i}}
\newcommand{\bj}{\mathbf{j}}
\newcommand{\bk}{\mathbf{k}}
\newcommand{\cA}{\mathcal{A}}
\newcommand{\cB}{\mathcal{B}} 
\newcommand{\cC}{\mathcal{C}}
\newcommand{\cD}{\mathcal{D}} 
\newcommand{\cF}{\mathcal{F}} 
\newcommand{\cH}{\mathcal{H}} 
\newcommand{\cI}{\mathcal{I}}
\newcommand{\cK}{\mathcal{K}}
\newcommand{\cL}{\mathcal{L}} 
\newcommand{\cM}{\mathcal{M}}
\newcommand{\cO}{\mathcal{O}}
\newcommand{\cQ}{\mathcal{Q}} 
\newcommand{\cR}{\mathcal{R}}
\newcommand{\cS}{\mathcal{S}}
\newcommand{\cV}{\mathcal{V}}
\newcommand{\cW}{\mathcal{W}}
\newcommand{\Hc}{\textup{H}}
\newcommand{\dc}{\textup{d}}
\DeclareMathOperator{\Ext}{\mathrm{Ext}}
\newcommand{\ZZ}{\mathbb{Z}}
\newcommand{\RR}{\mathbb{R}} 
\newcommand{\CC}{\mathbb{C}}
\newcommand{\HH}{\mathbb{H}} 
\newcommand{\FF}{\mathbb{F}} 
\newcommand{\KK}{\mathbb{K}}
\newcommand{\LL}{\mathbb{L}}
\newcommand{\Falg}{{\overline{\FF}}} 
\newcommand{\Fsep}{\FF_{\textrm{sep}}} 
\newcommand{\chr}[1]{\mathrm{char}\,#1}
\DeclareMathOperator{\AlgF}{\mathrm{Alg_{\FF}}} 
\DeclareMathOperator{\rad}{\mathrm{rad}} 
\DeclareMathOperator{\Hom}{\mathrm{Hom}}
\DeclareMathOperator{\End}{\mathrm{End}}
\DeclareMathOperator{\Alg}{\mathrm{Alg}}
\DeclareMathOperator{\Aut}{\mathrm{Aut}} 
\DeclareMathOperator{\Stab}{\mathrm{Stab}} 
\DeclareMathOperator{\Der}{\mathrm{Der}} 
\newcommand{\Ad}{\mathrm{Ad}}
\newcommand{\frsl}{{\mathfrak{sl}}}
\newcommand{\frsp}{{\mathfrak{sp}}}
\newcommand{\frso}{{\mathfrak{so}}}
\newcommand{\frpsl}{{\mathfrak{psl}}}
\newcommand{\frgl}{{\mathfrak{gl}}}
\newcommand{\frpgl}{{\mathfrak{pgl}}}
\newcommand{\frsu}{{\mathfrak{su}}}
\newcommand{\GL}{\mathrm{GL}}
\newcommand{\AGL}{\mathrm{AGL}} 
\newcommand{\Ort}{\mathrm{O}}
\newcommand{\AO}{\mathrm{AO}}
\newcommand{\SP}{\mathrm{Sp}}
\DeclareMathOperator{\AAut}{\mathbf{Aut}} 
\DeclareMathOperator{\AAntaut}{\overline{\mathbf{Aut}}} 
\DeclareMathOperator{\InAAut}{\mathbf{Int}} 
\newcommand{\Gs}{\mathbf{G}}
\newcommand{\PGLs}{\mathbf{PGL}} 
\newcommand{\Os}{\mathbf{O}}
\newcommand{\Diags}{\mathbf{Diag}}
\newcommand{\bmu}{\boldsymbol{\mu}}
\newtheorem{theorem}{Theorem}[section]
\newtheorem{proposition}[theorem]{Proposition}
\newtheorem*{proposition*}{Proposition}
\newtheorem{lemma}[theorem]{Lemma}
\newtheorem{corollary}[theorem]{Corollary}
\theoremstyle{definition} 
\newtheorem{df}[theorem]{Definition}
\newtheorem{example}[theorem]{Example}
\theoremstyle{remark} \newtheorem{remark}[theorem]{Remark}
\numberwithin{equation}{section}
\newcommand{\qup}{\textup{q}}
\begin{document}

\title[Gradings on algebras with involution and classical Lie algebras]{Gradings on associative algebras with involution\\ and real forms of classical simple Lie algebras}

\author[A.~Elduque]{Alberto Elduque}
\address[A.\,E., A.\,R.-E.]{Departamento de
Matem\'{a}ticas e Instituto Universitario de Matem\'aticas y
Aplicaciones, Universidad de Zaragoza, 50009 Zaragoza, Spain}
\email{elduque@unizar.es, rodrigo@unizar.es} 
\thanks{The first and thirds authors are supported by ``Agencia Estatal de
Investigaci\'on'' under grant
MTM2017-83506-C2-1-P, and by ``Departamento de Ciencia, Universidad y
Sociedad del Conocimiento del Gobierno de Arag\'on'' under grant E22\_20R: 
``\'Algebra y Geometr{\'\i}a''.}

\author[M.~Kochetov]{Mikhail Kochetov} 
\address[M.\,K.]{Department of Mathematics and Statistics, 
Memorial University of Newfoundland, 
St. John's, NL, A1C5S7, Canada}
\email{mikhail@mun.ca}
\thanks{The second author is supported by Discovery Grant 2018-04883 of the Natural Sciences and Engineering Research Council (NSERC) of Canada}

\author[A.~Rodrigo-Escudero]{Adri\'an Rodrigo-Escudero}

\subjclass[2010]{Primary 17B70; Secondary 16W50; 16W10}

\keywords{Graded algebra; graded module; classical simple Lie algebra; real algebra; classification.}

\date{}

\begin{abstract}

We study gradings by abelian groups on associative algebras with involution over an arbitrary field. Of particular importance are the fine gradings (that is, those that do not admit a proper refinement), because any grading on a finite-dimensional algebra can be obtained from them via a group homomorphism (although not in a unique way). 
We classify up to equivalence the fine gradings on simple associative algebras with involution over the field of real numbers (or any real closed field) and, as a consequence, on the real forms of classical simple Lie algebras.

\end{abstract}

\maketitle
\tableofcontents

\section{Introduction}\label{se:intro}

Let $G$ be a group and let $\cA$ be an algebra (not necessarily associative) over a field $\FF$. A \emph{$G$-grading}, or a \emph{grading by $G$}, on $\cA$ is a family $\Gamma=\{\cA_g\}_{g\in G}$ of $\FF$-subspaces of $\cA$, called \emph{homogeneous components}, that form a direct sum decomposition of $\cA$ and satisfy $\cA_{g_1}\cA_{g_2}\subset\cA_{g_1g_2}$ for all $g_1,g_2\in G$. We will write 
\[
\Gamma:\;\cA=\bigoplus_{g\in G}\cA_g
\]
and call $(\cA,\Gamma)$ a \emph{$G$-graded algebra}. If the grading is clear from the context, we will say that $\cA$ is a graded algebra. This definition can be easily adapted to the more general situation where $\cA$ has any number of multilinear operations. The \emph{support}, denoted $\supp\Gamma$ or $\supp\cA$, is the set 
$\{g\in G\mid\cA_g\ne 0\}$.

The most frequently encountered gradings are by the group $\ZZ$. Gradings by free abelian groups have long been prominent in the theory of Lie algebras and their representations; gradings by $\ZZ_2$ are well known in the context of superalgebras, starting with the early work [Wal63] of C.T.C. Wall on graded Brauer groups.
 A systematic study of gradings by arbitrary groups on Lie algebras was started in \cite{PZ}; many of the concepts introduced there apply to algebras of any kind and are briefly reviewed in Section \ref{se:generalities} (see Chapter 1 in the monograph \cite{EKmon} for more details). Group gradings have been extensively studied in the past two decades for many classes of algebras. When dealing with Lie algebras, it is natural to assume that the grading group $G$ is abelian, because in this case a $G$-grading on an associative algebra $\cR$ is also a grading on the corresponding Lie algebra $\cR^{(-)}$ (i.e., the underlying vector space of $\cR$ with the operation $[x,y]=xy-yx$), and a $G$-grading on a Lie algebra $\cL$ extends naturally to the universal enveloping algebra of $\cL$. In fact, if $\cL$ is a simple Lie algebra then the support of any $G$-grading on $\cL$ generates an abelian subgroup of $G$ (see e.g. \cite[Proposition 1.12]{EKmon}). 

The classical simple Lie algebras are closely related to (finite-dimensional) central simple associative algebras. Group gradings on the latter were described in \cite{BSZ,BZ02} over an algebraically closed field of characteristic $0$. For abelian groups, this description immediately gives a classification of fine gradings up to equivalence (see Section \ref{se:generalities} for terminology), and with some extra work a classification of all $G$-gradings, for a fixed abelian group $G$, up to isomorphism, over an algebraically closed field of any characteristic (see \cite{BK10} or Chapter 2 in \cite{EKmon}). 

To deal with classical simple Lie algebras, one also needs antiautomorphisms of central simple associative algebras, especially the involutive antiautomorphisms, which we call \emph{involutions} for short. In \cite{BShZ,BZ06}, it was proposed to study gradings (by abelian groups) on classical simple Lie (and Jordan) algebras over an algebraically closed field of characteristic $0$ through gradings on matrix algebras, possibly equipped with an involution. Involutions that preserve the components of a grading on $M_n(\FF)$, where $\FF$ is an algebraically closed field, were described in \cite{BShZ,BZ07} for the case $\chr{\FF}=0$ and in \cite{BG08} for $\chr{\FF}\ne 2$. The approach to study gradings on  classical simple Lie algebras through matrix algebras also works under the assumption $\chr{\FF}\ne 2$, through the use of \emph{automorphism group schemes} (which we briefly review at the beginning of Section \ref{se:Lie}). This was used in \cite{BK10} to classify all $G$-gradings on simple Lie algebras of series $A$, $B$, $C$ and $D$ (except $D_4$) up to isomorphism, over an algebraically closed field $\FF$ with $\chr{\FF}\ne 2$. Fine gradings on the same Lie algebras were described earlier in \cite{HPP98} over $\CC$, in terms of the \emph{maximal abelian diagonalizable} (MAD) subgroups of their automorphism groups, and then classified up to equivalence in \cite{E10}, including type $D_4$, over an algebraically closed field of characteristic $0$. This latter work used an additional ingredient, namely, representing the matrix algebra with a grading as $\End_\cD(\cV)$ where $\cD$ is a graded-division algebra and $\cV$ is a (finitely generated) graded right $\cD$-module. A $G$-graded algebra $\cD$ is said to be a \emph{graded-division algebra} if $\cD$ is unital and all nonzero homogeneous elements of $\cD$ are invertible. The approaches of \cite{BK10} and \cite{E10} were combined and presented in \cite{EKmon}. Finally, for type $D_4$, the classification of all $G$-gradings up to isomorphism was obtained in \cite{EK_D4}, over an algebraically closed field of characteristic different from $2$. 

As one would expect, the situation becomes substantially more complicated if we do not assume that the ground field is algebraically closed, even in the case of a real closed field such as $\RR$. One reason is that, due to the lack of roots of unity, we cannot translate gradings to actions by automorphisms and, in particular, fine gradings to MAD subgroups of the automorphism group (which were described for real forms of classical Lie algebras in \cite{HPP00}); one has to consider maximal diagonalizable subgroupschemes instead (see Subsection \ref{sse:transfer}). But the main reason is the complexity of the structure of finite-dimensional graded-division algebras, even under the assumption that the grading group is abelian. Over $\RR$, such graded-division algebras that are simple as algebras were classified in \cite{BZ16,R16}. Also, to obtain classical simple Lie algebras from associative algebras and involutions, one has to consider associative algebras that are not necessarily central simple, but rather \emph{central simple as algebras with involution}, which means that they have no proper nonzero ideals that are invariant under the involution and the set of symmetric elements in the center is equal to the ground field. With the exception of type $D_4$, this link to central simple associative algebras with involution allows one to transfer the study of gradings on any classical central simple Lie algebra to the associative context, over any ground field $\FF$ with $\chr{\FF}\ne 2$. (In the case of algebraically closed $\FF$, non-simple associative algebras can be avoided, but there is a price: to obtain all gradings in series $A$, one has to consider antiautomorphisms that are not necessarily involutive, so, even in this case, the setting of `central simple as an algebra with involution' leads to more elegant arguments.) This transfer was used in \cite{BKR_Lie} to classify up to isomorphism all $G$-gradings on the real forms of classical simple Lie algebras except $D_4$; this work relies on the study of involutions on real graded-division algebras carried out in \cite{BKR_inv}. Finally, a classification of $G$-gradings on real forms of $D_4$ was obtained in \cite{EK_G2D4}.

The main purpose of the present work is to classify fine gradings on all real forms of classical simple Lie algebras up to equivalence. To achieve this, we study gradings on associative algebras with involution, which are of independent interest. We will be working with these latter algebras at three levels of generality: 
 
\smallskip

\noindent$\bullet$\quad $\cR$ is a graded-simple associative algebra, 
over any field $\FF$, with DCC on graded left ideals or, equivalently, $\cR\simeq\End_\cD(\cV)$ where $\cD$ is a graded-division algebra and $\cV$ is a finitely generated graded right $\cD$-module of finite rank. In Section \ref{se:involutions}, we show that any involution $\varphi$ on $\cR$ that preserves the components of the grading is given by the adjunction with respect to a nondegenerate homogeneous $\varphi_0$-sesquilinear form $B:\cV\times\cV\to\cD$ that is either hermitian or skew-hermitian, where $\varphi_0:\cD\to\cD$ is an involution that preserves the components of the grading on $\cD$ (see Theorem \ref{th:involutions_from_B}). 
In Section~\ref{se:fine}, we construct a family of such graded algebras with involution that is complete in a certain sense: Theorem \ref{th:completeness} and Corollary \ref{cor:completeness}. In Section \ref{se:equivalence}, we consider the equivalence problem: Theorem \ref{th:equivalence_with_anti} gives a criterion for two graded algebras with involution $(\cR,\varphi)$ as above to be equivalent to each other and Theorem \ref{th:equivalenceMM'} solves the equivalence problem for the members of the family we constructed.

\smallskip

\noindent$\bullet$\quad $(\cR,\varphi)$ or, equivalently, $(\cD,\varphi_0)$ is finite-dimensional and central simple as an algebra with involution. This is the setting that can be applied to study forms (over any field $\FF$ with $\chr{\FF}\ne 2$) of classical simple Lie algebras and their gradings. 
Here the involution $\varphi_0$ can be fixed arbitrarily (Corollary~\ref{cor:involutions_from_B}) and, with a fixed $\varphi_0$, the equivalence criterion for the family we constructed can be conveniently expressed in terms of group actions (Corollary~\ref{cor:equivalenceMM'cs}). We also give sufficient conditions for the gradings in this family to be fine (Theorem \ref{th:GMfine}, Proposition \ref{pr:GMfineq2s0} and Theorem \ref{th:FineMex}). 

\smallskip

\noindent$\bullet$\quad $\FF$ is a real closed field, for example, $\RR$. In this case, a classification of $(\cD,\varphi_0)$ as in the previous item is available, so we can make explicit computations. In Section~\ref{se:fine}, we narrow down our complete family of graded algebras with involution to keep only those members whose grading is fine: Theorem \ref{th:fine_real}. In Section~\ref{se:equivalence}, we compute the relevant groups and make the equivalence criterion more explicit: Theorem \ref{th:equivalenceMM'real}. This criterion completes the classification of fine gradings by telling which of those appearing in Theorem \ref{th:fine_real} are equivalent to each other (see Corollary \ref{cor:fine_real}). 

\smallskip

Finally, in Section \ref{se:Lie}, we review the transfer technique mentioned above and obtain, as a direct consequence of our results on associative algebras with involution, a classification of fine gradings up to equivalence for all real forms of classical simple Lie algebras except $D_4$: see Theorems \ref{th:AI_RH}, \ref{th:AI_C}, \ref{th:AII_RH} and \ref{th:AII_C} for series $A$, Theorem~\ref{th:B} for series $B$ (which is much easier than other series and could be treated with simpler techniques), Theorem \ref{th:C} for Series $C$ and Theorem \ref{th:D} for series $D$. In the final subsection we complete the classification of fine gradings on real forms of type $D_4$. We combine our results on associative algebras with involution and the results of \cite{EK_G2D4} to classify what we call Type I and II fine gradings, to complement the classification of Type III fine gradings already obtained in \cite{EK_G2D4}.

When discussing $G$-gradings on algebras in general terms, we will use multiplicative notation for $G$ (even when it is abelian), because the operation in $G$ is related with the multiplication in the algebra. The identity element of $G$ will be denoted by $e$. For any abelian group $G$ and integer $n$, we have an endomorphism $[n]:G\to G$ sending $g\mapsto g^n$. We will use the following notation for its kernel and image:
\[
G_{[n]}\bydef\{g\in G\mid g^n=e\}\;\text{ and }\;G^{[n]}\bydef\{g^n \mid g\in G\}.
\]
We will often drop the subscript $\FF$ when working with homomorphisms or tensor products of vector spaces over the ground field $\FF$. We will, however, use a subscript when necessary to avoid confusion --- for example, when a tensor product over $\CC$ is considered in the context of real algebras.

\section{Generalities on gradings}\label{se:generalities}

In this section we will briefly review some general facts and terminology concerning gradings by groups on algebras. We will also introduce notation that is used throughout the paper. The reader is referred to Chapter~1 of the monograph \cite{EKmon} for more details.

\subsection{Group gradings}

Let $G$ be a group. A \emph{$G$-graded vector space} is a vector space $V$ with a fixed $G$-grading $\Gamma$, i.e., a direct sum decomposition $V=\bigoplus_{g\in G}V_g$. If a nonzero vector $v$ belongs to $V_g$, we will say that $v$ is \emph{homogeneous of degree $g$} and write $\deg v=g$. The zero vector is also considered homogeneous, but its degree is undefined. The class of $G$-graded vector spaces is a category in which the morphisms from $V$ to $W$ are the linear maps $f:V\to W$ that preserve degree, i.e., $f(V_g)\subset W_g$ for all $g\in G$. In particular, we can speak of isomorphism of $G$-graded vector spaces.

In this paper we will deal with graded associative and Lie algebras, as well as (associative) algebras with involution. To treat them in a uniform manner, we look at a more general setting. Let $\cA$ be an algebra with any number of multilinear operations. $\cA$ is said to be a \emph{$G$-graded algebra} if its underlying vector space is $G$-graded such that, for any operation $\varphi$ defined on $\cA$, we have $\varphi(\cA_{g_1},\ldots,\cA_{g_n})\subset\cA_{g_1\cdots g_n}$ for all $g_1,\ldots,g_n\in G$, where $n$ is the number of arguments taken by $\varphi$. For the usual case $n=2$, we recover the definition in the Introduction. For $n=1$, we get 
$\varphi(\cA_g)\subset\cA_g$ for all $g\in G$, which is what will be required for involutions. (Graded algebras in this general sense have been recently studied in \cite{Bahturin-Yukihide} from the point of view of their (graded) polynomial identities, based on deep results of \cite{Razmyslov}.)

$G$-graded modules over $G$-graded associative algebras are defined in the same vein. A \emph{homomorphism of $G$-graded algebras} is a homomorphism of algebras that is also a homomorphism of $G$-graded spaces. Two $G$-gradings, $\Gamma$ and $\Gamma'$, on the same algebra $\cA$ are said to be \emph{isomorphic} if there exists an isomorphism $(\cA,\Gamma)\to(\cA,\Gamma')$ or, in other words, there exists an automorphism of the algebra $\cA$ that maps each component of $\Gamma$ onto the component of $\Gamma'$ of the same degree.

Any group homomorphism $\alpha:G\to H$ gives a functor from $G$-graded vector spaces to $H$-graded ones: for $V=\bigoplus_{g\in G}V_g$, we define the $H$-graded vector space ${}^\alpha V$ to be the same space $V$ but equipped with the $H$-grading $V=\bigoplus_{h\in H} V'_h$ where $V'_h\bydef\bigoplus_{g\in\alpha^{-1}(h)}V_g$. This functor is the identity on morphisms: if $f$ is a homomorphism of $G$-graded spaces $V\to W$, then it is also a homomorphism of $H$-graded spaces ${}^\alpha V\to{}^\alpha W$. If the $G$-grading on $V$ is denoted by $\Gamma$, we will write ${}^\alpha\Gamma$ for the corresponding $H$-grading on $V$; the homogeneous elements of degree $g$ with respect to $\Gamma$ become homogeneous of degree $\alpha(g)$ with respect to $^\alpha{}\Gamma$.

A $G$-graded algebra $\cA$ and an $H$-graded algebra $\cB$ are \emph{weakly isomorphic} if there exist a group isomorphism $\gamma:G\to H$ and an algebra isomorphism $\psi:\cA\to\cB$ such that $\psi(\cA_g)=\cB_{\gamma(g)}$ for all $g\in G$ or, in other words, $\psi$ is an isomorphism of $G$-graded algebras $\cA\to{}^{\gamma^{-1}}\cB$.

Given an element $g\in G$ and a $G$-graded vector space $(V,\Gamma)$, the \emph{right shift} $V^{[g]}$ is the same space $V$ but equipped with the $G$-grading $\Gamma^{[g]}:V=\bigoplus_{g'\in G}V'_{g'}$ where $V'_{g'}\bydef V_{g'g^{-1}}$. In other words, the homogeneous elements of degree $g'$ with respect to $\Gamma$ become homogeneous of degree $g'g$ with respect to $\Gamma^{[g]}$. The {left shift} ${}^{[g]}V$ is defined similarly.

For $G$-graded vector spaces $V$ and $W$, define 
\[
\Hom(V,W)_g\bydef\{f\in\Hom(V,W)\mid f(V_{g'})\subset W_{gg'}\text{ for all }g'\in G\}.
\]
In particular, if we take $W=V$, any $G$-grading on $V$ induces a $G$-grading on the algebra $\End^\gr(V)\bydef\bigoplus_{g\in G}\End(V)_g$ and its graded subalgebras. Note that these gradings are not affected by \emph{right} shifts of the grading on $\cV$.

The tensor product $V\otimes W$ is also $G$-graded: 
\[
(V\otimes W)_g\bydef\bigoplus_{g_1,g_2\in G: g_1g_2=g}V_{g_1}\otimes W_{g_2}
\]
If $G$ is abelian and $\cA$ and $\cB$ are $G$-graded associative algebras, then $\cA\otimes\cB$ becomes a $G$-graded associative algebra in this way.

\subsection{The universal group and the Weyl group of a grading}\label{sse:universal}

There is a more general concept of a grading on an algebra $\cA$, namely, a set $S$ of nonzero subspaces of $\cA$, which we write as $\Gamma=\{\cA_s\}_{s\in S}$ for convenience, such that $\cA=\bigoplus_{s\in S}\cA_s$ and, for any $n$-ary operation $\varphi$ defined on $\cA$ and any $s_1,\ldots,s_n\in S$, we have $\varphi(\cA_{s_1},\ldots,\cA_{s_n})\subset\cA_s$ for some $s\in S$. Any $G$-grading on $\cA$ becomes a grading in this sense if we take $S=\supp\cA$. 

An \emph{equivalence} from $\cA=\bigoplus_{s\in S}\cA_s$ to $\cB=\bigoplus_{t\in T}\cB_t$ is an algebra isomorphism $\psi:\cA\to\cB$ such that, for any $s\in S$, we have $\psi(\cA_s)=\cB_t$ for some $t\in T$. Since we assume that all $\cA_s$ are nonzero, $\psi$ defines a bijection $\gamma:S\to T$ such that $\psi(\cA_s)=\cB_{\gamma(s)}$ for all $s\in S$. Two gradings, $\Gamma$ and $\Gamma'$, on the same algebra $\cA$ are said to be \emph{equivalent} if there exists an equivalence $(\cA,\Gamma)\to(\cA,\Gamma')$ or, in other words, there exists an automorphism of the algebra $\cA$ that maps each component of $\Gamma$ onto a component of $\Gamma'$.

For a given grading $\Gamma$ on an algebra $\cA$ as above, there may or may not exist a realization of $\Gamma$ in a group $G$, by which we mean an injective map $\iota:S\to G$ such that assigning the nonzero elements of $\cA_s$ degree $\iota(s)\in G$, for all $s\in S$, defines a $G$-grading on $\cA$. For example, if $\cA$ has a unary operation $\varphi$, a necessary condition is that $\varphi(\cA_s)\subset\cA_s$ for all $s\in S$. If such pairs $(G,\iota)$ exist, there is a universal one among them: the group $U=U(\Gamma)$ generated by the set $S$ subject to all relations of the form $s_1\cdots s_n=s$ whenever $0\ne\varphi(\cA_{s_1},\ldots,\cA_{s_n})\subset\cA_s$ for an $n$-ary operation $\varphi$ on $\cA$, and $\iota$ is the inclusion map $S\to U$. This is called \emph{the universal group} of the grading $\Gamma$. 

If we start with a $G$-grading $\Gamma$ with support $S$, then the inclusion $\iota:S\to U$ realizes $\Gamma$ as a $U$-grading, which we denote by $\wt{\Gamma}$, and the universal property of $U$ gives a unique group homomorphism $\alpha:U\to G$ such that $\alpha\iota=\iota'$ where $\iota'$ is the inclusion $S\to G$. In other words, $\alpha$ is characterized by the property ${}^{\alpha}\wt{\Gamma}=\Gamma$.

When we use universal groups to realize gradings on $\cA$ and $\cB$, any equivalence $\psi:\cA\to\cB$ becomes a weak isomorphism: the bijection of the supports $\gamma:S\to T$ determined by $\psi$ extends to a unique isomorphism of the universal groups.

Given a grading $\Gamma$ on an algebra $\cA$, we can consider the group $\Aut(\Gamma)$ of all equivalences from the graded algebra $(\cA,\Gamma)$ to itself. Applying the above property of universal groups (with $\cA=\cB$), we see that the permutation of the support of $\Gamma$ defined by any element $\Aut(\Gamma)$ extends to a unique automorphism of the universal group $U=U(\Gamma)$. This gives us a group homomorphism $\Aut(\Gamma)\to\Aut(U)$, whose kernel $\Stab(\Gamma) $ consists of all degree-preserving automorphisms, i.e., isomorphisms from the graded algebra $(\cA,\Gamma)$ to itself. The image of this homomorphism $\Aut(\Gamma)\to\Aut(U)$ is known as the \emph{Weyl group} of the grading $\Gamma$:
\[
W(\Gamma)\bydef\Aut(\Gamma)/\Stab(\Gamma)\hookrightarrow\Aut(U(\Gamma)).
\]

\subsection{Fine gradings}

A $G$-grading $\Gamma:\cA=\bigoplus_{g\in G}\cA_g$ is said to be a \emph{refinement} of an $H$-grading $\Gamma':\cA=\bigoplus_{h\in H}\cA'_h$ (or $\Gamma'$ a \emph{coarsening} of $\Gamma$) if, for any $g\in G$, there exists $h\in H$ such that $\cA_g\subset\cA'_h$. If the inclusion is proper for at least one $g\in\supp\Gamma$, the refinement (or coarsening) is called \emph{proper}. For example, if $\alpha:G\to H$ is a group homomorphism, then ${}^\alpha\Gamma$ is a coarsening of $\Gamma$, which is proper if and only if $\alpha$ is not injective on the support of $\Gamma$. If $G$ is the universal group of $\Gamma$ then any coarsening $\Gamma'$ has the form 
${}^\alpha\Gamma$ for a unique group homomorphism 
$\alpha:G\to H$. 

A grading is said to be \emph{fine} if it has no proper refinement. 
If $\cA$ is finite-dimensional then any grading is a coarsening of a fine grading. Hence, if $\{\Gamma_i\}_{i\in I}$ is a set of representatives of the equivalence classes of fine gradings on $\cA$ and $G_i$ is the universal group of $\Gamma_i$, then any $G$-grading $\Gamma$ on $\cA$ is isomorphic to ${}^\alpha\Gamma_i$ for some $i\in I$ and a group homomorphism $\alpha:G_i\to G$.

\subsection{Gradings and actions}

Given a $G$-grading $\Gamma:\cA=\bigoplus_{g\in G}\cA_g$, any group homomorphism $\chi:G\to\FF^\times$ acts as an automorphism of $\cA$ as follows: $\chi\cdot a=\chi(g)a$ for all $a\in\cA_g$ and $g\in G$. Note that this is actually an automorphism of $\cA$ as a graded algebra, as it leaves each component $\cA_g$ invariant --- in fact, acts on it as a scalar operator. Thus $\Gamma$ defines a group homomorphism from $\Hom(G,\FF^\times)$ to the automorphism group of the graded algebra $\cA$, which is particularly useful if $G$ is abelian and $\FF$ is algebraically closed and of characteristic $0$, because then $\Hom(G,\FF^\times)$ separates points of $G$ and, therefore, the grading $\Gamma$ can be recovered as a simultaneous eigenspace decomposition with respect to these automorphisms.

Conversely, if $\psi$ is an automorphism of an algebra $\cA$ satisfying $\psi^n=\id_\cA$ and if $\FF$ contains a primitive $n$-th root of unity, then the eigenspace decomposition of $\cA$ with respect to $\psi$ is a $\ZZ_n$-grading.

We will further elaborate on the connection between gradings (by abelian groups) and actions (by diagonalizable group schemes) in Subsection \ref{sse:transfer}.

\section{Involutions on graded-simple associative algebras}\label{se:involutions}

In this section, all algebras are assumed associative, but the ground field $\FF$ is arbitrary. 

\subsection{Graded-simple algebras with DCC}

Let $G$ be a group. 
A $G$-graded algebra $\cR$ is said to be \emph{simple as a graded algebra}, or \emph{graded-simple} for short, 
if $\cR^2\ne 0$ and the only graded (two-sided) ideals of $\cR$ are $0$ and $\cR$. As shown in \cite[\S 2.1]{EKmon}, the graded-simple algebras satisfying the descending chain condition on graded left ideals are precisely the graded algebras of the form $\End_\cD(\cV)$ where $\cD$ is a graded-division algebra and $\cV$ is a graded right $\cD$-module of finite rank. Any graded $\cD$-module is free: it admits a \emph{graded basis}, i.e., a $\cD$-basis consisting of homogeneous elements. If we fix a graded basis $\{v_1,\ldots,v_k\}$ for $\cV$, with $\deg v_i=g_i$, then $\End_\cD(\cV)$ can be identified with the matrix algebra $M_k(\cD)\simeq M_k(\FF)\otimes\cD$ with the following grading:
\begin{equation}\label{eq:degEijd}
\deg(E_{ij}\otimes d)=g_i (\deg d) g_j^{-1}\;\text{ for any }d\in\cD^\times_\gr,
\end{equation}
where $\cD^\times_\gr$ denotes the set of nonzero homogeneous elements of $\cD$, which is a group under multiplication. Note that $\deg:\cD^\times_\gr\to G$ is a group homomorphism whose image is $T\bydef\supp\cD$. For any $d\in\cD^\times_\gr$, we denote by $\Int(d)$ the corresponding inner automorphism:
\[
\Int(d):\cD\to\cD,\; x\mapsto dxd^{-1}.
\]
Note that $\Int(d)$ is actually an isomorphism of graded algebras $\cD\to{}^{[t^{-1}]}\cD^{[t]}$ where $t=\deg d$.

\subsection{Antiautomorphisms and involutions that preserve degree}

We are interested in involutions of a graded algebra $\cR$ as above, by which we mean the involutive $\FF$-linear antiautomorphisms of $\cR$ that preserve degrees, i.e., leave every homogeneous component of $\cR$ invariant. First note that, since $\cR$ is graded-simple, the existence of a degree-preserving antiautomorphism implies that the support of $\cR$ generates an abelian subgroup of $G$ (see \cite{BShZ} or \cite[Proposition 2.49]{EKmon}), so \emph{we will assume for the remainder of this section that $G$ is abelian}.

\begin{theorem}\label{th:involutions_from_B}
Let $G$ be an abelian group and consider the $G$-graded algebra $\cR=\End_\cD(\cV)$ where $\cD$ is a graded-division algebra and $\cV$ is a nonzero graded right $\cD$-module of finite rank. 
\begin{enumerate}
\item[(1)] If $\varphi$ is an antiautomorphism of the graded algebra $\cR$, then there exists an antiautomorphism $\varphi_0$ of the graded algebra $\cD$ and a nondegenerate $\varphi_0$-sesquilinear form $B:\cV\times\cV\rightarrow\cD$, by which we mean a nondegenerate $\FF$-bilinear mapping that is $\varphi_0$-sesquilinear over $\cD$, i.e., 
\begin{enumerate}
\item[(i)] 
$B(vd,w)=\varphi_0(d)B(v,w)$ and $B(v,wd)=B(v,w)d$ for all $d\in\cD$ and $v,w\in\cV$,
\end{enumerate}
and homogeneous of some degree $g_0\in G$, i.e.,
\begin{enumerate}
\item[(ii)] $B(\cV_a,\cV_b)\subset\cD_{g_0ab}$ for all $a,b\in G$, 
\end{enumerate}
such that $\varphi$ is the adjunction with respect to $B$, i.e.,
\begin{enumerate}
\item[(iii)] $B(rv,w)=B(v,\varphi(r)w)$ for all $r\in\cR$ and $v,w\in \cV$. 
\end{enumerate}
\item[(2)] Another pair $(\varphi'_0,B')$ satisfies these conditions if and only if there exists $d\in\cD^\times_\gr$ such that $B'=dB$ and $\varphi'_0=\Int(d)\circ\varphi_0$.
\item[(3)] If $\varphi$ is an involution, then the pair $(\varphi_0,B)$ as in part (1) can be chosen so that
$\varphi_0$ is an involution and $B$ is hermitian or skew-hermitian, by which we mean that $B(w,v)=\delta \varphi_0\bigl(B(v,w)\bigr)$ for all $v,w\in\cV$, where $\delta=1$ (hermitian) or $\delta=-1$ (skew-hermitian).
\item[(4)] Let $(\varphi_0,B)$ be a pair chosen for an involution $\varphi$ as in part (3). Then:
\begin{enumerate}
\item[(i)] Any other such pair $(\varphi'_0,B')$ has the form $(\Int(d)\circ\varphi_0,dB)$ 
where $d\in\cD^\times_\gr$ satisfies $\varphi_0(d)=d$ (symmetric) or $\varphi_0(d)=-d$ (skew-symmetric). More precisely, we have $B'(w,v)=\delta' \varphi'_0\bigl(B'(v,w)\bigr)$ for all $v,w\in\cV$ where $\delta'=\delta$ if $d$ is symmetric and $\delta'=-\delta$ if $d$ is skew-symmetric.
\item[(ii)] If $\varphi'_0$ is a degree-preserving involution of $\cD$ such that $\varphi'_0\varphi_0^{-1}$ is an inner automorphism of $\cD$, then there exists $d\in\cD^\times_\gr$ such that $\varphi'_0=\Int(d)\circ\varphi_0$ and the pair $(\varphi'_0,dB)$ satisfies part (3).
\end{enumerate}
\end{enumerate}
\end{theorem}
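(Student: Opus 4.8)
The plan for part~(1) is to produce $B$ from a module isomorphism. I would regard $\cV$ as a graded $(\cR,\cD)$-bimodule and let $\cW:=\Homgr_\cD(\cV,\cD)$ be the graded dual, which is a graded $(\cD,\cR)$-bimodule (left $\cD$-action by postcomposition, right $\cR$-action by precomposition). Twisting the right $\cR$-action through $\varphi$ turns $\cW$ into a graded \emph{left} $\cR$-module $\cW^{\varphi}$ with $r*f:=f\circ\varphi(r)$. Since $\cR\simeq M_k(\cD)$ is graded-simple with DCC, it has, up to a shift, a unique graded-simple left module --- namely $\cV$ (see \cite[\S2]{EKmon}) --- and as $\cW^{\varphi}$ has the same rank, $\cW^{\varphi}\simeq\cV^{[g_0^{-1}]}$ as graded left $\cR$-modules for some $g_0\in G$. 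Fixing such an isomorphism $\Theta\colon\cV\to\cW^{\varphi}$ and setting $B(v,w):=\Theta(v)(w)$, I get $\FF$-bilinearity and nondegeneracy for free; right $\cD$-linearity of each $\Theta(v)\in\cW$ gives $B(v,wd)=B(v,w)d$; homogeneity of $\Theta$ gives~(ii); and $\cR$-linearity of $\Theta$, which reads $\Theta(rv)=\Theta(v)\circ\varphi(r)$, is exactly~(iii). The obstruction to $\Theta$ being left $\cD$-linear is a degree-preserving antiautomorphism $\varphi_0$ of $\cD$, obtained by transporting the left $\cD$-action on $\cW$ to $\cV$ through $\Theta$ and the canonical identification $\Endgr_\cR(\cV)\simeq\cD^{\op}$; this is property~(i).

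For part~(2), given a second pair $(\varphi_0',B')$ for $\varphi$, I would use nondegeneracy of $B$ to define $h\in\End_\FF(\cV)$ by $B'(v,w)=B(hv,w)$; applying~(iii) to both forms shows $h$ centralizes $\cR$, so $h$ is right multiplication by some $d\in\cD$, and homogeneity of the two forms forces $d\in\cD^\times_\gr$. Then~(i) gives $B'=\varphi_0(d)B$ and a comparison of sesquilinearity identities gives $\varphi_0'=\Int(\varphi_0(d))\circ\varphi_0$; replacing $d$ by $\varphi_0(d)$ yields the stated form, the converse being a direct check.

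For part~(3), I would start from any pair $(\varphi_0,B)$ from~(1) and form the conjugate $\bar B(v,w):=\varphi_0^{-1}\bigl(B(w,v)\bigr)$. It is a nondegenerate homogeneous $\varphi_0^{-1}$-sesquilinear form \emph{of the same degree $g_0$}, and --- this is where $\varphi^2=\id$ enters --- its adjunction is again $\varphi$. Part~(2) then gives $d\in\cD^\times_\gr$ with $\bar B=dB$ and $\varphi_0^{-1}=\Int(d)\circ\varphi_0$; equality of degrees forces $d\in\cD_e$, and unwinding gives $\varphi_0^2=\Int(d^{-1})$ and, upon iterating the relation $\bar B=dB$ and using nondegeneracy, $\varphi_0(d)=d^{-1}$. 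Hence $\sigma(x):=d\,\varphi_0(x)$ is an $\FF$-linear map of $\cD$ with $\sigma^2=\id$, $\sigma(1)=d$ and $\sigma(d)=1$; since $d\in\cD_e$, one of $f:=d+1$ (when $d\neq-1$) or $f:=1$ (when $d=-1$, in which case $\varphi_0$ is already an involution) is homogeneous and invertible with $\sigma(f)=\delta f$ for some $\delta\in\{1,-1\}$. Putting $e:=f^{-1}d$, a short computation shows $\Int(e)\circ\varphi_0$ is an involution --- its square is $\Int(\delta)=\id$ --- and that $eB$ is $\delta$-hermitian for it; the same choices handle characteristic~$2$, with $\delta=1$.

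Part~(4)(i) would be formal: by~(2) any pair refining~(3) is $(\Int(d)\circ\varphi_0,dB)$, and imposing $\delta'$-hermiticity of $dB$ for $\Int(d)\circ\varphi_0$ against $\delta$-hermiticity of $B$ for $\varphi_0$ forces, via nondegeneracy, $\varphi_0(d)=(\delta\delta')d$; thus $d$ is $\varphi_0$-symmetric or $\varphi_0$-skew, its sign determines $\delta'$, and $\Int(d)\circ\varphi_0$ is then automatically involutive. For~(4)(ii), I would write $\varphi_0'\varphi_0^{-1}=\Int(c)$; since this automorphism preserves degrees, the structure of the graded-division algebra $\cD$ lets $c$ be taken homogeneous, so $\varphi_0'=\Int(d_1)\circ\varphi_0$ with $d_1\in\cD^\times_\gr$. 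Then $(\varphi_0',d_1B)$ is a pair for $\varphi$ with $\varphi_0'$ an involution; to make the form hermitian or skew-hermitian for $\varphi_0'$ one multiplies $d_1$ by a suitable homogeneous invertible \emph{central} element (which does not affect $\Int(d_1)$), whose existence is once more the eigenvector argument of part~(3), now applied to the restriction of the involution to the commutative center. I expect the principal obstacles to be the graded-module input of part~(1) (where I would lean on \cite{EKmon}) and the degree/sign bookkeeping of parts~(3)--(4) --- in particular the observation that $\bar B$ has the same degree as $B$, which forces $d\in\cD_e$ and underlies the whole argument.
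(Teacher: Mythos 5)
Your proof is correct and essentially follows the paper's route. In part~(3), after the shared opening step --- pass to $\wb{B}(v,w)=\varphi_0^{-1}\bigl(B(w,v)\bigr)$, note it has the same degree as $B$ so that the scalar $d$ with $\wb{B}=dB$ lies in $\cD_e$, and derive $\varphi_0(d)=d^{-1}$ --- the paper forms the subfield $\LL=\FF(d)\subset\cD_e$, splits on whether $\varphi_0|_\LL$ is trivial, and invokes Hilbert's Theorem~90 for the quadratic Galois extension; you instead exhibit the required element explicitly as $f=d+1$ (handling $d=-1$ separately), which is the cyclic-case proof of Hilbert~90 carried out in place and avoids the field-theoretic framing. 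Your packaging of this via the $\FF$-linear involution $\sigma(x)=d\,\varphi_0(x)$ (with $\sigma(1)=d$, $\sigma(d)=1$, so $d+1$ is a fixed vector) is neat, and the ensuing bookkeeping --- that $\Int(e)\circ\varphi_0$ with $e:=f^{-1}d$ squares to the identity and that $eB$ is hermitian for it, with the $d=-1$ case giving a skew-hermitian form --- checks out, including in characteristic~$2$, where $d\neq -1$ guarantees $d\neq 1$ so $f$ is still invertible. For parts~(1) and~(2) the paper simply cites \cite[Theorem~2.57]{EKmon}; your sketch via the twisted graded dual $\Homgr_\cD(\cV,\cD)$ and the uniqueness up to shift of the graded-simple left $\cR$-module, together with the argument defining $h$ by $B'(v,w)=B(hv,w)$ and showing it centralizes $\cR$, is a correct reconstruction of that material. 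Part~(4) matches the paper as well: (i) is a direct application of~(2) and the change-of-form identity $\delta'=\delta\,\varphi_0(d)d^{-1}$, and for~(ii) the key input you quote implicitly --- that a degree-preserving inner automorphism of a graded-division algebra is $\Int$ of a homogeneous element --- is what the paper cites \cite[Lemma~3.3]{E10} for; the subsequent normalization by a central element is, as you observe, the part~(3) argument carried out inside $Z(\cD)\cap\cD_e$.
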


\begin{proof}
Parts (1) and (2) are given by \cite[Theorem 2.57]{EKmon}, so we proceed to prove part (3). 
Let $\varphi$ be an involution and let $(\varphi_0,B)$ be as in part (1). We will show that there exists $\lambda\in\cD^\times_e$ such that $\varphi'_0\bydef\Int(\lambda)\circ\varphi_0$ is an involution and $B'\bydef \lambda B$ is hermitian or skew-hermitian.

Consider $\wb{B}(v, w) \bydef \varphi_0^{-1}\bigl(B(w,v)\bigr)$.
This is a $\varphi_0^{-1}$-sesquilinear form of the same degree as $B$, 
and the adjunction with respect to $\wb{B}$ is $\varphi^{-1}=\varphi$. 
Therefore, by part (2), there exists $0\ne\delta\in\cD_e$ such that $\wb{B}=\delta B$ and 
\begin{equation}\label{eq:phi0}
\varphi_0^{-1}=\Int(\delta)\circ\varphi_0.
\end{equation}
Note that this equation implies that $\varphi_0^2=\id_\cD$ if and only if $\delta$ is central. In particular, if $\wb{B}=\delta B$ for $\delta\in\{\pm 1\}$ then $\varphi_0$ is automatically an involution.

A straightforward computation (see e.g. \cite[Lemma 5]{BKR_Lie}) shows that changing $B$ to $dB$, for any $d\in\cD^\times_\gr$, replaces $\delta$ by 
\begin{equation}\label{eq:delta'}
\delta'=\delta\varphi_0(d)d^{-1},
\end{equation}
and it follows by considering $\delta B$ and the equation $B=\wb{\delta B}$ that 
\begin{equation}\label{eq:delta}
\delta\varphi_0(\delta)=1.
\end{equation}
 
Let $\LL$ be the subfield of the division algebra $\cD_e$ generated by $\FF$ and $\delta$. 
Since $\varphi_0|_\FF=\id_\FF$ by $\FF$-linearity and $\varphi_0(\delta)=\delta^{-1}$ by equation \eqref{eq:delta}, $\varphi_0$ restricts to an involutive automorphism $\sigma$ of $\LL$ over $\FF$. By definition, $\sigma(\delta)=\delta^{-1}$.

If $\sigma=\id_\LL$ then $\delta^{-1}=\sigma(\delta)=\delta$ and hence $\delta\in\{\pm 1\}$, 
so $\varphi_0$ is an involution and $B$ is hermitian or skew-hermitian. 

If, on the contrary, $\sigma\ne\id_\LL$, let $\LL_0=\{x\in\LL\mid \sigma(x)=x\}$. 
Then $\LL/\LL_0$ is a quadratic Galois field extension with Galois group $\langle\sigma\rangle$ and, 
since $\delta\sigma(\delta)=1$, Hilbert's Theorem 90 gives an element $\lambda\in\LL\subset \cD_e$ such that 
$\delta=\lambda\sigma(\lambda)^{-1}$. 
Replacing $B$ by $B'=\lambda B$ changes $\delta$ to $\delta'=\delta\varphi_0(\lambda)\lambda^{-1}=1$ by equation \eqref{eq:delta'}, so $\varphi'_0=\Int(\lambda)\circ\varphi_0$ is an involution and $B'$ is hermitian.

It remains to prove part (4). Let $(\varphi_0,B)$ be as in part (3): $\varphi_0^2=\id_\cD$ and $\wb{B}=\delta B$ for $\delta\in\{\pm 1\}$. Then (i) follows immediately from part (2) and formula \eqref{eq:delta'}.

Now let $\varphi'_0$ be as in (ii). Since the inner automorphism 
$\varphi'_0\varphi_0^{-1}$ preserves degrees, it must be  of the form $\Int(d')$ for some $d'\in\cD^\times_\gr$ (see \cite[Lemma 3.3]{E10} or 
\cite[Theorem 2.1]{R20}), so we have 
$\varphi'_0=\Int(d')\circ\varphi_0$. Letting $B'\bydef d'B$, we see from part (2) that the pair $(\varphi'_0,B')$ satisfies the conditions in part (1). Applying to this pair the arguments in the proof of part (3), we see that $\wb{B'}=\delta'B'$ for some $0\ne\delta'\in\cD_e$, and this $\delta'$ must be central because $\varphi'_0$ is an involution (see equation \eqref{eq:phi0}). Moreover, we have $\delta'\varphi'_0(\delta')=1$ by equation \eqref{eq:delta}. Considering the subfield $\LL$ of $\cD_e$ generated by $\FF$ and $\delta'$, we see that either $\delta'\in\{\pm 1\}$ or there exists $\lambda\in\LL\subset Z(\cD)\cap\cD_e$ such that $\delta'=\lambda\sigma(\lambda)^{-1}$. In the former case, we take $d=d'$ and, in the latter case, we take $d=\lambda d'$, which satisfies $\varphi'_0=\Int(d)\circ\varphi_0$ because $\lambda$ is central. 
\end{proof}

If we fix a graded basis $\{v_1,\ldots,v_k\}$ of the graded $\cD$-module $\cV$ and represent the elements of $\End_\cD(\cV)$ by matrices in $M_k(\cD)$, the adjunction with respect to $B$ can be expressed in matrix form:
\begin{equation}\label{eq:phi_with_matrices}
\varphi(X)=\Phi^{-1}\varphi_0(X^T)\Phi\;\text{ for all }X\in M_k(\cD),
\end{equation}
where $X^T$ is the transpose of $X$, $\varphi_0$ is applied to matrices entry-wise, and $\Phi$ is the matrix representing $B$ relative to the chosen graded basis: $\Phi=\bigl(B(v_i,v_j)\bigr)_{1\le i,j\le k}$.
Note that, since $G$ is abelian, equation \eqref{eq:degEijd} tells us that $M_k(\cD)$ is isomorphic to the tensor product of graded algebras $M_k(\FF)$ and $\cD$ where the grading on $M_k(\FF)$ is defined by the rule $\deg E_{ij}=g_ig_j^{-1}$ and is called the \emph{elementary grading} determined by the $k$-tuple $(g_1,\ldots,g_k)$, where $g_i=\deg v_i$. 

\subsection{The central simple case}\label{sse:central_simple_basics}

Of particular importance to us is the case of finite-dimensional algebras with involution that are  
\emph{central simple as algebras with involution}, i.e., have no proper nonzero ideals that are invariant under 
the involution and have no symmetric elements in the center other than the scalars. Let $(\cR,\varphi)$ be such an algebra with involution and let $\KK=Z(\cR)$. Then $\KK$ is a so-called \'etale (i.e., commutative and separable) algebra over $\FF$ of dimension at most $2$:
\begin{itemize}
\item If $\KK=\FF$, $\cR$ is central simple over $\FF$ and $\varphi$ is said to be of the first kind;
\item If $\KK$ is a separable quadratic field extension of $\FF$, then $\cR$ is central simple over $\KK$ and $\varphi$ is said to be of the second kind;
\item If $\KK\simeq\FF\times\FF$, then $\cR$ is not simple: it is isomorphic to $\cS\times\cS^\op$ for a central simple algebra $\cS$ over $\FF$, with the exchange involution: $(x,y)\mapsto(y,x)$.
\end{itemize}
Now suppose that $(\cR,\varphi)$ is given a grading by an abelian group $G$. It is easy to see that the center is graded (see e.g. \cite[Lemma 4]{BKR_Lie}). If $\KK\ne\FF$, it will be important to distinguish two cases: if $\KK$ is graded trivially, we will say that the grading is of \emph{Type I} and otherwise of \emph{Type II}. In the latter case, we have $\KK=\KK_e\oplus\KK_f$ where $\KK_e=\FF$ and $f\in G$ is an element of order $2$, which we will call the \emph{distinguished element}. Since $\varphi$ leaves $\KK_f$ invariant and the nonzero elements of $\KK_f$ cannot be symmetric, it follows that they are skew-symmetric and $\chr\FF\ne 2$. 

In all cases except gradings of Type I with $\KK\simeq\FF\times\FF$, the algebra $\cR$ is graded-simple and hence we can apply Theorem \ref{th:involutions_from_B} to describe $(\cR,\varphi)$ as a graded algebra with involution. The case of Type I gradings when $\KK\simeq\FF\times\FF$ is actually easy: the ideals $\cS$ and $\cS^\op$ are graded and, since the exchange involution preserves degrees, $\cS$ and $\cS^\op$ must have the same grading, so the situation reduces to gradings on central simple algebras without involution.

We know that $\cR\simeq\End_\cD(\cV)$ as a graded algebra, so let $(\varphi_0,B)$ be as in part (3) of Theorem \ref{th:involutions_from_B}. We can identify the centers of $\End_\cD(\cV)$ and $\cD$ as graded algebras with involution, so the pair $(\cD,\varphi_0)$ is of the same kind as $(\cR,\varphi)$. However, $\varphi_0$ is not determined by $\varphi$. In fact, we have the following

\begin{corollary}\label{cor:involutions_from_B}
Let $\varphi$ be an involution on the graded algebra $\cR = \End_\cD(\cV)$ where $\cD$ is a finite-dimensional graded-division algebra and $\cV$ is a nonzero graded right $\cD$-module of finite rank. 
Assume that $(\cR,\varphi)$ is central simple as an algebra with involution. 
Then $\cD$ admits a degree-preserving involution of the same kind as $\varphi$, and for any such involution $\varphi_0$, there exists a nondegenerate homogeneous $\varphi_0$-sesquilinear form $B:\cV\times\cV\to\cD$ such that $\varphi$ is the adjunction with respect to $B$. Moreover,
\begin{itemize}
\item[(i)] If $\cR$ is central simple and $\chr{\FF}\ne 2$, then $B$ is determined up to a nonzero factor in $\FF$ and is hermitian if $\varphi$ and $\varphi_0$ have the same type (orthogonal or symplectic) and skew-hermitian if they have the opposite type; 
\item[(ii)] If $\cR$ is central simple and $\chr{\FF}=2$, then $B$ is determined up to a nonzero factor in $\FF$ and is hermitian;
\item[(iii)] If $\cR$ is not central simple, then $B$ can be chosen hermitian, and this condition determines it up to a nonzero factor in $\FF$. 
\end{itemize}
\end{corollary}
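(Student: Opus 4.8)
The strategy is to extract the corollary from Theorem~\ref{th:involutions_from_B}, using the Skolem--Noether theorem to absorb the freedom in the choice of $\varphi_0$. By parts (1) and (3) of Theorem~\ref{th:involutions_from_B} there is a pair $(\psi_0,B_0)$ where $\psi_0$ is a degree-preserving involution of $\cD$ and $B_0\colon\cV\times\cV\to\cD$ is a nondegenerate homogeneous $\psi_0$-sesquilinear form, hermitian or skew-hermitian, such that $\varphi$ is the adjunction with respect to $B_0$. As noted in Subsection~\ref{sse:central_simple_basics}, the centers of $\cR=\End_\cD(\cV)$ and of $\cD$ are identified as graded algebras with involution, so $(\cD,\psi_0)$ is of the same kind as $(\cR,\varphi)$. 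This already yields the first assertion: $\cD$ admits a degree-preserving involution of the same kind as $\varphi$.

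Next, let $\varphi_0$ be an arbitrary degree-preserving involution of $\cD$ of the same kind as $\varphi$. Since $\varphi_0$ and $\psi_0$ induce the same map on $Z(\cD)$ (the identity if the kind is the first, the nontrivial automorphism if it is the second, the swap of the two minimal idempotents if $Z(\cD)\simeq\FF\times\FF$), the automorphism $\varphi_0\psi_0$ of $\cD$ restricts to the identity on $Z(\cD)$, hence is inner: when $\cR$ is simple, $\cD$ is central simple over $Z(\cD)$ and this is the Skolem--Noether theorem; when $\cR$ is not simple, $\cD\simeq\cQ\times\cQ^{\op}$ with $\cQ$ central simple over $\FF$, and $\varphi_0\psi_0$, fixing both minimal idempotents, restricts to inner automorphisms of $\cQ$ and $\cQ^{\op}$, so it is inner on $\cD$. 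Because $\varphi_0\psi_0$ is degree-preserving, the implementing element can be taken in $\cD^\times_\gr$ (see \cite[Lemma 3.3]{E10} or \cite[Theorem 2.1]{R20}). Now Theorem~\ref{th:involutions_from_B}(4)(ii), applied with $\psi_0$ in the role of $\varphi_0$ and our $\varphi_0$ in the role of $\varphi'_0$, produces $d\in\cD^\times_\gr$ with $\varphi_0=\Int(d)\circ\psi_0$ such that $B\bydef dB_0$ is a nondegenerate homogeneous $\varphi_0$-sesquilinear form, hermitian or skew-hermitian, with respect to which $\varphi$ is the adjunction.

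Finally, fix $\varphi_0$ and let $(\varphi_0,B')$ be any admissible pair. By Theorem~\ref{th:involutions_from_B}(4)(i), $B'=dB$ with $\Int(d)=\id_\cD$ --- so $d\in Z(\cD)^\times_\gr$ --- and $\varphi_0(d)=\pm d$, the hermitian/skew-hermitian sign being multiplied by $\varphi_0(d)d^{-1}$. When $\cR$ is central simple, $Z(\cD)$ is a field: in the first-kind case $Z(\cD)=\FF$, forcing $d\in\FF^\times$, so the sign is unchanged and $B$ is determined up to $\FF^\times$; the orthogonal/symplectic statement in (i) then reduces to the classical fact that the type of an adjoint involution is the product of the type of $\varphi_0$ and the type of the form ($+1$ for hermitian, $-1$ for skew-hermitian) --- a dimension count for spaces of symmetric elements that is unaffected by the grading. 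In the second-kind case one restricts to hermitian $B$; a nonzero homogeneous skew-symmetric central element has square in $\FF^\times$, so skew-hermitian forms differ from hermitian ones by such a factor, and the hermitian forms are unique up to $\FF^\times$. In characteristic $2$ (case (ii)) there are no nonzero skew-symmetric scalars, $B$ is forced hermitian, and the same argument gives uniqueness up to $\FF^\times$. In the non-simple case (iii), $Z(\cD)\simeq\FF\times\FF$ and the same analysis of homogeneous symmetric central elements shows a hermitian representative exists and is determined up to $\FF^\times$.

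The part demanding the most care is this last step: tracking the hermitian/skew-hermitian sign under a change of $\varphi_0$ and matching it with the orthogonal/symplectic type of $\varphi$ in (i), together with the uniform handling of the first kind, the second kind and the non-simple case when computing $Z(\cD)$ and the $\varphi_0$-action on it. The existence half is comparatively routine once Theorem~\ref{th:involutions_from_B} and the Skolem--Noether theorem are in hand.
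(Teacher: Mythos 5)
Your proof is correct and takes essentially the same approach as the paper's: get an initial pair $(\psi_0,B_0)$ from Theorem~\ref{th:involutions_from_B}(1),(3), invoke Skolem--Noether to connect an arbitrary $\varphi_0$ of the same kind to $\psi_0$, apply part (4) of that theorem to adjust $B$, and then analyze the residual ambiguity through $Z(\cD)^\times_\gr$ case by case. You supply more detail (in particular the explicit decomposition $\cD\simeq\cQ\times\cQ^{\op}$ for the non-simple case and the verification that homogeneous skew-symmetric central elements have square in $\FF^\times$), but the key ideas and references are the same; the only cosmetic mismatch is that you discuss the second-kind case while still under the heading ``$\cR$ is central simple,'' whereas in the corollary's numbering that case belongs under (iii).
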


\begin{proof}
By part (4) of Theorem \ref{th:involutions_from_B}, we can find a hermitian or skew-hermitian $B$ for any $\varphi_0$, because any two involutions of the same kind differ by an inner automorphism of $\cD$ (Noether-Skolem Theorem). Moreover, when $\varphi_0$ is fixed, the homogeneous $\varphi_0$-sesquilinear form $B$ is determined up to a factor in $Z(\cD)^\times_\gr$, which immediately implies (i) and (ii) if we take into account that $\cR\simeq M_k(\cD)$ as an ungraded algebra and $\varphi$ is given by equation \eqref{eq:phi_with_matrices}. If $\cR$ is not central simple and $B$ is not hermitian, we can multiply $B$ by any skew-symmetric element of $Z(\cD)^\times_\gr$.
\end{proof}

Since we are interested in fine gradings, of particular importance to us will be the case where the division algebra $\cD_e$ is $\FF$. Then, for any $t\in T\bydef\supp\cD$ and $0\ne x_t\in\cD_t$, we have $\cD_t=\cD_e x_t=\FF x_t$, which means that $\cD$ is a twisted group algebra of $T$ (with coefficients in $\FF$) equipped with its natural grading. 
Since $T$ is abelian, there exist scalars $\beta(s,t)\in\FF^\times$, for all $s,t\in T$, such that $x_s x_t = \beta(s,t) x_t x_s$, and it is clear that the function $\beta:T\times T\to\FF^\times$ does not depend on the choice of the elements $x_t$, so we have
\begin{equation}\label{eq:def_beta}
xy = \beta(s,t) yx\;\text{ for all }x\in\cD_s,y\in\cD_t\text{ and }s,t\in T.
\end{equation} 
It follows that the function $\beta$ is a \emph{bicharacter}, i.e., multiplicative in each variable. Moreover, $\beta$ is \emph{alternating} in the sense that $\beta(t,t)=1$ for all $t\in T$, which implies that $\beta$ is (multiplicatively) skew-symmetric: $\beta(t,s)=\beta(s,t)^{-1}$ for all $s,t\in T$ (which is also obvious from equation \eqref{eq:def_beta}). By analogy with (skew-)symmetric bilinear forms, we define
\begin{equation}\label{eq:def_rad_beta}
\rad\beta\bydef\{s\in T\mid\beta(s,t)=1\text{ for all }t\in T\}.
\end{equation}
It is easy to see that $Z(\cD)=\bigoplus_{s\in\rad\beta}\cD_s$, so in our case either $\rad\beta=\{e\}$  (a \emph{nondegenerate} bicharacter) if $\KK=\FF$ or 
$\rad\beta=\{e,f\}$ (a \emph{slightly degenerate} bicharacter) if $\KK$ is quadratic over $\FF$ and the grading is of Type II. In the slightly degenerate case, we will sometimes consider the quotient group $\wb{T}\bydef T/\rad\beta$ and the nondegenerate bicharacter $\bar{\beta}:\wb{T}\times\wb{T}\to\FF^\times$ induced by 
$\beta$.

We will also need the case of Type I gradings where $\cD_e$ is $\KK$, which is a quadratic field extension of $\FF$. Then the same considerations as above lead us to a nondegenerate alternating bicharacter $\beta:T\times T\to\KK^\times$.

Whenever a finite abelian group $T$ has a nondegenerate alternating bicharacter 
$\beta:T\times T\to\LL^\times$, where $\LL$ is any field, $T$ admits a \emph{symplectic basis} (see e.g. \cite[2,\S 2]{EKmon}), 
i.e., a generating set of the form  $\{a_1,b_1,\ldots,a_m,b_m\}$ with the order of both $a_i$ and $b_i$ equal to some 
$n_i\ge 2$, $i=1,\ldots,m$, such that 
\begin{equation}\label{eq:symplectic_basis}
T=\langle a_1\rangle\times\langle b_1\rangle\times\cdots\times\langle a_m\rangle\times\langle b_m\rangle,
\end{equation}
and $\beta(a_i,b_i)=\zeta_i$, with $\zeta_i\in\LL$ a primitive root of unity of degree $n_i$, 
while $\beta(a_i,b_j)=1$ for $i\neq j$ and $\beta(a_i,a_j)=\beta(b_i,b_j)=1$ for all $i,j$.
In particular, $T$ is the direct product of two isomorphic subgroups:
$\langle a_1,\ldots,a_m\rangle$ and $\langle b_1,\ldots,b_m\rangle$.
Now if $T$ is the support of a graded-division algebra $\cD$ with $\cD_e=\LL=Z(\cD)$ and $\beta$ is defined by equation \eqref{eq:def_beta}, then any elements $0\ne X_i\in\cD_{a_i}$ and $0\ne Y_i\in\cD_{b_i}$ generate $\cD$ as an $\LL$-algebra and satisfy the following defining relations:
\begin{equation}\label{eq:genrel}
\begin{split}
&X_i^{n_i}=\mu_i,\; Y_i^{n_i}=\nu_i,\; X_i Y_i=\zeta_i Y_i X_i,\\
&X_i X_j=X_j X_i,\; Y_i Y_j=Y_j Y_i,\text{ and } X_i Y_j=Y_j X_i\text{ for }i\neq j.
\end{split}
\end{equation}

\begin{remark}\label{rm:change_roots}
The primitive root of unity $\zeta_i=\beta(a_i,b_i)$ can be chosen arbitrarily at the expense of changing the symplectic basis.
\end{remark}

This can be restated by saying that $\cD$ is a tensor product of \emph{cyclic} or \emph{symbol algebras} (see e.g. \cite[p.~27]{KMRT}):
\begin{equation}\label{eq:symbol_alg}
\cD\simeq (\mu_1,\nu_1)_{\zeta_1^{-1},\LL}\otimes_\LL\cdots\otimes_\LL (\mu_m,\nu_m)_{\zeta_m^{-1},\LL}.
\end{equation}
Since the elements $X_i$ and $Y_i$ of degrees $a_i$ and $b_i$, respectively, are determined up to a factor in $\LL^\times$, and this factor is raised to the power $n_i$ in defining relations \eqref{eq:genrel}, the graded-division algebras $\cD$ with support $T$ and $\cD_e=\LL=Z(\cD)$ are classified up to isomorphism of graded algebras by the bicharacter $\beta$ and the images of the scalars $\mu_i$ and $\nu_i$ (assigned to a fixed symplectic basis of $T$) in the quotient group $\LL^\times/\LL^{\times n_i}$. However, in this paper we are interested in classifying graded algebras up to equivalence. For graded-division algebras this is the same as the classification up to weak isomorphism, since the fact that $\cD_s\cD_t=\cD_{st}$ for all $s,t\in T$ immediately implies that $T$ is the universal group of $\cD$. 

If $\LL$ is algebraically closed, then each symbol algebra in the right-hand side of equation \eqref{eq:symbol_alg} is of the form $M_n(\LL)$, equipped with the so-called \emph{Pauli grading} by $\langle a,b\rangle\simeq\ZZ_n^2$: we can take for the generator $X$ of degree $a$ the diagonal matrix with consecutive powers of $\zeta=\beta(a,b)$ and for the generator $Y$ of degree $b$ the permutation matrix of the cycle $(1,\ldots,n)$. Therefore, over an algebraically closed field, the finite-dimensional graded-division algebras that are simple as algebras and have abelian support are classified up to equivalence by the isomorphism class of their support, as was first shown in \cite{BSZ}. Over the field of real numbers, such a classification was obtained in \cite{BZ16,R16} and is considerably more complicated, because here $\cD_e$ can be not only $\RR$, but also $\CC$ or $\HH$. Over an arbitrary field $\LL$, an explicit classification up to equivalence seems out of reach.   
Here we only make an observation about Weyl groups, to be used later:

\begin{proposition}\label{pr:Weyl_group_C}
If $(\cD,\Gamma_\cD)$ is a graded-division algebra over a field $\LL$ with an abelian support $T$ and $\cD_e=\LL$, then $W(\Gamma_\cD)$ is contained in 
\[
\Aut(T,\beta)\bydef\{\alpha\in\Aut(T)\mid\beta\bigl(\alpha(s),\alpha(t)\bigr)=\beta(s,t)\text{ for all }s,t\in T\},
\]
where $\beta:T\times T\to\LL^\times$ is the alternating bicharacter defined by equation \eqref{eq:def_beta}.
If $\Ext_\ZZ(T,\LL^\times)$ is trivial (for example, if $\LL$ is algebraically closed or $T$ is finite of exponent $n$ and $\LL^\times=\LL^{\times n}$), then $W(\Gamma_\cD)=\Aut(T,\beta)$.
\end{proposition}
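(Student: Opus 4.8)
The plan is to use the observation, made just before the statement, that the relations $\cD_s\cD_t=\cD_{st}$ make $T$ the universal group of $\Gamma_\cD$; thus $W(\Gamma_\cD)$ sits canonically inside $\Aut(T)$, and an $\alpha\in\Aut(T)$ lies in $W(\Gamma_\cD)$ precisely when it is induced by an ($\LL$-linear) algebra automorphism $\psi$ of $\cD$ with $\psi(\cD_t)=\cD_{\alpha(t)}$ for all $t\in T$. First I would settle the inclusion $W(\Gamma_\cD)\subseteq\Aut(T,\beta)$, which requires no hypothesis; then, assuming $\Ext_\ZZ(T,\LL^\times)=0$, I would construct such a $\psi$ for every $\alpha\in\Aut(T,\beta)$.

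For the first inclusion: given $\psi$ inducing $\alpha$, take nonzero homogeneous $x\in\cD_s$ and $y\in\cD_t$, and apply $\psi$ to the relation $xy=\beta(s,t)\,yx$ of \eqref{eq:def_beta}. Since $\psi$ is $\LL$-linear and $\beta(s,t)\in\LL^\times$, this gives $\psi(x)\psi(y)=\beta(s,t)\,\psi(y)\psi(x)$; but $0\ne\psi(x)\in\cD_{\alpha(s)}$ and $0\ne\psi(y)\in\cD_{\alpha(t)}$, so also $\psi(x)\psi(y)=\beta(\alpha(s),\alpha(t))\,\psi(y)\psi(x)$, while $\psi(y)\psi(x)\ne 0$ because a product of nonzero homogeneous elements of a graded-division algebra is invertible. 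Comparing, $\beta(\alpha(s),\alpha(t))=\beta(s,t)$, that is, $\alpha\in\Aut(T,\beta)$.

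For the reverse inclusion I would use the twisted-group-algebra description of $\cD$: fix a graded $\LL$-basis $\{x_t\mid t\in T\}$ of $\cD$ with $x_e=1$ (possible since $\cD_t=\LL x_t$) and let $\sigma\colon T\times T\to\LL^\times$ be the normalized $2$-cocycle with $x_sx_t=\sigma(s,t)\,x_{st}$, so that $\beta(s,t)=\sigma(s,t)\sigma(t,s)^{-1}$. An automorphism $\psi$ inducing a given $\alpha\in\Aut(T)$ is forced to have the form $\psi(x_t)=\mu(t)\,x_{\alpha(t)}$ with $\mu(t)\in\LL^\times$, and a short computation shows that $\psi$ is multiplicative if and only if
\[
\rho(s,t)\bydef\sigma\bigl(\alpha(s),\alpha(t)\bigr)\,\sigma(s,t)^{-1}=\mu(st)\,\mu(s)^{-1}\mu(t)^{-1}\quad\text{for all }s,t\in T,
\]
that is, if and only if $\rho$ is the coboundary of $t\mapsto\mu(t)^{-1}$. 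Now $\rho$ is again a $2$-cocycle, and $\rho(s,t)\rho(t,s)^{-1}=\beta(\alpha(s),\alpha(t))\,\beta(s,t)^{-1}=1$ because $\alpha\in\Aut(T,\beta)$; hence $\rho$ is a \emph{symmetric} $2$-cocycle. Since the symmetric $2$-cocycles $T\times T\to\LL^\times$ modulo coboundaries form a group isomorphic to $\Ext_\ZZ(T,\LL^\times)$ (the classification of abelian group extensions $1\to\LL^\times\to E\to T\to 1$), the hypothesis forces $\rho$ to be a coboundary; this supplies $\mu$, hence $\psi$, which is bijective because $\alpha$ is. Thus $\alpha\in W(\Gamma_\cD)$ and $W(\Gamma_\cD)=\Aut(T,\beta)$.

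It remains to note that the two displayed conditions do force $\Ext_\ZZ(T,\LL^\times)=0$: if $\LL$ is algebraically closed then $\LL^\times$ is divisible, hence an injective $\ZZ$-module; and if $T$ is finite of exponent $n$ with $\LL^\times=\LL^{\times n}$, then $\LL^\times$ is $m$-divisible for every $m\mid n$, whence $\Ext_\ZZ(\ZZ/m\ZZ,\LL^\times)=\LL^\times/(\LL^\times)^m=0$ for each cyclic direct factor $\ZZ/m\ZZ$ of $T$. I expect the only step that is not pure bookkeeping to be the identification of the symmetric part of $H^2(T,\LL^\times)$ with $\Ext_\ZZ(T,\LL^\times)$; if one prefers to avoid citing it, one can argue equivalently that the symmetric cocycle $\rho$ defines an abelian group extension of $T$ by $\LL^\times$, and that a splitting of it --- which exists once $\Ext_\ZZ(T,\LL^\times)=0$ --- is exactly a function $\mu$ exhibiting $\rho$ as a coboundary.
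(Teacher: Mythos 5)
Your proof is correct and takes essentially the same approach as the paper: both identify $\cD$ with a twisted group algebra $\LL^\sigma T$, observe that $\alpha\in\Aut(T,\beta)$ forces the obstruction cocycle $\rho$ to be symmetric, and then invoke the identification of classes of symmetric $2$-cocycles with $\Ext_\ZZ(T,\LL^\times)$; the only difference is that you unpack what the paper cites from \cite[Proposition 2.14]{EKmon} (that $W(\Gamma_\cD)$ is the stabilizer of $[\sigma]$ in $\Aut(T)$) into an explicit ansatz $\psi(x_t)=\mu(t)x_{\alpha(t)}$, and you establish the easy inclusion $W(\Gamma_\cD)\subset\Aut(T,\beta)$ by a direct computation on the commutation relation rather than via the homomorphism $\Hc^2(T,\LL^\times)\to\Hom(T\wedge_\ZZ T,\LL^\times)$.
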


\begin{proof}
 
Since $\cD_e=\LL$, $\cD$ is the twisted group algebra $\LL^\sigma T$ for some $2$-cocycle $\sigma:T\times T\to\LL^\times$ (with trivial action of $T$ on $\LL^\times$). It is not hard to see (\cite[Proposition 2.14]{EKmon}) that $W(\Gamma_\cD)$ is the stabilizer of the class $[\sigma]\in\Hc^2(T,\LL^\times)$ in $\Aut(T)$. Since $\beta(s,t)=\sigma(s,t)/\sigma(t,s)$ for all $s,t\in T$, $\beta$ is the image of $[\sigma]$ under the well-known homomorphism $\Hc^2(T,\LL^\times)\to\Hom(T\wedge_\ZZ T,\LL^\times)$, which is valid for any abelian groups in place of $T$ and $\LL^\times$. The kernel of this homomorphism is the group of classes of symmetric $2$-cocycles, which can be identified with $\Ext_\ZZ(T,\LL^\times)$.
\end{proof}

We now return to our situation where $\cD$ has a degree-preserving involution $\varphi_0$ that makes it central simple as an algebra with involution over $\FF$, and $\cD_e$ is either $\FF$ or $\KK\bydef Z(\cD)$. In the first case this forces $T$ to be $2$-elementary, as shown in the next lemma, which generalizes analogous results in \cite{BZ06,BKR_Lie}:

\begin{lemma}\label{lm:2elementary}
Suppose that a finite-dimensional graded-division algebra $\cD$ with $\cD_e=\FF$ admits a degree-preserving involution $\varphi_0$ such that $(\cD,\varphi_0)$ is central simple as an algebra with involution. Then the support $T$ of $\cD$ is an elementary abelian $2$-group. Moreover, if $\chr\FF=2$ then $T$ is trivial, so $\cD=\FF$. 
\end{lemma}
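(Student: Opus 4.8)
The plan is to use the involution $\varphi_0$ to show that $T$ has exponent $2$ and then, in characteristic $2$, to force $T$ to be trivial. Since $\cD_e=\FF$, the algebra $\cD$ is a twisted group algebra of $T$ over $\FF$, so for each $t\in T$ we may pick $0\ne x_t\in\cD_t$ with $\cD_t=\FF x_t$, and $\varphi_0(x_t)\in\cD_t$ (degree preservation) so $\varphi_0(x_t)=c(t)x_t$ for some $c(t)\in\FF^\times$. Applying $\varphi_0$ twice, using $\varphi_0^2=\id_\cD$ and $\varphi_0|_\FF=\id_\FF$, gives $c(t)^2=1$, hence $c(t)=\pm1$. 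The first key step is to show that $x_t^2$ commutes with $\cD$, i.e.\ lies in $Z(\cD)=\bigoplus_{s\in\rad\beta}\cD_s$. Indeed $x_t^2\in\cD_{t^2}$, and $\varphi_0(x_t^2)=\varphi_0(x_t)^2=c(t)^2 x_t^2=x_t^2$; but the only symmetric elements of $Z(\cD)$ are the scalars (central simplicity of $(\cD,\varphi_0)$), while if $x_t^2$ were a nonzero homogeneous element of $\cD_{t^2}$ with $t^2\ne e$ it could not be a scalar. So I need $x_t^2\in\cD_e=\FF$, which forces $t^2=e$ once I know $x_t^2\ne0$ — and $x_t^2\ne0$ because $x_t$ is invertible in the graded-division algebra $\cD$. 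Thus $t^2=e$ for all $t\in T$, i.e.\ $T$ is an elementary abelian $2$-group.

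For the "moreover" part, assume $\chr\FF=2$ and suppose $T\ne\{e\}$; I will derive a contradiction. Pick $t\ne e$ in $T$ and $0\ne x_t\in\cD_t$ with $x_t^2=\mu\in\FF^\times$ (from the previous paragraph, $x_t^2\in\FF^\times$). In characteristic $2$, the element $y:=x_t+\varphi_0(x_t)=(1+c(t))x_t$ is symmetric; if $c(t)=1$ this is $2x_t=0$ in characteristic $2$ already when $c(t)=1$, whereas if $c(t)=-1=1$ in characteristic $2$ again $1+c(t)=0$, so $\varphi_0(x_t)=x_t$ in every case (since $c(t)=\pm1=1$). Hence every homogeneous element of $\cD$ is symmetric, so $\varphi_0=\id_\cD$. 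But then $\cD$ is commutative: for $s,t\in T$, $\beta(s,t)x_t x_s=x_s x_t=\varphi_0(x_s)\varphi_0(x_t)$ should be compared with $\varphi_0(x_t x_s)=\varphi_0(x_t)\varphi_0(x_s)=x_t x_s$ — wait, an antiautomorphism reverses products, so $\varphi_0(x_sx_t)=\varphi_0(x_t)\varphi_0(x_s)=x_tx_s=\beta(s,t)^{-1}x_sx_t$, and since $\varphi_0(x_sx_t)=x_sx_t$ we get $\beta(s,t)=1$ for all $s,t$. Thus $\beta$ is trivial and $\cD=Z(\cD)$ is commutative. Now a commutative graded-division algebra with $\cD_e=\FF$ and support an elementary abelian $2$-group, in characteristic $2$, is a quotient of $\FF[T]\cong\FF[z_1,\ldots,z_r]/(z_1^2-\mu_1,\ldots)$; each $z_i^2-\mu_i$ factors as $(z_i-\sqrt{\mu_i})^2$ over a purely inseparable extension, so $\cD$ has nonzero nilpotents unless $T=\{e\}$, contradicting that $\cD$ is a division algebra. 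Therefore $T=\{e\}$ and $\cD=\FF$.

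I expect the main obstacle to be the characteristic-$2$ argument: one must be careful that "central simple as an algebra with involution" does not by itself exclude $\chr\FF=2$, and the contradiction really comes from combining (a) the forced triviality of $\varphi_0$ in characteristic $2$ with (b) the fact that a graded-division algebra cannot be a nontrivial commutative $\FF$-algebra generated by square roots of scalars over a field of characteristic $2$ without acquiring nilpotents. I would phrase (b) cleanly by noting $Z(\cD)=\cD$ is then a finite-dimensional commutative $\FF$-algebra which is a graded-division algebra, hence a field, whose support is $2$-elementary; but a field of characteristic $2$ has no elements of multiplicative... no — rather, the grading group of a field extension graded-division algebra must inject into the group $L^\times/\FF^\times$ modulo... the cleanest route is: $\cD$ a field, $\dim_\FF\cD=|T|$, and each nonidentity $t$ gives $x_t\notin\FF$ with $x_t^2\in\FF$, so $x_t$ is inseparable over $\FF$, contradicting that $\cD/\FF$, being the center of a central simple algebra with involution, is separable (étale) over $\FF$ — which is exactly the property recorded in Subsection~\ref{sse:central_simple_basics}. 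This separability is the pigeonhole that kills characteristic $2$, and I would lean on it rather than on an explicit nilpotent.
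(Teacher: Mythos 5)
The first paragraph of your proposal has a genuine gap. You announce as the ``first key step'' that $x_t^2$ lies in $Z(\cD)$, and you deduce from this (together with the fact that $x_t^2$ is symmetric and central simplicity) that $x_t^2$ must be a scalar and hence $t^2=e$. But you never actually prove $x_t^2\in Z(\cD)$. The only thing you establish about $x_t^2$ is that it is a symmetric homogeneous element of degree $t^2$; without knowing it is central, the ``symmetric elements of $Z(\cD)$ are scalars'' fact gives you nothing. And centrality is not automatic: $x_t^2 x_s=\beta(t,s)^2\,x_s x_t^2$, so $x_t^2$ commutes with $\cD$ precisely when $\beta(t,\cdot)^2$ is trivial, which need not hold for an arbitrary graded-division algebra (e.g., the Pauli grading of $M_3$ by $\ZZ_3^2$). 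The missing ingredient is precisely the paper's first observation: applying the antiautomorphism $\varphi_0$ to both sides of $xy=\beta(s,t)yx$ gives $\varphi_0(y)\varphi_0(x)=\beta(s,t)\varphi_0(x)\varphi_0(y)$, while $\varphi_0(x)\in\cD_s$ and $\varphi_0(y)\in\cD_t$ force $\varphi_0(x)\varphi_0(y)=\beta(s,t)\varphi_0(y)\varphi_0(x)$, so $\beta(s,t)^2=1$ for all $s,t$. Once you have this, $x_t^2$ does commute with every $x_s$, and your argument goes through cleanly --- in fact your ``$x_t^2\in\FF$, so $t^2=e$'' conclusion is a slightly slicker packaging than the paper's, which first shows the quotient $T/\rad\beta$ is $2$-elementary and then separately rules out $t^2=f$ by the symmetric-versus-skew distinction on $\KK_f$ (the same symmetry observation you make).

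Your characteristic-$2$ paragraph is sound in substance, though meandering. In $\chr\FF=2$ you correctly get $\varphi_0(x_t)=x_t$ for all $t$, so $\varphi_0=\id_\cD$, which forces $\cD$ commutative (apply $\varphi_0$ to $x_sx_t$). But at that point you do not need separability or nilpotents: $\varphi_0=\id$ means every element of $Z(\cD)$ is symmetric, so central simplicity gives $Z(\cD)=\FF$; with $\cD$ commutative, $\cD=Z(\cD)=\FF$ and $T$ is trivial. (Your étale route works too, but this is shorter, and it also sidesteps the issue that a commutative graded-division algebra need not be a field, as $\FF[\ZZ_2]$ already shows.)
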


\begin{proof}
Let $0\ne x\in\cD_s$ and $0\ne y\in\cD_t$. Applying $\varphi_0$ to both sides of equation \eqref{eq:def_beta}, we get $\varphi_0(y)\varphi_0(x)=\beta(s,t) \varphi_0(x)\varphi_0(y)$. But $\varphi_0(x)\in\cD_s$ and $\varphi_0(y)\in\cD_t$, so $\varphi_0(x)\varphi_0(y)=\beta(s,t) \varphi_0(y)\varphi_0(x)$, by equation \eqref{eq:def_beta} again, and hence $\beta(s,t)=\beta(s,t)^{-1}$ for all $s,t\in T$, which means that $\beta$ takes values in $\{\pm 1\}$. The nondegeneracy of $\bar{\beta}$ now implies that $\wb{T}$ is $2$-elementary and in fact trivial in the case $\chr\FF=2$. If the center $\KK$ equals $\FF$, we have $\wb{T}=T$, so we are done. Otherwise $\chr\FF\ne 2$, $\KK=\KK_e\oplus\KK_f$, and the elements of $\KK_f$ are skew-symmetric with respect to $\varphi_0$. Since $\rad\beta=\{e,f\}$, it suffices to prove that we cannot have an element $t\in T$ with $t^2=f$. Indeed, if $0\ne x\in\cD_t$ and $t^2=f$, then $\cD_f=\FF x^2$. But $\cD_t$ is a $1$-dimensional invariant subspace for $\varphi_0$, so $\varphi_0(x)=\pm x$, which implies that $x^2$ is symmetric --- a contradiction.    
\end{proof}

Thus, under the conditions of Lemma \ref{lm:2elementary}, we can consider $T$ as a vector space over the field $GF(2)$ and $\beta$ as an alternating bilinear form on this space. In particular, this allows us to get a version of symplectic basis for the degenerate case by taking a complement to the subspace $\rad\beta$:
\begin{equation}\label{eq:symplectic_basis_degen}
T=\langle a_1\rangle\times\langle b_1\rangle\times\cdots\times\langle a_m\rangle\times\langle b_m\rangle\times\langle f\rangle,
\end{equation}
where $\beta(a_i,b_i)=-1$ and the value of $\beta$ on all other pairs of basis elements is $1$. Therefore, $\cD$ is either the tensor product of quaternion algebras as in equation \eqref{eq:symbol_alg} with $\LL=\FF$ and all $\zeta_i=-1$ or else 
\begin{equation}\label{eq:symbol_alg_degen}
\cD\simeq (\mu_1,\nu_1)_{-1,\FF}\otimes_\FF\cdots\otimes_\FF (\mu_m,\nu_m)_{-1,\FF}\otimes_\FF\KK.
\end{equation} 
Now, since the homogeneous components $\cD_t$ are $1$-dimensional invariant subspaces of the involution $\varphi_0$, we obtain a function $\eta:T\to\{\pm 1\}$ defined by
\begin{equation}\label{eq:def_eta}
\varphi_0(x) = \eta(t) x\;\text{ for all }x\in\cD_t\text{ and }t\in T.
\end{equation}
In view of equation \eqref{eq:def_beta}, the condition $\varphi_0(xy)=\varphi_0(y)\varphi_0(x)$ translates to the following:
\begin{equation}\label{eq:eta_polarization}
\eta(st)=\eta(s)\eta(t)\beta(s,t)\;\text{ for all }s,t\in T.
\end{equation}
Thinking of $T$ as a vector space over $GF(2)$, this condition means that $\eta$ is a quadratic form and $\beta$ is its polarization.

It is well known that quadratic forms over $GF(2)$ whose polarization is nondegenerate are classified by their \emph{Arf invariant}, which in this case can simply be defined as the value that the quadratic form takes more often. Since $\eta$ takes values $\pm 1$, we will also define $\Arf(\eta)$ to take values $\pm 1$ rather than the usual $\bar{0}$ and $\bar{1}$. Since orthogonal involutions on a central simple algebra of degree $n$ are characterized by the property that the space of symmetric elements has a higher dimension than the space of skew-symmetric elements ($\frac{n(n+1)}{2}>\frac{n(n-1)}{2}$), we conclude that, if $\cD$ is central simple over $\FF$ and $\cD_e=\FF$, then
\begin{equation}\label{eq:Arf}
\varphi_0\text{ is }\begin{cases}
\text{orthogonal} & \text{if }\Arf(\eta)=1,\\
\text{symplectic} & \text{if }\Arf(\eta)=-1.
\end{cases}
\end{equation}
Note also that in this case $|T|=\dim_\FF\cD$ is an even power of $2$ and, disregarding the grading, $\cD\cong M_\ell(\Delta)$, where $\Delta$ is a central division algebra over $\FF$ and $|T|=\ell^2\dim_\FF\Delta$ by dimension count.

If $\cD$ is not central simple (but still assuming $\cD_e=\FF$), the radical of the polarization $\beta$ has dimension $1$ over $GF(2)$ and the quadratic form $\eta$ is \emph{nonsingular} in the sense that 
$\eta(s)$ is nontrivial for any nontrivial element $s$ 
in this  radical. Here $|T|$ is an odd power of $2$ and, disregarding the grading, $\cD$ is isomorphic to either $M_\ell(\Delta)$ or $M_\ell(\Delta)\times M_\ell(\Delta)^\op$ where $\Delta$ is a central division algebra over $\KK$ or $\FF$, respectively. 

Finally, if $\KK$ is a quadratic field extension of $\FF$ and $\cD_e=\KK$, then $T$ is not necessarily $2$-elementary and we cannot define a function $\eta$ by equation \eqref{eq:def_eta}, because $\varphi_0$ is not $\KK$-linear. In fact, an application of Hilbert's Theorem 90 to $\KK/\FF$ shows that we can choose $0\ne x_t\in\cD_t$ with $\varphi_0(x_t)=x_t$ for all $t\in T$. Also, $T$ can be nontrivial even if $\chr{\FF}=2$.

To summarize, we have the following possibilities under the assumption $\cD_e\subset\KK$:
\begin{itemize}
\item If $\KK=\FF$, then $\cD\simeq M_\ell(\Delta)$ as an ungraded algebra, where $\Delta$ is a central division algebra over $\FF$, the involution $\varphi_0$ (of the first kind) is either orthogonal or symplectic according to the rule \eqref{eq:Arf}, and the support $T$ is $2$-elementary with $|T|=\ell^2\dim_\FF\Delta$;
\item If $\KK$ is a separable quadratic field extension of $\FF$, then $\cD\simeq M_\ell(\Delta)$ as an ungraded algebra, where $\Delta$ is a central division algebra over $\KK$, the involution $\varphi_0$ is of the second kind, and
\[
|T|=\begin{cases}
\phantom{2}\ell^2\dim_\KK\Delta & \text{if the grading is of Type I},\\
2\ell^2\dim_\KK\Delta & \text{if the grading is of Type II}, 
\end{cases}
\]
where $T$ is $2$-elementary in the second case; 
\item If $\KK\simeq\FF\times\FF$, then $\cD\simeq M_\ell(\Delta)\times M_\ell(\Delta)^\op$ as an ungraded algebra, where $\Delta$ is a central division algebra over $\FF$, $\varphi_0$ is the exchange involution, the grading is of Type II, and its support $T$ is $2$-elementary with $|T|=2\ell^2\dim_\FF\Delta$.
\end{itemize}

\begin{remark}
The existence of $\varphi_0$ imposes restrictions on the class of $\Delta$ in the Brauer group: $[\Delta]$ must be an element of $\Br(\FF)$ of order at most $2$ if $\KK=\FF$ and an element of the kernel of the homomorphism $N_{\KK/\FF}:\Br(\KK)\to\Br(\FF)$ if $\KK$ is a separable quadratic field extension of $\FF$ (see e.g. \cite[\S 3]{KMRT}).
\end{remark}

\subsection{The real case}\label{sse:real_basics}

We now focus on the case $\FF=\RR$ (or, more generally, any real closed field). Here the possibilities for $\KK$ are $\RR$, $\CC$ and $\wt{\CC}\bydef\RR\times\RR$ (known as `split complex numbers'), while $\Delta$ can be only $\CC$ if $\KK=\CC$ and either $\RR$ or $\HH$ otherwise. As mentioned above, the classification up to equivalence of finite-dimensional real graded-division algebras that are simple as algebras and have abelian support was given in \cite{BZ16,R16}. This covers the possibilities $\KK\in\{\RR,\CC\}$; the remaining possibility $\KK=\wt{\CC}$ appeared in \cite{BKR_inv}. Here we will only need the cases with $\cD_e\subset\KK$, because otherwise the grading $\Gamma_\cD$ is not fine (see \cite[Proposition~20]{R16}, whose proof does not depend on the simplicity of $\cD$). 

We will first  discuss the case $\cD_e=\RR$, in which we will only consider $2$-elementary $T$, in view of Lemma \ref{lm:2elementary}. The symbol algebras of the form $(\mu,\nu)_{-1,\RR}$ that appear in the tensor product decompositions \eqref{eq:symbol_alg} and \eqref{eq:symbol_alg_degen} are isomorphic to $\HH$ if both $\mu$ and $\nu$ are negative and to $M_2(\RR)$ otherwise. To avoid fixing generators of $T$, it is convenient to introduce a function $\mu:T\to\RR^\times/\RR^{\times 2}\simeq\{\pm 1\}$ as follows:
\begin{equation}\label{eq:def_mu}
x^2\in\mu(t)\RR_{>0}\;\text{ for all }0\ne x\in\cD_t\text{ and }t\in T.
\end{equation}
Then equation \eqref{eq:def_beta} immediately implies that $\mu(st)=\mu(s)\mu(t)\beta(s,t)$ for all $s,t\in T$, so $\mu$ is a quadratic form with polarization $\beta$ and we can define an involution of $\cD$ by $x\mapsto\mu(t)x$ for all $x\in\cD_t$ and $t\in T$. This is called the \emph{distinguished involution} of $\cD$ in \cite{BKR_inv} and is characterized by the property $x\varphi_0(x)\in\RR_{>0}$ for all $x\in\cD^\times_\gr$. We are going to use it as $\varphi_0$ (recall Corollary \ref{cor:involutions_from_B}) except in the case $\KK=\wt{\CC}$, because the distinguished involution is trivial on $\KK$ in this case.  

In the case $\KK=\RR$, the restriction $\mu|_{\langle a_i,b_i\rangle}$ has trivial Arf invariant if and only if the corresponding quaternion algebra $(\mu_i,\nu_i)_{-1,\RR}$ in the tensor product decomposition \eqref{eq:symbol_alg} is split, because $\mu_i\in\mu(a_i)\RR_{>0}$ and $\nu_i\in\mu(b_i)\RR_{>0}$ by the definition of the function $\mu$. It follows that we have $\Delta=\RR$ if and only if $\Arf(\mu)=1$.

In the case $\KK=\wt{\CC}$, the same considerations apply to the decomposition \eqref{eq:symbol_alg_degen}, so we have $\Delta=\RR$ if and only if $\Arf(\bar{\mu})=1$ where $\bar{\mu}$ is the quadratic form induced on $\wb{T}\bydef T/\rad\beta$ by $\mu$, which is well-defined because in this case the restriction of $\mu$ to $\rad\beta$ is trivial. Also note that $\bar{\mu}$ has a nondegenerate polarization $\bar{\beta}$, so $\Arf(\bar{\mu})$ is defined.

We have the following analog of Proposition \ref{pr:Weyl_group_C}:

\begin{proposition}\label{pr:Weyl_group_R}
Let $(\cD,\Gamma_\cD)$ be a graded-division algebra over $\RR$ with an abelian support $T$ and $\cD_e=\RR$. Assume that $T$ is $2$-elementary and define a quadratic form $\mu:T\to\{\pm 1\}$ by condition \eqref{eq:def_mu}, where we regard $T$ as a vector space over $GF(2)$. Then $W(\Gamma_\cD)$ is the group 
\[
\Aut(T,\mu)\bydef\{\alpha\in\Aut(T)\mid\mu\bigl(\alpha(t)\bigr)=\mu(t)\text{ for all }t\in T\}.
\]
\end{proposition}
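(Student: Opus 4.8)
The plan is to build on the proof of Proposition \ref{pr:Weyl_group_C}: since $\cD_e=\RR$, we have $\cD=\RR^\sigma T$ for a $2$-cocycle $\sigma\colon T\times T\to\RR^\times$, and that proof shows $W(\Gamma_\cD)$ is exactly the stabilizer of $[\sigma]\in\Hc^2(T,\RR^\times)$ under the natural action of $\Aut(T)$. So the proposition reduces to identifying this stabilizer with $\Aut(T,\mu)$. To do so I would first make the coefficients explicit: writing $\RR^\times=\{\pm1\}\times\RR_{>0}$ and using that $\RR_{>0}$ is uniquely divisible and $T$ is torsion, one gets $\Hc^i(T,\RR_{>0})=0$ for $i\ge1$, so the inclusion $\{\pm1\}\hookrightarrow\RR^\times$ induces an $\Aut(T)$-equivariant isomorphism $\Hc^2(T,\{\pm1\})\cong\Hc^2(T,\RR^\times)$. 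Hence we may take $\sigma$ with values in $\{\pm1\}$, and then a standard homogeneous generator $x_t$ of $\cD_t$ satisfies $x_t^2=\sigma(t,t)\in\{\pm1\}$, so by \eqref{eq:def_mu} the class $[\sigma]$ is sent to the quadratic form $\mu$ under the homomorphism
\[
\theta\colon\Hc^2(T,\{\pm1\})\longrightarrow\Maps(T,\{\pm1\}),\qquad [\sigma]\longmapsto\bigl(t\mapsto\sigma(t,t)\bigr),
\]
which is well defined (any coboundary is trivial on the diagonal because $T$ is $2$-elementary) and manifestly $\Aut(T)$-equivariant, the stabilizer of $\mu$ under the induced action on $\Maps(T,\{\pm1\})$ being precisely $\Aut(T,\mu)$. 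Thus, once $\theta$ is known to be injective, we obtain $W(\Gamma_\cD)=\Stab_{\Aut(T)}([\sigma])=\Stab_{\Aut(T)}(\mu)=\Aut(T,\mu)$.

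The only substantial point is the injectivity of $\theta$, i.e.\ that $\mu$ is a complete invariant of $\cD$ among real graded-division algebras with $\cD_e=\RR$ and $2$-elementary support. Here I would argue directly: suppose $[\sigma]\in\ker\theta$, so (with $\sigma$ valued in $\{\pm1\}$) $\sigma(t,t)=1$ for all $t\in T$. By the identity $\mu(st)=\mu(s)\mu(t)\beta(s,t)$ from Subsection \ref{sse:real_basics} this forces $\beta\equiv1$, whence $\cD$ is commutative and $\sigma$ is symmetric. A symmetric $2$-cocycle $\sigma\colon T\times T\to\{\pm1\}$ with trivial diagonal describes an abelian group extension $1\to\{\pm1\}\to E\to T\to1$ in which the obvious lift of every $t\in T$ squares to the identity; since $T$ is $2$-elementary, every element of $E$ then has order at most $2$, so $E$ is a $\ZZ_2$-vector space in which the central $\{\pm1\}$ is a direct summand, the extension splits, and $[\sigma]=0$. (Alternatively, this injectivity is contained in the classification of real graded-division algebras of \cite{BZ16,R16,BKR_inv}.)

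It is also instructive to record the two inclusions directly. For $W(\Gamma_\cD)\subset\Aut(T,\mu)$: an equivalence $\psi$ of $(\cD,\Gamma_\cD)$ with itself is a unital $\RR$-algebra automorphism, hence fixes $\cD_e=\RR$ pointwise, so for $0\ne x\in\cD_t$ it gives $\psi(x)\in\cD_{\alpha(t)}$ with $\psi(x)^2=\psi(x^2)=x^2$, forcing $\mu(\alpha(t))=\mu(t)$; this refines Proposition \ref{pr:Weyl_group_C}, since $\beta$ is the polarization of $\mu$ and therefore $\Aut(T,\mu)\subset\Aut(T,\beta)$. For $\Aut(T,\mu)\subset W(\Gamma_\cD)$: given $\alpha\in\Aut(T,\mu)$, the $T$-grading on $\cD$ with degree-$t$ component $\cD_{\alpha(t)}$ has the same bicharacter and the same quadratic form $\mu$ as $\Gamma_\cD$, hence (by the injectivity of $\theta$) is $T$-graded isomorphic to $\Gamma_\cD$, and any such isomorphism is an equivalence realizing $\alpha$. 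I expect the injectivity of $\theta$ to be the only real obstacle; the rest is formal, resting on the machinery already developed for Proposition \ref{pr:Weyl_group_C}.
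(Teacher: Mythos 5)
Your proof is correct, but it follows a genuinely different route from the paper's. The paper argues concretely: after the (shared) easy inclusion $W(\Gamma_\cD)\subset\Aut(T,\mu)$, it picks a $GF(2)$-basis $\{b_i\}$ of $T$ and normalized generators $X_i\in\cD_{b_i}$ with $X_i^2=\mu(b_i)$, observes that $\{X_i^2=\mu(b_i),\; X_iX_j=\beta(b_i,b_j)X_jX_i\}$ is a presentation of $\cD$, and for any $\alpha\in\Aut(T,\mu)$ produces the required automorphism explicitly by sending $X_i\mapsto X_{\alpha(b_i)}$, since the relations are visibly preserved. Your argument instead works entirely at the level of cohomology: by \cite[Proposition 2.14]{EKmon} (invoked already in the proof of Proposition~\ref{pr:Weyl_group_C}), $W(\Gamma_\cD)=\Stab_{\Aut(T)}([\sigma])$ where $\cD=\RR^\sigma T$; you reduce to $\{\pm1\}$-coefficients using that $\RR^\times\cong\{\pm1\}\times\RR_{>0}$ with $\Hc^{\ge1}(T,\RR_{>0})=0$ ($\RR_{>0}$ is a $\QQ$-vector space and $T$ is torsion), then show that the $\Aut(T)$-equivariant restriction-to-diagonal map $\theta\colon\Hc^2(T,\{\pm1\})\to\Maps(T,\{\pm1\})$, $[\sigma]\mapsto(t\mapsto\sigma(t,t))=\mu$, is injective, from which $\Stab([\sigma])=\Stab(\mu)=\Aut(T,\mu)$ follows formally. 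Your injectivity argument ($\mu\equiv1\Rightarrow\beta\equiv1\Rightarrow\sigma$ symmetric $\Rightarrow$ the central extension is an elementary abelian $2$-group and therefore splits) is a clean way to see that $\mu$ determines the cohomology class, and I checked that it holds. The trade-off: the paper's generators-and-relations proof is more elementary and self-contained, and hands you the automorphism realizing a given $\alpha$ directly; your cohomological proof is conceptually tidier and in one stroke gives the classification statement (that such $\cD$'s are classified up to graded isomorphism by $\mu$) which the paper only records afterwards as a remark following its proof. Both are valid, and both cover the case where $T$ is infinite.
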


\begin{proof}
Let $\psi\in\Aut(\Gamma_\cD)$ and let $\alpha$ be the image of $\psi$ in $W(\Gamma_\cD)$. Applying $\psi$ to $x^2$ for $0\ne x\in\cD_t$ and using condition \eqref{eq:def_mu}, we get $\psi(x)^2\in\mu(t)\RR_{>0}$. Since $\psi(x)\in\cD_{\alpha(t)}$, we conclude that $\mu\bigl(\alpha(t)\bigr)=\mu(t)$ for all $t\in T$, i.e., $\alpha\in\Aut(T,\mu)$. Note that $\Aut(T,\mu)$ is a subgroup of $\Aut(T,\beta)$ where $\beta$ is the polarization of $\mu$. 

To prove the converse, we choose a basis $\{b_i\mid i\in I\}$ of $T$ as a vector space over $GF(2)$ and also choose a normalized element $X_t\in\cD_t$ for each $t\in T$, i.e., an element satisfying $X_t^2=\mu(t)$. Then the algebra $\cD$ is generated by the elements $X_i\bydef X_{b_i}$ with defining relations $X_i^2=\mu(b_i)$ and $X_i X_j=\beta(b_i,b_j)X_j X_i$ ($i,j\in I$). Now suppose $\alpha\in\Aut(T,\mu)$ and let $b'_i=\alpha(b_i)$. These elements form another basis of $T$, so $\cD$ is generated by $X'_i\bydef X_{b'_i}$. Since $\mu(b'_i)=\mu(b_i)$ and $\beta(b'_i,b'_j)=\beta(b_i,b_j)$, we have exactly the same defining relations for $X_i$ as for $X'_i$. Therefore, there exists an automorphism $\psi$ of the algebra $\cD$ sending $X_i\mapsto X'_i$. It follows that $\psi$ permutes the homogeneous components of $\cD$, i.e., $\psi\in W(\Gamma_\cD)$. By construction, the corresponding permutation on $T$ is exactly $\alpha$.
\end{proof}

The presentation of $\cD$ by generators and relations used in the above proof also shows that, over a real closed field, the finite-dimensional graded-division algebras with $2$-elementary abelian support and $1$-dimensional components are classified up to equivalence by the isomorphism class of their support regarded as a quadratic space over the field $GF(2)$. These latter are classified by their dimension, rank, and the Arf invariant of the associated nonsingular quadratic form (which is defined if the rank is even). Since for our purposes we need only the case $\dim_\RR Z(\cD)\le 2$, the quadratic space will have the rank equal to the dimension or one less.

Finally, we turn to the case $\cD_e=Z(\cD)\simeq\CC$ or, in other words, a Type~I grading with $\KK=\CC$. In this case the existence of a degree-preserving involution of the second kind does not force $T$ to be $2$-elementary. In fact, it is shown in \cite{BKR_inv} that such involutions always exist, but there is a single distinguished involution among them only in the case of $2$-elementary $T$. This involution is defined by the condition that, for any $t\in T$ and any \emph{symmetric} element $0\ne x\in\cD_t$ (recall that such an element always exists, defined up to a factor in $\RR$), we have $x^2\in\RR_{>0}$. In general, there is a \emph{distinguished class} of involutions, characterized by the same condition, but imposed only for $t$ satisfying $t^2=e$, which is equivalent to saying that $x\varphi_0(x)\in\RR_{>0}$ for all $0\ne x\in\cD_t$ with $t^2=e$. For example, if $|T|$ is odd then all degree-preserving involutions of the second kind belong to this class. However, for any involutions $\varphi_0$ and $\varphi'_0$ in this class, $(\cD,\varphi_0)$ and $(\cD,\varphi'_0)$ are isomorphic as graded algebras with involution, so it will not matter which one to choose (we make a specific choice below). 

There is also a nuance concerning the Weyl group in this case. Proposition \ref{pr:Weyl_group_C} (with $\LL=\CC$) tells us that the Weyl group of $\cD$ as a \emph{complex} algebra is $\Aut(T,\beta)$, but here we consider $\cD$ as a \emph{real} algebra, so the Weyl group may be larger (cf. \cite[Proposition 5.2]{R20}):  

\begin{proposition}\label{pr:Weyl_group_C_as_R}
Let $(\cD,\Gamma_\cD)$ be a graded-division algebra over $\CC$ with an abelian support $T$ and $\cD_e=\CC$.
Define an alternating bicharacter $\beta:T\times T\to\CC^\times$ by equation \eqref{eq:def_beta}. 
Then, regarding $\cD$ as an algebra over $\RR$, $W(\Gamma_\cD)$ is the stabilizer of the set $\{\beta,\iota\circ\beta\}$ in $\Aut(T)$, where $\iota$ denotes the complex conjugation. 
If $T$ is finite and $\beta$ is nondegenerate, then we can say more:
\[
W(\Gamma_\cD)=\begin{cases}
\Aut(T,\beta) & \text{if $T$ is $2$-elementary}, \\
\Aut(T,\beta)\rtimes\langle\tau\rangle & \text{otherwise},
\end{cases}
\]
where $\tau$ is the (involutive) automorphism of $T$ defined by $a_j\mapsto a_j^{-1}$ and $b_j\mapsto b_j$ for a symplectic basis of $T$ as in equation \eqref{eq:symplectic_basis}.
\end{proposition}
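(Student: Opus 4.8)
The plan is to determine $W(\Gamma_\cD)$ by the same type of cocycle computation used for Proposition~\ref{pr:Weyl_group_C}, but keeping careful track of the fact that an equivalence of $\cD$, regarded over $\RR$, need only be $\RR$-linear. Since $\cD_s\cD_t=\cD_{st}$ for all $s,t\in T$, the group $T$ is the universal group of $\Gamma_\cD$, so $W(\Gamma_\cD)$ is the subgroup of $\Aut(T)$ consisting of those $\alpha$ that are induced by some $\RR$-algebra automorphism $\psi$ of $\cD$ with $\psi(\cD_t)=\cD_{\alpha(t)}$ for all $t\in T$. First I would observe that, because $\cD_e=\CC=Z(\cD)$, any such $\psi$ restricts on $\cD_e$ either to $\id_\CC$ or to the complex conjugation $\iota$; write $\epsilon\in\{\id,\iota\}$ for this restriction. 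For a fixed $\alpha\in\Aut(T)$ I would then ask, separately for $\epsilon=\id$ and $\epsilon=\iota$, whether such a $\psi$ exists.

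To answer this I would write $\cD=\CC^\sigma T$ with homogeneous basis $\{x_t\}_{t\in T}$, $x_sx_t=\sigma(s,t)x_{st}$, so that $\beta(s,t)=\sigma(s,t)\sigma(t,s)^{-1}$. A $\psi$ inducing $\alpha$ must have the form $\psi(x_t)=f(t)x_{\alpha(t)}$ for some $f\colon T\to\CC^\times$ (as $\cD_{\alpha(t)}$ is one-dimensional over $\CC$), and expanding $\psi(x_sx_t)=\psi(x_s)\psi(x_t)$ shows that a $\psi$ with prescribed $\epsilon$ exists if and only if the $2$-cocycle $(s,t)\mapsto\sigma(\alpha(s),\alpha(t))\,(\epsilon\circ\sigma)(s,t)^{-1}$ is a coboundary, i.e. $[\alpha^{*}\sigma]=[\epsilon_{*}\sigma]$ in $\Hc^2(T,\CC^\times)$; conversely such an $f$ produces an $\epsilon$-semilinear bijection $\psi$, which is an $\RR$-algebra automorphism. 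The alternating bicharacter attached to $\alpha^{*}\sigma$ is $\beta\circ(\alpha\times\alpha)$ and the one attached to $\epsilon_{*}\sigma$ is $\epsilon\circ\beta$. Since $\CC^\times$ is divisible, $\Ext_\ZZ(T,\CC^\times)=0$, so the homomorphism $\Hc^2(T,\CC^\times)\to\Hom(T\wedge_\ZZ T,\CC^\times)$ from the proof of Proposition~\ref{pr:Weyl_group_C} is injective, whence $[\alpha^{*}\sigma]=[\epsilon_{*}\sigma]$ is equivalent to $\beta\circ(\alpha\times\alpha)=\epsilon\circ\beta$. Combining the two cases, $\alpha\in W(\Gamma_\cD)$ if and only if $\beta\circ(\alpha\times\alpha)\in\{\beta,\iota\circ\beta\}$, and since $(\iota\circ\beta)\circ(\alpha\times\alpha)=\iota\circ\bigl(\beta\circ(\alpha\times\alpha)\bigr)$ this is precisely the condition that $\alpha$ stabilize the set $\{\beta,\iota\circ\beta\}$; the case $\epsilon=\id$ alone recovers $\Aut(T,\beta)$ as in Proposition~\ref{pr:Weyl_group_C}.

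For the refined statement I would assume $T$ finite and $\beta$ nondegenerate and fix a symplectic basis $\{a_1,b_1,\dots,a_m,b_m\}$ as in \eqref{eq:symplectic_basis}, with $\beta(a_j,b_j)=\zeta_j$ a primitive $n_j$-th root of unity. If $T$ is $2$-elementary then every $\zeta_j=-1$, so $\beta$ is $\{\pm1\}$-valued, $\iota\circ\beta=\beta$, the set $\{\beta,\iota\circ\beta\}$ is a singleton, and its stabilizer is $\Aut(T,\beta)$. If $T$ is not $2$-elementary then some $n_j\ge 3$, so $\zeta_j\notin\{\pm1\}$ and $\iota\circ\beta\ne\beta$; hence $W(\Gamma_\cD)$ acts on the two-element set $\{\beta,\iota\circ\beta\}$ with kernel exactly $\Aut(T,\beta)$. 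I would then check directly that the automorphism $\tau$ given by $a_j\mapsto a_j^{-1}$, $b_j\mapsto b_j$ satisfies $\tau^2=\id$ and, using $\overline{\zeta_j}=\zeta_j^{-1}$ and evaluating on all pairs of basis elements, $\beta\circ(\tau\times\tau)=\iota\circ\beta$; thus $\tau\in W(\Gamma_\cD)\setminus\Aut(T,\beta)$, the action on $\{\beta,\iota\circ\beta\}$ is surjective, $\Aut(T,\beta)$ has index $2$ in $W(\Gamma_\cD)$, and $\langle\tau\rangle$ is a complement, so $W(\Gamma_\cD)=\Aut(T,\beta)\rtimes\langle\tau\rangle$.

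The main obstacle, to my mind, is getting the first two paragraphs exactly right: one must be careful not to overlook the conjugate-linear ($\epsilon=\iota$) automorphisms and to encode them correctly in the cocycle identity, and one must use that $\beta$ is a complete invariant here precisely because $\Ext_\ZZ(T,\CC^\times)=0$ over $\CC$. By contrast, the symplectic-basis verification that $\beta\circ(\tau\times\tau)=\iota\circ\beta$ and the index-$2$ bookkeeping in the last paragraph are routine.
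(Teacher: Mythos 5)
Your proof is correct and takes essentially the same approach as the paper's: both reduce the question to the class of the defining $2$-cocycle $\sigma$ of $\cD=\CC^\sigma T$, pass from cocycle classes to bicharacters via the vanishing of $\Ext_\ZZ(T,\CC^\times)$, and exhibit $\tau$ explicitly to obtain the index-$2$ extension in the finite nondegenerate case. The only stylistic difference is that the paper packages the $\CC$-antilinear automorphisms as $\CC$-linear isomorphisms onto the conjugate algebra $\wb{\cD}$ and cites its isomorphism criterion for twisted group algebras, whereas you derive the cocycle condition $[\alpha^{*}\sigma]=[\epsilon_{*}\sigma]$ directly from the ansatz $\psi(x_t)=f(t)x_{\alpha(t)}$.
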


\begin{proof}
Consider the conjugate algebra $\wb{\cD}$, by which we mean $\cD$ as an algebra over $\RR$, but with the multiplication by scalars in $\CC$ twisted by $\iota$. Then a $\CC$-antilinear automorphism of $\cD$ is the same as a $\CC$-linear isomorphism $\cD\to\wb{\cD}$. Now, the bicharacter associated to $\wb{\cD}$ (using the same grading $\Gamma_\cD$) is clearly $\iota\circ\beta$. As in the proof of Proposition \ref{pr:Weyl_group_C}, $\cD$ and $\wb{\cD}$ are twisted group algebras of $T$, and the classes of the corresponding $2$-cocycles are determined by $\beta$ and $\iota\circ\beta$, respectively. Our first claim now follows from \cite[Theorem 2.13]{EKmon}.

Now suppose $T$ is finite and $\beta$ is nondegenerate. We claim that $\cD$ has a $\CC$-antilinear automorphism that permutes the components of $\Gamma_\cD$. Indeed, pick a symplectic basis $\{a_1,b_1,\ldots,a_m,b_m\}$ of $T$ and elements $0\ne X_j\in\cD_{a_j}$ and $0\ne Y_j\in\cD_{b_j}$  satisfying $X_j^{n_j}=1$ and $Y_j^{n_j}=1$. The algebra $\cD$ is generated over $\CC$ by these elements and defined by relations \eqref{eq:genrel} with $\mu_j=\nu_j=1$. $\wb{\cD}$ is also generated by these $X_j$ and $Y_j$, but its defining relations have the roots of unity $\zeta_j\bydef\beta(a_j,b_j)$ replaced by their complex conjugates $\zeta_j^{-1}=\beta(a_j^{-1},b_j)$. It follows that there exists an algebra isomorphism $\psi_0:\cD\to\wb{\cD}$ that sends $X_j\mapsto X_j^{-1}$ and $Y_j\mapsto Y_j$. Regarded as a $\CC$-antilinear automorphism of $\cD$, this $\psi_0$ permutes the components of $\Gamma_\cD$, and the corresponding permutation on $T$ is exactly the above automorphism $\tau$. It follows that $W(\Gamma_\cD)$ is generated by its normal subgroup $\Aut(T,\beta)$ and the element $\tau$. It remains to observe that $\tau$ is the identity if $T$ is $2$-elementary and does not belong to $\Aut(T,\beta)$ if $T$ is not $2$-elementary (since $\beta\ne\beta^{-1}$ in this case).   
\end{proof}

Let us summarize for future reference the classification of real graded-division algebras $\cD$ satisfying $\cD_e\subset Z(\cD)$ and admitting a degree-preserving involution $\varphi_0$ that makes them central simple as algebras with involution. We also fix a model for $\cD$ and $\varphi_0$ in each case. Below, the \emph{standard $\ZZ_2^2$-grading} on $\HH$ means the grading defined by $\deg(\mathbf{i})=(\bar{1},\bar{0})$ and $\deg(\mathbf{j})=(\bar{0},\bar{1})$, and the \emph{standard involution} (conjugation) on $\HH$, denoted by bar, maps $a+b\mathbf{i}+c\mathbf{j}+d\mathbf{k}\mapsto a-b\mathbf{i}-c\mathbf{j}-d\mathbf{k}$. Similarly, bar denotes the conjugation on $\CC$.
\begin{enumerate}
\item[(1)] If $\KK=\RR$, we have two cases according to $\Arf(\mu)\in\{\pm 1\}$:
\begin{itemize}
\item $\cD(2m;+1)=M_2(\RR)^{\otimes m}\simeq M_\ell(\RR)$ (via Kronecker product) where $\ell=2^m$, each factor $M_2(\RR)$ has the Pauli grading by the corresponding factor $\ZZ_2^2$ in $T=\ZZ_2^{2m}$, with the distinguished involution $\varphi_0(X)=X^T$ for all $X\in M_\ell(\RR)$;
\item $\cD(2m;-1)=M_2(\RR)^{\otimes m-1}\otimes\HH\simeq M_{\ell}(\HH)$ where $\ell=2^{m-1}$, each factor $M_2(\RR)$ has the Pauli grading by the corresponding factor $\ZZ_2^2$ and $\HH$ has the standard grading by the last factor $\ZZ_2^2$ in $T=\ZZ_2^{2m}$, with the distinguished involution $\varphi_0(X)=\wb{X}^T$ for all $X\in M_\ell(\HH)$. 
\end{itemize} 
\item[(2)] If $\KK=\CC$, we have two cases according to $\cD_e\in\{\RR,\CC\}$:
\begin{itemize}
\item $\cD\bigl(2m+1;\RR\bigr)=M_2(\RR)^{\otimes m}\otimes\CC\simeq M_\ell(\CC)$ where $\ell=2^m$, each factor $M_2(\RR)$ has the Pauli grading by the corresponding factor $\ZZ_2^2$ and $\CC$ has the nontrivial grading by the last factor $\ZZ_2$ in $T=\ZZ_2^{2m+1}$, with the distinguished involution $\varphi_0(X)=\wb{X}^T$ for all $X\in M_\ell(\CC)$.
\item $\cD\bigl(\ell_1,\ldots,\ell_m;\CC\bigr)=M_{\ell_1}(\CC)\otimes_\CC\cdots\otimes_\CC M_{\ell_m}(\CC)\simeq M_{\ell}(\CC)$ where $\ell=\ell_1\cdots\ell_m$, each factor $M_{\ell_j}(\CC)$ has the Pauli grading by the corresponding factor $\ZZ_{\ell_j}^2$ in $T=\ZZ_{\ell_1}^2\times\cdots\times\ZZ_{\ell_m}^2$, with the involution $\varphi_0=\varphi_{\ell_1}\otimes_\CC\cdots\otimes_\CC\varphi_{\ell_m}$, where 
\begin{equation}\label{eq:inv_bad_D}
\varphi_n(X)\bydef P^{-1}\wb{X}^TP\;\text{ for all }X\in M_n(\CC)
\end{equation} 
and $P$ is the matrix of the permutation that interchanges $1$ with $n-1$, $2$ with $n-2$,\ldots, $\lfloor\frac{n-1}{2}\rfloor$ with $\lfloor\frac{n+1}{2}\rfloor$;
\end{itemize}
\item[(3)] If $\KK=\wt{\CC}$, we have two cases according to $\Arf(\bar{\mu})\in\{\pm 1\}$:
\begin{itemize}
\item $\cD(2m+1;+1)=M_2(\RR)^{\otimes m}\otimes\wt{\CC}\simeq M_\ell(\RR)\times M_\ell(\RR)$ where $\ell=2^m$, each factor $M_2(\RR)$ has the Pauli grading by the corresponding factor $\ZZ_2^2$ and $\wt{\CC}$ has the nontrivial grading by the last factor $\ZZ_2$ in $T=\ZZ_2^{2m+1}$, with the involution $\varphi_0(X,Y)=(Y^T,X^T)$ for all $X,Y\in M_\ell(\RR)$;
\item $\cD(2m+1;-1)=M_2(\RR)^{\otimes m-1}\otimes\HH\otimes\wt{\CC}\simeq M_{\ell}(\HH)\times M_\ell(\HH)$ where $\ell=2^{m-1}$, each factor $M_2(\RR)$ has the Pauli grading by the corresponding factor $\ZZ_2^2$, $\HH$ has the standard grading by the next factor $\ZZ_2^2$, and $\wt{\CC}$ has the nontrivial grading by the last factor $\ZZ_2$ in $T=\ZZ_2^{2m+1}$, with the involution $\varphi_0(X,Y)=(\wb{Y}^T,\wb{X}^T)$ for all $X,Y\in M_\ell(\HH)$. 
\end{itemize} 
\end{enumerate}
To avoid redundancy in the case $\cD_e=Z(\cD)\simeq\CC$, we can take all $\ell_j$ to be prime powers.

Since $T$ is a finite abelian group, a nondegenerate alternating bicharacter $\beta$ sets up a group isomorphism $T\mapsto \wh{T}\bydef\Hom(T,\CC^\times)$ by sending $s\in T$ to the character $\beta(s,\cdot)$ on $T$. It follows that, if we define the `orthogonal complement' for a subgroup $A\subset T$ by 
\[
A^\perp\bydef\{s\in T\mid\beta(s,t)=1\text{ for all }t\in A\},
\]
then $|A^\perp|=[T:A]$ and, consequently, $(A^\perp)^\perp=A$ (since $A$ is contained in $(A^\perp)^\perp$ and has the same size). We will later need the following observation:

\begin{lemma}\label{lm:perp}
For any $n$, we have $(T_{[n]})^\perp=T^{[n]}$.
\end{lemma}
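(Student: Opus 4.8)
The plan is to verify the two inclusions $(T_{[n]})^\perp\subseteq T^{[n]}$ and $T^{[n]}\subseteq(T_{[n]})^\perp$ using the nondegeneracy of $\beta$ together with a counting argument, exactly the kind of reasoning already used above to show $(A^\perp)^\perp=A$. The easy inclusion is $T^{[n]}\subseteq(T_{[n]})^\perp$: if $s=u^n$ for some $u\in T$ and $t\in T_{[n]}$, then $\beta(s,t)=\beta(u^n,t)=\beta(u,t)^n=\beta(u,t^n)=\beta(u,e)=1$, where I use that $\beta$ is a bicharacter; hence $s\in(T_{[n]})^\perp$. This handles one direction without any finiteness hypothesis on $T$, though of course $T$ is finite here.

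For the reverse inclusion I would argue by cardinality. Since $\beta$ is nondegenerate, the map $s\mapsto\beta(s,\cdot)$ is an isomorphism $T\xrightarrow{\sim}\wh T=\Hom(T,\CC^\times)$, and under this isomorphism the subgroup $T_{[n]}$ corresponds to the subgroup of characters of $\wh T$ that kill $T^{[n]}$ — equivalently, $T_{[n]}$ is identified with $\widehat{T/T^{[n]}}$. Therefore $|T_{[n]}|=|T/T^{[n]}|=[T:T^{[n]}]$. On the other hand, by the general fact recorded just before the lemma, $|(T_{[n]})^\perp|=[T:T_{[n]}]$. Since $T$ is finite abelian, $[T:T_{[n]}]=|T^{[n]}|$ (the first isomorphism theorem applied to $[n]:T\to T$ gives $T/T_{[n]}\cong T^{[n]}$). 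Combining, $|(T_{[n]})^\perp|=|T^{[n]}|$, and together with the inclusion $T^{[n]}\subseteq(T_{[n]})^\perp$ already established, this forces equality.

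The only point requiring a little care is the identification $|T_{[n]}|=[T:T^{[n]}]$, i.e. that $T$ and $\wh T$ have "the same" $n$-torsion and $n$-divisibility data; this is where the nondegeneracy of $\beta$ (rather than just its existence) is used, via the isomorphism $T\cong\wh T$ and Pontryagin-type duality for finite abelian groups, under which annihilators of subgroups swap kernel and image of $[n]$. I expect this bookkeeping with the duality to be the main (and really only) obstacle; everything else is the bicharacter identity in the easy inclusion and the counting formula $|A^\perp|=[T:A]$ already available in the text. An alternative to invoking duality is to reduce to the case $T$ cyclic by writing $T$ as a product of cyclic groups on which $\beta$ need not be "diagonal," so I would instead phrase the argument purely through the isomorphism $T\cong\wh T$ to avoid choosing a decomposition.
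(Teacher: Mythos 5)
Your proof is correct and its load-bearing part is the same as the paper's: the easy inclusion via the bicharacter identity, plus the cardinality argument $|(T_{[n]})^\perp|=[T:T_{[n]}]=|T^{[n]}|$, the first equality being the general fact recorded just before the lemma and the second being the first isomorphism theorem for $[n]:T\to T$.

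However, you have included a detour that never gets used. The paragraph establishing $|T_{[n]}|=[T:T^{[n]}]$ via the Pontryagin-duality identification $T_{[n]}\cong\widehat{T/T^{[n]}}$ under $T\cong\wh T$ is never invoked in your concluding chain of equalities, which runs only through $|(T_{[n]})^\perp|=[T:T_{[n]}]$ and $[T:T_{[n]}]=|T^{[n]}|$. Consequently, your closing remark that ``the only point requiring a little care is the identification $|T_{[n]}|=[T:T^{[n]}]$'' misidentifies the structure of your own argument --- that identification is a byproduct, not a prerequisite --- and the proposed alternative of reducing to cyclic $T$ is likewise unnecessary. Deleting the duality paragraph gives exactly the paper's two-sentence proof.
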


\begin{proof}
For any $s\in T_{[n]}$ and $t\in T$, we have $\beta(s,t^n)=\beta(s,t)^n=\beta(s^n,t)=1$, so $T^{[n]}\subset (T_{[n]})^\perp$. On the other hand, $T_{[n]}$ and $T^{[n]}$ are the kernel and image of the endomorphism $[n]:T\to T$ sending $t\mapsto t^n$. Hence $T^{[n]}$ and $(T_{[n]})^\perp$ have the same size, namely, $[T:T_{[n]}]$.
\end{proof}

We finish this section by observing that the grading on $\cD$ is obviously fine if $\cD_e=\RR$, whereas in the case $\cD_e=Z(\cD)\simeq\CC$, it is fine if and only if the support $T$ is not $2$-elementary. This latter was shown in \cite{R16} and can be easily seen from the proof of our Proposition \ref{pr:Weyl_group_C_as_R}. Indeed, if $T$ is $2$-elementary, then the involutive $\CC$-antilinear automorphism $\psi_0$ defined there, which corresponds to the mapping $X\mapsto\wb{X}$ in the matrix model above, gives us a real form of $\cD$ as a graded algebra, namely, $\cD(2m;+1)$, so the $T$-grading on $\cD\simeq\cD(2m;+1)\otimes\CC$ can be refined to a $T\times\ZZ_2$-grading, which yields the graded-division algebra $\cD(2m+1;\RR)$. On the other hand, if $T$ is not $2$-elementary, then $\cD$ does not admit a $\CC$-antilinear automorphism as a graded algebra (since $\beta\ne\beta^{-1}$), but any proper refinement would have to split $\cD_e=Z(\cD)$ and, therefore, any complex character of the new grading group that is nontrivial on the support of $Z(\cD)$ would have to act as a $\CC$-antilinear automorphism on $\cD$ and preserve the components of the original grading.

\section{Fine gradings on associative algebras with involution}\label{se:fine}

In this section we will construct a family $\{\cM_x\}_{x\in X}$ of associative algebras with involution, each equipped with a grading $\Gamma_x$ by an abelian group $\wt{G}_x$ such that $\cM_x$ is simple as a graded algebra with involution and satisfies DCC on graded left ideals, and will show that this family is complete in the following sense: for any $G$-graded associative algebra with involution $\cR$ that satisfies the same conditions, there exists $x\in X$ and a group homomorphism $\alpha:\wt{G}_x\to G$ such that $\cR$ is isomorphic to ${}^\alpha\cM_x$ as a $G$-graded algebra with involution. Since the support of $\cR$ generates an abelian subgroup of $G$, we will assume without loss of generality that $G$ is abelian.  All algebras in this section are assumed associative.

The parameter $x$ includes a graded-division algebra $\cD$, so technically $X$ is not a set, but it can be made a set if desired by choosing representatives of the equivalence classes of graded-division algebras, which we will indeed do when specializing to the finite-dimensional real case. For the construction of $\cM_x$, we do not make any assumptions on the ground field $\FF$ or on the dimension of $\cD$, but we will have to impose restrictions when investigating which of the gradings $\Gamma_x$ are fine. We roughly follow the approach of \cite[\S 3.2]{EKmon} (which itself goes back to \cite{E10}), but with significant changes required by our general setting: the homogeneous components of $\cD$ are not assumed $1$-dimensional and the support $T$ of $\cD$ is not necessarily $2$-elementary. Also, our approach to the presentation of the groups $\wt{G}_x$ is more conceptual than in \cite{EKmon}. 

\subsection{The graded algebras with involution $\cM(\cD,\varphi_0,q,s,\ul{d},\delta)$}\label{sse:def_MDpqsdd}

Let $\cD$ be a graded-division algebra and let $\varphi_0$ be a degree-preserving involution on $\cD$. Then the support $T$ of $\cD$ is an abelian group. Let $q$ and $s$ be nonnegative integers (not both zero), let $\delta\in\{\pm 1\}$ and let $\ul{d}=(d_1,\ldots,d_q)$ be a $q$-tuple of nonzero homogeneous elements of $\cD$ such that $\varphi_0(d_i)=\delta d_i$ for all $1\le i\le q$. (If $q=0$, we consider $\ul{d}$ to be empty.) 

Given this data, we first define the abelian group $\wt{G}=\wt{G}(T,q,s,\ul{t})$, where $t_i\bydef\deg d_i$. 
Let $k=q+2s$ and let $F$ be the free abelian group generated by the symbols $\tilde{g}_1,\ldots,\tilde{g}_k$. Then $\wt{G}$ is the quotient of $F\times T$ modulo the following relations:
\begin{equation}\label{eq:rel_Gtilde}
\tilde{g}_1^2 t_1^{-1}=\ldots=\tilde{g}_q^2 t_q^{-1}
=\tilde{g}_{q+1}\tilde{g}_{q+2}=\ldots=\tilde{g}_{q+2s-1}\tilde{g}_{q+2s}.
\end{equation}
(There are no relations if $q+s=1$.)

\begin{lemma}\label{lm:combinatorics}
Let $\pi:F\times T\to\wt{G}$ be the quotient map.
\begin{enumerate}
\item[(i)] 
The restriction of $\pi$ to $T$ is injective and hence $T$ can be identified with a subgroup of $\wt{G}$.
\item[(ii)]
The images of $\tilde{g}_1,\ldots,\tilde{g}_k$ under $\pi$ are pairwise distinct modulo $T$.
\item[(iii)] 
The images of the elements $\tilde{g}_i\tilde{g}_j^{-1}$ ($i\ne j$) under $\pi$ are pairwise distinct modulo $T$ except that the images of $\tilde{g}_i\tilde{g}_j^{-1}$ and $\tilde{g}_{\sigma(j)}\tilde{g}_{\sigma(i)}^{-1}$ are equal to one another modulo $T$, where $\sigma$ is the permutation of $\{1,\ldots,k\}$ that fixes each $i$ for $1\le i\le q$ and interchanges $q+2i-1$ and $q+2i$ for $1\le i\le s$.
\end{enumerate}
\end{lemma}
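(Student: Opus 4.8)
The plan is to analyze the quotient group $\wt{G}=(F\times T)/N$ by making the relations in \eqref{eq:rel_Gtilde} explicit. First I would describe the normal subgroup $N\subset F\times T$ that we are quotienting by: it is generated by the elements obtained from the chain of equalities in \eqref{eq:rel_Gtilde}, namely (reading consecutive pairs of the chain) $\tilde{g}_i^2 t_i^{-1}\cdot(\tilde{g}_{i+1}^2 t_{i+1}^{-1})^{-1}$ for $1\le i<q$, then $\tilde{g}_q^2 t_q^{-1}\cdot(\tilde{g}_{q+1}\tilde{g}_{q+2})^{-1}$ (if both $q\ge 1$ and $s\ge 1$), and $\tilde{g}_{q+2i-1}\tilde{g}_{q+2i}\cdot(\tilde{g}_{q+2i+1}\tilde{g}_{q+2i+2})^{-1}$ for $1\le i<s$. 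Since $F\times T$ is abelian, $N$ is simply the subgroup they generate. The key observation is that $N$ projects to $0$ in $F/F'$ where... actually more usefully: every generator of $N$, when projected to $F$ (killing $T$), lies in the subgroup $F_0\subset F$ generated by the differences $\{\tilde g_i^2\tilde g_{i+1}^{-2}, \tilde g_q^2\tilde g_{q+1}^{-1}\tilde g_{q+2}^{-1}, \ldots\}$, and $N\cap T=0$ because each nonzero combination of the generators has nonzero $F$-component (the $\tilde g_i$ are free). That immediately gives (i): $\pi|_T$ is injective.

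For (ii) and (iii), the plan is to produce a group homomorphism $\psi:\wt{G}\to \wt{G}/T\cong F/\pi_F(N)$ and compute the target explicitly, where $\pi_F(N)\subset F$ is the image of $N$ under the projection $F\times T\to F$ that kills $T$; note $\pi_F(N)$ is generated purely by the ``$\tilde g$-parts'' of the relations, i.e.\ by $\tilde g_i^2\tilde g_{i+1}^{-2}$ ($1\le i<q$), $\tilde g_q^2\tilde g_{q+1}^{-1}\tilde g_{q+2}^{-1}$ (when $q,s\ge 1$), and $\tilde g_{q+2i-1}\tilde g_{q+2i}\tilde g_{q+2i+1}^{-1}\tilde g_{q+2i+2}^{-1}$ ($1\le i<s$). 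Then $\wt G/T\cong F/\pi_F(N)$ and the claims (ii), (iii) become purely combinatorial statements about which elements of $F$ become equal modulo $\pi_F(N)$. Concretely, I would introduce the homomorphism $\theta:F\to \ZZ$ defined by $\tilde g_i\mapsto 1$ for $i\le q$ and $\tilde g_i\mapsto 1$ for $q<i\le k$ as well (so $\theta$ is the total-degree map $\tilde g_i \mapsto 1$); one checks $\theta(\pi_F(N))=0$, so the $\theta$-value is a well-defined invariant mod $\pi_F(N)$. Then $\theta(\tilde g_i)=1$ for all $i$, $\theta(\tilde g_i\tilde g_j^{-1})=0$, and $\theta(\tilde g_i^2\tilde g_j^{-2})=0$ — which doesn't yet separate things, so I need a finer invariant: take $F/\pi_F(N)$ and observe it is isomorphic to $\ZZ^k$ modulo the explicit relator subgroup, which has a transparent structure (a free part of rank $q+s+1$ if $q,s\ge1$, or $\ZZ\times(\ZZ/2)^{?}$ depending on parity — actually one should just row-reduce the relator matrix). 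The cleanest route: show that the classes of $\tilde g_1,\dots,\tilde g_q,\tilde g_{q+1},\dots,\tilde g_{q+2s}$ modulo $\pi_F(N)$ satisfy exactly the stated coincidences and no others, by exhibiting enough homomorphisms $F/\pi_F(N)\to A$ (for suitable abelian $A$) to distinguish all the claimed-distinct elements, and by direct manipulation of the relators to establish the claimed equalities (e.g.\ in (iii), $\tilde g_i\tilde g_j^{-1}\equiv \tilde g_{\sigma(j)}\tilde g_{\sigma(i)}^{-1}$ follows because $\tilde g_{q+2i-1}\tilde g_{q+2i}\equiv \tilde g_{q+2i'-1}\tilde g_{q+2i'}$ and $\tilde g_i^2 t_i^{-1}\equiv\tilde g_j^2 t_j^{-1}$ let us swap indices within a hyperbolic pair or within the ``isotropic'' block, at the cost of a factor in $T$ that is exactly tracked).

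The main obstacle I expect is the bookkeeping in (iii): one must carefully verify that the \emph{only} coincidences among the $\tilde g_i\tilde g_j^{-1}$ (mod $T$) are the ones induced by the permutation $\sigma$, and in particular that, say, $\tilde g_i\tilde g_j^{-1}$ for $i,j\le q$ is never congruent mod $T$ to $\tilde g_{i'}\tilde g_{j'}^{-1}$ for a different pair unless $\sigma$ forces it (here $\sigma$ fixes $i\le q$, so for indices in the ``$q$-block'' the claim is that all the $\tilde g_i\tilde g_j^{-1}$ with $i\ne j\le q$ are distinct mod $T$, while for the ``$2s$-block'' the hyperbolic pairing identifies $\tilde g_i\tilde g_j^{-1}$ with $\tilde g_{\sigma(j)}\tilde g_{\sigma(i)}^{-1}$). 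The way to control this is to lift back to $F\times T$: two expressions $\tilde g_i\tilde g_j^{-1}$ and $\tilde g_{i'}\tilde g_{j'}^{-1}$ are congruent mod $T$ in $\wt G$ iff $\tilde g_i\tilde g_j^{-1}(\tilde g_{i'}\tilde g_{j'}^{-1})^{-1}\in \pi_F(N)$, so it all reduces to deciding membership of explicit short words in the explicitly generated subgroup $\pi_F(N)\subset F\cong \ZZ^k$ — a finite linear-algebra check over $\ZZ$ that one organizes by cases ($i,j$ both in the $q$-block; both in the same hyperbolic pair; in different hyperbolic pairs; one in each block), using the total-degree homomorphism $\theta$ and, where needed, a second homomorphism detecting the hyperbolic-pair structure (e.g.\ $F\to \ZZ$, $\tilde g_{q+2a-1}\mapsto 1$, $\tilde g_{q+2a}\mapsto -1$, everything else $\mapsto 0$, which one checks kills $\pi_F(N)$ and hence is invariant). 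Parts (i) and (ii) should then be quick corollaries of the same invariants.
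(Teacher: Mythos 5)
Your plan is essentially the paper's: reduce both parts to elementary abelian-group linear algebra by exhibiting a homomorphism from $F\times T$ (or $F$) that kills the kernel and separates the required elements. Your argument for (i) — that the kernel $N$ meets $T$ trivially because the projections to $F$ of its generators are $\ZZ$-linearly independent — is exactly the paper's argument.

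The gap is in the invariants you actually name for (ii) and (iii). Both your total-degree map $\theta:\tilde g_i\mapsto 1$ and your hyperbolic maps $\psi_a:\tilde g_{q+2a-1}\mapsto 1,\ \tilde g_{q+2a}\mapsto -1$ do kill $\pi_F(N)$, and the tuple $(\psi_1,\ldots,\psi_s)$ handles everything involving the $2s$-block. But they carry \emph{no} information about the $q$-block: $\theta(\tilde g_i)=1$ and $\psi_a(\tilde g_i)=0$ for all $1\le i\le q$, so all of $\tilde g_1,\ldots,\tilde g_q$ (and all the differences $\tilde g_i\tilde g_j^{-1}$ with $i,j\le q$) are indistinguishable by your stated invariants. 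You do say ``exhibit enough homomorphisms,'' but the one you are missing is constrained and worth making explicit: any homomorphism $F\to\ZZ$ that kills $\pi_F(N)$ must send all $\tilde g_i$ with $i\le q$ to the same integer (from $2\chi(\tilde g_i)=2\chi(\tilde g_{i+1})$ and torsion-freeness of $\ZZ$), so no $\ZZ$-valued linear form can separate the $q$-block — you are forced into a $2$-torsion target. The paper uses the single map $\pi_1:F\times T\to\ZZ_2^q\times\ZZ^s$ with $t\mapsto 0$, $\tilde g_i\mapsto\bar e_i$ ($i\le q$), $\tilde g_{q+2a-1}\mapsto e_a$, $\tilde g_{q+2a}\mapsto -e_a$; the $\ZZ_2^q$ coordinate kills the relation $\tilde g_i^2\tilde g_{i+1}^{-2}$ because $2\bar e_i-2\bar e_{i+1}=0$, and the images of the $\tilde g_i\tilde g_j^{-1}$ are exactly $\bar e_i+\bar e_j$, $\pm(\bar e_i\pm e_a)$, $\pm e_a\pm e_b$, $\pm 2e_a$, whose coincidences are precisely the $\sigma$-induced ones. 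Your $\psi_a$'s are the $\ZZ^s$ coordinates of $\pi_1$; once you add the $\ZZ_2^q$ coordinate, your plan goes through.
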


\begin{proof}
Let $r_i=\tilde{g}_i^2 t_i^{-1}$ for $1\le i\le q$ and $r_{q+i}=\tilde{g}_{q+2i-1}\tilde{g}_{q+2i}$ for $1\le i\le s$, so the kernel $K$ of the quotient map $T\times F\to\wt{G}$ is generated by the elements $r_ir_{i+1}^{-1}$ ($1\le i\le q+s-1$). It is clear that the projections of these elements to $F$, i.e., the elements $\tilde{g}_i^2\tilde{g}_{i+1}^{-2}$ ($1\le i\le q-1$), $\tilde{g}_{q+2i-1}\tilde{g}_{q+2i}\tilde{g}_{q+2i+1}^{-1}\tilde{g}_{q+2i+2}^{-1}$ ($1\le i\le s-1$), and $\tilde{g}_q^2\tilde{g}_{q+1}^{-1}\tilde{g}_{q+2}^{-1}$ (defined if both $q$ and $s$ are nonzero) form a basis of the subgroup that they generate in $F$. It follows that $K$ intersects $T$ trivially, which proves part (i).

Now let $\pi_0$ be the quotient map $F\times T\to\wt{G}/T$. Reading the relations \eqref{eq:rel_Gtilde} modulo $T$, it is straightforward to verify that $\pi_0\bigl(\tilde{g}_i\tilde{g}_j^{-1}\bigr)=\pi_0\bigl(\tilde{g}_{\sigma(j)}\tilde{g}_{\sigma(i)}^{-1}\bigr)$. To prove that no other equalities occur, and also that $\pi_0(\tilde{g}_i)\ne\pi_0(\tilde{g}_j)$ for $i\ne j$, we may replace $\pi_0$ by any homomorphism $\pi_1:F\times T\to G_1$ that kills the kernel of $\pi_0$. We can take $G_1=\ZZ_2^q\times\ZZ^s$ and define $\pi_1$ as follows: $t\mapsto 0$ for all $t\in T$, $\tilde{g}_i\mapsto\bar{e}_i$ for all $1\le i\le q$, $\tilde{g}_{q+2i-1}\mapsto e_i$ and $\tilde{g}_{q+2i}\mapsto-e_i$ for all $1\le i\le s$, where $\{\bar{e}_1,\ldots,\bar{e}_q\}$ is the standard basis of $\ZZ_2^q$ and $\{e_1,\ldots,e_s\}$ is that of $\ZZ^s$. Since $\pi_1(r_i)=0$ for all $1\le i\le q+s$ and the kernel of $\pi_0$ is generated by $T$ and $r_i r_j^{-1}$, we see that it is indeed killed by $\pi_1$. Clearly, the elements $\bar{e}_1,\ldots,\bar{e}_q,\pm e_1,\ldots,\pm e_s$ are pairwise distinct, which proves part (ii).

To finish the proof of (iii), note that, for $i\ne j$ in the set $\{1,\ldots,k\}$, we have $\sigma(j)<\sigma(i)$ if and only if $i>j$, except when $\{i,j\}=\{q+2r-1,q+2r\}$ for some $r\in\{1,\ldots,s\}$. Therefore, it is sufficient to consider where $\pi_1$ maps the elements $\tilde{g}_i\tilde{g}_j^{-1}$ with $i<j$ and also the elements $\tilde{g}_{q+2r}\tilde{g}_{q+2r-1}^{-1}$. It is clear that the images of $\tilde{g}_i\tilde{g}_j^{-1}$ with $i<j$ and $i\le q$ are distinct from each other and from the images of the remaining elements $\tilde{g}_{q+i}\tilde{g}_{q+j}^{-1}$ with $i<j$ or $(i,j)=(2r,2r-1)$. But these latter are mapped bijectively onto the subset $\{\pm e_i\pm e_j\mid i<j\}\cup\{\pm 2e_i\}$ of $\ZZ^s$ (the root system of type $C_s$).
\end{proof}

In view of Lemma \ref{lm:combinatorics}, we will regard $T$ as a subgroup of $\wt{G}$ and will also identify the elements $\tilde{g}_1,\ldots,\tilde{g}_k$ with their images in $\wt{G}$.

Now let $\cV=\cD^{[\tilde{g}_1]}\oplus\cdots\oplus\cD^{[\tilde{g}_k]}$ or, in other words, the right $\cD$-module $\cD^k$ on which we defined a $\wt{G}$-grading by assigning the elements $v_1,\ldots,v_k$ of the standard $\cD$-basis degrees $\tilde{g}_1,\ldots,\tilde{g}_k$, respectively. Note that since these $\tilde{g}_i$ are not only distinct, but distinct modulo $T$, all homogeneneous components of $\cV$ have dimension $1$ as right vector spaces over the division algebra $\cD_e$.

Then the graded algebra $\cR=\End_\cD(\cV)$ can be identified with the matrix algebra $M_k(\cD)\simeq M_k(\FF)\otimes\cD$ with the following $\wt{G}$-grading:
\begin{equation}\label{eq:gradingMDpqsdd}
\deg(E_{ij}\otimes d)=\tilde{g}_i \tilde{g}_j^{-1} t\;\text{ for any }0\ne d\in\cD_t.
\end{equation}
Lemma \ref{lm:combinatorics} implies that the homogeneous components of $\cR$ are as follows ($t\in T$):
\begin{align}
&\cR_t = \lspan{E_{1,1},\ldots,E_{k,k}}\otimes\cD_t, \label{eq:kd} \\
&\cR_{\tilde{g}_{q+2r-1}\tilde{g}_{q+2r}^{-1}t} = \FF E_{q+2r-1,q+2r}\otimes\cD_t, \label{eq:1d1} \\ 
&\cR_{\tilde{g}_{q+2r-1}^{-1}\tilde{g}_{q+2r}t} = \FF E_{q+2r,q+2r-1}\otimes\cD_t, \label{eq:1d2} \\
&\cR_{\tilde{g}_i\tilde{g}_j^{-1}t} = \FF E_{i,j}\otimes\cD_t\,\oplus\, \FF E_{\sigma(j),\sigma(i)}\otimes\cD_{t'}\;
\text{ with }t' \bydef \tilde{g}_i\tilde{g}_{\sigma(i)}\tilde{g}_j^{-1}\tilde{g}_{\sigma(j)}^{-1}t\in T, \label{eq:2d} 
\end{align}
where, in equation \eqref{eq:2d}, $i<j$ and $(i,j)\ne(q+2r-1,q+2r)$ for $1\le r\le s$.

Finally, there is an involution $\varphi$ on this graded algebra given by the nondegenerate $\varphi_0$-sesquilinear form $B$ represented (relative to the standard $\cD$-basis of $\cV$) by the following block-diagonal matrix (with $q$ blocks of size $1$ and $s$ of size $2$):
\begin{equation}\label{eq:PhiMDpqsdd}
\Phi=\diag\left(d_1,\ldots,d_q,
\begin{bmatrix} 0&1 \\ \delta&0 \end{bmatrix},\ldots,\begin{bmatrix} 0&1 \\ \delta&0 \end{bmatrix}\right).
\end{equation}
Indeed, the defining relations \eqref{eq:rel_Gtilde} of $\wt{G}$ ensure that $B$ is homogeneous of degree $\tilde{g}_0$, where $\tilde{g}_0^{-1}$ is the common value of the expressions that are forced to be equal to each other by those relations. Moreover, since $\varphi_0(d_i)=\delta d_i$, applying $\varphi_0$ to each entry of matrix $\Phi$ and transposing, we get $\delta\Phi$, hence $\wb{B}=\delta B$. Therefore, the adjunction with respect to $B$ is a degree-preserving involution on $\cR$. In matrix form, it is given by equation \eqref{eq:phi_with_matrices}. Note that $\varphi$ interchanges the two direct summands in the right-hand side of equation \eqref{eq:2d}.

\begin{df}\label{df:M}
The $\wt{G}(T,q,s,\ul{t})$-graded algebra with involution constructed above will be denoted $\cM(\cD,\varphi_0,q,s,\ul{d},\delta)$, and its grading by $\Gamma_\cM(\cD,\varphi_0,q,s,\ul{d},\delta)$.  
\end{df}

We will show in the next section  that the subgroup $\wt{G}^0(T,q,s,\ul{t})$ of $\wt{G}(T,q,s,\ul{t})$ generated by the support of $\Gamma_\cM(\cD,\varphi_0,q,s,\ul{d},\delta)$ is the universal group of this grading (see Proposition \ref{pr:G0_is_universal}) and will determine its isomorphism class in the case of finite $T$: the torsion part is given by equation \eqref{eq:iso_type_U} and the free part is $\ZZ^s$.

The proof of the following result is essentially that of \cite[Theorem 3.31]{EKmon} (which is itself a graded version, first given in \cite{E10}, of a classical argument in linear algebra), but we include it here for completeness.

\begin{theorem}\label{th:completeness}
Let $G$ be an abelian group, let $\cR$ be a $G$-graded algebra that is graded-simple and 
satisfies DCC on graded left ideals, and let $\varphi$ be a degree-preserving involution on $\cR$. 
Then there exist a graded-division algebra $\cD$ whose support $T$ is a subgroup of $G$, 
a degree-preserving involution $\varphi_0$ on $\cD$, integers $q\ge 0$ and $s\ge 0$ (not both zero), 
$\delta\in\{\pm 1\}$, a $q$-tuple $\ul{d}$ of elements $0\ne d_i\in\cD_{t_i}$ satisfying $\varphi_0(d_i)=\delta d_i$, 
and a group homomorphism $\alpha:\wt{G}(T,q,s,\ul{t})\to G$ with $\alpha|_T=\id_T$ such that 
${}^\alpha\cM(\cD,\varphi_0,q,s,\ul{d},\delta)$ is isomorphic to $\cR$ as a $G$-graded algebra with involution. 
\end{theorem}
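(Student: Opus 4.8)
The plan is to start from the description of $(\cR,\varphi)$ provided by Theorem \ref{th:involutions_from_B}: since $\cR$ is graded-simple with DCC on graded left ideals, we have $\cR\simeq\End_\cD(\cV)$ for a graded-division algebra $\cD$ with support $T$ (which we may take to be a subgroup of $G$) and a graded right $\cD$-module $\cV$ of finite rank; and since $\varphi$ is a degree-preserving involution, parts (1) and (3) give a degree-preserving involution $\varphi_0$ on $\cD$ and a nondegenerate homogeneous $\varphi_0$-sesquilinear form $B:\cV\times\cV\to\cD$ that is hermitian or skew-hermitian, say $\wb{B}=\delta B$ with $\delta\in\{\pm 1\}$, such that $\varphi$ is the adjunction with respect to $B$. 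The whole problem is then to choose a \emph{good} graded $\cD$-basis of $\cV$ in which the Gram matrix $\Phi=\bigl(B(v_i,v_j)\bigr)$ has the block form \eqref{eq:PhiMDpqsdd}, after which the identification of $(\cR,\varphi)$ with an appropriately relabelled ${}^\alpha\cM(\cD,\varphi_0,q,s,\ul d,\delta)$ is essentially bookkeeping.

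To produce such a basis I would run the standard Gram--Schmidt/Witt-decomposition argument, but carried out with homogeneous elements so as to respect the grading. Pick a homogeneous vector $v$ of degree $g$; by nondegeneracy there is a homogeneous $w$ with $B(v,w)\ne 0$, and we are in one of two cases. If $B(v,v)\ne 0$ for some homogeneous $v$, then $d:=B(v,v)$ is a nonzero homogeneous element of $\cD$ with $\varphi_0(d)=\wb B(v,v)=\delta d$; since $B(v,v)$ is invertible in $\cD$, the submodule $v\cD$ is nondegenerate, $\cV=v\cD\perp (v\cD)^\perp$, and $(v\cD)^\perp$ is again a graded nondegenerate $\varphi_0$-sesquilinear space of the same type, so we recurse and get a one-dimensional (size-$1$) orthogonal block contributing a parameter $d_i$ with $\varphi_0(d_i)=\delta d_i$. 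If instead $B(v,v)=0$ for the chosen isotropic homogeneous $v$, then using a homogeneous $w$ with $B(v,w)\ne 0$ one builds, exactly as in the classical hyperbolic-plane construction, a homogeneous $w'$ (in the $\cD$-span of $v$ and $w$, after scaling $w$ by a homogeneous element of $\cD$ and subtracting a homogeneous multiple of $v$) with $B(v,v)=B(w',w')=0$ and $B(v,w')=1$; the pair $(v,w')$ spans a graded nondegenerate hyperbolic plane $\perp$-split off from $\cV$, contributing a size-$2$ block $\matr{0&1\\\delta&0}$ after possibly rescaling $w'$ by $1$ (note $B(w',v)=\delta B(v,w')=\delta$). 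Iterating, $\cV$ decomposes as an orthogonal direct sum of $q$ size-$1$ blocks and $s$ size-$2$ hyperbolic blocks, giving a homogeneous $\cD$-basis $v_1,\dots,v_k$ ($k=q+2s$) in which $\Phi$ has exactly the form \eqref{eq:PhiMDpqsdd}; set $t_i=\deg d_i$ for $i\le q$.

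With this basis in hand, let $g_i=\deg v_i\in G$. Because $B$ is homogeneous of a single degree and $\Phi$ is block-diagonal of the stated shape, the degrees $g_i$ satisfy $g_1^2 t_1^{-1}=\dots=g_q^2 t_q^{-1}=g_{q+1}g_{q+2}=\dots=g_{q+2s-1}g_{q+2s}$ (this common value being the inverse of the degree of $B$), which are precisely the defining relations \eqref{eq:rel_Gtilde} of $\wt G(T,q,s,\ul t)$; hence there is a unique group homomorphism $\alpha:\wt G(T,q,s,\ul t)\to G$ with $\alpha(\tilde g_i)=g_i$ and $\alpha|_T=\id_T$. Comparing the $\wt G$-grading \eqref{eq:gradingMDpqsdd} on $\cM(\cD,\varphi_0,q,s,\ul d,\delta)=\End_\cD(\cD^k)$ transported along $\alpha$ with the $G$-grading on $\cR=\End_\cD(\cV)$ determined by \eqref{eq:degEijd}, one sees they coincide under the $\cD$-linear isomorphism $\cD^{[\tilde g_1]}\oplus\dots\oplus\cD^{[\tilde g_k]}\to\cV$ sending the standard basis vector of the $i$-th summand to $v_i$: this isomorphism carries the standard Gram matrix $\Phi$ to the Gram matrix of $B$ (by construction), hence intertwines the two adjunction involutions, so it is an isomorphism of $G$-graded algebras with involution ${}^\alpha\cM(\cD,\varphi_0,q,s,\ul d,\delta)\to\cR$. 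The one point needing care, and the main obstacle, is the homogeneity of all choices in the Gram--Schmidt step: one must check that in the isotropic case the scalars used to normalize $w$ and to clear the $v$-component can always be taken homogeneous in $\cD$ (they can, since $\cD$ is a graded-division algebra, so every nonzero homogeneous element is invertible and $B$ takes homogeneous values on homogeneous arguments), and that $q$ and $s$ are not both zero, which holds because $\cV\ne 0$. Note also that there is genuine freedom in the choice of $(\varphi_0,B)$ (part (4) of Theorem \ref{th:involutions_from_B}), but the statement only asserts existence, so any valid choice suffices; the fact that $q$, $s$ and the class of $\ul d$ are essentially forced will be addressed later when the universal group is identified.
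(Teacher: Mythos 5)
Your proof is correct and follows essentially the same route as the paper's: reduce to a graded hermitian/skew-hermitian form $B$ on $\cV$ via Theorem~\ref{th:involutions_from_B}, then run a graded Witt/Gram--Schmidt decomposition to produce a homogeneous $\cD$-basis in which the Gram matrix is $\Phi$ of \eqref{eq:PhiMDpqsdd}, and finally read off the homomorphism $\alpha$ from the degree constraints.

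One small point. In your isotropic branch, the step ``subtracting a homogeneous multiple of $v$'' to force $B(w',w')=0$ is unnecessary, and the paper does not do it: the branch is entered only when \emph{every} homogeneous vector is isotropic, so $B(w,w)=0$ automatically, and the only normalization needed is to scale $w$ by $B(v,w)^{-1}$ (then $B(w,v)=\delta\varphi_0(1)=\delta$). Including that subtraction is harmless but, if one actually tried to carry it out for a non-isotropic $w$, the ``completing the square'' would require finding a homogeneous $c$ with $\delta c+\varphi_0(c)=B(w,w)$, which is fine if $\chr\FF\ne 2$ but problematic in characteristic $2$; the theorem is stated over an arbitrary field, so the clean dichotomy (``some homogeneous $v$ is non-isotropic'' vs.\ ``all homogeneous $v$ are isotropic'') is the right one to use and avoids the issue entirely.
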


\begin{proof}
Up to isomorphism of $G$-graded algebras, $\cR$ is of the form $\End_\cD(\cV)$, for a graded-division algebra $\cD$ and a graded right $\cD$-module of nonzero finite rank, and $\varphi$ is determined by a pair $(\varphi_0,B)$ as in Theorem \ref{th:involutions_from_B}. We show by induction on the rank $k$ of $\cV$ that there exists a graded $\cD$-basis $\{v_1,\ldots,v_k\}$ such that $B$ is represented by the matrix $\Phi$ in equation \eqref{eq:PhiMDpqsdd}, for some $q$, $s$ and $\ul{d}$ (with $q+2s=k$). 

If $k=1$, then we can take for $v_1$ any nonzero homogeneous element of $\cV$ and set $d_1\bydef B(v_1,v_1)$, which belongs to $\cD^\times_\gr$ because $B$ is homogeneous and nondegenerate. Now suppose $k>1$. If there exists a homogeneous element $v_1\in\cV$ with $B(v_1,v_1)\ne 0$, we include it in our graded basis and again take $d_1\bydef B(v_1,v_1)$. Let $\cV_1=v_1\cD$ and let $\cW=\cV_1^\perp\bydef\{w\in\cV\mid B(w,v)=0\text{ for all }v\in\cV_1\}$. Then $\cW$ is a $\cD$-submodule of $\cV$, graded because $B$ is homogeneous and of rank $k-1$ because $B$ is nondegenerate. Moreover, $\cV_1\cap\cW=0$ because $B$ restricts to a nondegenerate form on $\cV_1$. Therefore, $\cV=\cV_1\oplus\cW$. By induction hypothesis, $\cW$ has a graded $\cD$-basis $\{v_2,\ldots,v_k\}$ of the desired form, and we have $B(v_1,v_i)=\delta\varphi_0(B(v_i,v_1))=0$ for all $i>1$, which completes the proof in this case. Now if $B(v,v)=0$ for all homogeneous elements $v\in\cV$, pick some nonzero homogeneous $v_1\in\cV$. Since $B$ is nondegenerate, there exists a homogeneous $v_2\in\cV$ such that $B(v_1,v_2)\ne 0$ and hence $v_2\notin v_1\cD$. Replacing $v_2$ with $v_2 B(v_1,v_2)^{-1}$, we obtain $B(v_1,v_2)=1$ and hence $B(v_2,v_1)=\delta\varphi_0(1)=\delta$. The proof is then completed as above, using $\cV_1\bydef v_1\cD\oplus v_2\cD$.

Now let $g_i=\deg v_i$ and $g_0=\deg B$. Then the fact that $B(v_i,v_i)=d_i\ne 0$ for $1\le i\le q$ and $B(v_{q+2j-1},v_{q+2j})=1$ for $1\le j\le s$ implies the following relations:
\[
g_1^2 t_1^{-1}=\ldots=g_q^2 t_q^{-1}=g_{q+1}g_{q+2}=\ldots=g_{q+2s-1}g_{q+2s}=g_0^{-1}.
\]
Looking at the defining relations \eqref{eq:rel_Gtilde} of 
$\wt{G}=\wt{G}(T,q,s,\ul{t})$, we see that there exists a group homomorphism 
$\alpha:\wt{G}\to G$ such that $\alpha(t)=t$ for all $t\in T$ and 
$\alpha(\tilde{g}_i)=g_i$ for all 
$1\le i\le k$. Then ${}^\alpha\cD$ can be identified with $\cD$, and the graded 
$\cD$-module 
${}^\alpha\bigl(\cD^{[\tilde{g}_1]}\oplus\cdots\oplus\cD^{[\tilde{g}_k]}\bigr)$ is isomorphic to $\cV$ by sending the standard basis to $\{v_1,\ldots,v_k\}$. The result follows.
\end{proof}

\subsection{The graded algebras with involution $\cM^\ex(\cD,k)$}

To account for simple graded algebras with involution that are not simple as graded algebras, we introduce another family:

\begin{df}\label{df:Mex}
Let $\cD$ be a graded-division algebra with abelian support $T$ and let $k$ be a positive integer. 
Let $\wt{G}(T,k)\bydef F\times T$, where $F$ is the free abelian group generated by the symbols $\tilde{g}_1,\ldots,\tilde{g}_k$. 
\begin{enumerate}
\item[(i)]
The $\wt{G}(T,k)$-graded algebra $M_k(\cD)$ defined by equation \eqref{eq:gradingMDpqsdd} will be denoted by $\cM(\cD,k)$ and its grading by 
$\Gamma_\cM(\cD,k)$.
\item[(ii)]
Using the same grading on the opposite algebra, we obtain a $\wt{G}(T,k)$-graded algebra $\cM(\cD,k)\times\cM(\cD,k)^\op$ so that the exchange involution 
$\ex:(x,y)\mapsto(y,x)$ is degree-preserving. The resulting graded algebra with involution will be denoted by $\cM^\ex(\cD,k)$ and its grading by 
$\Gamma_{\cM^\ex}(\cD,k)$.
\end{enumerate} 
\end{df}

By \cite[Proposition 2.35]{EKmon}, the subgroup $\wt{G}^0(T,k)\simeq T\times\ZZ^{k-1}$ of $\wt{G}(T,k)$ generated by the support of $\Gamma_\cM(\cD,k)$ is the universal abelian group of this grading and hence the universal group of 
$\cM^\ex(\cD,k)$.

The proof of the following result is left as an exercise:

\begin{proposition}\label{pr:completeness}
Let $G$ be an abelian group, let $\cR$ be a $G$-graded algebra that satisfies DCC on graded left ideals, and let $\varphi$ be a degree-preserving involution on $\cR$. If $(\cR,\varphi)$ is simple as a graded algebra with involution, but $\cR$ is not graded-simple, then there exist 
a graded-division algebra $\cD$ whose support $T$ is a subgroup of $G$, an integer $k\ge 1$
and a group homomorphism $\alpha:\wt{G}(T,k)\to G$ with $\alpha|_T=\id_T$ such that ${}^\alpha\cM^\ex(\cD,k)$ is isomorphic to $\cR$ as a $G$-graded algebra with involution. \qed
\end{proposition}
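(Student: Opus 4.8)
The plan is to reduce the statement to the already-proved Theorem~\ref{th:completeness} by passing from the non-graded-simple algebra $\cR$ to one of its graded simple components. First I would analyze the structure of $(\cR,\varphi)$ as a graded algebra with involution that is simple as such but not graded-simple. Since $\cR$ satisfies DCC on graded left ideals, it is a finite direct product of graded-simple components; the condition that $(\cR,\varphi)$ has no proper nonzero $\varphi$-invariant ideals forces $\varphi$ to permute these components transitively, while the simplicity of $\cR$ \emph{as a graded algebra with involution} (but not as a graded algebra) forces there to be exactly two of them, say $\cR=\cS\times\cS'$, interchanged by $\varphi$. Thus $\varphi(s,s')=(\psi^{-1}(s'),\psi(s))$ for some graded algebra isomorphism $\psi:\cS\to\cS'$, and because $\varphi$ preserves degrees, so does $\psi$; composing with $\psi$ in the second coordinate identifies $(\cR,\varphi)$ with $(\cS\times\cS^{\op},\ex)$ equipped with the grading on $\cS^{\op}$ inherited from $\cS$ via the same homogeneous components.

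Next I would apply the structure theory of graded-simple algebras with DCC to $\cS$: it is isomorphic, as a $G$-graded algebra, to $\End_\cD(\cV)$ for a graded-division algebra $\cD$ with support $T$ (which we may assume is a subgroup of $G$, after translating by the degrees) and a graded right $\cD$-module $\cV$ of finite rank $k\ge 1$. Fixing a graded $\cD$-basis $\{v_1,\dots,v_k\}$ of $\cV$ with $\deg v_i=g_i$ identifies $\cS$ with $M_k(\cD)$ carrying the elementary-type grading $\deg(E_{ij}\otimes d)=g_i(\deg d)g_j^{-1}$. Then I would invoke the universal property of $\wt{G}(T,k)=F\times T$: since $\wt{G}(T,k)$ has \emph{no} imposed relations, the assignment $\tilde{g}_i\mapsto g_i$, $t\mapsto t$ extends to a group homomorphism $\alpha:\wt{G}(T,k)\to G$ with $\alpha|_T=\id_T$, and with this $\alpha$ one checks that ${}^\alpha\cM(\cD,k)\cong\cS$ as $G$-graded algebras, via the basis identification exactly as at the end of the proof of Theorem~\ref{th:completeness}. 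Taking the product with the opposite algebra and the exchange involution, ${}^\alpha\cM^\ex(\cD,k)\cong\cS\times\cS^{\op}$ with the exchange involution, which we have already identified with $(\cR,\varphi)$.

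The last step is to verify that this isomorphism of graded algebras is in fact an isomorphism of graded algebras \emph{with involution}: the exchange involution on $\cM^\ex(\cD,k)$ is defined precisely so as to be degree-preserving and to match the exchange involution on $\cS\times\cS^{\op}$ under the component-wise identification, so this is immediate once the coordinates are aligned as in the first paragraph. I would also record that, as stated in the paragraph preceding the proposition, the subgroup $\wt{G}^0(T,k)\simeq T\times\ZZ^{k-1}$ generated by the support is the universal group, which is not strictly needed for the statement but is the natural companion fact.

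The only genuinely substantive point — and the place where care is required rather than difficulty — is the reduction in the first paragraph: showing that ``simple as a graded algebra with involution but not graded-simple'' forces exactly two graded-simple components exchanged by $\varphi$, and that the connecting isomorphism $\psi$ is automatically degree-preserving so that the opposite-algebra grading used in Definition~\ref{df:Mex}(ii) is the correct one. Everything after that is a direct transcription of the argument already carried out for Theorem~\ref{th:completeness}, with the relation-free group $\wt{G}(T,k)$ in place of $\wt{G}(T,q,s,\ul{t})$ and no sesquilinear form to normalize, which is why the authors leave it as an exercise.
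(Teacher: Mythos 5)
Your argument is correct, and since the authors explicitly leave this as an exercise (there is no proof in the paper to compare against), what you wrote is presumably exactly the intended solution: reduce to the graded-simple case by writing $\cR\cong\cS\times\cS^\op$ with the exchange involution, then apply the classification of graded-simple algebras with DCC to $\cS$, noting that $\wt{G}(T,k)=F\times T$ is relation-free so the homomorphism $\alpha$ exists with nothing to verify. One small terminological slip worth fixing: the map $\varphi|_{\cS}:\cS\to\cS'$ you call $\psi$ is an \emph{anti}-isomorphism of graded algebras, not an isomorphism (this is forced by $\varphi$ being an antiautomorphism); viewing it as an isomorphism $\cS^\op\to\cS'$ and composing in the second coordinate is precisely what produces the exchange involution on $\cS\times\cS^\op$, as you correctly conclude. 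You could also streamline the first paragraph by avoiding the appeal to a graded-semisimple decomposition: for \emph{any} proper nonzero graded ideal $\cI$, the ideals $\cI\cap\varphi(\cI)$ and $\cI+\varphi(\cI)$ are graded and $\varphi$-invariant, hence $\cI\cap\varphi(\cI)=0$ and $\cI+\varphi(\cI)=\cR$, giving $\cR=\cI\oplus\varphi(\cI)$ directly; graded-simplicity of $\cI$ and $\cI^2\ne 0$ then follow at once.
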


Together with Theorem \ref{th:completeness}, this gives the following

\begin{corollary}\label{cor:completeness}
Let $\varphi$ be an involution on an artinian algebra $\cR$. If $(\cR,\varphi)$ is simple as an algebra with involution, then, for any $G$-grading $\Gamma$ on $(\cR,\varphi)$, exactly one of the following holds:
\begin{itemize}
\item $\Gamma$ is the image of some ${}^\alpha\Gamma_{\cM^\ex}(\cD,k)$ under an isomorphism of algebras with involution, where $\cD$ is a graded-division algebra whose support $T$ is a subgroup of $G$ and $\alpha:\wt{G}(T,k)\to G$ is a group homomorphism with $\alpha|_T=\id_T$; 
\item $\Gamma$ is the image of some ${}^\alpha\Gamma_\cM(\cD,\varphi_0,q,s,\ul{d},\delta)$ under an isomorphism of algebras with involution, where $\cD$ is a graded-division algebra whose support $T$ is a subgroup of $G$, $\varphi_0$ is a degree-preserving involution on $\cD$, and $\alpha:\wt{G}(T,q,s,\ul{t})\to G$ is a group homomorphism with $\alpha|_T=\id_T$.\qed 
\end{itemize}
\end{corollary}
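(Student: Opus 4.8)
The plan is to obtain the corollary by combining Theorem~\ref{th:completeness} and Proposition~\ref{pr:completeness} via a case split, and then to verify that the two alternatives exclude each other. I would start with the following observations. Since $\Gamma$ is a $G$-grading on $(\cR,\varphi)$, the involution $\varphi$ preserves each homogeneous component, and any graded ideal of $\cR$ that is invariant under $\varphi$ is, in particular, a $\varphi$-invariant ideal; therefore the fact that $(\cR,\varphi)$ is simple as an algebra with involution forces $(\cR,\varphi)$ to be simple as a \emph{graded} algebra with involution. Moreover, $\cR$ being artinian implies that it satisfies the DCC on graded left ideals.

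Next I would split into two cases. If $\cR$ is graded-simple, Theorem~\ref{th:completeness} applies directly and produces a graded-division algebra $\cD$ with $T\bydef\supp\cD$ a subgroup of $G$, a degree-preserving involution $\varphi_0$ on $\cD$, integers $q,s$, a sign $\delta$, a tuple $\ul{d}$, and a homomorphism $\alpha\colon\wt{G}(T,q,s,\ul{t})\to G$ with $\alpha|_T=\id_T$ such that $(\cR,\varphi,\Gamma)$ is isomorphic, as a $G$-graded algebra with involution, to ${}^{\alpha}\cM(\cD,\varphi_0,q,s,\ul{d},\delta)$ --- i.e. the second bullet holds. If, on the contrary, $\cR$ is not graded-simple, then, having established that $(\cR,\varphi)$ is simple as a graded algebra with involution, I would invoke Proposition~\ref{pr:completeness} to obtain $\cD$, an integer $k\ge 1$, and $\alpha\colon\wt{G}(T,k)\to G$ with $\alpha|_T=\id_T$ realizing the first bullet.

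It remains to prove that the two bullets are mutually exclusive, and for this I would use ``$(\cR,\Gamma)$ is graded-simple'' as a distinguishing invariant. On the one hand, as a $\wt{G}(T,k)$-graded algebra, $\cM^\ex(\cD,k)=\cM(\cD,k)\times\cM(\cD,k)^\op$ is the direct sum of the proper nonzero two-sided ideals $\cM(\cD,k)\times 0$ and $0\times\cM(\cD,k)^\op$, both of which are graded (the grading on the product is defined componentwise); since a graded subspace of a graded algebra stays graded under any coarsening of the grading, these ideals persist in ${}^{\alpha}\cM^\ex(\cD,k)$, which is therefore not graded-simple. On the other hand, $\cM(\cD,\varphi_0,q,s,\ul{d},\delta)=\End_\cD(\cV)$ for a graded right $\cD$-module $\cV$ of finite rank; using $\alpha|_T=\id_T$ and $\supp\cD=T$, I would check that ${}^{\alpha}\cD$ is again a graded-division algebra (its homogeneous components are merely relabeled, not merged, because $\alpha$ is injective on $T$) and that ${}^{\alpha}\End_\cD(\cV)\simeq\End_{{}^{\alpha}\cD}({}^{\alpha}\cV)$, so by the description of graded-simple algebras with DCC recalled at the beginning of Section~\ref{se:involutions} this algebra is graded-simple. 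Hence no $(\cR,\Gamma)$ can satisfy both bullets, which gives the ``exactly one''.

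I expect the only non-routine step to be the last one, namely verifying that coarsening along the (not necessarily injective) homomorphism $\alpha$ neither destroys the graded-division property of $\cD$ nor the identification of $\cM(\cD,\varphi_0,q,s,\ul{d},\delta)$ with an endomorphism algebra $\End_{\cD}(\cV)$; the hypothesis $\alpha|_T=\id_T$ coming out of Theorem~\ref{th:completeness} and Proposition~\ref{pr:completeness} is exactly what makes this go through. Everything else is bookkeeping on top of those two results.
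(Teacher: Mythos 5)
Your proposal is correct and follows the same route the paper intends: the paper gives no explicit proof and simply says the corollary follows from Theorem~\ref{th:completeness} and Proposition~\ref{pr:completeness}, whose hypotheses split exactly as you do — graded-simple versus simple-as-graded-algebra-with-involution-but-not-graded-simple. Your preliminary reductions (simple as an algebra with involution implies simple as a graded algebra with involution, since graded $\varphi$-invariant ideals are $\varphi$-invariant; artinian implies DCC on graded left ideals) are exactly the observations needed to put $(\cR,\varphi,\Gamma)$ under the hypotheses of those two results.

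The one thing you address that the paper leaves tacit is the ``exactly one'' clause, and your argument for it is sound: graded-simplicity of $(\cR,\Gamma)$ is the distinguishing invariant, with ${}^\alpha\cM^\ex(\cD,k)$ never graded-simple (its two factors are graded ideals that persist under any coarsening) and ${}^\alpha\cM(\cD,\varphi_0,q,s,\ul{d},\delta)$ always graded-simple. For the latter, your reduction to $\End_{{}^\alpha\cD}({}^\alpha\cV)$ via $\alpha|_T=\id_T$ is the right way to see it; note that since $\alpha$ is literally the identity on $T$, ${}^\alpha\cD$ is not merely ``equivalent to'' but \emph{equal to} $\cD$ as a $G$-graded algebra, which makes the verification even more immediate. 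This is a small but genuine gap in the paper's exposition that you correctly identify and fill.
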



\subsection{Fineness criteria}\label{ss:fineness}

It remains to check which of the gradings constructed in the previous subsections are 
indeed fine (as gradings on algebras with involution). 
First we will deal with the gradings $\Gamma_\cM(\cD,\varphi_0,q,s,\ul{d},\delta)$
in Definition \ref{df:M}.

\begin{theorem}\label{th:GMfine}
Let $\cD$ be a finite-dimensional graded-division algebra, with support $T$, 
and let $\varphi_0$ 
be a degree-preserving
involution on $\cD$. Let $q$ and $s$ be nonnegative integers (not both zero), let $\delta\in\{\pm 1\}$, and let $\ul{d}$ be a $q$-tuple of elements $d_i\in\cD^\times_\gr$ satisfying $\varphi_0(d_i)=\delta d_i$. Assume that $(q,s)\neq (2,0)$ and that the grading $\Gamma_\cD$ on $\cD$ is fine.
Then the grading $\Gamma_\cM(\cD,\varphi_0,q,s,\ul{d},\delta)$ is fine.
\end{theorem}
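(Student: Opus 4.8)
The plan is to show that any refinement of $\Gamma_\cM \bydef \Gamma_\cM(\cD,\varphi_0,q,s,\ul{d},\delta)$, say a $G'$-grading $\Gamma'$ on the algebra with involution $(\cM,\varphi)$ with $G'$ the universal group of $\Gamma'$, must actually coincide with $\Gamma_\cM$. Since $\Gamma_\cM$ refines to $\Gamma'$, there is a surjection (up to the identifications of Subsection \ref{sse:universal}) relating the universal groups, so it suffices to prove that each homogeneous component of $\Gamma_\cM$ listed in equations \eqref{eq:kd}--\eqref{eq:2d} is already homogeneous for $\Gamma'$, i.e. lies inside a single $\Gamma'$-component. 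I would organize the argument around two sub-refinements: first the induced refinement of $\Gamma_\cD$ on the subalgebra $\cD\buno$ (the "diagonal" copy), and then the refinement of the off-diagonal part.

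First I would observe that $\Gamma'$ induces a grading on the $\varphi$-invariant subalgebra $\cR_T \bydef \lspan{E_{11},\ldots,E_{kk}}\otimes\cD$ (equation \eqref{eq:kd}), because $\cR_T$ is the graded-centralizer-type subalgebra one can describe intrinsically; more carefully, the idempotents $E_{ii}$ need not be $\Gamma'$-homogeneous, but the sum $\cR_e$ of the identity-component stuff controls things. The cleanest route: the restriction of $\varphi$ to $\cR_T$ together with $\Gamma'$ refines the grading on $(\cR_T,\varphi|_{\cR_T})$; since $\cD\buno$ (scalar matrices) is contained in $\cR_T$ and carries the grading $\Gamma_\cD$, and $\Gamma_\cD$ is assumed fine, the components $\cD_t$ cannot be split — so I must argue that the refinement $\Gamma'$ does not split $\cD_t \buno$. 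Here I would use the structure of $\cR_T \simeq M_k(\FF)\otimes\cD$ with the involution acting as $\varphi_0$ on $\cD$ and (since $\Phi$ restricted to the first $q$ coordinates is $\diag(d_1,\ldots,d_q)$ and the $2\times 2$ blocks have the antidiagonal form) an orthogonal- or symplectic-type involution on the $M_k(\FF)$ factor; fineness of $\Gamma_\cD$ forces the $\cD$-tensor factor to stay intact in any refinement, and then a count of homogeneous components shows the $E_{ii}$-decomposition is forced as well — this uses $(q,s)\ne(2,0)$ to rule out the degenerate case where the symmetric/skew structure on $M_k(\FF)$ would admit an extra grading (the $q=2,s=0$ case is exactly where $M_2(\FF)$ with orthogonal involution has the Pauli refinement).

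Having pinned down $\cR_e = \lspan{E_{11},\ldots,E_{kk}}\otimes\cD_e$ and more generally $\cR_t$ for $t\in T$, I would then handle the off-diagonal components. For a pair $(i,j)$ with $i<j$, $(i,j)\ne(q+2r-1,q+2r)$, the component in \eqref{eq:2d} is $\FF E_{ij}\otimes\cD_t \oplus \FF E_{\sigma(j)\sigma(i)}\otimes\cD_{t'}$, and $\varphi$ swaps the two summands. Since the $E_{ii}$ are $\Gamma'$-homogeneous (of degree $e$, as they are symmetric idempotents lying in $\cR_e$ after the first step), the Peirce decomposition $E_{ii}\cM E_{jj}$ is $\Gamma'$-graded, so $\FF E_{ij}\otimes\cD$ is a graded subspace; intersecting with the known $T$-grading coming from $\cR_t$ forces $\FF E_{ij}\otimes\cD_t$ to be $\Gamma'$-homogeneous, and likewise for its $\varphi$-image. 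The components in \eqref{eq:1d1}, \eqref{eq:1d2} are one-dimensional over $\cD_e$ and hence automatically not splittable once we know the $\cD_e$-module structure is rigid. Finally one checks that the degrees assigned by $\Gamma'$ are consistent with the map to $\wt{G}^0$, so $\Gamma' = \wt{\Gamma_\cM}$ up to the universal-group identification, i.e. $\Gamma_\cM$ is fine.

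The main obstacle I anticipate is the first step: showing that the refinement cannot simultaneously split $\cD_t$ and reshuffle the idempotents $E_{ii}$ in a compensating way. Over an algebraically closed field one would compare with MAD subgroups, but here (arbitrary $\FF$, non-$2$-elementary $T$, possibly higher-dimensional $\cD_e$) one has to argue directly with the involution. I would reduce to the ungraded picture $\cR_T \simeq M_k(\FF)\otimes\cD$, use that a refinement of $\Gamma_\cD$ fixing $\Gamma_\cD$ must be trivial on the $\cD$-factor (fineness hypothesis), and then separately analyze refinements of the elementary grading on $M_k(\FF)$ compatible with its (transpose-type or symplectic-type) involution; the point where $(q,s)\ne(2,0)$ enters is precisely in excluding the $2\times 2$ orthogonal case, while all other elementary $M_k(\FF)$-gradings with a compatible involution are already fine. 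This is where I expect to invoke — or reprove in the needed generality — the classification of fine gradings on $M_k(\FF)$ with involution in the spirit of \cite[\S 3]{EKmon}, adapted so that the $\cD$-coefficients do not interfere.
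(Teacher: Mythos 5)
Your overall skeleton is right: the crux is to show that the diagonal idempotents $\veps_i=E_{ii}\otimes 1$ are $\Gamma'$-homogeneous, and once that is in place, the rest (identifying $\veps_i\cR\veps_i\simeq\cD$ as a $\Gamma'$-graded subalgebra and invoking fineness of $\Gamma_\cD$, and then using $\varphi$-invariance to show the two-dimensional off-diagonal components in \eqref{eq:2d} cannot split) goes through essentially as you describe. You also correctly identify why $(q,s)\ne(2,0)$ must be excluded — that is exactly where the Pauli refinement of $M_2$ would give a proper refinement — and your treatment of \eqref{eq:1d1}, \eqref{eq:1d2} and \eqref{eq:2d} is sound modulo the first step.

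But the route you propose for that first step has a genuine gap. You want to argue that ``fineness of $\Gamma_\cD$ forces the $\cD$-tensor factor to stay intact in any refinement,'' and then analyze what refinements the elementary grading on $M_k(\FF)$ can support. The problem is that this is circular: there is no canonical $\Gamma'$-graded copy of $\cD$ inside $\cR_T\simeq M_k(\FF)\otimes\cD$ until you already know the idempotents $E_{ii}$ are $\Gamma'$-homogeneous. A refinement of the $T$-grading $\{\cR_t\}$ on $\cR_T$ need not respect the tensor decomposition at all — it can entangle the $M_k(\FF)$ and $\cD$ factors, and neither $1_{M_k}\otimes\cD$ nor $M_k(\FF)\otimes 1$ need be $\Gamma'$-graded subalgebras a priori. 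So ``the $\cD$-factor stays intact'' is not a consequence of fineness of $\Gamma_\cD$ alone; it is part of what you are trying to prove, and ``a count of homogeneous components shows the $E_{ii}$-decomposition is forced'' is a heuristic, not an argument. You honestly flag this as your main anticipated obstacle, and it is indeed the obstacle.

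The paper takes a different, more hands-on route that sidesteps the tensor-splitting issue entirely. It works with explicit products of $\Gamma'$-homogeneous elements. The starting lemma is: if some $E_{ii}\otimes d$ with $d\in\cD^\times_\gr$ is $\Gamma'$-homogeneous, then so is $\veps_i=E_{ii}\otimes 1$ (replace $E_{ii}\otimes d$ by a power of degree $e'$, then use that $d$ is algebraic with nonzero constant term in its minimal polynomial). Then, to produce such elements $E_{ii}\otimes d$, the paper multiplies together $\Gamma'$-homogeneous elements drawn from the low-dimensional components \eqref{eq:1d1}, \eqref{eq:1d2} and \eqref{eq:2d}: for $s\ge 1$ one multiplies $E_{q+2r-1,q+2r}\otimes c$ with $E_{q+2r,q+2r-1}\otimes d$; for $q\ge 3$ one uses three distinct indices and takes products like $(E_{ii}\otimes dd'+E_{jj}\otimes d'd)(E_{ii}\otimes a+E_{kk}\otimes b)=E_{ii}\otimes dd'a$; and a mixed case handles $1\le i\le q$ together with $s\ge 1$. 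The case split $q\ge 3$, $s\ge 1$, or $q=1$, $s=0$ is exactly where $(q,s)\ne(2,0)$ is used. This produces the homogeneous idempotents directly, without any appeal to a tensor-factor decomposition of the refinement, and is the part your proposal would need to be replaced with.
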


\begin{proof}
The homogeneous components of the grading $\Gamma\bydef\Gamma_\cM(\cD,\varphi_0,q,s,\ul{d},\delta)$ on $\cR\bydef\cM(\cD,\varphi_0,q,s,\ul{d},\delta)$ are given in equations \eqref{eq:kd}--\eqref{eq:2d}. As already noted, the involution $\varphi:X\mapsto\Phi^{-1}\varphi_0(X^T)\Phi$, with matrix $\Phi$ given by equation \eqref{eq:PhiMDpqsdd}, interchanges the two summands in \eqref{eq:2d}. Assume that a $G'$-grading $\Gamma'$ is a refinement of $\Gamma$ as a grading on the algebra with involution $(\cR,\varphi)$. We shall follow several steps:

\smallskip

\noindent$\bullet$\quad Assume that a nonzero element of the form $E_{ii}\otimes d$ is 
homogeneous for $\Gamma'$. Then the idempotent $\veps_i\bydef E_{ii}\otimes 1$ is also homogeneous 
for $\Gamma'$. 

Indeed, $\cD$ is isomorphic to the subalgebra $E_{ii}\otimes\cD\subset\cR$ (whose unit element is $\veps_i$). 
Since $\cD$ is finite-dimensional, $d$ is algebraic over $\FF$, and hence the order of $\deg_{\Gamma'}(E_{ii}\otimes d)$ is finite. Also note that $d$ is homogeneous for $\Gamma_\cD$ and hence invertible. Replacing $E_{ii}\otimes d$ by one of its powers, we can assume that its degree is the identity element $e'$ of $G'$. 
Since the constant term of the minimal polynomial of $d$ is nonzero, the element $E_{ii}\otimes 1$ is an $\FF$-linear combination of powers of $E_{ii}\otimes d$, and hence it is homogeneous (of degree $e'$).
 
\smallskip

\noindent$\bullet$\quad Now we repeat the arguments in \cite[Theorem 3.30]{EKmon} to check that the elements $E_{11}\otimes 1, \ldots, E_{q+2s,q+2s}\otimes 1$ are homogeneous for $\Gamma'$.
This is trivial if $q=1$ and $s=0$, so we may assume that $s\geq 1$ or $q\geq 3$.
In view of equations \eqref{eq:1d1} and \eqref{eq:1d2}, for $i=1,\ldots,s$, we can find nonzero $\Gamma'$-homogeneous
elements $E_{q+2i-1,q+2i}\otimes c_i$ and $E_{q+2i,q+2i-1}\otimes d_i$. Multiplying these, we get $\Gamma'$-homogeneous elements $E_{q+2i-1,q+2i-1}\otimes c_i d_i$ and $E_{q+2i,q+2i}\otimes d_i c_i$, to which we can apply the previous step, because $c_i$ and $d_i$ are $\Gamma_\cD$-homogeneous and hence invertible.

If $q\geq 3$, take three different indices $1\leq i,j,k\leq q$. We will prove that $E_{ii}\otimes 1$ is $\Gamma'$-homogeneous. In view of equation \eqref{eq:2d}, we can find a $\Gamma'$-homogeneous element of the form $E_{ij}\otimes d+E_{ji}\otimes d'$ where $d\ne 0$. If $d'=0$ then, applying $\varphi$ to $E_{ij}\otimes d$, we obtain another nonzero $\Gamma'$-homogeneous element $E_{ji}\otimes d''$. It follows that $E_{ii}\otimes dd''$ is $\Gamma'$-homogeneous, so the previous step applies. Now assume $d,d'\neq 0$. Then $\bigl(E_{ij}\otimes d+ E_{ji}\otimes d'\bigr)^2=
E_{ii}\otimes dd'+ E_{jj}\otimes d'd$ is $\Gamma'$-homogeneous, too. In the same vein, we pick a $\Gamma'$-homogeneous element of the form $E_{ii}\otimes a+E_{kk}\otimes b$ where $a\ne 0$. It follows that the  element 
$E_{ii}\otimes dd'a=\bigl(E_{ii}\otimes dd'+ E_{jj}\otimes d'd\bigr)
\bigl(E_{ii}\otimes a+ E_{kk}\otimes b\bigr)$ is $\Gamma'$-homogeneous, so the previous step applies.

Finally, if $1\leq i\leq q$ and $s\geq 1$, we can pick $\Gamma'$-homogeneous elements $x=E_{i,q+1}\otimes c+E_{q+2,i}\otimes c'$ and $y=E_{q+1,i}\otimes d+E_{i,q+2}\otimes d'$ where $c,d\neq 0$. Then 
$xy=E_{ii}\otimes a+ E_{q+2,q+2}\otimes a'$, with $a=cd\neq 0$ and $a'=c'd'$. If $a'=0$ then we can apply the previous step, so assume $c',d'\neq 0$. Then $yx=E_{ii}\otimes b+E_{q+1,q+1}\otimes b'$, with $b,b'\neq 0$. 
The nonzero product of $xy$ and $yx$ is of the form $E_{ii}\otimes f$, and the first step applies.

\smallskip

\noindent$\bullet$\quad For any $i=1,\ldots,q+2s$, recall the isomorphism $\cD\simeq E_{ii}\otimes\cD=\veps_i\cR\veps_i$. Since we have shown that $\veps_i$ is a homogeneous idempotent for $\Gamma'$, the subalgebra $\veps_i\cR\veps_i\subset\cR$ is $G'$-graded, so the isomorphism induces a $G'$-grading on $\cD$, which is a refinement of $\Gamma_\cD$.  
But $\Gamma_\cD$ is fine, so the restrictions of $\Gamma$ and $\Gamma'$ to $\veps_i\cR\veps_i$ have the same homogeneous components (although with different labels).

\smallskip

\noindent$\bullet$\quad The homogeneous components of $\Gamma$ in equations \eqref{eq:1d1} and \eqref{eq:1d2} do not split in $\Gamma'$, because we know from the previous step that all elements in $E_{q+2i-1,q+2i-1}\otimes\cD_e$ and $E_{q+2i,q+2i}\otimes\cD_e$ are $\Gamma'$-homogeneous of degree $e'$.

\smallskip

\noindent$\bullet$\quad Now consider any of the homogeneous components $E_{i,j}\otimes \cD_t\oplus E_{\sigma(j),\sigma(i)}\otimes \cD_{t'}$ of $\Gamma$ in equation \eqref{eq:2d}. 
Since all elements in $E_{i,i}\otimes\cD_e$ are $\Gamma'$-homogeneous of degree $e'$,  
it follows that all elements of $E_{i,j}\otimes \cD_t$ are $\Gamma'$-homogeneous of the same degree, and similarly for $E_{\sigma(j),\sigma(i)}\otimes \cD_{t'}$. Therefore, either $E_{i,j}\otimes \cD_t\oplus E_{\sigma(j),\sigma(i)}\otimes \cD_{t'}$ does not split in $\Gamma'$ or else it splits as the direct sum of $E_{i,j}\otimes \cD_t$ and $E_{\sigma(j),\sigma(i)}\otimes \cD_{t'}$. But $E_{i,j}\otimes\cD_t$ is not $\varphi$-invariant, so we conclude that the homogeneous components of $\Gamma$ in equation \eqref{eq:2d} do not split in $\Gamma'$.

\smallskip

\noindent$\bullet$\quad Finally, for any nonzero homogeneous element $d\in\cD$ and any $1\leq i<j\leq q+2s$, the elements 
$E_{i,i}\otimes d=(E_{i,j}\otimes 1)(E_{j,i}\otimes d)$ and
$E_{j,j}\otimes d=(E_{j,i}\otimes d)(E_{i,j}\otimes 1)$ are $\Gamma'$-homogeneous of the same
degree, so the homogeneous components of $\Gamma$ in equation \eqref{eq:kd} do not split in $\Gamma'$.

\smallskip

We conclude that none of the homogeneous components of $\Gamma$ split in $\Gamma'$ and, therefore, $\Gamma$ does not admit proper refinements.
\end{proof}

The case $(q,s)=(2,0)$ is an exception in Theorem \ref{th:GMfine}. The next two results deal with this case under some restrictions.

\begin{proposition}\label{pr:GMfineq2s0}
Let $\cD$ be a finite-dimensional graded-division algebra with $1$-dimensional homogeneous components and let $\varphi_0$ be a degree-preserving involution on $\cD$ such that $(\cD,\varphi_0)$ is central simple as an algebra with involution. Let $\delta\in\{\pm 1\}$ and let $\ul{d}=(d_1,d_2)$ where $d_i\in\cD^\times_\gr$ satisfy $\varphi_0(d_i)=\delta d_i$ for $i=1,2$. Then the grading $\Gamma_\cM(\cD,\varphi_0,2,0,\ul{d},\delta)$ is fine if and only if $\deg d_1\neq \deg d_2$.
\end{proposition}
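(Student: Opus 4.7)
The plan is to prove each implication separately. The necessity is handled by an explicit construction of a proper refinement; the sufficiency requires a careful analysis of the possible splittings of the two-dimensional homogeneous components of $\Gamma$ under a hypothetical refinement.

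For the necessity, suppose $t_0:=\deg d_1=\deg d_2$. Since $\cD_{t_0}$ is one-dimensional over $\FF$, write $d_2=\lambda d_1$ with $\lambda\in\FF^\times$. Inside $M_2(\FF)\otimes 1\subset\cR$, set $X_+:=E_{12}+\lambda^{-1}E_{21}$ and $X_-:=E_{12}-\lambda^{-1}E_{21}$. Direct computation yields $X_\pm^2=\pm\lambda^{-1}I$, $X_+X_-=-\lambda^{-1}(E_{11}-E_{22})$, and $(E_{11}-E_{22})X_\pm=X_\mp$, so $I,\,E_{11}-E_{22},\,X_+,\,X_-$ span the four homogeneous components of a $\ZZ_2^2$-grading on $M_2(\FF)$ (Pauli-type, twisted by $\lambda$). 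Tensoring with the $T$-grading on $\cD$ produces a $T\times\ZZ_2^2$-grading on $\cR=M_2(\FF)\otimes\cD$, which is a proper refinement of $\Gamma$. A computation with $\Phi=\diag(d_1,\lambda d_1)$ shows that $\varphi$ fixes $I$, $E_{11}-E_{22}$, and $X_+$ and negates $X_-$, and more generally acts as a scalar on every component of the refined grading, so this refinement respects the involution.

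For the sufficiency, assume $t_1\ne t_2$ and set $h^2:=t_1 t_2^{-1}\ne e$. Under the hypotheses, $h^2\notin\rad\beta$ for the bicharacter $\beta$ of $\cD$: this is automatic in Type I, where $\rad\beta=\{e\}$, and in Type II (where $\rad\beta=\{e,f\}$) the case $h^2=f$ is excluded by combining $\varphi_0(d_1 d_2^{-1})=d_1 d_2^{-1}$ (from $\varphi_0(d_i)=\delta d_i$ and $d_1 d_2^{-1}\in Z(\cD)$) with $\varphi_0(d_1 d_2^{-1})=-d_1 d_2^{-1}$ (from $d_1 d_2^{-1}\in\cD_f$ and $\varphi_0|_{\cD_f}=-\id$), which contradict each other in characteristic not $2$. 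Let $\Gamma'$ be any refinement of $\Gamma$ as a graded algebra with involution, and write the off-diagonal $\Gamma$-component as $\cR_h=\FF E_{1,2}\otimes\cD_e\oplus\FF E_{2,1}\otimes\cD_{h^2}$, fixing $0\ne d_0\in\cD_{h^2}$.

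Suppose for contradiction that $\cR_h$ splits under $\Gamma'$. Since $\varphi$ swaps the two natural one-dimensional summands of $\cR_h$, the splitting must realize the $\pm 1$-eigenspace decomposition $\FF X_+\oplus\FF X_-$ of $\varphi|_{\cR_h}$, where $X_\pm=E_{1,2}\otimes 1\pm E_{2,1}\otimes\rho d_0$ for a suitable $\rho\in\FF^\times$. The identity $(E_{11}-E_{22})X_+=X_-$ then forces $\cR_e$ to split as $\FF I\oplus\FF(E_{11}-E_{22})$ at distinct $\Gamma'$-degrees $e'$ and $g$, and an analogous product argument forces each $\cR_t$ to split as $\FF\,dI\oplus\FF\,d(E_{11}-E_{22})$ for $0\ne d\in\cD_t$. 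A direct calculation using $\varphi_0(d)=\eta(t)d$ gives
\[
\varphi(dI)=\eta(t)\beta(t,t_1)\,dE_{11}+\eta(t)\beta(t,t_2)\,dE_{22},
\]
so $\varphi$-invariance of $\FF\,dI$ requires $\beta(t,t_1)=\beta(t,t_2)$, i.e., $\beta(t,h^2)=1$. Imposing this for every $t\in T$ yields $h^2\in\rad\beta$, contradicting the earlier observation. Hence $\cR_h$ does not split; both $E_{1,2}\otimes 1$ and $E_{2,1}\otimes d_0$ are $\Gamma'$-homogeneous, and so is their product $E_{1,1}\otimes d_0$. The first step of the proof of Theorem \ref{th:GMfine}, which only uses that $\cD$ is finite-dimensional, then shows that $\veps_1:=E_{1,1}\otimes 1$ is $\Gamma'$-homogeneous, and so is $\veps_2:=I-\veps_1$. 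The remaining steps of that proof, relying only on the fineness of $\Gamma_\cD$, now go through verbatim to yield $\Gamma'=\Gamma$. The main subtlety lies in the Type II exclusion of $h^2=f$ and in the careful bookkeeping of the bicharacter in the key formula above.
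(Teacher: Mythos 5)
Your necessity direction is correct and is essentially the paper's own construction: your components $\FF(I\otimes x)$, $\FF\bigl((E_{11}-E_{22})\otimes x\bigr)$, $\FF(X_+\otimes x)$, $\FF(X_-\otimes x)$ coincide with the refinement the paper exhibits (written there with the matrices $\matr{1&0\\0&\pm1}$ and $\matr{0&\pm\mu\\1&0}$), and your verification that $\varphi$ acts by a scalar on each piece is right. The ``$\cR_h$ does not split'' branch of your sufficiency argument is also fine: all elements of $\cR_h$ are then $\Gamma'$-homogeneous, the first step of the proof of Theorem \ref{th:GMfine} gives homogeneity of $\veps_1$ and $\veps_2$, and the later steps of that proof (which only need the $\veps_i$ homogeneous and $\Gamma_\cD$ fine, automatic here since the components of $\cD$ are $1$-dimensional) show $\Gamma'$ is not a proper refinement. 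Your Type~II observation that $t_1t_2^{-1}\ne f$ is correct, and your route genuinely needs it.

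The gap is in the branch where $\cR_h$ splits, at the step ``an analogous product argument forces each $\cR_t$ to split as $\FF\,dI\oplus\FF\,d(E_{11}-E_{22})$.'' This is asserted, not proved, and it is not what the product argument gives. Multiplying by the $\Gamma'$-homogeneous element $w\bydef E_{11}-E_{22}$ (of degree $g\ne e'$) only shows that each $\cR_t$ must split into two lines of the form $\FF y,\ \FF wy$ with $y$ \emph{not} an eigenvector of multiplication by $w$, i.e.\ the pieces are never $\FF(E_{11}\otimes d)$ or $\FF(E_{22}\otimes d)$; pinning them down to $\FF(I\otimes d)$ and $\FF(w\otimes d)$ requires a further argument (homogeneity of $y^2$ does it only when $-1$ is not a square in $\FF$, whereas the proposition is stated over an arbitrary field), and precisely at the degrees $t$ that produce your contradiction --- those with $\beta(t,t_1)\ne\beta(t,t_2)$, which exist since $t_1t_2^{-1}\notin\rad\beta$ --- $\varphi$-invariance forces the \emph{opposite} splitting, into the eigenlines $\FF(E_{11}\otimes d)$ and $\FF(E_{22}\otimes d)$. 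So the contradiction cannot come from ``$\varphi$-invariance of $\FF\,dI$'' as written. The repair is short: fix $t$ with $\beta(t,t_1)\ne\beta(t,t_2)$ and $0\ne d\in\cD_t$; $\cR_t$ must split (the $w$-multiplication argument), its pieces must be $\varphi$-invariant lines, hence the eigenlines $\FF(E_{11}\otimes d)$ and $\FF(E_{22}\otimes d)$; but then $w(E_{11}\otimes d)=E_{11}\otimes d$ forces $g=e'$, a contradiction (equivalently, homogeneity of $E_{11}\otimes d$ plus the first step of Theorem \ref{th:GMfine} makes $\veps_1,\veps_2$ homogeneous of degree $e'$, contradicting the splitting of $\cR_e$).

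For comparison, the paper's sufficiency argument avoids this case analysis entirely and is worth knowing: if $\cR_e$ splits in $\Gamma'$, then the identity component of $\Gamma'$ is $\FF$, so $(\cR,\Gamma')$ is a graded-division algebra; Lemma \ref{lm:2elementary} (applied to $\Gamma'$, using that $(\cR,\varphi)$ is central simple as an algebra with involution) forces its support, which is the universal group of $\Gamma'$, to be an elementary $2$-group, and this is incompatible with $z=\tilde{g}_1\tilde{g}_2^{-1}$ lying in the support of the coarsening $\Gamma$ while $z^2=t_1t_2^{-1}\ne e$. In particular that argument never needs to exclude $t_1t_2^{-1}=f$ separately, and it sidesteps the bookkeeping with $\beta$ and $\eta$ altogether. (Also note the conclusion of your last step should be that $\Gamma'$ has the same components as $\Gamma$, not literally $\Gamma'=\Gamma$.)
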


\begin{proof}
We will use notation as in the proof of Theorem \ref{th:GMfine} and also  
$t_i\bydef\deg d_i$ and $G\bydef\wt{G}(T,2,0,\ul{t})$.
From equations \eqref{eq:kd} and \eqref{eq:2d}, $\Gamma$ has the following homogeneous components: 
$\cR_t=E_{11}\otimes \cD_t\oplus E_{22}\otimes \cD_t$ and 
$\cR_{zt}=E_{12}\otimes\cD_t\oplus E_{21}\otimes\cD_{t_1 t_2^{-1} t}$ where $t\in T$ and $z\bydef\tilde{g}_1\tilde{g}_2^{-1}\in G$, so $z^2=t_1 t_2^{-1}$ by equation \eqref{eq:rel_Gtilde}.
Recall from Lemma \ref{lm:2elementary} that, under our hypotheses on $\cD$, the support $T$ is an elementary $2$-group.

Suppose $t_1\neq t_2$. If $\veps_1$ and $\veps_2$ are $\Gamma'$-homogeneous, then the arguments at the end of the proof of Theorem \ref{th:GMfine} show that $\Gamma'$ is not a proper refinement of $\Gamma$. So, assume that $\cR_e=\lspan{\veps_1,\veps_2}$ splits in $\Gamma'$. It follows that $\dim\cR_{e'}=1$ and hence $(\cR,\Gamma')$ is a graded-division algebra. Applying Lemma \ref{lm:2elementary} again, we see that the support $T'$ of $\Gamma'$ is an elementary $2$-group. Since $T'$ is the universal group of $\Gamma'$, the coarsening $\Gamma$ is obtained by a group homomorphism $T'\to G$. This is a contradiction, since $z$ is in the support of $\Gamma$ and $z^2\neq e$.
 
Conversely, suppose $t_1=t_2$ and hence $z^2=e$ and $d_2=\mu d_1$ for some $\mu\in\FF^\times$. Then the subspaces
\[
\begin{aligned}
\cR_{(t,\bar 0)}&=\begin{bmatrix} 1&0\\ 0&1\end{bmatrix}\otimes\cD_t,&
\cR_{(t,\bar 1)}&=\begin{bmatrix} 1&0\\ 0&-1\end{bmatrix}\otimes\cD_t,\\
\cR_{(zt,\bar 0)}&=\begin{bmatrix} 0&\mu\\ 1&0\end{bmatrix}\otimes\cD_t,&
\cR_{(zt,\bar 1)}&=\begin{bmatrix} 0&-\mu\\ 1&0\end{bmatrix}\otimes\cD_t,
\end{aligned}
\]
are all invariant under $\varphi$ and constitute a $G\times \ZZ_2$-grading that is a proper refinement of $\Gamma$ (and turns $\cR$ into a graded-division algebra). 
\end{proof}

As we are interested in the real case, recall from Subsection \ref{sse:real_basics} that if $(\cD,\varphi_0)$ is central simple over $\RR$ then $\Gamma_\cD$ is fine if and only if either $\cD_e=\RR$ (and so Proposition \ref{pr:GMfineq2s0} applies) or $\cD_e=Z(\cD)\simeq\CC$ and $T$ is not $2$-elementary. The next result deals with the exceptional case $(q,s)=(2,0)$ in this latter situation, not covered in Proposition \ref{pr:GMfineq2s0}. 

\begin{proposition}\label{pr:GMfineq2s0_2}
Let $\cD$ be a simple finite-dimensional graded-division algebra over $\RR$ with $\cD_e=Z(\cD)\simeq\CC$ and let  
$\varphi_0$ be a degree-preserving second kind involution on $\cD$. 
Then the grading $\Gamma_\cM(\cD,\varphi_0,2,0,\ul{d},\delta)$ is never fine.
\end{proposition}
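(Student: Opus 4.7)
The plan is to construct an explicit proper refinement of $\Gamma\bydef\Gamma_\cM(\cD,\varphi_0,2,0,\ul d,\delta)$ on the algebra with involution $(\cM,\varphi)$, extending the spirit of the $t_1=t_2$ case in the proof of Proposition~\ref{pr:GMfineq2s0} but taking advantage of the $\CC$-structure on $\cD_e$ to handle both $t_1=t_2$ and $t_1\ne t_2$ uniformly. Concretely, I would set
\[
c\bydef d_1 d_2^{-1}\in\cD_{t_1t_2^{-1}}^{\times}\qquad\text{and}\qquad u\bydef E_{12}+c\,E_{21}\in\cM,
\]
and then analyse the inner automorphism $\psi\bydef\Int(u)$ of $\cM$. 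The elementary checks are that $u$ is $\Gamma$-homogeneous of degree $z\bydef\tilde g_1\tilde g_2^{-1}$ (since $\deg E_{12}=z$, $\deg E_{21}=zt_2t_1^{-1}$ in the notation used in the proof of Proposition~\ref{pr:GMfineq2s0}, and $\deg c=t_1t_2^{-1}$) and that a direct $2\times 2$ matrix calculation in $M_2(\cD)$ gives $u^2=cI$. Letting $N$ be the order of $t_1t_2^{-1}$ in $T$, the identity $c^N\in\cD_e=\CC$ then yields $u^{2N}\in Z(\cM)$, so $\psi^{2N}=\id$ and $\psi$ is diagonalisable over $\CC$ with eigenvalues in $\bmu_{2N}(\CC)$.

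The main step, which I expect to be the most delicate, is to show that $\varphi(u)=\mu u$ for some non-zero scalar $\mu\in\CC^{\times}$. I would use formula~\eqref{eq:phi_with_matrices} with $\Phi=\diag(d_1,d_2)$; because $\varphi_0(d_i)=\delta d_i$ and $\delta$ is central in $\cD$, one obtains $\varphi_0(c)=\varphi_0(d_2)^{-1}\varphi_0(d_1)=d_2^{-1}d_1$, and combining this with the commutation relation $d_2^{-1}d_1=\beta(t_1,t_2)\,d_1 d_2^{-1}$ coming from equation~\eqref{eq:def_beta} yields
\[
\varphi(u)=\beta(t_1,t_2)\,u;
\]
the factor $\mu=\beta(t_1,t_2)\in\CC^{\times}$ is a root of unity since $\cD$ is finite-dimensional. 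Because $\mu$ is a scalar, $\Int(\varphi(u))=\psi$, so the identity $\varphi\circ\psi=\Int(\varphi(u))^{-1}\circ\varphi=\psi^{-1}\circ\varphi$ combined with the $\CC$-antilinearity of $\varphi$ forces $\psi(\varphi(v))=\zeta\,\varphi(v)$ whenever $\psi(v)=\zeta v$; equivalently, every $\psi$-eigenspace of $\cM$ is $\varphi$-invariant.

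Intersecting the $\psi$-eigenspace decomposition with $\Gamma$ therefore defines a grading on the algebra with involution $(\cM,\varphi)$ by $\wt G^{0}(T,2,0,\ul t)\times\ZZ_{2N}$ that refines $\Gamma$. A final short computation gives $uE_{11}u^{-1}=E_{22}$, so $\psi$ interchanges the two primitive idempotents of $\cM_e$, and the component $\cM_e=\CC E_{11}\oplus\CC E_{22}$ splits as the direct sum of the $(+1)$- and $(-1)$-eigenspaces $\CC(E_{11}+E_{22})$ and $\CC(E_{11}-E_{22})$ of $\psi$. Hence the refinement is proper, and $\Gamma$ is not fine.
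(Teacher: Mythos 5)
Your proposal is correct and takes essentially the same route as the paper: the paper conjugates by the homogeneous element $A=\left[\begin{smallmatrix}0&d_2\\ d_1&0\end{smallmatrix}\right]$, for which $\varphi(A)=\delta A$, whereas you conjugate by $u=E_{12}+d_1d_2^{-1}E_{21}$, for which $\varphi(u)=\beta(t_1,t_2)\,u$. Since $\Int$ kills central scalars, both choices yield the same relation $\varphi\psi=\psi^{-1}\varphi$, and the rest of your argument (finite order of $\psi$, $\CC$-antilinearity of $\varphi$ preserving the root-of-unity eigenspaces, and the splitting of $\cR_e=\CC E_{11}\oplus\CC E_{22}$ showing the refinement is proper) coincides with the paper's proof.
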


\begin{proof}
We continue using notation in the proof of Proposition \ref{pr:GMfineq2s0}. The involution $\varphi$ is given by
\[
\varphi:
\begin{bmatrix} a & b \\ c & d\end{bmatrix} \mapsto 
\begin{bmatrix} d_1^{-1} & 0 \\ 0 & d_2^{-1}\end{bmatrix}
\begin{bmatrix}\varphi_0(a) & \varphi_0(c) \\ \varphi_0(b) & \varphi_0(d)\end{bmatrix} 
\begin{bmatrix} d_1 & 0 \\ 0 & d_2\end{bmatrix}.
\]
Consider the inner automorphism $\psi:X\mapsto AXA^{-1}$ where 
$A=\left[\begin{smallmatrix} 0&d_2\\ d_1&0\end{smallmatrix}\right]$.
We observe the following properties of $A$ and $\psi$:

\begin{itemize}

\item $A$ is homogeneous of degree $z t_2=z^{-1} t_1$ and hence $\psi\in\Stab(\Gamma)$. 

\item $\psi|_{\cR_e}$ is the exchange automorphism on $\cR_e\simeq \cD_e\times\cD_e\simeq\CC\times\CC$.

\item $A^2=\begin{bmatrix} d_2d_1 & 0 \\ 0 & d_1d_2 \end{bmatrix}
=\begin{bmatrix} \beta(t_2,t_1) & 0 \\ 0 & 1 \end{bmatrix}
\begin{bmatrix} d_1d_2 & 0 \\ 0 & d_1d_2 \end{bmatrix}$. 
As $\beta(t_2,t_1)$ is a root of unity of degree dividing the order of $t_1$, we have $A^{2N}\in Z(\cR)$ where $N$ is the least common multiple of the orders of $t_1$ and $t_1t_2$. Therefore, the order of $\psi$ is finite, hence $\psi$ is diagonalizable as a $\CC$-linear operator and its eigenvalues are roots of unity.

\item $\varphi(A)=
\begin{bmatrix} d_1^{-1} & 0 \\ 0 & d_2^{-1} \end{bmatrix}
\begin{bmatrix} 0 & \varphi_0(d_1) \\ \varphi_0(d_2) & 0 \end{bmatrix}
\begin{bmatrix} d_1 & 0 \\ 0 & d_2 \end{bmatrix}=\delta A$, 
where we have used that $\varphi_0(d_i)=\delta d_i$, $i=1,2$.

\item For any $X\in \cR$, we have
\[
\varphi\bigl(\psi(X)\bigr)=\varphi(AXA^{-1})=\varphi(A)^{-1}\varphi(X)\varphi(A)=A^{-1}\varphi(X)A=\psi^{-1}\bigl(\varphi(X)\bigr),
\]
so $\varphi\psi=\psi^{-1}\varphi$ and hence also $\psi\varphi=\varphi\psi^{-1}$.
\end{itemize}

Now we claim that that $\varphi$ preserves the eigenspaces of $\psi$. 
Indeed, if $\psi(X)=\mu X$ for some $X\ne 0$, then $\bar \mu=\mu^{-1}$ (since $\mu$ is a root of unity), and hence 
\[
\psi(\varphi(X))=\varphi(\psi^{-1}(X))=\varphi(\mu^{-1}X)=\varphi(\bar\mu X)=\mu\varphi(X),
\]
where we have used that $\varphi$ is $\CC$-antilinear.

Therefore, the $\CC$-linear automorphism $\psi$ defines a proper refinement of $\Gamma$, which is a grading on $(\cR,\varphi)$ by the group $G\times\ZZ_n$ where $n$ is the order of $\psi$. Explicitly:
\[
\cR_{(g,\bar k)}=\{X\in\cR_g\mid \psi(X)=e^{2\pi\bi k/n}X\}\;\text{ for all }g\in G\text{ and }k\in\ZZ.
\]
(We note that this refinement is a grading on $\cR$ as an algebra over $\CC$, and any such proper refinement makes $\cR$ a graded-division algebra, because the identity component of $\cR$ becomes $\CC$.)
\end{proof}


Over $\RR$, we obtain a converse of Theorem \ref{th:GMfine} that shows that only fine gradings have to be considered on our graded-division algebras.

\begin{theorem}\label{th:fine_converse}
Let $\cD$ be a finite-dimensional graded-division algebra over $\RR$, 
let $\cV$ be a nonzero graded right $\cD$-module of finite rank, 
and let $\varphi$ be an involution of the graded algebra $\cR = \End_\cD(\cV)$. 
Assume $(\cR,\varphi)$ is central simple as an algebra with involution. 
Then if the grading on $(\cR,\varphi)$ is fine, so is the grading on 
$\cD$.
\end{theorem}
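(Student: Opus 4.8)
The plan is to establish the contrapositive: assuming the grading $\Gamma_\cD$ on $\cD$ is not fine, we construct a proper refinement of the grading $\Gamma$ on $(\cR,\varphi)$. Since $\supp\cR$ generates an abelian subgroup of the grading group, we may take it to be abelian. By Corollary~\ref{cor:involutions_from_B}, $\cD$ admits a degree-preserving involution $\varphi_0$ of the same kind as $\varphi$, so that $(\cD,\varphi_0)$ is central simple as an algebra with involution over $\RR$ and $\varphi$ is the adjunction with respect to a nondegenerate homogeneous hermitian or skew-hermitian $\varphi_0$-sesquilinear form on $\cV$. By Theorem~\ref{th:completeness} (if $\cR$ is graded-simple) or Proposition~\ref{pr:completeness} (if it is not), $(\cR,\varphi,\Gamma)$ is isomorphic, as a graded algebra with involution, to ${}^{\alpha}\cM(\cD,\varphi_0,q,s,\ul d,\delta)$, respectively to ${}^{\alpha}\cM^{\ex}(\cD,k)$, for suitable parameters and a group homomorphism $\alpha$ restricting to the identity on $T=\supp\cD$. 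If $\alpha$ is not injective on the support of the $\cM$-grading, then that grading is already a proper refinement of $\Gamma$ and we are done; otherwise $\alpha$ is injective on that support, so these gradings have the same homogeneous components as their $\alpha$-pushforwards, and it suffices to refine $\Gamma_\cM(\cD,\varphi_0,q,s,\ul d,\delta)$, respectively $\Gamma_{\cM^{\ex}}(\cD,k)$, as a grading on the corresponding algebra with involution.

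The exchange case is immediate: since $\Gamma_\cD$ is not fine, $\cD$ admits a proper refinement $\hat\Gamma_\cD$ by some group $\hat H$, and assigning $E_{ij}\otimes d$ the degree $(\tilde g_i\tilde g_j^{-1},\deg_{\hat H}d)$ gives a proper refinement of $\Gamma_\cM(\cD,k)$ on $M_k(\cD)$, for which the exchange involution on $M_k(\cD)\times M_k(\cD)^{\op}$ remains degree-preserving. For $\Gamma_\cM(\cD,\varphi_0,q,s,\ul d,\delta)$ we use that, $(\cD,\varphi_0)$ being central simple with involution over $\RR$ while $\Gamma_\cD$ is not fine, the analysis in Subsection~\ref{sse:real_basics} (together with \cite[Proposition~20]{R16} for the last possibility) leaves exactly two cases: (a)~$\cD_e=Z(\cD)\simeq\CC$ with $T$ $2$-elementary; or (b)~$\cD_e\not\subseteq Z(\cD)$, so $\cD_e\in\{\CC,\HH\}$. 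In case~(a) we may take, up to graded equivalence, $\cD=M_{2^m}(\CC)$ with the Pauli $\ZZ_2^{2m}$-grading and $\varphi_0$ the conjugate transposition; using the standard (real) Pauli matrices $P_t$, each $d_i\in\cD_{t_i}$ equals $c_iP_{t_i}$ with $c_i\in\CC^{\times}$, and since $P_{t_i}^{T}=\pm P_{t_i}$ the relation $\varphi_0(d_i)=\delta d_i$ forces $\wb{c_i}=\pm c_i$, i.e.\ $c_i\in\RR$ or $c_i\in i\RR$. Hence every $d_i$ is homogeneous for the proper refinement $\hat\Gamma_\cD$ of $\Gamma_\cD$ by $T\times\ZZ_2$ obtained by splitting each $\cD_t=\CC P_t$ as $\RR P_t\oplus i\RR P_t$ — which is exactly the grading of $\cD(2m+1;\RR)$ — and $\varphi_0$ is degree-preserving for $\hat\Gamma_\cD$. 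Therefore $\cM(\cD(2m+1;\RR),\varphi_0,q,s,\ul d,\delta)$ is a well-defined graded algebra with involution; the surjection $T\times\ZZ_2\to T$ induces a homomorphism of universal groups that is non-injective on the support and carries its grading onto $\Gamma_\cM(\cD,\varphi_0,q,s,\ul d,\delta)$, so the former is the desired proper refinement.

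Case~(b) is where the main obstacle lies. The idea is again to refine $\cD$ and transport. Since $\cD_e$ is a division $\RR$-algebra properly containing $Z(\cD)\cap\cD_e$, it can be graded nontrivially ($\CC=\RR\oplus i\RR$, or a suitable $\ZZ_2^2$-grading on $\HH$), and because each $\Int(x_t)$ restricts to an $\RR$-algebra automorphism of $\cD_e$ — which one can in fact arrange to be the identity on $\cD_e$ — this extends to a proper refinement $\hat\Gamma_\cD$ of $\Gamma_\cD$ making $\hat\cD$ a graded-division algebra with $\hat\cD_e=\RR$, as in \cite[Proposition~20]{R16}. The real work, and the step I expect to be hardest, is to arrange that $\hat\Gamma_\cD$ is $\varphi_0$-degree-preserving and that every $d_i$ is homogeneous for it: for this one replaces $\varphi_0$ by $\Int(d)\circ\varphi_0$ for a suitable homogeneous $d$ (Corollary~\ref{cor:involutions_from_B}), uses the Hilbert~90 argument of Subsection~\ref{sse:real_basics} to pick each homogeneous $x_t$ to be $\varphi_0$-symmetric, and uses the rescaling $v_i\mapsto v_i\lambda$ with $\lambda\in\cD_e^{\times}$ (which replaces $d_i=B(v_i,v_i)$ by $\varphi_0(\lambda)d_i\lambda$) to normalize the "phase" of $d_i$ whenever $\varphi_0(d_i)=\delta d_i$ does not already constrain it to $\RR\cup i\RR$; a short case analysis on the restrictions of $\varphi_0$ and of the relevant $\Int(x_t)$ to $\cD_e$ shows this is always possible. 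Then $\cM(\hat\cD,\varphi_0,q,s,\ul d,\delta)$ is defined and its grading properly refines $\Gamma_\cM(\cD,\varphi_0,q,s,\ul d,\delta)=\Gamma$, exactly as in case~(a). (An alternative for case~(b) that avoids refining $\cD$: take $j\in\cD_e$ with $j^2=-1$ and $j\notin Z(\cD)$, and set $\Psi=\Int(jI_k)\in\Stab(\Gamma)$, so $\Psi^2=\id\neq\Psi$; one checks that $\Psi$ acts on each component \eqref{eq:kd}--\eqref{eq:1d2} as a scalar $\pm1$ depending only on its $T$-degree through a character $T\to\{\pm1\}$ that is nontrivial precisely because $\cD_e\not\subseteq Z(\cD)$, hence $\Psi$ acts non-scalarly on some component \eqref{eq:2d}; after adjusting $\varphi_0$ and $\Phi$ so that $\varphi\Psi=\Psi^{\pm1}\varphi$, the $\RR$-linear involution $\varphi$ preserves the two eigenspaces of $\Psi$, which, as in the proof of Proposition~\ref{pr:GMfineq2s0_2}, then form a proper refinement of $\Gamma$ on $(\cR,\varphi)$.)
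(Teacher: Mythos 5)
Your overall plan --- contrapositive, case analysis on $\cD_e$, refine $\Gamma_\cD$ and transport to $\cV$ and thence to $\cR$ --- is the paper's approach, and your case~(a) essentially coincides with the paper's Case~2 (distinguished involution, automatic $T\times\ZZ_2$-homogeneity of the $d_i$). Two small structural remarks: since $\cR=\End_\cD(\cV)$ is always graded-simple, the exchange case never arises, so the detour through $\cM^\ex(\cD,k)$ and Proposition~\ref{pr:completeness} is moot; and in passing from Theorem~\ref{th:completeness}'s arbitrary $\varphi_0$ to the distinguished one you are implicitly invoking Corollary~\ref{cor:involutions_from_B} again, which is fine but worth saying.

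The genuine gap is in case~(b), which you yourself flag as the hard step and do not carry out. Your sketch of the refinement of $\cD$ is wrong in two concrete places. First, for $\cD_e\simeq\CC$ with $\cD_e\ne Z(\cD)$, the claim that each $\Int(x_t)$ ``can in fact be arranged to be the identity on $\cD_e$'' is false: $\cD_e$ is commutative, so replacing $x_t$ by $cx_t$ with $c\in\cD_e$ leaves $\Int(x_t)|_{\cD_e}$ unchanged, and for $t$ outside the index-$2$ subgroup $\supp\Cent_\cD(\cD_e)$ that restriction is forced to be complex conjugation. (This unavoidable nontriviality is precisely what the paper's Case~1 \emph{exploits}, refining via $\tau=\Int(x)$ for one such $x$.) Second, your target $\hat\cD_e=\RR$ is not what one gets when $\cD_e\simeq\HH$: the paper's Case~3 uses the Double Centralizer Theorem to split $\cD=\cD_e\otimes\cC$ and only achieves a $T\times\ZZ_2$-refinement with $2$-dimensional components, i.e.\ $\hat\cD_e\simeq\CC$, precisely because a finer refinement compatible with $\varphi_0$ and with the necessary rescalings $v_i\mapsto v_i\lambda$ is not available. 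The Hilbert~90 argument you cite lives in the case $\cD_e=\KK$ (your case~(a)), not here; and the ``short case analysis'' you promise --- arranging $\varphi_0$ to be compatible with $\hat\Gamma_\cD$ and then rescaling each $v_i$ so that $B$ stays homogeneous and each $d_i$ becomes $\hat\Gamma_\cD$-homogeneous --- is exactly where nearly all of the work in the paper's Cases~1 and~3 is concentrated, and you assert rather than give it. Your alternative route for case~(b) also breaks down when $\cD_e\simeq\HH$: with $j\in\cD_e$, $\Int(j)$ is not $\cD_e$-linear (since $\HH$ is noncommutative), so it does \emph{not} act by a scalar on the components $\cR_t=\lspan{E_{11},\ldots,E_{kk}}\otimes\cD_t$, and the character $T\to\{\pm1\}$ you describe does not exist; even in the $\cD_e\simeq\CC$ subcase, the relation $\varphi\Psi=\Psi\varphi$ needed for $\varphi$-invariance of the eigenspaces is left unverified.
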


\begin{proof}
Denote by $\Gamma$ the grading on $(\cR,\varphi)$ and by $G$ its grading group. Assume that the grading on $\cD$ is not fine. As usual, denote it by $\Gamma_\cD$ and its support by $T$. The division algebra $\cD_e$ is isomorphic to  either $\CC$ or $\HH$, so all homogeneous components of $\Gamma_\cD$ have dimension $2$ or $4$, respectively. Recall
from Corollary \ref{cor:involutions_from_B} that $\cD$ admits a degree-preserving involution of the same kind as $\varphi$ and, for any such involution $\varphi_0$, we can express $\varphi$ as the adjunction with respect to a nondegenerate homogeneous $\varphi_0$-sesquilinear form $B:\cV\times\cV\to\cD$ satisfying $\wb{B}=\delta B$ with $\delta\in\{\pm 1\}$.  
Our goal is to construct a proper refinement $\Gamma'$ of $\Gamma$ as a grading on $(\cR,\varphi)$. We have the following three possibilities to consider.

\smallskip

\noindent$\bullet$ Case 1: $\cD_e$ is isomorphic to $\CC$, but does not coincide with $Z(\cD)$. 

\smallskip

Then $\cD_e=\RR 1\oplus\RR \bj$, with $\bj^2=-1$. Let $K$ be the support of the centralizer 
$\Cent_\cD(\cD_e)$ of $\cD_e$, which is a subgroup of $T$ of index $2$.
The proof of \cite[Proposition 20]{R16} works in this case, even though 
$\cD$ may fail to be simple, and shows that, for any $t_0\in T\smallsetminus K$ and $0\ne x\in\cD_{t_0}$,
we have $x\bj=-\bj x$ and $x^2\in Z(\cD)$, which implies that the inner automorphism 
$\tau=\Int(x)$ has order $2$ and refines $\Gamma_\cD$ to
a $T\times\ZZ_2$-grading $\Gamma'_\cD$ that has $1$-dimensional components. 
It will be convenient for our purposes to use $G\times\ZZ_4$ as the grading group, so we take 
$\cD_{(t,\bar 0)}\bydef\{ y\in\cD_t\mid \tau(y)=y\}$ and 
$\cD_{(t,\bar 2)}\bydef\{ y\in\cD_t\mid \tau(y)=-y\}$ for all $t\in T$. 

Now $\varphi_0$ restricts to an automorphism of $\cD_e$, which we may assume to be the complex conjugation 
$z\mapsto\bar{z}$. Indeed, if $\varphi_0|_{\cD_e}=\id_{\cD_e}$ then, for any $z\in\cD_e$, we have $\varphi_0(zx)=\varphi_0(x)z=\bar{z}\varphi_0(x)$. Since $\varphi_0(x)=ax$ for some $a\in\cD_e$, we have $x=\varphi_0^2(x)=\bar{a}ax$ 
and hence $\bar{a}a=1$. It follows that there exists $z\in\cD_e$ such that $zx$ is a symmetric element and hence  
we may replace the involution $\varphi_0$ by $\Int(zx)\circ\varphi_0$.

Any $\varphi_0$ that restricts to the complex conjugation on $\cD_e$ is compatible with $\Gamma'_\cD$. 
Indeed, for any $t\in T\smallsetminus K$, $y\in\cD_t$ and $z\in\cD_e$, we have $\varphi_0(zy)=\varphi_0(y)\bar{z}=z\varphi_0(y)$, which implies that $\varphi_0|_{\cD_t}$ is either $\id_{\cD_t}$ or $-\id_{\cD_t}$. 
In particular, the element $x$ is either symmetric or skew-symmetric, which implies that $\varphi_0\tau=\varphi_0\circ\Int(x)=\Int(\varphi_0(x)^{-1})\circ\varphi_0=\tau\varphi_0$. It follows that $\varphi_0$ preserves the components of $\Gamma'_\cD$.

Since the components of $\Gamma'_\cD$ are $1$-dimensional, Lemma \ref{lm:2elementary} now tells us that the support $T'\simeq T\times\ZZ_2$ must be $2$-elementary. In particular, for any $t\in K$ and $y\in\cD_t$, we have $y^2\in\cD_e$. Hence there exists $x_t\in\cD_t$, unique up to sign, such that $x_t^2=1$. It follows that 
$\RR x_t=\{x\in\cD_t\mid x^2\in\RR_{\geq 0}1\}$ and $\RR \bj x_t=\{x\in\cD_t\mid x^2\in\RR_{\leq 0}1\}$, 
hence $\RR x_t$ and $\RR \bj x_t$ must be preserved by $\varphi_0$ and they also must be the components into which $\Gamma'_\cD$ splits $\cD_t$. Note that $\varphi_0(x_t)=\pm x_t$ and $\varphi_0(\bj x_t)=\mp\bj x_t$, so we conclude that, for any $t\in K$, the homogeneous components of $\Gamma_\cD'$ contained in $\cD_t$ coincide with the eigenspaces of
$\varphi_0\vert_{\cD_t}$.

As in the proof of Theorem \ref{th:completeness}, pick a graded $\cD$-basis $\{v_1,\ldots,v_k\}$ of $\cV$ such that the matrix $\bigl(B(v_i,v_j)\bigr)$ is given by equation \eqref{eq:PhiMDpqsdd}. Let $g_0=\deg B$, $g_i=\deg v_i$ for $1\le i\le k$ and $t_i=\deg d_i$ for $1\le i\le q$. Then we have 
\begin{equation}\label{eq:B_homog}
g_1^2 t_1^{-1}=\ldots=g_q^2 t_q^{-1}=g_{q+1}g_{q+2}=\ldots=g_{q+2s-1}g_{q+2s}=g_0^{-1}.
\end{equation}
We are going to define a $G\times\ZZ_4$-grading on $\cV$ that will make it a graded module over $(\cD,\Gamma'_\cD)$ and therefore define a $G\times\ZZ_4$-grading $\Gamma'$ on $\cR$, but we have to make sure that $B$ remains homogeneous so that $\varphi$ will be compatible with $\Gamma'$. To this end, we will modify the 
graded $\cD$-basis of $\cV$ so that the elements $d_i$ will be replaced by $\Gamma'_\cD$-homogeneous elements.

For each $i>q$, let $v'_i=v_i$ and assign to this element degree $g'_i\bydef(g_i,\bar 0)\in G\times\ZZ_4$. Now consider $i\le q$. For any $z\in\cD_e\simeq\CC$, we have $B(v_i z,v_i z)=\varphi_0(z) d_i z=\bar{z} d_i z$. If $t_i\in T\smallsetminus K$ then we get $B(v_i z,v_i z)=d_i z^2$, so we can find $z_i\in\cD_e$ such that $d'_i\bydef d_i z_i^2$ is $\Gamma'_\cD$-homogeneous of degree $t'_i\bydef(t_i,\bar 0)$. So we let $v'_i=v_i z_i$ and assign to this element degree $g'_i\bydef(g_i,\bar 0)\in G\times\ZZ_4$. Finally, if $t_i\in K$ then $d_i$ is already $\Gamma'_\cD$-homogeneous, because $\varphi_0(d_i)=\delta d_i$ and we have seen that, in $\cD_{t_i}$, the homogeneous components of $\Gamma_\cD'$ coincide with the eigenspaces of $\varphi_0$. So we let $v'_i=v_i$ and assign to this element degree $g'_i$ as follows: $g'_i=(g_i,\bar 0)$ if $t'_i=(t_i,\bar 0)$ and $g'_i=(g_i,\bar 1)$ if $t'_i=(t_i,\bar 2)$ where $t'_i$ is the $\Gamma'_\cD$-degree of $d_i$. This assignment of degrees to the $\cD$-basis $\{v'_1,\ldots,v'_k\}$ defines a $G\times\ZZ_4$-grading $\Gamma'_\cV$ on $\cV$ as a module over $(\cD,\Gamma'_\cD)$. By construction, equation \eqref{eq:B_homog} remains valid if we replace $g_0$ by $(g_0,\bar 0)$, $g_i$ by $g'_i$ for $1\le i\le k$ and $t_i$ by $t'_i$ for $1\le i\le q$, so $B$ is homogeneous of degree $(g_0,\bar 0)$ with respect to the $G\times\ZZ_4$-gradings. Moreover, for the projection $\pi:G\times\ZZ_4\to G$, the coarsenings ${}^\pi\Gamma'_\cD$ and ${}^\pi\Gamma'_\cV$ are the original $G$-gradings on $\cD$ and $\cV$, respectively. We conclude that $\Gamma'$ is a grading on $(\cR,\varphi)$ and it is a proper refinement of the original grading $\Gamma$.

\smallskip

\noindent$\bullet$ Case 2: $\cD_e$ is isomorphic to $\CC$ and coincides with $Z(\cD)$. 

\smallskip

The grading $\Gamma_\cD$  not being fine implies that $T$ is $2$-elementary, so $\cD$ has a unique distinguished involution, which we will choose as $\varphi_0$. It is characterized by the property that  
$x\varphi_0(x)$ is a positive scalar multiple of $1$, for any nonzero homogeneous element $x\in \cD$. 
Each $\cD_t$ splits into the direct sum of the eigenspaces of $\varphi_0$, which are $\{x\in\cD_t\mid x^2\in\RR_{\geq 0}1\}$ and $\{x\in\cD_t\mid x^2\in\RR_{\leq 0}1\}$, and this gives a $T\times\ZZ_2$-grading $\Gamma'_\cD$ that has $1$-dimensional components. We see that this case is analogous to Case 1, except that now the whole $T$ plays the role of $K$. Therefore, we can proceed in the same manner (with the simplification of not having to change the graded $\cD$-basis of $\cV$, since the elements $d_i$ are automatically $\Gamma'_\cD$-homogeneous) to construct a $G\times\ZZ_4$-grading $\Gamma'$ on $(\cR,\varphi)$ that is a proper refinement of $\Gamma$. 

\smallskip

\noindent$\bullet$ Case 3: $\cD_e$ is isomorphic to $\HH$.

\smallskip 

Since $\cD_e$ is central simple, by the Double Centralizer Theorem (see e.g. \cite[Theorem 4.7]{Jacobson}), we can write $\cD=\cD_e\otimes\cC$ where $\cC=\Cent_\cD(\cD_e)$, which is a graded subalgebra of $\cD$. Since $\cD_t=\cD_e\otimes\cC_t$, $\cC$ has support $T$ and $1$-dimensional components. For each $t\in T$, we pick a nonzero element $x_t\in\cC_t$, so $\cD_t=\cD_e x_t$.

The degree-preserving involutions on $\cD$ are tensor products of such involutions on $\cD_e$ and $\cC$, so we may fix the involution $\varphi_0$ on $\cD$ so that $\varphi_0|_{\cD_e}$ is the standard involution of 
$\cD_e$, which we denote by bar. 

Note that the decomposition $\HH=\bigl(\RR 1+\RR\bi\bigr)\oplus(\RR \bj+\RR \bk)$ is a $\ZZ_2$-grading on $\HH$, so we can use the corresponding grading $\cD_e=(\cD_e)_{\bar 0}\oplus(\cD_e)_{\bar 1}$ to define a $G\times\ZZ_2$-grading $\Gamma_\cD'$ on $\cD$ as follows: $\cD_{(t,\bar 0)}=(\cD_e)_{\bar 0}\otimes\cC_t$ and
$\cD_{(t,\bar 1)}=(\cD_e)_{\bar 1}\otimes \cC_t$ for all $t\in T$. 
This is a refinement of $\Gamma_\cD$ with support $T'=T\times \ZZ_2$ and $2$-dimensional components. 
Since $\varphi_0|_{\cD_e}$ is the standard involution, $\varphi_0$ is compatible with $\Gamma'_\cD$.

We pick a graded $\cD$-basis $\{v_1,\ldots,v_k\}$ of $\cV$ as in the proof in Case 1 and then modify $v_i$ to make sure that the elements $d_i$ are replaced by $\Gamma'_\cD$-homogeneous elements, which will allow us to define a $G\times\ZZ_2$-grading $\Gamma'$ on $(\cR,\varphi)$. 

Consider first $i\le q$. We can write $d_i=h_i x_{t_i}$ for some $h_i\in\cD_e$. For any $y\in\cD_e\simeq\HH$, we have $B(v_i y,v_i y)=\varphi_0(y) d_i y=(\bar{y} h_i y)x_{t_i}$. Since any element of $\HH$ is conjugate to an element in the subalgebra $\RR 1+\RR\bi\simeq\CC$, we can find $y_i\in\cD_e$ such that $d'_i\bydef (\bar{y}_i h_i y_i)x_{t_i}$ belongs to $(\cD_e)_{\bar 0} x_{t_i}$. In other words, $d'_i$ is $\Gamma'_\cD$-homogeneous of degree $t'_i\bydef(t_i,\bar 0)$, so we let $v'_i=v_i y_i$. Now, for $i>q$, we let $v'_i=v_i$, as before. Finally, assigning to $v'_i$ degree $g'_i\bydef(g_i,\bar 0)\in G\times\ZZ_2$, for all $1\le i\le k$, we get a grading $\Gamma'_\cV$ on $\cV$ that yields the desired grading $\Gamma'$ on $(\cR,\varphi)$. Since the coarsenings ${}^\pi\Gamma'_\cD$ and ${}^\pi\Gamma'_\cV$ defined by the projection $\pi:G\times\ZZ_2\to G$ are the original $G$-gradings on $\cD$ and $\cV$, respectively, we conclude that $\Gamma'$ is a proper refinement of $\Gamma$.
\end{proof}


We now turn our attention to the gradings $\Gamma_{\cM^\ex}(\cD,k)$ in Definition \ref{df:Mex}.

\begin{theorem}\label{th:FineMex}
Let $\cD$ be a finite-dimensional graded-division algebra with abelian support $T$ over a field $\FF$ of characteristic different from $2$. Assume that $\cD$ is a central simple algebra and has $1$-dimensional homogeneous components. 
Let $k$ be a positive integer. Then the grading $\Gamma_{\cM^\ex}(\cD,k)$ is fine if and only if $k\ge 3$ or $T$ is not an elementary $2$-group. 
\end{theorem}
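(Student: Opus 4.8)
The plan is to prove both directions; I describe the (easier) construction of a refinement first, as it shapes the converse.

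\emph{If $T$ is $2$-elementary and $k\le 2$, then $\Gamma_{\cM^\ex}(\cD,k)$ is not fine.} In this case $\cD$ carries a degree-preserving involution $\varphi_0$, and I claim that $\cS\bydef M_k(\cD)$, with the grading $\Gamma_\cM(\cD,k)$, admits a degree-preserving anti-automorphism: for $k=1$ take $\varphi_0$; for $k=2$ use that $\wt G(T,2)=\wt G(T,0,1,\emptyset)$, so $\cM(\cD,2)$ coincides as a graded algebra with $\cM(\cD,\varphi_0,0,1,\emptyset,\delta)$ and hence carries the degree-preserving involution $X\mapsto\Phi^{-1}\varphi_0(X^T)\Phi$ with $\Phi=\matr{0&1\\\delta&0}$. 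Therefore $\cS^\op\simeq\cS$ as graded algebras, so $\cR=\cS\times\cS^\op\simeq\cS\otimes_\FF\cE$ where $\cE=\FF(1,1)\oplus\FF(1,-1)$ is the $\ZZ_2$-graded-division algebra $\FF[\ZZ_2]$, and under this identification the exchange involution becomes $\mu\otimes\iota$, where $\mu$ is the anti-automorphism above and $\iota$ is the degree-preserving involution of $\cE$ negating the degree-$\bar1$ component. The grading $\Gamma_\cM(\cD,k)\otimes\Gamma_\cE$ then properly refines $\Gamma_{\cM^\ex}(\cD,k)=\Gamma_\cM(\cD,k)\otimes(\text{trivial grading on }\cE)$ and is preserved by $\mu\otimes\iota$, so $\Gamma_{\cM^\ex}(\cD,k)$ is not fine. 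What makes this break down for $k\ge3$ is that $M_k(\cD)$ then admits no degree-preserving anti-automorphism: by Theorem~\ref{th:involutions_from_B} such a map would come from a homogeneous nondegenerate sesquilinear form on $\cV=\bigoplus_i\cD^{[\tilde g_i]}$, whose Gram matrix must have exactly one nonzero entry per row (the $\tilde g_i$ being free generators), forcing a permutation $\pi$ with all $\tilde g_i\tilde g_{\pi(i)}$ equal in $F$, which is impossible unless $k\le2$.

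\emph{If $k\ge3$ or $T$ is not $2$-elementary, then $\Gamma\bydef\Gamma_{\cM^\ex}(\cD,k)$ is fine.} Let $\Gamma'$ be a refinement of $\Gamma$ on $(\cR,\ex)$; I may assume $\Gamma'$ is fine. Since $Z(\cR)=\FF\times\FF$ is $\Gamma'$-graded and $\ex$-invariant, there are two cases. If $Z(\cR)\subseteq\cR_{e'}$, then the idempotents $\veps_1=(1,0)$ and $\veps_2=(0,1)$ are $\Gamma'$-homogeneous, so $\Gamma'$ restricts on $\veps_1\cR\veps_1\simeq M_k(\cD)$ to a refinement of $\Gamma_\cM(\cD,k)$; but $\Gamma_\cM(\cD,k)$ is itself fine on $M_k(\cD)$: any refinement keeps the one-dimensional off-diagonal components $E_{ij}\otimes\cD_t$ ($i\ne j$) homogeneous, hence all products $E_{ii}\otimes c=(E_{ij}\otimes d)(E_{ji}\otimes d')$ homogeneous — so each $\veps_i=E_{ii}\otimes1$ has identity degree — and then, because the group is abelian, $\deg(E_{jj}\otimes c)=\deg(E_{ii}\otimes c)$ via $E_{jj}\otimes c=(E_{ji}\otimes1)(E_{ii}\otimes c)(E_{ij}\otimes1)$, so the diagonal components do not split. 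Thus $\Gamma'|_{M_k(\cD)}=\Gamma_\cM(\cD,k)$, and since $\ex$ swaps the two factors preserving $\Gamma'$-degrees, no component of $\Gamma$ splits and $\Gamma'=\Gamma$.

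\emph{The main obstacle is the remaining case}: $u\bydef(1,-1)$ is $\Gamma'$-homogeneous of a degree $f'\ne e'$ of order $2$. Then $\cR$ is graded-simple for $\Gamma'$ (its only proper ideals $\cS,\cS^\op$ are not graded), so by Theorem~\ref{th:completeness} and fineness $(\cR,\ex,\Gamma')$ is weakly isomorphic to $\cM(\cD',\varphi_0',q',s',\ul d',\delta')$ with $Z(\cD')=\FF\times\FF$; by the structure recalled in Subsection~\ref{sse:central_simple_basics} (a dimension count), $\cD'$ has one-dimensional homogeneous components and $\cD'_e=\FF$, the involution $\varphi_0'$ induces the exchange on $Z(\cD')$, so $\cD'\simeq\cD'_+\times(\cD'_+)^\op$ with $\cD'_+$ central simple and $\varphi_0'$ the exchange, and $\rad\beta'=\{e,f'\}$. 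Because $u$ has $\Gamma$-degree $e$, the element $f'$ lies in the kernel of the induced homomorphism $U(\Gamma')\to U(\Gamma)$, so the coarsening $\Gamma'/\langle f'\rangle$ still refines $\Gamma$; it is of $\cM^\ex$-type (the $\veps_i$ become homogeneous), hence by the previous paragraph equals $\Gamma$. Consequently the $\langle f'\rangle$-collapsed grading identifies $\cS=M_k(\cD)$ with $\End_{\cD'_+}(\cV'_+)$ as graded algebras carrying the grading $\Gamma_\cM(\cD,k)$, and since $\cD'_+$ is a graded-division algebra for this collapsed grading, uniqueness of the attached graded-division algebra forces $\cD'_+\simeq\cD$. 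Two numerical consequences follow, each contradicting the hypothesis: first, $\cD'\simeq\cD\times\cD^\op$ with the exchange involution is a graded-division algebra with one-dimensional components, and examining products of homogeneous components shows this forces the bicharacter of $\cD$ to take values in $\{\pm1\}$, i.e.\ $T$ is $2$-elementary; second, $\dim_\FF\cD'=|\rad\beta'|\cdot\dim_\FF\cD=2|T|$ together with $\dim_\FF\cR=2k^2|T|=(q'+2s')^2\dim_\FF\cD'$ gives $q'+2s'=k$, while comparing torsion-free ranks in the surjection $U(\Gamma')\twoheadrightarrow U(\Gamma)$ (these ranks being $s'$ and $k-1$) gives $s'\ge k-1$, whence $k\ge2(k-1)$, i.e.\ $k\le2$. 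So this case cannot occur, and $\Gamma'=\Gamma$. The heart of the proof is exactly this case: recognizing $(\cR,\ex,\Gamma')$ via Theorem~\ref{th:completeness}, pinning down $\cD'$ (by dimension and by the coarsening by $\langle f'\rangle$), and extracting the two contradictions.
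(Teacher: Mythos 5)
Your proposal takes a genuinely different route from the paper for the forward (``fine'') direction. The paper extends scalars to $\Falg$, observes that a Type~II refinement composed with a suitable character of the refined grading group yields a degree-preserving antiautomorphism of $\cS\otimes\Falg$, and then cites Proposition~3.27 of \cite{EKmon}; this is short but requires going outside $\FF$. You instead stay over $\FF$: a Type~II fine refinement $\Gamma'$ makes $\cR$ graded-simple, so Theorem~\ref{th:completeness} writes $(\cR,\ex,\Gamma')$ as some $\cM(\cD',\varphi_0',q',s',\ul d',\delta')$, and you pin down $\cD'$ and then derive that $T$ is $2$-elementary (via Lemma~\ref{lm:2elementary}) and $k\leq 2$ (via a rank-and-dimension count on universal groups). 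Your converse direction is essentially the paper's; the added remark explaining via Theorem~\ref{th:involutions_from_B} why no degree-preserving antiautomorphism of $M_k(\cD)$ exists for $k\geq 3$ is a nice self-contained version of the fact the paper cites.

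The structural route is viable, but one step is justified incorrectly and a couple of others are left implicit. You claim ``$\cD'$ has one-dimensional homogeneous components and $\cD'_e=\FF$'' by ``a dimension count'' referencing Subsection~\ref{sse:central_simple_basics} --- but over a general field, the identity component of a graded-division algebra need not be $\FF$, and Subsection~\ref{sse:central_simple_basics} only discusses (not establishes) the case $\cD'_e\subset\KK$; a raw dimension count gives $2k^2|T|=(q'+2s')^2\dim\cD'$, which by itself constrains nothing. What actually forces $\cD'_e=\FF$ is that $\cD'_e\simeq\veps\cR_{e'}\veps$ for a primitive idempotent $\veps\in\cR_{e'}$, while $\cR_{e'}\subset\cR_e\simeq\FF^{2k}$ is split commutative semisimple, so the division algebra $\cD'_e$ must be $\FF$. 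Similarly, the identity $q'+2s'=k$ (and therefore $\dim\cD'=2|T|$ and $|T'|=2|T|$) is used in your numerical contradiction but not argued; it follows by comparing $\cR_e=\cR_{e'}\oplus\cR_{f'}$: commutativity of $\cR_e$ forces both summands to be diagonal, so $\dim\cR_{e'}=\dim\cR_{f'}=k$, i.e.\ $\cV'$ has rank $k$. With these filled in, your argument is complete and gives a self-contained alternative that avoids extension of scalars, in exchange for more bookkeeping about the collapsed grading.
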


\begin{proof}
Since $\cD$ has $1$-dimensional components, the grading $\Gamma_\cM(\cD,k)$ on the algebra $\cS\bydef\cM(\cD,k)$ is fine (in the class of abelian group gradings) by \cite[Proposition 2.35]{EKmon}. (In fact, it is shown in \cite{R20} for any graded-division algebra $\cD$ with an abelian support that $\Gamma_\cM(\cD,k)$ is fine in the class of abelian group gradings if and only if $\Gamma_\cD$ is fine in the same class.) Therefore, the grading $\Gamma\bydef\Gamma_{\cM^\ex}(\cD,k)$ on $\cR\bydef\cS\times\cS^\op$ cannot be refined in the class of Type I gradings.

Assume that a $G'$-grading $\Gamma'$ is a Type II refinement of $\Gamma$ as a grading on the algebra with involution $(\cR,\ex)$. Since $\cD$ has $1$-dimensional components, it remains a graded-division algebra after an extension of scalars. Extending to an algebraic closure $\Falg$, $\Gamma$ gives the grading $\Gamma_{\cM^\ex}(\cD\otimes\Falg,k)$ on $(\cR\otimes\Falg,\ex)$ and $\Gamma'$ gives a Type II refinement of this grading. It follows that the graded algebra $\cS\otimes\Falg$ admits an antiautomorphism, namely, the restriction of the composition of $\ex$ and the action of any character $\chi:G'\rightarrow \Falg^\times$ that takes value $-1$ on the distinguished element $f\in G'$. (Such characters always exist as $\Falg^\times$ is a divisible group.) Now \cite[Proposition 3.27]{EKmon} shows that $T$ is $2$-elementary and $k\leq 2$.

Conversely, suppose $T$ is $2$-elementary. Since $\cD_e=\FF$ and $\cD$ is central simple, $\cD$ is a tensor product of graded subalgebras as in equation \eqref{eq:symbol_alg}, each of which is a symbol algebra of degree $2$, i.e., a quaternion algebra. It follows that $\cD$ admits a degree-preserving involution $\varphi_0$ (e.g., 
the tensor product of the standard involutions). For $k\le 2$, the $\wt{G}(T,k)$-graded algebra $\cS=M_k(\cD)$ admits an involution $\varphi$, namely, $\varphi=\varphi_0$ if $k=1$ and 
\[
\varphi:\begin{bmatrix} a & b \\ c & d \end{bmatrix}\mapsto
\begin{bmatrix} 0 & 1\\ 1 & 0 \end{bmatrix} 
\begin{bmatrix} \varphi_0(a) & \varphi_0(c) \\ \varphi_0(b) & \varphi_0(d) \end{bmatrix}
\begin{bmatrix} 0 & 1 \\ 1 & 0 \end{bmatrix}
=\begin{bmatrix} \varphi_0(d) & \varphi_0(b) \\ \varphi_0(c) & \varphi_0(a) \end{bmatrix}
\]
if $k=2$. The mapping $(\varphi,\varphi)\circ\ex=\ex\circ(\varphi,\varphi)$ is
an involutive automorphism of $(\cR,\ex)$, and gives a Type II refinement with grading group $G'=\wt{G}(T,k)\times\ZZ_2$.
\end{proof}

\subsection{Fine gradings in the real case}

Corollary \ref{cor:completeness} implies that if $(\cR,\varphi)$ is finite-dimensional and simple as an algebra with involution then any grading on $(\cR,\varphi)$ is isomorphic to a coarsening of a grading of the form  
$\Gamma_\cM(\cD,\varphi_0,q,s,\ul{d},\delta)$ or $\Gamma_{\cM^\ex}(\cD,k)$. It follows that, up to equivalence, all fine gradings on $(\cR,\varphi)$ are found among these gradings. Assuming $(\cR,\varphi)$ is central simple over $\RR$ (as an algebra with involution), the results of Subsection \ref{ss:fineness} give necessary and sufficient conditions for these gradings to be fine.

We now summarize all possibilities:

\begin{theorem}\label{th:fine_real}
Let $\cR$ be a finite-dimensional algebra over $\RR$ and $\varphi$ be an involution on $\cR$ such that $(\cR,\varphi)$ is central simple as an algebra with involution. If $(\cR,\varphi)$ is equipped with a group grading $\Gamma$, then $\Gamma$ is fine if and only if $\cR$ is equivalent as a graded algebra with involution to one of the following:  
\begin{enumerate}
\item[(1)] Central simple algebras over $\RR$ with a first kind involution, which is either orthogonal ($\delta=+1$) or symplectic ($\delta=-1$):
\begin{itemize}
\item $\cM(2m;\RR;q,s,\ul{d},\delta)\bydef\cM\bigl(\cD(2m;+1),*,q,s,\ul{d},\delta\bigr)$ where $m\ge 0$, \\ 
$X^*=X^T$ for all $X\in \cD(2m;+1)\simeq M_{2^m}(\RR)$ and, in the case \\
$(q,s)=(2,0)$, the pair $\ul{d}=(d_1,d_2)$ satisfies $\deg d_1\ne\deg d_2$;

\item $\cM(2m;\HH;q,s,\ul{d},\delta)\bydef\cM\bigl(\cD(2m;-1),*,q,s,\ul{d},-\delta\bigr)$ where $m\ge 1$, \\
$X^*=\wb{X}^T$ for all $X\in \cD(2m;-1)\simeq M_{2^{m-1}}(\HH)$ and, in the case \\
$(q,s)=(2,0)$, the pair $\ul{d}=(d_1,d_2)$ satisfies $\deg d_1\ne\deg d_2$; 
\end{itemize}

\item[(2)] Central simple algebras over $\CC$ with a second kind involution:
\begin{itemize}
\item $\cM^{\mathrm{(I)}}(\ell_1,\ldots,\ell_m;\CC;q,s,\ul{d})\bydef\cM\bigl(\cD(\ell_1,\ldots,\ell_m;\CC),\varphi_0,q,s,\ul{d},+1\bigr)$ with ${m\ge 1}$, prime powers $\ell_j\ge 2$ that are not all equal to $2$, \\
$\varphi_0=\varphi_{\ell_1}\otimes_\CC\cdots\otimes_\CC\varphi_{\ell_m}$ 
on $\cD(\ell_1,\ldots,\ell_m;\CC)\simeq M_{\ell_1\cdots \ell_m}(\CC)$ defined by equation \eqref{eq:inv_bad_D}, 
and $(q,s)\ne(2,0)$;

\item $\cM^{\mathrm{(II)}}(2m+1;\CC;q,s,\ul{d})\bydef\cM\bigl(\cD(2m+1;\RR),*,q,s,\ul{d},+1\bigr)$ where $m\ge 0$, \\
$X^*=\wb{X}^T$ for all $X\in \cD(2m+1;\RR)\simeq M_{2^m}(\CC)$ and, in the case \\
$(q,s)=(2,0)$, the pair $\ul{d}=(d_1,d_2)$ satisfies $\deg d_1\ne\deg d_2$;
\end{itemize}

\item[(3)] Nonsimple algebras with an exchange involution:
\begin{itemize}
\item $\cM^{\mathrm{(I)}}(2m;\RR;k)\bydef\cM^{\ex}\bigl(\cD(2m;+1),k\bigr)$ where $m\ge 0$ and $k\ge 3$;

\item $\cM^{\mathrm{(I)}}(2m;\HH;k)\bydef\cM^{\ex}\bigl(\cD(2m;-1),k\bigr)$ where $m\ge 1$ and $k\ge 3$;

\item $\cM^{\mathrm{(II)}}(2m+1;\RR;q,s,\ul{d})\bydef\cM\bigl(\cD(2m+1;+1),\varphi_0,q,s,\ul{d},+1\bigr)$ \\
where $m\ge 0$, $\varphi_0(X,Y)=(Y^T,X^T)$ for all $(X,Y)\in\cD(2m+1;+1)
\simeq M_{2^m}(\RR)\times M_{2^m}(\RR)$ and, if $(q,s)=(2,0)$, then 
$\deg d_1\ne\deg d_2$;

\item $\cM^{\mathrm{(II)}}(2m+1;\HH;q,s,\ul{d})\bydef\cM\bigl(\cD(2m+1;-1),\varphi_0,q,s,\ul{d},+1\bigr)$ \\
where $m\ge 1$, $\varphi_0(X,Y)=(\wb{Y}^T,\wb{X}^T)$ for all $(X,Y)\in\cD(2m+1;-1)
\simeq {M_{2^{m-1}}(\HH)\times M_{2^{m-1}}(\HH)}$ and, if $(q,s)=(2,0)$, then 
$\deg d_1\ne\deg d_2$.\qed
\end{itemize}
\end{enumerate}
\end{theorem}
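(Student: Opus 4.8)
The plan is to read off the list directly from Corollary \ref{cor:completeness}, use the fineness criteria of Subsection \ref{ss:fineness} to decide which of the gradings $\Gamma_\cM(\cD,\varphi_0,q,s,\ul d,\delta)$ and $\Gamma_{\cM^\ex}(\cD,k)$ remain fine, and invoke the classification of real graded-division algebras collected in Subsection \ref{sse:real_basics} to identify the admissible $\cD$. Suppose first that $\Gamma$ is fine. By Corollary \ref{cor:completeness}, $\Gamma$ is the image under an isomorphism of algebras with involution of some ${}^\alpha\Gamma_{\cM^\ex}(\cD,k)$ or ${}^\alpha\Gamma_\cM(\cD,\varphi_0,q,s,\ul d,\delta)$ with $\alpha|_T=\id_T$; transporting the grading inside the bracket to $\cR$ by this isomorphism produces a refinement of $\Gamma$, which cannot be proper, so $(\cR,\Gamma)$ is already equivalent, as a graded algebra with involution, to $\cM^\ex(\cD,k)$ or $\cM(\cD,\varphi_0,q,s,\ul d,\delta)$ itself.

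Next I would pin down $\cD$. In the $\cM(\cD,\varphi_0,q,s,\ul d,\delta)$ case, $\cR=\End_\cD(\cV)$ is graded-simple, so $\Gamma_\cD$ is fine by Theorem \ref{th:fine_converse}, and $(\cD,\varphi_0)$ is central simple as an algebra with involution of the same kind as $(\cR,\varphi)$ by Subsection \ref{sse:central_simple_basics}. In the $\cM^\ex(\cD,k)$ case, $\cR\simeq\cS\times\cS^\op$ with $\cS=M_k(\cD)$ central simple over $\RR$, hence $\cD$ is central simple over $\RR$; restricting $\Gamma$ to the graded-simple graded ideal $\cS$ and using that $\Gamma_\cM(\cD,k)$ is fine iff $\Gamma_\cD$ is fine (from \cite{R20}, as in the proof of Theorem \ref{th:FineMex}) again shows $\Gamma_\cD$ fine. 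By \cite[Proposition 20]{R16}, fineness of $\Gamma_\cD$ forces $\cD_e\subset Z(\cD)$, so by the classification in Subsection \ref{sse:real_basics} the graded algebra $\cD$ is equivalent to one of $\cD(2m;+1)$, $\cD(2m;-1)$, $\cD(2m+1;\RR)$, $\cD(\ell_1,\ldots,\ell_m;\CC)$, $\cD(2m+1;+1)$, $\cD(2m+1;-1)$; fineness of $\Gamma_\cD$ additionally forces $T$ to be non-$2$-elementary when $\cD_e=Z(\cD)\simeq\CC$, that is, not all $\ell_j=2$, and in the $\cM^\ex$ case $Z(\cD)=\RR$ forces $\cD_e=\RR$, hence $\cD\in\{\cD(2m;+1),\cD(2m;-1)\}$. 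By Corollary \ref{cor:involutions_from_B} we may take $\varphi_0$ to be the corresponding model involution and $\varphi$ to be the adjunction with respect to a hermitian or skew-hermitian $\varphi_0$-sesquilinear form $B$.

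It remains to invoke the criteria. Diagonalizing $B$ as in the proof of Theorem \ref{th:completeness} puts $\varphi$ in the shape $\cM(\cD,\varphi_0,q,s,\ul d,\delta)$; the sign of $B$, compared via equation \eqref{eq:Arf} with the orthogonal or symplectic type of the model $\varphi_0$, is what records the parameter $\delta$ of the statement (with the twist to $-\delta$ in the $\cD(2m;-1)$ item, where $\varphi_0$ is symplectic), while parts (i) and (iii) of Corollary \ref{cor:involutions_from_B} justify normalizing $\delta=+1$ in the second-kind and exchange items. For $(q,s)\ne(2,0)$, Theorem \ref{th:GMfine} gives fineness outright. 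For $(q,s)=(2,0)$: when $\cD$ has $1$-dimensional homogeneous components, Proposition \ref{pr:GMfineq2s0} gives fineness precisely when $\deg d_1\ne\deg d_2$; when $\cD_e=Z(\cD)\simeq\CC$, Proposition \ref{pr:GMfineq2s0_2} shows no fine grading arises, which is why $(q,s)\ne(2,0)$ is built into $\cM^{(\mathrm{I})}(\ell_1,\ldots,\ell_m;\CC;q,s,\ul d)$. Finally, for $\cM^\ex(\cD,k)$ with $\cD\in\{\cD(2m;+1),\cD(2m;-1)\}$, where $T=\ZZ_2^{2m}$ is $2$-elementary, Theorem \ref{th:FineMex} gives fineness exactly when $k\ge 3$. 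Assembling these cases yields exactly the list, and the converse — that each displayed algebra with involution, subject to the stated conditions, carries a fine grading — follows by reading the same implications backwards.

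The crux of the argument is not fineness itself, which is supplied wholesale by Subsection \ref{ss:fineness}, but the bookkeeping that matches the abstract output of Corollary \ref{cor:completeness} to the concrete models and parameters of the statement: tracking the correspondence between the orthogonal/symplectic type of $\varphi$ and the sign $\delta$ (with the twist for $\cD(2m;-1)$), checking that the six displayed graded-division algebras genuinely exhaust the possibilities compatible with $\Gamma_\cD$ being fine, and carrying along the side conditions ``not all $\ell_j=2$'' and ``$k\ge 3$''. A further subtlety requiring attention is that Theorem \ref{th:fine_converse} is stated only when $\cR$ is graded-simple, so in the $\cM^\ex$ case the fineness of $\Gamma_\cD$ has to be extracted from the structure of a simple direct factor instead.
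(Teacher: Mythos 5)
Your proposal is correct and follows the same route the paper indicates: the theorem is stated with \qed precisely because the paper treats it as a summary of Corollary~\ref{cor:completeness}, the classification in Subsection~\ref{sse:real_basics}, and the fineness criteria of Subsection~\ref{ss:fineness}, and you have reconstructed exactly that chain, including the one real subtlety (Theorem~\ref{th:fine_converse} assumes graded-simplicity, so the $\cM^\ex$ case needs the separate argument via a simple factor and \cite{R20} that you supply). The only cosmetic quibble is that normalizing $\delta=+1$ in the second-kind and exchange items comes from part~(iii) of Corollary~\ref{cor:involutions_from_B} alone, while part~(i) is what governs the $\delta$ versus $-\delta$ bookkeeping in the first-kind items.
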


\begin{remark}
Theorem \ref{th:GMfine}, Proposition \ref{pr:GMfineq2s0} and Theorem \ref{th:FineMex} show that the complexifications of all graded algebras with involution in Theorem \ref{th:fine_real} except $\cM^{\mathrm{(I)}}(\ell_1,\ldots,\ell_m;\CC;q,s,\ul{d})$ have fine gradings.
\end{remark}

\section{The equivalence problem}\label{se:equivalence}

\subsection{Equivalence of graded-simple algebras with involution}

The isomorphisms between $G$-graded algebras of the form $\cR=\End_\cD(\cV)$ and $\cR'=\End_{\cD'}(\cV')$ are described in \cite[Theorem 2.10]{EKmon}. By an isomorphism $(\cD,\cV)\to(\cD',\cV')$ we mean a pair $(\psi_0,\psi_1)$ where
\begin{enumerate}
\item[(i)] $\psi_0:\cD\to\cD'$ is an isomorphism of $G$-graded algebras,
\item[(ii)] $\psi_1:\cV\to\cV'$ is an isomorphism of $G$-graded vector spaces, and
\item[(iii)] $\psi_1$ is $\psi_0$-semilinear: $\psi_1(vd)=\psi_1(v)\psi_0(d)$ for all $v\in\cV$ and $d\in\cD$.
\end{enumerate}
If $\psi:\cR\to\cR'$ is an isomorphism, then there exist an element $g\in G$ and an isomorphism $(\psi_0,\psi_1)$ from $({}^{[g^{-1}]}\cD^{[g]},\cV^{[g]})$ to $(\cD',\cV')$ such that $\psi(r)=\psi_1 r\psi_1^{-1}$ for all $r\in\cR$. Conversely, any isomorphism $(\psi_0,\psi_1)$ from $({}^{[g^{-1}]}\cD^{[g]},\cV^{[g]})$ to $(\cD',\cV')$ gives rise to an isomorphism $\psi:\cR\to\cR'$ in this way.

When we study equivalence of graded algebras, it is convenient to use universal groups, because an equivalence then becomes a weak isomorphism. So, we start with the question what happens to a $G$-graded algebra of the form $\End_\cD(\cV)$ if we consider it as graded by its universal group $U$.

\begin{proposition}\label{pr:EndDV_over_U}
Let $G$ be a group and let $\cR=\End_\cD(\cV)$ where $\cD$ is a graded-division algebra with support $T\subset G$ and $\cV$ is a $G$-graded right $\cD$-module of finite rank. Assume that $G$ is generated by the support of $\cR$. Denote by $\Gamma_\cD$ and $\Gamma_\cV$ the $G$-gradings on $\cD$ and $\cV$, respectively, and by $\Gamma$ the $G$-grading on $\cR$ (induced by $\Gamma_\cV$). Let $U$ be the universal group of $\Gamma$, let $\wt{\Gamma}:\cR=\bigoplus_{u\in U}\wt{\cR}_u$ be the realization of $\Gamma$ as a $U$-grading, and let $\alpha:U\rightarrow G$ be the associated group homomorphism such that $\Gamma={}^\alpha\wt{\Gamma}$. Then there are $U$-gradings $\wt{\Gamma}_\cD$ on $\cD$ and $\wt{\Gamma}_\cV$ on $\cV$ such that 
\begin{enumerate}
\item[(i)] $(\cD,\wt{\Gamma}_\cD)$ is a graded algebra, ${}^\alpha\wt{\Gamma}_\cD=\Gamma_\cD$, and $\alpha$ maps $\supp\wt{\Gamma}_\cD$ bijectively onto $T$ (hence $\Gamma_\cD$ and $\wt{\Gamma}_\cD$ differ only by the labeling of the components);
\item[(ii)] $(\cV,\wt{\Gamma}_\cV)$ is a graded right $(\cD,\wt{\Gamma}_\cD)$-module, ${}^\alpha\wt{\Gamma}_\cV=\Gamma_\cV$, and $\alpha$ maps $\supp\wt{\Gamma}_\cV$ bijectively onto $\supp\Gamma_\cV$ (hence $\Gamma_\cV$ and 
$\wt{\Gamma}_\cV$ differ only by the labeling of the components);
\item[(iii)] $\wt{\Gamma}$ is the grading induced by $\wt{\Gamma}_\cV$.
\end{enumerate}
\end{proposition}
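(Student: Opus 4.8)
\emph{Proof plan.} The plan is to fix a graded $\cD$-basis $\{v_1,\dots,v_k\}$ of $\cV$ with $\deg_{\Gamma_\cV}v_i = g_i$, identify $\cR = \End_\cD(\cV)\cong M_k(\cD)$ with $\deg_\Gamma(E_{ij}\otimes d) = g_i\,(\deg_{\Gamma_\cD}d)\,g_j^{-1}$ as in \eqref{eq:degEijd}, and then lift the degrees of $\cD$ and of the basis vectors from $G$ to $U$ by hand. The elements $E_{ij}\otimes 1$ and $E_{ij}\otimes d$ (with $0\ne d\in\cD_t$, $t\in T$) span $\cR$ and are $\Gamma$-homogeneous, hence $\wt\Gamma$-homogeneous, of $\wt\Gamma$-degrees $\iota(g_ig_j^{-1})$ and $\iota(g_itg_j^{-1})$, where $\iota\colon\supp\Gamma\hookrightarrow U$ is the canonical inclusion. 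The only tool needed is that $\alpha\circ\iota$ is the inclusion $\supp\Gamma\hookrightarrow G$ and that $\iota(a)\iota(b) = \iota(ab)$ whenever $\cR_a\cR_b\ne 0$; each time I rewrite a product $\iota(a)\iota(b)$ as $\iota(ab)$ below, it will be licensed by a concrete nonzero product of two of the above homogeneous elements lying in $\cR_a$ and $\cR_b$.

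\emph{Lifting the gradings.} For $t\in T$ I would set $\wt t\bydef\iota(g_1)^{-1}\iota(g_1tg_1^{-1})\iota(g_1)\in U$ (legitimate since $E_{11}\otimes\cD_t\ne 0$) and declare $\cD_t$ to have $U$-degree $\wt t$. Because $\cD$ is a graded-division algebra, $dd'\ne 0$ for $0\ne d\in\cD_s$, $0\ne d'\in\cD_t$, and translating $(E_{11}\otimes d)(E_{11}\otimes d') = E_{11}\otimes dd'$ through the universal-group relation yields $\wt s\,\wt t = \wt{st}$, so $\wt\Gamma_\cD$ grades the algebra $\cD$; since $\alpha(\wt t) = t$, this gives ${}^\alpha\wt\Gamma_\cD = \Gamma_\cD$ and that $\alpha$ restricts to a bijection $\supp\wt\Gamma_\cD\to T$, which is (i). Next I would declare $v_i$ to have $U$-degree $\wt g_i\bydef\iota(g_ig_1^{-1})\iota(g_1)$. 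As $\cV = \bigoplus_{i,t}v_i\cD_t$ with $v_i\cD_t\cdot\cD_{t'}\subseteq v_i\cD_{tt'}$ and $\wt g_i\,\wt t\,\wt{t'} = \wt g_i\,\wt{tt'}$, this makes $\cV$ a graded right $(\cD,\wt\Gamma_\cD)$-module $\wt\Gamma_\cV$ with $v_i\cD_t$ of degree $\wt g_i\,\wt t$. Using $(E_{i1}\otimes 1)(E_{11}\otimes d) = E_{i1}\otimes d\ne 0$, one rewrites $\wt g_i\,\wt t = \iota(g_itg_1^{-1})\iota(g_1)$, so $\alpha(\wt g_i\,\wt t) = g_it$, the $\Gamma_\cV$-degree of $v_i\cD_t$; hence ${}^\alpha\wt\Gamma_\cV = \Gamma_\cV$, and if $g_it = g_jt'$ this same formula forces $\wt g_i\,\wt t = \wt g_j\,\wt{t'}$, so $\alpha$ restricts to a bijection $\supp\wt\Gamma_\cV\to\supp\Gamma_\cV$. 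This is (ii).

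\emph{The induced grading, and the main obstacle.} For (iii) it suffices to check that the $U$-grading on $\End_\cD(\cV)$ determined by $\wt\Gamma_\cV$ and $\wt\Gamma_\cD$ agrees with $\wt\Gamma$ on the spanning set $\{E_{ij}\otimes d\}$. With respect to $\wt\Gamma_\cV$ the operator $E_{ij}\otimes d$ ($0\ne d\in\cD_t$) sends $v_j\cD_{t'}$ (degree $\wt g_j\,\wt{t'}$) to $v_i\cD_{tt'}$ (degree $\wt g_i\,\wt t\,\wt{t'}$), so it is homogeneous of degree $\wt g_i\,\wt t\,\wt g_j^{-1}$; using $\iota(g_jg_1^{-1})^{-1} = \iota(g_1g_j^{-1})$ (from $(E_{j1}\otimes 1)(E_{1j}\otimes 1) = E_{jj}\otimes 1\ne 0$) and then $(E_{i1}\otimes d)(E_{1j}\otimes 1) = E_{ij}\otimes d\ne 0$, this equals $\iota(g_itg_j^{-1})$, precisely the $\wt\Gamma$-degree of $E_{ij}\otimes d$; hence the two $U$-gradings on $\cR$ coincide. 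The whole computation is elementary, so the main obstacle is purely one of bookkeeping: keeping the conjugations by $\iota(g_1)$ straight — they are there only because $G$, and hence $U$, need not be abelian, and in the abelian case $\wt t = \iota(t)$, $\wt g_i = \iota(g_i)$, and everything trivializes — and checking that each identity $\iota(a)\iota(b) = \iota(ab)$ invoked above is genuinely a relation of the universal group, i.e.\ is witnessed by a nonzero homogeneous product in $\cR$. A different choice of graded basis changes $\wt\Gamma_\cV$ only by a right shift and a relabeling within $T$, hence yields the same induced grading $\wt\Gamma$.
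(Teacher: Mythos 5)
Your approach is genuinely different from the paper's: there, one takes a minimal graded left ideal $\cI=\cR\veps$ for a homogeneous idempotent $\veps\in\cR_e$, restricts the $U$-grading $\wt{\Gamma}$ to $\veps\cR\veps$ and to $\cI$, and then transports the result back to $\cD$ and $\cV$ through the isomorphisms ${}^{[g^{-1}]}\cD^{[g]}\simeq\veps\cR\veps$ and $\cV^{[g]}\simeq\cI$, correcting the shift by some $h\in U$ with $\alpha(h)=g$. Your coordinate version avoids the structure theory of graded modules entirely and makes everything verifiable by hand, at the cost of more careful bookkeeping with the universal-group relations. Both routes are natural.

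However, there is a genuine gap in the proof as written: you use $\iota(g_1)$, but $g_1=\deg_{\Gamma_\cV}v_1$ need not lie in $\supp\Gamma=\supp\cR$, which is the domain of $\iota$. The support of $\Gamma$ consists of the elements $g_itg_j^{-1}$ with $t\in T$, and $g_1$ itself is generally not of that form; your parenthetical justification (``$E_{11}\otimes\cD_t\neq0$'') covers $\iota(g_1tg_1^{-1})$ but says nothing about $\iota(g_1)$. The fix is exactly the device the paper uses at the analogous step: since $G$ is generated by $\supp\Gamma$ and $\alpha\circ\iota$ is the inclusion of $\supp\Gamma$ into $G$, the homomorphism $\alpha\colon U\to G$ is surjective, so one may fix some $h\in U$ with $\alpha(h)=g_1$ and replace every occurrence of $\iota(g_1)$ by $h$. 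The rest of your argument then goes through unchanged: in every computation the factors $h$ and $h^{-1}$ either cancel or are seen only through $\alpha(h)=g_1$, and each identity of the form $\iota(a)\iota(b)=\iota(ab)$ you invoke is still licensed by the indicated nonzero product of matrix units in $\cR$. (You also implicitly need $\iota(e)$ to be the identity of $U$ when you write $\iota(g_jg_1^{-1})^{-1}=\iota(g_1g_j^{-1})$; this holds because $\cR_e\cR_e\neq0$ makes $\iota(e)$ an idempotent in the group $U$.) With this one replacement the proof is correct.
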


\begin{proof}
Let $\cI$ be a minimal graded left ideal of $(\cR,\Gamma)$. Then $\cI=\cR\veps$ for an idempotent $\veps\in\cR_e$ and, as a $G$-graded left $\cR$-module, $\cI$ is isomorphic to a shift of $\cV$ (see e.g. \cite[Lemma 2.7]{EKmon}), 
so there is an element $g\in G$ and an isomorphism $\psi_1:\cV^{[g]}\to\cI$. 
Hence we get a chain of isomorphisms of $G$-graded algebras (see e.g. \cite[Lemma 2.8]{EKmon}):
\[
{}^{[g^{-1}]}\cD^{[g]}\simeq\End_\cR(\cV^{[g]})\simeq\End_\cR(\cI)\simeq\veps\cR\veps\defby\cD',
\]
whose composition will be denoted by $\psi_0:{}^{[g^{-1}]}\cD^{[g]}\to\cD'$.

Passing to the universal group $U$, the idempotent $\veps$ remains homogeneous of degree $e$, as the degree of any nonzero homogeneous idempotent is necessarily the neutral element, and hence $\wt{\Gamma}$ restricts to $U$-gradings $\wt{\Gamma}_{\cD'}:\cD'=\bigoplus_{u\in U}\wt{\cD}'_u$ and $\wt{\Gamma}_\cI:\cI=\bigoplus_{u\in U}\tilde{\cI}_u$ where $\wt{\cD}'_u=\veps\wt{\cR}_u\veps$ and $\tilde{\cI}_u=\wt{\cR}_u\veps$ for all $u\in U$. Thus $\cD'$ becomes a $U$-graded algebra and $\cI$ becomes a $U$-graded right $\cD'$-module. By construction, the original $G$-gradings $\Gamma_{\cD'}$ and $\Gamma_\cI$ are obtained from these $U$-gradings $\wt{\Gamma}_{\cD'}$ and $\wt{\Gamma}_\cI$ through the homomorphism $\alpha$. Since $\alpha$ maps $\supp\wt{\Gamma}$ bijectively onto $\supp\Gamma$ (more precisely, $\wt{\cR}_u=\cR_{\alpha(u)}$ for all $u\in\supp\wt{\Gamma}$), it maps $\supp\wt{\Gamma}_{\cD'}$ and $\supp\wt{\Gamma}_\cI$  bijectively onto $\supp\Gamma_{\cD'}$ and $\supp\Gamma_\cI$, respectively. Moreover, if we identify $\cR$ with $\End_{\cD'}(\cI)$, $\wt{\Gamma}$ is induced by the $U$-grading on $\cI$, i.e., $\wt{\cR}_u=\{r\in\cR\mid r\tilde{\cI}_{u'}\subset\tilde{\cI}_{uu'}\;\forall u'\in U\}$.

Since $\supp\Gamma$ generates $G$, the homomorphism $\alpha:U\to G$ is surjective, so we can find $h\in U$ such that $\alpha(h)=g$. Define the $U$-gradings $\wt{\Gamma}_\cD:\cD=\bigoplus_{u\in U}\wt{\cD}_u$ and $\wt{\Gamma}_\cV:\cV=\bigoplus_{u\in U}\wt{\cV}_u$ by $\wt{\cD}_u=\psi_0^{-1}(\wt{\cD}'_{h^{-1}uh})$ and $\wt{\cV}_u=\psi_1^{-1}(\tilde{\cI}_{uh})$ for all $u\in U$. 
By construction, $\wt{\Gamma}_\cV$ corresponds under $\psi_1$ to $\wt{\Gamma}_{\cI}^{[h^{-1}]}$, so ${}^\alpha\wt{\Gamma}_\cV$ corresponds to $\Gamma_{\cI}^{[g^{-1}]}$, hence we have ${}^\alpha\wt{\Gamma}_\cV=\Gamma_\cV$. 
Since right translations in any group are bijective, $\alpha$ maps $\supp\wt{\Gamma}_\cV$ bijectively onto $\supp\Gamma_\cV$, because this holds for the supports of the corresponding gradings on $\cI$. 
Similarly for $\cD$. Since $\psi_1:(\cV,\wt{\Gamma}_\cV^{[h]})\to(\cI,\wt{\Gamma}_\cI)$ is an isomorphism of $U$-graded left $\cR$-modules, $\wt{\Gamma}$ is induced by $\wt{\Gamma}_\cV$.
\end{proof}

\begin{proposition}\label{pr:B_U_homogeneous}
Under the conditions of Proposition \ref{pr:EndDV_over_U}, suppose $\cR$ admits an antiautomorphism $\varphi$ that leaves the components of $\Gamma$ invariant (hence $G$ and $U$ are abelian). Let $(\varphi_0,B)$ be a pair associated to $\varphi$ on the $G$-graded algebra $(\cR,\Gamma)$ as in Theorem \ref{th:involutions_from_B}. Then 
$\varphi_0$ is an antiautomorphism of the $U$-graded algebra $(\cD,\wt{\Gamma}_\cD)$ and $B:\cV\times\cV\to\cD$ is homogeneous with respect to the gradings $\wt{\Gamma}_\cV$ and $\wt{\Gamma}_\cD$.
\end{proposition}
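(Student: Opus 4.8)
The plan is to invoke Theorem \ref{th:involutions_from_B} twice: once applied to the $U$-graded algebra $(\cR,\wt\Gamma)$ to produce \emph{some} pair for $\varphi$ that is homogeneous for the $U$-gradings, and once through its uniqueness part (2) to transport this homogeneity to the given pair $(\varphi_0,B)$.

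The assertion about $\varphi_0$ is almost a tautology. By Proposition \ref{pr:EndDV_over_U}(i), the homogeneous components of $\wt\Gamma_\cD$ are exactly those of $\Gamma_\cD$, only relabeled by $\alpha$; since the antiautomorphism $\varphi_0$ leaves every $\Gamma_\cD$-component invariant, it leaves every $\wt\Gamma_\cD$-component invariant, i.e., it is an antiautomorphism of the $U$-graded algebra $(\cD,\wt\Gamma_\cD)$. The same remark shows that $\wt\Gamma_\cD$ and $\Gamma_\cD$ have the same set $\cD^\times_\gr$ of nonzero homogeneous elements, which we will use below. I would also note that $\varphi$ preserves the components of $\wt\Gamma$, since $\wt\cR_u=\cR_{\alpha(u)}$ for $u\in\supp\wt\Gamma$ and $\varphi$ preserves $\cR_{\alpha(u)}$.

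The main step is to apply Theorem \ref{th:involutions_from_B}(1) to the $U$-graded algebra $\cR=\End_\cD(\cV)$. This is legitimate: Proposition \ref{pr:EndDV_over_U} tells us that $(\cD,\wt\Gamma_\cD)$ is a graded algebra with $\supp\wt\Gamma_\cD$ mapped bijectively onto $T$ --- hence again a graded-division algebra, as being graded-division depends only on the set of homogeneous elements --- that $(\cV,\wt\Gamma_\cV)$ is a graded right $(\cD,\wt\Gamma_\cD)$-module of the same (finite) rank, and that $\wt\Gamma$ is the grading induced by $\wt\Gamma_\cV$; moreover $U$ is abelian. Since $\varphi$ preserves $\wt\Gamma$, the theorem yields an antiautomorphism $\varphi_0'$ of $(\cD,\wt\Gamma_\cD)$ and a nondegenerate $\varphi_0'$-sesquilinear form $B'\colon\cV\times\cV\to\cD$ that is homogeneous for $\wt\Gamma_\cV$ and $\wt\Gamma_\cD$ and such that $\varphi$ is the adjunction with respect to $B'$. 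Pushing forward along $\alpha$ (using ${}^\alpha\wt\Gamma_\cV=\Gamma_\cV$ and ${}^\alpha\wt\Gamma_\cD=\Gamma_\cD$), the pair $(\varphi_0',B')$ also satisfies the conditions of Theorem \ref{th:involutions_from_B}(1) for $(\cR,\Gamma)$, so part (2) of that theorem provides $d\in\cD^\times_\gr$ with $B'=dB$ and $\varphi_0'=\Int(d)\circ\varphi_0$. As $d$ (hence $d^{-1}$) is $\wt\Gamma_\cD$-homogeneous and $U$ is abelian, $\Int(d^{-1})$ preserves $\wt\Gamma_\cD$-degrees, so $\varphi_0=\Int(d^{-1})\circ\varphi_0'$ is an antiautomorphism of $(\cD,\wt\Gamma_\cD)$, and $B=d^{-1}B'$ is homogeneous for $\wt\Gamma_\cV$ and $\wt\Gamma_\cD$, which is the desired conclusion.

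The step needing the most care is the application of Theorem \ref{th:involutions_from_B}(1) in the $U$-graded setting, i.e., checking that $(\cR,\wt\Gamma)$ really is of the required form $\End_{(\cD,\wt\Gamma_\cD)}(\cV,\wt\Gamma_\cV)$ with $(\cD,\wt\Gamma_\cD)$ graded-division; but this is precisely the content of Proposition \ref{pr:EndDV_over_U} together with the trivial observation about graded-division algebras. A hands-on alternative --- choosing a $\wt\Gamma_\cV$-graded $\cD$-basis of $\cV$ and checking from the $\Gamma$-homogeneity of $B$ that each Gram-matrix entry $B(v_i,v_j)$ lands in a single $\wt\Gamma_\cD$-component (or is zero) --- does not by itself show that $B$ has a single homogeneous degree in $U$, since the kernel of $\alpha$ could a priori intervene; this is why routing the argument through Theorem \ref{th:involutions_from_B} seems cleanest.
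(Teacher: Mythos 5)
Your proof is correct and follows essentially the same route as the paper: apply Theorem \ref{th:involutions_from_B}(1) to the $U$-graded algebra $(\cR,\wt\Gamma)$ to obtain a $U$-homogeneous pair, push it forward along $\alpha$ to the $G$-graded setting, and use the uniqueness in part (2) to write $B$ as a $\cD^\times_\gr$-multiple of the $U$-homogeneous form. The only differences are cosmetic: you spell out explicitly that $(\cD,\wt\Gamma_\cD)$ remains graded-division and that the hypotheses of Theorem \ref{th:involutions_from_B} are met, points the paper treats as immediate consequences of Proposition \ref{pr:EndDV_over_U}, and your closing remark about why a naive Gram-matrix argument does not directly pin down a single degree $u_0\in U$ is a sound observation, though not needed for the proof.
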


\begin{proof}
Since $\wt{\Gamma}_\cD$ has, up to relabeling, the same components as $\Gamma_\cD$, the claim about $\varphi_0$ is clear.
We know that $B$ is homogeneous of degree, say, $g_0\in G$ with respect to the gradings $\Gamma_\cV$ and $\Gamma_\cD$. 
Since $\varphi$ is an antiautomorphism of the $U$-graded algebra $(\cR,\wt{\Gamma})$, there exists a pair $(\tilde{\varphi}_0,\wt{B})$ as in Theorem \ref{th:involutions_from_B}, where $\wt{B}:\cV\times\cV\to\cD$ is homogeneous of degree, say, $u_0\in U$ with respect to $\wt{\Gamma}_\cV$ and $\wt{\Gamma}_\cD$. Applying $\alpha$, it follows that $\wt{B}$ is homogeneous of degree $\alpha(u_0)$ with respect to $\Gamma_\cV$ and $\Gamma_\cD$. Hence $\wt{B}=dB$ for some nonzero element $d\in\cD$ of degree $\alpha(u_0)g_0^{-1}$ with respect to $\Gamma_\cD$. It follows that $d$ is homogeneous with respect to $\wt{\Gamma}_\cD$ and hence $B=d^{-1}\wt{B}$ is homogeneous with respect to $\wt{\Gamma}_\cV$ and $\wt{\Gamma}_\cD$.
\end{proof}

As an application, we will now find the universal groups of the graded algebras with involution $\cM(\cD,\varphi_0,q,s,\ul{d},\delta)$ defined in Subsection \ref{sse:def_MDpqsdd}. Recall that these are algebras of the form  $M_{q+2s}(\cD)\cong M_{q+2s}(\FF)\otimes\cD$, equipped with a grading by the abelian group $\wt{G}(T,q,s,\ul{t})$, defined by equation \eqref{eq:gradingMDpqsdd}, and a certain involution $\varphi$, where $\cD$ is a graded-division algebra with support $T$, $\varphi_0$ is an involution on $\cD$, and $\ul{d}$ is a $q$-tuple of elements $0\ne d_i\in\cD_{t_i}$. 

\begin{proposition}\label{pr:G0_is_universal}
Let $\cR=\cM(\cD,\varphi_0,q,s,\ul{d},\delta)$ and let $\wt{G}^0(T,q,s,\ul{t})$ be the subgroup of $\wt{G}(T,q,s,\ul{t})$ generated by the support of the grading $\Gamma$ on $\cR$. Then:
\begin{enumerate}
\item[(i)] $\wt{G}^0(T,q,s,\ul{t})$ is the universal group of $\Gamma$. 
\item[(ii)] This group can alternatively be presented, for any fixed $i_0$, as the abelian group generated by $T$ and the symbols $u_1,\ldots,u_{q+2s}$ with defining relations 
\begin{equation}\label{eq:relGtilde0}
u_1^2 t_1^{-1}=\ldots=u_q^2 t_q^{-1}=u_{q+1}u_{q+2}=\ldots=u_{q+2s-1}u_{q+2s}\;
\text{ and }\;u_{i_0}=e,
\end{equation}
and the corresponding grading $\wt{\Gamma}$ on $\cR$ is then defined by $\deg(E_{ij}\ot d)=u_i u_j^{-1}t$ for any $0\ne t\in\cD_t$, $t\in T$.
\end{enumerate}
\end{proposition}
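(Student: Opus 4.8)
The plan is to establish (ii) first as a purely combinatorial fact about $\wt{G}(T,q,s,\ul{t})$, and then to deduce (i) by re-grading $\cR$ by the universal group $U(\Gamma)$ and feeding this into Propositions \ref{pr:EndDV_over_U} and \ref{pr:B_U_homogeneous}.

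\emph{Part (ii).} By equations \eqref{eq:kd}--\eqref{eq:2d}, every element of $\supp\Gamma$ has the shape $\tilde{g}_i\tilde{g}_j^{-1}t$ with $t\in T$ (using Lemma \ref{lm:combinatorics}(i) to regard $T\subset\wt{G}$), and conversely each $t\in T$ and each $\tilde{g}_i\tilde{g}_j^{-1}$ with $i\ne j$ is itself such an element; hence, writing $u_i\bydef\tilde{g}_i\tilde{g}_{i_0}^{-1}$, we get $\wt{G}^0(T,q,s,\ul{t})=\langle T,\,u_1,\ldots,u_k\rangle$ with $u_{i_0}=e$. Multiplying the defining relations \eqref{eq:rel_Gtilde} of $\wt{G}$ by $\tilde{g}_{i_0}^{-2}$ shows that $T$ and the $u_i$ satisfy \eqref{eq:relGtilde0}, so there is a surjection $p$ onto $\wt{G}^0$ from the abstractly presented abelian group $\hat{G}$ generated by $T$ and $u_1,\ldots,u_k$ subject to \eqref{eq:relGtilde0}. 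For the reverse direction I would note that $\tilde{g}_i\mapsto u_i$, $t\mapsto t$ defines a homomorphism $F\times T\to\hat{G}$ killing \eqref{eq:rel_Gtilde}, hence factors through $\wt{G}$, and its composite with $p$ fixes $T$ and sends $u_i\mapsto u_iu_{i_0}^{-1}=u_i$, i.e.\ equals $\id_{\hat{G}}$. Thus $p$ is an isomorphism; transporting $\deg(E_{ij}\otimes d)=\tilde{g}_i\tilde{g}_j^{-1}t$ through $p^{-1}$ gives the grading formula in (ii).

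\emph{Part (i).} Let $U\bydef U(\Gamma)$ and let $\pi:U\to\wt{G}^0$ be the canonical homomorphism (surjective, as $\wt{G}^0$ is generated by $\supp\Gamma$). Applying Proposition \ref{pr:EndDV_over_U} to $\cR=\End_\cD(\cV)$ graded by $\wt{G}^0$ yields $U$-gradings $\wt{\Gamma}_\cD$ on $\cD$ and $\wt{\Gamma}_\cV$ on $\cV$ inducing $\wt{\Gamma}$, with $\pi$ restricting to bijections $\supp\wt{\Gamma}_\cD\to T$ and $\supp\wt{\Gamma}_\cV\to\{\tilde{g}_1,\ldots,\tilde{g}_k\}$. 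Since $\cD$ is graded-division, its nonzero homogeneous elements are invertible, so $\supp\wt{\Gamma}_\cD$ is closed under products and inverses, i.e.\ is a subgroup $\widehat{T}\le U$ mapped isomorphically onto $T$ by $\pi$; write $t\mapsto\widehat{t}$ for the inverse. Each basis vector $v_i$ spans the homogeneous component $\cV_{\tilde{g}_i}$ over $\cD_e$, hence is $\wt{\Gamma}_\cV$-homogeneous of some degree $w_i\in U$ with $\pi(w_i)=\tilde{g}_i$. By Proposition \ref{pr:B_U_homogeneous}, $B$ is homogeneous with respect to $\wt{\Gamma}_\cV$ and $\wt{\Gamma}_\cD$, say of degree $w_0$. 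Reading off $B(v_i,v_i)=d_i$ ($i\le q$) and $B(v_{q+2r-1},v_{q+2r})=1$ ($r\le s$) from \eqref{eq:PhiMDpqsdd} and comparing degrees, we obtain in $U$
\[
w_0w_i^2=\widehat{t_i}\ \ (i\le q),\qquad w_0w_{q+2r-1}w_{q+2r}=e\ \ (r\le s).
\]
Putting $w_i'\bydef w_iw_{i_0}^{-1}$ (so $w_{i_0}'=e$ and $\pi(w_i')=u_i$), these are exactly the relations \eqref{eq:relGtilde0} with $t_i$ replaced by $\widehat{t_i}$. Moreover $\wt{\Gamma}$ is induced by $\wt{\Gamma}_\cV$, so $\deg_{\wt{\Gamma}}(E_{ij}\otimes d)=w_i'w_j'^{-1}\widehat{t}$ for $0\ne d\in\cD_t$; since $E_{i_0,i_0}\otimes d$ and $E_{i,i_0}\otimes 1$ are homogeneous and $U=\langle\supp\wt{\Gamma}\rangle$, this gives $U=\langle\widehat{T},w_1',\ldots,w_k'\rangle$. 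By the presentation of $\wt{G}^0$ established in (ii), the assignment $t\mapsto\widehat{t}$, $u_i\mapsto w_i'$ extends to a (necessarily surjective) homomorphism $\kappa:\wt{G}^0\to U$ with $\pi\circ\kappa=\id$; hence $\kappa$ and $\pi$ are mutually inverse isomorphisms, so $\wt{G}^0$ is the universal group of $\Gamma$. Applying $\pi$ to the displayed degree formula recovers $\deg(E_{ij}\otimes d)=u_iu_j^{-1}t$.

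\emph{Main obstacle.} The delicate point is that the sesquilinear form $B$ and the parameters $d_i$ must remain compatible with the finer grading by $U(\Gamma)$: a priori, re-grading $\cR$ by its universal group need not keep $B$ homogeneous nor $\varphi_0$ degree-preserving. Propositions \ref{pr:EndDV_over_U} and \ref{pr:B_U_homogeneous} are precisely what resolve this; granting them, what remains is bookkeeping among the three groups $\wt{G}$, $\wt{G}^0$, $U$, together with the innocuous normalization $w_i\rightsquigarrow w_iw_{i_0}^{-1}$ forced by the relation $u_{i_0}=e$.
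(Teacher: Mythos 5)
Your argument is essentially the same as the paper's, just with the two parts proved in the opposite order: the paper applies Propositions \ref{pr:EndDV_over_U} and \ref{pr:B_U_homogeneous} first to show $\alpha:U\to\wt{G}^0$ has a section $\gamma$ (defined on $\wt{G}$ via the presentation \eqref{eq:rel_Gtilde}), concluding (i), and then derives (ii) by observing that $\pi=\alpha\gamma$ factors through $\wt{G}/\langle\tilde{g}_{i_0}\rangle$; you establish (ii) by a clean self-contained combinatorial argument and then use that presentation of $\wt{G}^0$ to build the section $\kappa$ in (i). Both rely on exactly the same technical inputs, so this is a reorganization rather than a different route; your up-front combinatorial proof of (ii) is arguably a bit more transparent.

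One exposition gap worth flagging: in part (i) you invoke Proposition \ref{pr:EndDV_over_U} for ``$\cR=\End_\cD(\cV)$ graded by $\wt{G}^0$,'' but as given $\cV$ is graded by $\wt{G}$ and its support $\{\tilde g_1,\ldots,\tilde g_k\}$ need not lie in $\wt{G}^0$, so that proposition does not directly apply. One must first replace $\cV$ by the right shift $\cV^{[\tilde g_{i_0}^{-1}]}$, as the paper does; this makes $\deg v_i=u_i\in\wt{G}^0$ (and leaves $\cR$'s grading unchanged). Accordingly your statement ``$\pi(w_i)=\tilde g_i$'' should read $\pi(w_i)=u_i$; since you immediately pass to $w_i'=w_iw_{i_0}^{-1}$ and only ever use $\pi(w_i')=u_i$, which comes out the same either way, this slip does not affect the rest of the argument, but the shift should be stated explicitly.
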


\begin{proof}
Denote $G=\wt{G}(T,q,s,\ul{d})$, $G_0=\wt{G}^0(T,q,s,\ul{d})$, and $k=q+2s$. In more abstract terms, $\cR=\End_\cD(\cV)$ where $\cV=\cD^k$ is given a $G$-grading by assigning the standard $\cD$-basis elements $v_1,\ldots,v_k$ degrees $\tilde{g}_1,\ldots,\tilde{g}_k$, respectively. The involution $\varphi$ on $\cR$ is the adjunction with respect to the $\varphi_0$-sesquilinear form $B$ represented by matrix \eqref{eq:PhiMDpqsdd}. Now fix some $i_0$, say $i_0=1$. Replacing $\cV$ by $\cV^{[\tilde{g}_1^{-1}]}$ makes $\deg v_i=g_i\bydef\tilde{g}_i\tilde{g}_1^{-1}$, so we may suppose that $\cV$, $\cD$ and $\cR$ are graded by the group $G_0$, which is generated by $T$ and the elements $g_2,\ldots,g_k$ (note that $g_1=e$). 

Now we can use Proposition \ref{pr:EndDV_over_U} to describe the realization of the $G$-grading $\Gamma$ as a grading $\wt{\Gamma}$ by the universal group $U$. Since the homomorphism $\alpha:U\to G_0$ restricts to an isomorphism $\supp\wt{\Gamma}_\cD\to T$, we can identify $T$ with a subgroup of $U$ and assume $\alpha|_T=\id_T$. Note that $U$ is generated by $T$ and the elements $u_1,\ldots,u_k$, where $u_i\in U$ is the degree of $v_i$ with respect to the grading $\wt{\Gamma}_\cV$, and also that $\alpha$ maps $u_i\mapsto g_i$. Replacing $\wt{\Gamma}_\cV$ by $\wt{\Gamma}_\cV^{[u_1^{-1}]}$ and keeping the same $\wt{\Gamma}_\cD$ (since $U$ is abelian), we may suppose that $u_1=e$. By Proposition \ref{pr:B_U_homogeneous}, $B$ is homogeneous with respect to the $U$-gradings, so the fact that $B(v_i,v_i)=d_i\ne 0$ for $1\le i\le q$ and $B(v_{q+2j-1},v_{q+2j})=1$ for $1\le j\le s$ implies that $u_1,\ldots,u_k$ satisfy relations \eqref{eq:relGtilde0}.
Comparing these with the defining relations \eqref{eq:rel_Gtilde} of $G$, we see that there exists a homomorphism $\gamma:G\to U$ defined by $\gamma|_T=\id_T$ and $\gamma(\tilde{g}_i)=u_i$ for $1\le i\le k$. Since $\gamma(g_i)=\gamma(\tilde{g}_i)\gamma(\tilde{g}_1)^{-1}=u_i u_1^{-1}=u_i$ (recall that $u_1=e$), we see that $\gamma\alpha=\id_U$, which implies that $\alpha:U\to G_0$ is an isomorphism. Moreover, $\pi\bydef\alpha\gamma$ is a retraction of $G$ onto $ G_0$, which allows us to obtain a presentation for $G_0$ in terms of the generators $g_i=\pi(\tilde{g}_i)$. Since $g_1=e$, $\pi$ factors through a homomorphism $\bar{\pi}:G/K\to G_0$ that sends $\tilde{g}_i K\mapsto g_i$ where $K\bydef\langle\tilde{g}_1\rangle$. On the other hand, the composition of the inclusion $G_0\to G$ and the quotient map $G\to G/K$ is a homomorphism $G_0\to G/K$ that sends $g_i\mapsto\tilde{g}_i\tilde{g}_1^{-1}K=\tilde{g}_i K$ and, therefore, is the inverse of $\bar{\pi}$. 
\end{proof}

The following result extends \cite[Proposition 2.33]{EKmon} to take into account antiautomorphisms.

\begin{theorem}\label{th:equivalence_with_anti}
Let $G$ and $G'$ be abelian groups and consider the $G$-graded algebra $\cR=\End_\cD(\cV)$ and $G'$-graded algebra $\cR'=\End_{\cD'}(\cV')$ where $\cD$ and $\cD'$ are graded-division algebras (graded by $G$ and $G'$, respectively) and $\cV$ and $\cV'$ are nonzero graded right modules of finite rank over $\cD$ and $\cD'$, respectively. 
\begin{enumerate}
\item[(1)]
If $\psi:\cR\to\cR'$ is an equivalence of graded algebras, then there exists an equivalence $(\psi_0,\psi_1)$ from $(\cD,\cV)$ to $(\cD',\cV')$, by which we mean that
\begin{enumerate}
\item[(i)] $\psi_0:\cD\to\cD'$ is an equivalence of graded algebras,
\item[(ii)] $\psi_1:\cV\to\cV'$ is an equivalence of graded vector spaces, and
\item[(iii)] $\psi_1$ is $\psi_0$-semilinear: $\psi_1(vd)=\psi_1(v)\psi_0(d)$ for all $v\in\cV$ and $d\in\cD$,
\end{enumerate}
such that $\psi(r)=\psi_1 r\psi_1^{-1}$ for all $r\in\cR$. 
\item[(2)] 
Another pair $(\psi'_0,\psi'_1)$ satisfies these conditions if and only if there exists
$d'\in\cD'^\times_\gr$ such that $\psi'_0=\Int(d')^{-1}\circ\psi_0$ and $\psi'_1(v)=\psi_1(v)d'$ for all $v\in\cV$.
\item[(3)]
Suppose $\varphi$ and $\varphi'$ are antiautomorphism of $\cR$ and $\cR'$, respectively, and let $(\varphi_0,B)$ and $(\varphi'_0,B')$ be corresponding pairs as in Theorem \ref{th:involutions_from_B}. Then $\psi$ satisfies $\psi\varphi\psi^{-1}=\varphi'$ if and only if there exists $d_0\in\cD^\times_\gr$ such that 
\begin{equation}\label{eq:B'B}
B'\bigl(\psi_1(v),\psi_1(w)\bigr)=\psi_0\bigl(d_0B(v,w)\bigr)\;\text{ for all }v,w\in\cV\text{ and }d\in\cD.
\end{equation}
If this is the case, we also have
\begin{equation}\label{eq:d0psi}
\Int(d_0)\circ\varphi_0=\psi_0^{-1}\varphi_0'\psi_0.
\end{equation} 
\end{enumerate}
\end{theorem}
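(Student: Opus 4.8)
The plan is to treat parts (1) and (2) as being essentially \cite[Proposition 2.33]{EKmon} and to concentrate the work on part (3), which is the new ingredient. For (1) and (2), realizing both gradings by their universal groups turns the equivalence $\psi$ into a weak isomorphism, to which one applies the description of isomorphisms of graded algebras of the form $\End_\cD(\cV)$ recalled at the beginning of this section, producing the pair $(\psi_0,\psi_1)$. For part (2) the key observation, which I will reuse in part (3), is that if $(\psi_0,\psi_1)$ and $(\psi'_0,\psi'_1)$ both conjugate to the same $\psi$, then $\psi_1^{-1}\psi'_1$ commutes with the action of $\cR=\End_\cD(\cV)$ on $\cV$; since the centralizer of this action inside $\End_\FF(\cV)$ consists of the right multiplications by elements of $\cD$, this forces $\psi'_1(v)=\psi_1(v)d'$ for some $d'\in\cD'^\times_\gr$ (homogeneous because $\psi_1,\psi'_1$ are equivalences of graded vector spaces), and then $\psi'_0=\Int(d')^{-1}\circ\psi_0$ is read off from the semilinearity of $\psi_1$ and $\psi'_1$; the converse is a direct verification.

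For part (3), I would transport $B'$ through $\psi$ to a form on $\cV$ valued in $\cD$, namely $C(v,w)\bydef\psi_0^{-1}\bigl(B'(\psi_1(v),\psi_1(w))\bigr)$. A short computation, using only that $B'$ is $\varphi'_0$-sesquilinear, that $B'$ is the adjunction with respect to $\varphi'$, and that $\psi_1(rv)=\psi(r)\psi_1(v)$, shows that $C$ is a nondegenerate $\FF$-bilinear form that is $\varphi''_0$-sesquilinear for $\varphi''_0\bydef\psi_0^{-1}\varphi'_0\psi_0$ and whose adjunction is $\psi^{-1}\varphi'\psi$. Hence the condition $\psi\varphi\psi^{-1}=\varphi'$ is equivalent to the statement that $C$ and $B$ have the same adjunction $\varphi$. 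To finish one wants to invoke Theorem \ref{th:involutions_from_B}(2), which compares two sesquilinear forms with a common adjunction and produces $d_0\in\cD^\times_\gr$ with $C=d_0B$ and $\varphi''_0=\Int(d_0)\circ\varphi_0$; unwinding the definition of $C$ then gives \eqref{eq:B'B}, and $\varphi''_0=\psi_0^{-1}\varphi'_0\psi_0$ gives \eqref{eq:d0psi}.

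I expect the main obstacle to be exactly this last step: Theorem \ref{th:involutions_from_B}(2) requires $C$ to be \emph{homogeneous} and $\varphi''_0$ to be \emph{degree-preserving}, and neither is automatic for the given gradings, since $\psi_0$ and $\psi_1$ may permute homogeneous components. The remedy is to pass to the universal groups: by Propositions \ref{pr:EndDV_over_U} and \ref{pr:B_U_homogeneous}, the grading $\Gamma$ can be relabeled by the universal group $U$ together with compatible $U$-gradings $\wt\Gamma_\cD$ on $\cD$ and $\wt\Gamma_\cV$ on $\cV$ for which $\varphi_0$ is degree-preserving and $B$ is homogeneous, and similarly over the universal group of $\Gamma'$. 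Since $\psi$ is an equivalence, it is a weak isomorphism with an associated group isomorphism $\gamma:U\to U'$, and, transporting the second family of gradings by $\gamma^{-1}$, one may choose $(\psi_0,\psi_1)$ so that $\psi_0$ carries the degree-$u$ component of $(\cD,\wt\Gamma_\cD)$ to the degree-$\gamma(u)$ component of $(\cD',\wt\Gamma_{\cD'})$ (as an algebra map it must fix the identity component, so no translation occurs) and $\psi_1$ does the same for $\cV$ up to a fixed translation. With this choice $C$ is homogeneous with respect to $\wt\Gamma_\cV,\wt\Gamma_\cD$ and $\varphi''_0$ is degree-preserving with respect to $\wt\Gamma_\cD$, so Theorem \ref{th:involutions_from_B}(2) applies on the $U$-graded algebra $(\cR,\wt\Gamma)$ and the resulting $d_0$ is homogeneous for $\wt\Gamma_\cD$, hence for $\Gamma_\cD$. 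I would also remark that, by part (2), whether \eqref{eq:B'B} holds for some $d_0\in\cD^\times_\gr$ is independent of the choice of $(\psi_0,\psi_1)$ (replacing $(\psi_0,\psi_1)$ by $(\Int(d')^{-1}\circ\psi_0,\psi_1(\cdot)d')$ replaces a valid $d_0$ by $\psi_0^{-1}\bigl(d'\varphi'_0(d')\bigr)d_0$, again homogeneous), so this particular choice is harmless.

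The converse in part (3), together with \eqref{eq:d0psi}, I would handle with no reference to universal groups. If $d_0\in\cD^\times_\gr$ satisfies \eqref{eq:B'B}, i.e. $C=d_0B$, then, because scaling a form on the left by a fixed element of $\cD$ does not change its adjunction, the form $d_0B$ has adjunction $\varphi$; since $C$ has adjunction $\psi^{-1}\varphi'\psi$ and a nondegenerate form has a unique adjunction, $\psi^{-1}\varphi'\psi=\varphi$, i.e. $\psi\varphi\psi^{-1}=\varphi'$. Finally \eqref{eq:d0psi} comes out of $C=d_0B$ by comparing the identity $C(vd,w)=\varphi''_0(d)C(v,w)$ with $C(vd,w)=d_0B(vd,w)=d_0\varphi_0(d)B(v,w)=d_0\varphi_0(d)d_0^{-1}C(v,w)$ and using that the image of a nondegenerate $\cD$-valued form spans $\cD$, which yields $\varphi''_0=\Int(d_0)\circ\varphi_0$.
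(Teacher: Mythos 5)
Your proposal is correct and takes essentially the same route as the paper: pass to the universal groups via Propositions~\ref{pr:EndDV_over_U} and~\ref{pr:B_U_homogeneous} so that an equivalence $\psi$ becomes a graded isomorphism (after relabeling by the group isomorphism of universal groups), apply the known description of isomorphisms $\End_\cD(\cV)\to\End_{\cD'}(\cV')$ for (1) and (2), and then handle (3) at the level of the universal groups where $B$, $B'$ are homogeneous and $\varphi_0$, $\varphi_0'$ degree-preserving. The only cosmetic difference is that where the paper, having set everything up, delegates (3) to \cite[Lemma~3.32]{EKmon}, you reprove that lemma inline by transporting $B'$ to the form $C(v,w)\bydef\psi_0^{-1}\bigl(B'(\psi_1(v),\psi_1(w))\bigr)$ and invoking the uniqueness statement in Theorem~\ref{th:involutions_from_B}(2); this is exactly the content of the cited lemma and gives a self-contained argument, but is not a genuinely different strategy. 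Your remark that the specific choice of $(\psi_0,\psi_1)$ is immaterial (replacing it by $\bigl(\Int(d')^{-1}\psi_0,\psi_1(\cdot)d'\bigr)$ replaces $d_0$ by $\psi_0^{-1}\bigl(d'\varphi'_0(d')\bigr)d_0$) is a correct and useful sanity check not spelled out in the paper.
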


\begin{proof}
Parts (1) and (2) are proved in \cite{EKmon}, but we will obtain (1) here as an application of Proposition \ref{pr:EndDV_over_U}, because this approach will also allow us to prove (3).
 
Since $\End_\cD(\cV)=\End_\cD(\cV^{[g]})$ for any $g\in G$, nothing in part (1) is affected by a shift of grading. Let $G_0$ be the subgroup of $G$ generated by $\supp\cR$. Applying a suitable shift, we may assume without loss of generality that $\supp\cV\subset G_0$. (One way to see it is to represent $\End_\cD(\cV)$ by matrices, with the $G$-grading given by \eqref{eq:degEijd}, and then replace $\cV$ by $\cV^{[g_1^{-1}]}$.) Considering $\cR$, $\cV$ and $\cD$ as $G_0$-graded, we can apply Proposition \ref{pr:EndDV_over_U} and relabel these gradings with the elements of the universal group $U$. Since nothing in part (1) is affected by such a relabeling, we may assume without loss of generality that $G$ is the universal group of the grading on $\cR$, and similarly that $G'$ is the universal group for $\cR'$. 

Now let $\gamma:\supp\cR\to\supp\cR'$ be the bijection defined by the equivalence $\psi$, i.e., $\psi(\cR_g)=\cR_{\gamma(g)}$ for all $g\in\supp\cR$. Then $\gamma$ extends to a group isomorphism $G\to G'$, which we also denote by $\gamma$. By construction, $\psi$ then becomes an isomorphism of $G$-graded algebras $\cR\to{}^{\gamma^{-1}}\cR'$. Denote $\cR''={}^{\gamma^{-1}}\cR'$, $\cD''={}^{\gamma^{-1}}\cD'$ and $\cV''={}^{\gamma^{-1}}\cV'$. Then $\cR''=\End_{\cD''}(\cV'')$ and, applying \cite[Theorem 2.10]{EKmon} to the isomorphism $\psi:\cR\to\cR''$, we can find an element 
$g\in G$ and an isomorphism $(\psi_0,\psi_1)$ from $(\cD,\cV^{[g]})$ to $(\cD'',\cV'')$ such that $\psi(r)=\psi_1 r\psi_1^{-1}$ for all $r\in\cR$. Since $\psi_0$ is an equivalence of graded algebras $\cR\to\cR'$ and $\psi_1$ is an equivalence of graded vector spaces $\cV\to\cV'$, we are done with part (1). Note that any $(\psi'_0,\psi'_1)$ as in part (2) is an isomorphism $(\cD,\cV^{[g']})\to(\cD'',\cV'')$ where $g'=g\gamma^{-1}(t')$ for $d'\in\cD_{t'}$.

For part (3), we note that, after the manipulations above to replace $G$ and $G'$ by the universal groups, the antiautomorphisms $\varphi_0$ and $\varphi'_0$ remain degree-preserving and the sesquilinear forms $B$ and $B'$ remain homogeneous, due to Proposition \ref{pr:B_U_homogeneous}. The set $\cD^\times_\gr$ is also unaffected. The proof is then completed by applying \cite[Lemma 3.32]{EKmon} to $\psi$ regarded as an isomorphism $\cR\to\cR''$ and $(\psi_0,\psi_1)$ regarded as an isomorphism 
$(\cD,\cV^{[g]})\to(\cD'',\cV'')$. 
\end{proof}

Since a necessary condition for the equivalence of $\cR$ and $\cR'$ is that of $\cD$ and $\cD'$, the equivalence problem reduces to the case $\cD=\cD'$. The next result solves it for the graded algebras with involution constructed in Subsection \ref{sse:def_MDpqsdd}.

\begin{theorem}\label{th:equivalenceMM'}
The graded algebras with involution $\cR=\cM(\cD,\varphi_0,q,s,\ul{d},\delta)$ and $\cR'=\cM(\cD,\varphi'_0,q',s',\ul{d}',\delta')$ are equivalent if and only if $q'=q$, $s'=s$, and there exist an equivalence $\psi_0:\cD\to\cD$ and an element $c_0\in\cD^\times_\gr$ such that 
\begin{enumerate}
\item[(i)] $\varphi'_0=\Int(c_0)\circ\psi_0\varphi_0\psi_0^{-1}$, 
\item[(ii)] $\varphi'_0(c_0)=\delta\delta' c_0$, and 
\item[(iii)] there exist $c_1,\ldots,c_q\in\cD^\times_\gr$ such that the $q$-tuple $(d'_1,\ldots,d'_q)$ is a permutation of $(\tilde{d}_1,\ldots,\tilde{d}_q)$ where $\tilde{d}_i\bydef c_0\psi_0\bigl(c_i d_i \varphi_0(c_i)\bigr)$.
\end{enumerate}
\end{theorem}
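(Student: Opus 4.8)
The plan is to apply Theorem~\ref{th:equivalence_with_anti} to the graded algebras $\cR=\End_\cD(\cV)$ and $\cR'=\End_\cD(\cV')$ where $\cV=\cD^{[\tilde g_1]}\oplus\cdots\oplus\cD^{[\tilde g_k]}$ with the grading attached to $(T,q,s,\ul t)$ and similarly for $\cV'$ with $(T,q',s',\ul t{}')$. First note that the necessary condition ``$\cD\simeq\cD'$ as graded algebras'' is already built into the hypothesis since both algebras are constructed from the same $\cD$. An equivalence $\psi\colon\cR\to\cR'$ of graded algebras with involution yields, by Theorem~\ref{th:equivalence_with_anti}(1), an equivalence $(\psi_0,\psi_1)$ from $(\cD,\cV)$ to $(\cD,\cV')$, and by part~(3) an element $d_0\in\cD^\times_\gr$ satisfying \eqref{eq:B'B} and \eqref{eq:d0psi}, where $B$ and $B'$ are the forms \eqref{eq:PhiMDpqsdd} attached to $(\cR,\varphi)$ and $(\cR',\varphi')$. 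The numerical invariants $q$ and $s$ must agree on the two sides because they are recovered from the isometry type of the hermitian/skew-hermitian form: $B$ is isometric (after the $\psi_0$-twist and rescaling by $d_0$) to $B'$, so the number of hyperbolic planes ($s$) and the number of one-dimensional orthogonal summands ($q$) coincide. More precisely, one compares the rank $k=q+2s$ of $\cV$ (which is forced to equal the rank of $\cV'$) together with the Witt-type decomposition: in Theorem~\ref{th:completeness} the integers $q$ and $s$ are determined by the maximal number of pairwise $B$-orthogonal anisotropic homogeneous vectors, which is invariant under equivalence. So $q'=q$ and $s'=s$.

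Next I would unwind what \eqref{eq:B'B} and \eqref{eq:d0psi} say in terms of matrices. Write $\psi_0$ for the induced equivalence $\cD\to\cD$. Equation~\eqref{eq:d0psi}, namely $\Int(d_0)\circ\varphi_0=\psi_0^{-1}\varphi_0'\psi_0$, rearranges to $\varphi_0'=\psi_0\,\Int(d_0)\,\varphi_0\,\psi_0^{-1}=\Int(\psi_0(d_0))\circ\psi_0\varphi_0\psi_0^{-1}$; setting $c_0\bydef\psi_0(d_0)\in\cD^\times_\gr$ gives exactly condition~(i). For condition~(ii): since $(\varphi_0,B)$ is a pair of type~(3) of Theorem~\ref{th:involutions_from_B} with parameter $\delta$ (so $\overline B=\delta B$, i.e.\ $B(w,v)=\delta\varphi_0(B(v,w))$), and likewise $\overline{B'}=\delta'B'$ with $\varphi_0'$, one feeds this into \eqref{eq:B'B}. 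Computing $B'(\psi_1 w,\psi_1 v)$ in two ways --- once using $\overline{B'}=\delta'B'$ and once using \eqref{eq:B'B} together with $\overline B=\delta B$ --- yields a relation forcing $\varphi_0'(c_0)=\delta\delta' c_0$, which is condition~(ii). (This is the same bookkeeping as \cite[Lemma 3.32]{EKmon}, cf.\ the derivation of \eqref{eq:delta'}.)

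For condition~(iii), I would evaluate \eqref{eq:B'B} on a graded $\cD$-basis. Let $\{v_1,\dots,v_k\}$ be the standard basis of $\cV$ (with $B(v_i,v_i)=d_i$ for $i\le q$, $B(v_{q+2r-1},v_{q+2r})=1$, $B(v_{q+2r},v_{q+2r-1})=\delta$) and $\{v_1',\dots,v_k'\}$ that of $\cV'$ (analogous, with $d_i'$ and $\delta'$). The equivalence $\psi_1$ sends each $v_i$ to a homogeneous vector of $\cV'$, which after using the $\psi_0$-semilinearity can be written as $v_{\pi(i)}'c_i$ for a permutation $\pi$ of $\{1,\dots,k\}$ and elements $c_i\in\cD^\times_\gr$ --- here one uses that the homogeneous components of both $\cV$ and $\cV'$ are $1$-dimensional over $\cD_e$ (Lemma~\ref{lm:combinatorics} and the discussion after it), so $\psi_1$ permutes the ``lines'' $v_i\cD$. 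Substituting into \eqref{eq:B'B} with $v=w=v_i$ ($i\le q$) gives $B'(v_{\pi(i)}'c_i,v_{\pi(i)}'c_i)=\varphi_0'(c_i)\,B'(v_{\pi(i)}',v_{\pi(i)}')\,c_i$ on the left and $\psi_0(d_0 d_i)=\psi_0(d_0)\psi_0(d_i)=c_0\psi_0(d_i)$ on the right (for $\pi(i)\le q$), whence $d_{\pi(i)}' = c_0^{-1}\cdot\varphi_0'(c_i)B'(v_{\pi(i)}',v_{\pi(i)}')c_i\cdot\text{(correction)}$; rearranging and using (i) to convert $\varphi_0'$ back to $\psi_0\varphi_0\psi_0^{-1}$ conjugated by $c_0$, one gets $d_{\pi(i)}'=c_0\,\psi_0\bigl(c_i\,d_i\,\varphi_0(c_i)\bigr)$ after renaming, which is exactly the description of $\tilde d_i$ in (iii). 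A small point to check is that $\pi$ must preserve the block structure --- i.e.\ it maps the first $q$ indices among themselves and the $s$ hyperbolic pairs among themselves --- which follows because $B$-anisotropic homogeneous lines go to $B'$-anisotropic ones under an isometry (up to the $\psi_0$-twist and the $d_0$-rescaling), and this is where $q'=q$, $s'=s$ really gets used.

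The converse direction is more routine: given $\psi_0$, $c_0$, and $c_1,\dots,c_q$ with properties (i)--(iii), I would build $\psi_1\colon\cV\to\cV'$ explicitly on the standard basis (sending $v_i\mapsto v_{\rho(i)}'c_i$ where $\rho$ is the permutation witnessing (iii) on the first $q$ coordinates, extended appropriately on the hyperbolic pairs, with $c_i\bydef 1$ for $i>q$ after a further adjustment using $\delta\delta'$ as in~(ii)), declare it $\psi_0$-semilinear, check it is an equivalence of graded vector spaces (using that degrees match because of the defining relations \eqref{eq:rel_Gtilde} of the two groups $\wt G$ and of the isomorphism between them induced by $\psi_0$), then set $\psi(r)=\psi_1 r\psi_1^{-1}$ and verify $\psi\varphi\psi^{-1}=\varphi'$ by checking \eqref{eq:B'B} with $d_0=\psi_0^{-1}(c_0)$, which reduces to the matrix identity relating $\Phi$, $\Phi'$ and the ``change of basis'' matrix. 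The main obstacle I anticipate is the bookkeeping with the permutation $\pi$ and the hyperbolic blocks: one has to be careful that the $2\times 2$ antidiagonal blocks $\left(\begin{smallmatrix}0&1\\\delta&0\end{smallmatrix}\right)$ are respected, that the parameter $\delta$ can change to $\delta'$ precisely when $c_0$ is ``$\delta\delta'$-symmetric'' rather than symmetric, and that the $c_i$ for $i>q$ (acting on hyperbolic pairs) can always be normalized away --- so the only genuine moduli are the $c_i$ for $i\le q$, exactly as stated in~(iii).
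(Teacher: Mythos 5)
Your proposal follows the same route as the paper's proof: apply Theorem~\ref{th:equivalence_with_anti} to obtain $(\psi_0,\psi_1)$ and $d_0$, set $c_0=\psi_0(d_0)$ to read off (i) from \eqref{eq:d0psi}, extract (ii) from the $\wb{B}=\delta B$, $\wb{B'}=\delta'B'$ relations via \eqref{eq:delta'}, get (iii) by evaluating \eqref{eq:B'B} on the anisotropic basis vectors, and then construct $\psi_1$ on the standard basis for the converse. Two minor cautions: the initial ``Witt-type decomposition'' framing is not the right justification for $q'=q$ and $s'=s$ (the unordered decomposition of a hermitian form into rank-one and hyperbolic pieces is not unique); what actually pins down $q$ and $s$ is the fact, which you invoke later, that the cosets $\tilde g_i T$ are distinct so $\psi_1$ must permute the lines $v_i\cD$, and \eqref{eq:B'B} then preserves which of those lines are $B$-anisotropic. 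And your formula for $d'_{\pi(i)}$ carries the scalar on the wrong side, which is why you needed the ``(correction)''; the cleaner bookkeeping, as in the paper's \eqref{eq:compute_diag}, is to parametrize by $\psi_1^{-1}(v')=vc$ rather than $\psi_1(v)=v'c$, and then the stated $c_i$ is $\varphi_0(c)$ directly. Neither issue is a real gap, just places where your write-up needs tightening.
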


\begin{proof}
By definition, $\cR=\End_\cD(\cV)$ and $\cR'=\End_\cD(\cV')$, graded by $G=\wt{G}(T,q,s,\ul{t})$ and $G'=\wt{G}(T,q',s',\ul{t}')$, respectively, where $T$ is the support of $\cD$, $t_i=\deg d_i$, $t'_i=\deg d'_i$, $\cV$ has a $\cD$-basis $\{v_1,\ldots,v_k\}$ with $\deg v_i=\tilde{g}_i$ and $\cV'$ has a $\cD$-basis $\{v'_1,\ldots,v'_{k'}\}$ with $\deg v'_i=\tilde{g}'_i$, $k=q+2s$ and $k'=q'+2s'$. The involution $\varphi$ of $\cR$ is the adjunction with respect to the hermitian or skew-hermitian form $B$ represented by the matrix in equation \eqref{eq:PhiMDpqsdd}, and similarly for the involution $\varphi'$ of $\cR'$.
  
Suppose $\psi:\cR\to\cR'$ is an equivalence of graded algebras with involution and let $(\psi_0,\psi_1)$ be an equivalence $(\cD,\cV)\to(\cD,\cV')$ as in Theorem \ref{th:equivalence_with_anti}. First observe that $\psi_1$ must map $\{v_1,\ldots,v_k\}$ onto a graded basis of $\cV'$ (hence $k'=k$), which is necessarily a set of the form $\{v'_1 c'_1,\ldots,v'_k c'_k\}$ with $c'_i\in\cD^\times_\gr$, because the cosets $\tilde{g}'_1 T,\ldots,\tilde{g}'_k T$ are distinct. Since $\psi\varphi\psi^{-1}=\varphi'$, there exists $d_0\in\cD^\times_\gr$ such that \eqref{eq:B'B} and \eqref{eq:d0psi} hold. But $B(v_i,v_i)\ne 0$ if and only if $i\le q$, and similarly for $B'$, hence \eqref{eq:B'B} implies $q'=q$, $s'=s$, and $\psi_1$ must map the set $\{v_1,\ldots,v_q\}$ onto $\{v'_1 c'_1,\ldots,v'_q c'_q\}$. Conjugating both sides of \eqref{eq:d0psi} by $\psi_0$, we obtain (i) with $c_0\bydef\psi_0(d_0)$. The $(\psi_0^{-1}\varphi'_0\psi_0)$-sesquilinear form
$
B_0\bydef\psi_0^{-1}\circ B'\circ(\psi_1\times\psi_1)
$
satisfies $\wb{B_0}=\delta' B_0$ (since $B'$ has the same property), and \eqref{eq:B'B} reads $B_0=d_0 B$, so formula \eqref{eq:delta'} gives $\varphi_0(d_0)=\delta\delta' d_0$. 
Applying $\psi_0$ to both sides of this equation gives $(\psi_0\varphi_0\psi_0^{-1})(c_0)=\delta\delta' c_0$. On the other hand, inverting both sides of (i), we get $\varphi'_0=\psi_0\varphi_0\psi_0^{-1}\circ\Int(c_0^{-1})$, 
hence $(\psi_0\varphi_0\psi_0^{-1})(c_0)=\bigl(\varphi'_0\circ\Int(c_0)\bigr)(c_0)=\varphi'_0(c_0)$. This proves (ii). 
Finally, if $\psi_1^{-1}(v')=v c$ for some $v\in\cV$, $v'\in\cV'$ and $c\in\cD^\times_\gr$, then \eqref{eq:B'B} gives
\begin{equation}\label{eq:compute_diag}
B'(v',v')=\psi_0\bigl(d_0 B(vc,vc)\bigr)=c_0\psi_0\bigl(\varphi_0(c)B(v,v)c\bigr).
\end{equation}
Applying this for $v'\in\{v'_1,\ldots,v'_q\}$ and the corresponding $v\in\{v_1,\ldots,v_q\}$ and $c\in\cD^\times_\gr$, we obtain (iii).

To prove the converse, first observe that if we permute the entries of the $q$-tuple $\ul{d}$, we obtain a graded algebra $\cR''=\End_\cD(\cV'')$ with involution $\varphi''$ (coming from a $\varphi_0$-sesquilinear form $B''$ with $\overline{B''}=\delta B''$) that is weakly isomorphic to $\cR$. Indeed, let $\pi\in S_q$ and consider $\pi\cdot\ul{d}\bydef\bigl(d_{\pi^{-1}(1)},\ldots,d_{\pi^{-1}(1)}\bigr)$. From the defining relations \eqref{eq:rel_Gtilde}, we see that there exists an isomorphism $\alpha:G\to G''\bydef\wt{G}(T,q,s,\pi\cdot\ul{d})$ defined by $\alpha|_T=\id_T$ and $\alpha(\tilde{g}_i)=\tilde{g}''_{\pi(i)}$. Then $\psi_0\bydef\id_\cD$ can be regarded as an isomorphism of $G''$-graded algebras ${}^\alpha\cD\to\cD$ and the mapping $v_i\mapsto v''_{\pi(i)}$ extends to an isomorphism of $\cD$-modules $\psi_1:\cV\to\cV''$ that can be regarded as an isomorphism of $G''$-graded spaces ${}^\alpha\cV\to\cV''$. Hence the pair $(\psi_0,\psi_1)$ gives rise to an isomorphism of $G''$-graded algebras ${}^\alpha\cR\to\cR''$. As $B$ and $B''$ are $\varphi_0$-sesquilinear and $B''(\psi_1(v_i),\psi_1(v_j))=B(v_i,v_j)$ for all $1\le i,j\le k$, condition \eqref{eq:B'B} is satisfied for $B$ and $B''$ (with $\psi_0=\id_\cD$ and $d_0=1$), so $\psi\varphi\psi^{-1}=\varphi''$. 

Now suppose $q'=q$, $s'=s$, and conditions (i), (ii) and (iii) hold, where in the latter we may assume without loss of generality that $d'_i=c_0\psi_0\bigl(c_i d_i \varphi_0(c_i)\bigr)$ for all $1\le i\le q$. In particular, this implies the following relation between $t_i\bydef\deg d_i$ and $t'_i\bydef\deg d'_i$:
\begin{equation}\label{eq:degrees_agree}
t'_i=t_0\alpha\bigl(t_i (\deg c_i)^2\bigr)\;\text{ for all }1\le i\le q,
\end{equation}
where $t_0\bydef\deg c_0$ and $\alpha$ is the automorphism of $T$ defined by $\psi_0(\cD_t)=\cD_{\alpha(t)}$ for all $t\in T$. In view of the defining relations \eqref{eq:rel_Gtilde} and equation \eqref{eq:degrees_agree}, we have an isomorphism $G\to G'$ that extends $\alpha$ and maps $\tilde{g}_i\mapsto\tilde{g}'_i\alpha(\deg c_i)^{-1}$ for $1\le i\le q$, $\tilde{g}_i\mapsto\tilde{g}'_i$ for $i=q+2j-1$, and $\tilde{g}_i\mapsto\tilde{g}'_i t_0$ for $i=q+2j$ ($1\le j\le s$). We will denote this extension also by $\alpha$. Then $\psi_0$ can be regarded as an isomorphism of $G'$-graded algebras ${}^\alpha\cD\to\cD$, and the mapping $v_i\varphi_0(c_i)\mapsto v'_i$ for $1\le i\le q$, $v_i\mapsto v'_i$ for $i=q+2j-1$, and $v_i\mapsto v'_i c_0$ for $i=q+2j$ ($1\le j\le s$) extends to a $\psi_0$-semilinear isomorphism $\psi_1:\cV\to\cV'$ that can be regarded as an isomorphism of $G'$-graded spaces ${}^\alpha\cV\to\cV'$. Hence the pair $(\psi_0,\psi_1)$ gives rise to an isomorphism of $G'$-graded algebras ${}^\alpha\cR\to\cR'$. It remains to prove that condition \eqref{eq:B'B} is satisfied. Recall that it can be rewritten as $B_0=d_0 B$ where $d_0\bydef\psi_0^{-1}(c_0)$. Since $B_0$ is $(\psi_0^{-1}\varphi'_0\psi_0)$-sesquilinear, $d_0B$ is $\Int(d_0)\circ\varphi_0$-sesquilinear, and $\psi_0^{-1}\varphi'_0\psi_0=\Int(d_0)\circ\varphi_0$ by condition (i), it suffices to verify the equality $B_0=d_0B$ on some $\cD$-basis $\{w_1,\ldots,w_k\}$ of $\cV$. Moreover, we have $\wb{B_0}=\delta' B_0$, and condition (ii) implies that $\wb{d_0 B}=\delta' d_0 B$, so it is sufficient to verify that $B_0(w_i,w_j)=d_0 B(w_i,w_j)$ for all $1\le i\le j\le k$. This is straightforward using the basis $\{v_1\varphi_0(c_1),\ldots,v_q\varphi_0(c_q),v_{q+1},\ldots,v_{k}\}$ and a calculation similar to equation \eqref{eq:compute_diag}. 
\end{proof}

Condition (i) can be interpreted in terms of group actions as follows. Let $\Aut(\Gamma_\cD)$ be the group of equivalences from the graded algebra $(\cD,\Gamma_\cD)$ to itself or, in other words, automorphisms of the algebra $\cD$ that permute the components of $\Gamma_\cD$. This group acts on the set of degree-preserving antiautomorphisms of $\cD$ by conjugation, leaving the subset of involutions invariant. On the other hand, the group $\cD^\times_\gr$ acts on the same set as follows: an element $d\in\cD^\times_\gr$ moves an antiautomorphism $\varphi_0$ to $\Int(d)\circ\varphi_0$. These two actions are compatible in a way that they combine into an action of the semidirect product $\cD^\times_\gr\rtimes\Aut(\Gamma_\cD)$ with respect to the natural action of $\Aut(\Gamma_\cD)$ on $\cD^\times_\gr$. Then (i) implies that $\varphi_0$ and $\varphi'_0$ are in the same orbit of this action.

\subsection{The central simple case}

Now suppose we are in the setting of Subsection~\ref{sse:central_simple_basics}: $\cD$ is finite-dimensional, admits a degree-preserving involution $\varphi_0$, and is central simple as an algebra with involution, so  the center $\KK\bydef Z(\cD)$ is either $\FF$ or a quadratic \'etale algebra over $\FF$ with $\varphi_0|_\KK\ne\id_\KK$. Then the above action of $\cD^\times_\gr$ is transitive on the set of degree-preserving antiautomorphisms if $\KK=\FF$ and on the subset of those antiautomorphisms that act nontrivially on $\KK$ if $\KK\ne\FF$. In particular, it can move $\varphi_0$ to any other degree-preserving involution $\varphi'_0$ of the same kind. In fact, by Corollary \ref{cor:involutions_from_B}, $\varphi$ can be obtained from any such involution by choosing an appropriate $B$, so we may fix $\varphi_0$ for each such $\cD$ (as we did, for example, in the real case --- see Subsection \ref{sse:real_basics}). Moreover, $B$ is hermitian if $\chr{\FF}=2$ and can be chosen hermitian if $\KK\ne\FF$, which we will always do.

If there is a special involution on $\cD$ that commutes with all elements of $\Aut(\Gamma_\cD)$ and is of the right kind, as happens over $\RR$ for the distinguished involution if $\cD_e=\RR$ and $\KK\in\{\RR,\CC\}$ (see Subsection \ref{sse:real_basics}), then choosing this involution as $\varphi_0$ will considerably simplify the analysis: for example, condition (i) will say that $c_0$ is central. However, since we do not have such an involution in all cases, we continue in the general setting. 

With a fixed involution $\varphi_0$, we can express the remaining conditions (ii) and (iii) of Theorem \ref{th:equivalenceMM'} in terms of group actions as follows. 
Define a left action of the group $\cD^\times_\gr$ on the set $\cD^\times_\gr$ by $c*d\bydef cd\varphi_0(c)$. We denote the set of orbits by $X_{\varphi_0}(\cD)$ and the orbit of an element $d\in\cD^\times_\gr$ by $\cO(d)$.
Clearly, if $d$ is symmetric or skew-symmetric, then all elements in $\cO(d)$ have the same property. 
We also denote by $A_{\varphi_0}(\cD)$ the stabilizer of $\varphi_0$ in $\cD^\times_\gr\rtimes\Aut(\Gamma_\cD)$:
\[
A_{\varphi_0}(\cD)\bydef\{(c,\psi_0)\in\cD^\times_\gr\rtimes\Aut(\Gamma_\cD)\mid
\Int(c)\circ\psi_0\varphi_0\psi_0^{-1}=\varphi_0\}.
\]
The group $\cD^\times_\gr\rtimes\Aut(\Gamma_\cD)$ acts naturally on the set $\cD^\times_\gr$ via 
$(c,\psi_0)\cdot d\bydef c\psi_0(d)$, and the action of the subgroup $A_{\varphi_0}(\cD)$ 
passes to the quotient set $X_{\varphi_0}(\cD)$. 
Indeed, for $c',d\in\cD^\times_\gr$ and $(c,\psi_0)\in A_{\varphi_0}(\cD)$, we have 
$\varphi_0=\varphi_0^{-1}=\psi_0\varphi_0\psi_0^{-1}\circ\Int(c^{-1})$, so 
\[
\begin{split}
(c,\psi_0)\cdot\Bigl(\bigl(\psi_0^{-1}(c^{-1}c'c)\bigr)*d\Bigr)
&=c'c\psi_0(d)\psi_0\Bigl(\varphi_0\bigl(\psi_0^{-1}(c^{-1}c'c)\bigr)\Bigr)\\
&=c'c\psi_0(d)\varphi_0(c')=c'*\bigl((c,\psi_0)\cdot d\bigr),
\end{split}
\]
which proves that $(c,\psi_0)\cdot\cO(d)=\cO\bigl((c,\psi_0)\cdot d\bigr)$.

\begin{lemma}\label{lm:Aphi+Dsurj}
For any $\psi_0\in\Aut(\Gamma_\cD)$, there exists $c\in\cD^\times_\gr$ such that $(c,\psi_0)\in A_{\varphi_0}(\cD)$ and $\varphi_0(c)=c$. If $\KK=\FF$, then the former condition implies the latter.
\end{lemma}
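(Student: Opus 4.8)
The plan is to reduce the statement to a single degree-preserving involution on $\cD$ and then apply the Skolem--Noether theorem together with the known description of degree-preserving inner automorphisms. Fix $\psi_0\in\Aut(\Gamma_\cD)$ and set $\theta\bydef\psi_0\varphi_0\psi_0^{-1}$. First I would record the elementary facts: $\theta^2=\psi_0\varphi_0^2\psi_0^{-1}=\id$, so $\theta$ is an involution; since $T$ is the universal group of $\Gamma_\cD$, the equivalence $\psi_0$ induces an automorphism $\alpha$ of $T$ with $\psi_0(\cD_t)=\cD_{\alpha(t)}$, and $\alpha$ fixes $\supp Z(\cD)$ pointwise (as $\psi_0(Z(\cD))=Z(\cD)$), so $\theta$ is again degree-preserving; finally $\theta|_{Z(\cD)}=\varphi_0|_{Z(\cD)}$ because $\psi_0|_{Z(\cD)}$ and $\varphi_0|_{Z(\cD)}$ are commuting elements of the abelian group $\Aut_\FF(Z(\cD))$, which has order at most $2$. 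In particular $\theta$ is a degree-preserving involution of the same kind as $\varphi_0$.

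Next I would show that $\theta\varphi_0^{-1}$ is an inner automorphism of $\cD$ implemented by a homogeneous element. Being an algebra automorphism that fixes $Z(\cD)=\KK$ pointwise, it is $\KK$-linear; by the structure of $(\cD,\varphi_0)$ recalled in Subsection~\ref{sse:central_simple_basics} ($\cD$ is central simple over $\KK$ when $\KK$ is a field, and $\cD\simeq\cS\times\cS^\op$ with $\cS$ central simple over $\FF$ when $\KK\simeq\FF\times\FF$) together with the Skolem--Noether theorem, every $\KK$-linear algebra automorphism of $\cD$ is inner. As $\theta\varphi_0^{-1}$ is also degree-preserving, \cite[Lemma 3.3]{E10} (or \cite[Theorem 2.1]{R20}) furnishes $d\in\cD^\times_\gr$ with $\theta\varphi_0^{-1}=\Int(d)$; then $c_0\bydef d^{-1}\in\cD^\times_\gr$ satisfies $\Int(c_0)\circ\psi_0\varphi_0\psi_0^{-1}=\varphi_0$, i.e.\ $(c_0,\psi_0)\in A_{\varphi_0}(\cD)$, which already gives surjectivity onto $\Aut(\Gamma_\cD)$.

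It remains to treat symmetry. For any $c$ with $(c,\psi_0)\in A_{\varphi_0}(\cD)$ we have $\theta=\Int(c^{-1})\circ\varphi_0$; feeding $\theta^2=\id$ through the identity $\varphi_0\circ\Int(x)\circ\varphi_0=\Int(\varphi_0(x)^{-1})$ shows $c^{-1}\varphi_0(c)\in Z(\cD)^\times$, so $\varphi_0(c)=\mu c$ with $\mu\in\KK_e^\times$ homogeneous of degree $e$ (as $\varphi_0$ preserves degrees), and applying $\varphi_0$ once more gives $\mu\varphi_0(\mu)=1$. If $\KK=\FF$ then $\mu\in\{\pm1\}$: in characteristic $2$ this forces $\mu=1$, and in characteristic different from $2$ I would note that $\psi_0$, being an $\FF$-linear algebra automorphism, carries the space of $\varphi_0$-symmetric elements onto that of $\theta$-symmetric elements, so $\theta$ has the same type as $\varphi_0$, and then invoke the classical fact (see e.g.\ \cite[\S2]{KMRT}) that $\Int(c^{-1})\circ\varphi_0$ has the same type as $\varphi_0$ precisely when $c$ is $\varphi_0$-symmetric, to conclude $\mu=1$; thus, when $\KK=\FF$, every such $c$ is automatically symmetric. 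If $\KK\neq\FF$, I would instead replace $c_0$ by $c\bydef zc_0$ for a suitable $z\in\KK^\times_\gr$, which leaves $\Int(c)=\Int(c_0)$ unchanged (as $z$ is central) and yields $\varphi_0(c)=c$ provided $\varphi_0(z)/z=\varphi_0(\mu)$: in the Type~II cases $\KK_e=\FF$ and $\mu\in\{\pm1\}$, so one takes $z=1$ if $\mu=1$ and any nonzero element of $\KK_f$ (which is skew-symmetric) if $\mu=-1$, while in the Type~I case $\KK$ is a quadratic field, $\varphi_0(\mu)$ has norm $1$ over $\FF$, and Hilbert's Theorem~90 provides the required $z\in\KK^\times=\KK_e^\times$.

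I expect the main obstacle to be the case $\KK=\FF$ in characteristic different from $2$: ruling out $\mu=-1$ genuinely needs the type-invariance input (conjugation of a first-kind involution by a skew-symmetric unit interchanges the orthogonal and symplectic types), and one should make sure this classical statement is applied correctly without assuming that $\cD$ has $1$-dimensional homogeneous components. A secondary point requiring a short argument is the non-simple case $\KK\simeq\FF\times\FF$ in the inner-ness step, where one checks that a $\KK$-linear automorphism necessarily fixes the central idempotents and hence preserves the two simple factors.
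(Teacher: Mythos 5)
Your proof is correct, but it takes a noticeably longer and more explicit route than the paper's. The paper simply views $\cD$ as $\End_\cD(\cD)$, the endomorphism algebra of the rank-one graded right $\cD$-module $\cD$, and invokes Corollary~\ref{cor:involutions_from_B}: the degree-preserving involution $\varphi\bydef\psi_0\varphi_0\psi_0^{-1}$ is the adjunction with respect to a nondegenerate homogeneous $\varphi_0$-sesquilinear form $B$, which can always be chosen hermitian and, when $\KK=\FF$, is automatically hermitian and determined up to a factor in $\FF^\times$. Setting $c\bydef B(1,1)$ gives $\varphi_0(c)=c$ at once and, via $B(d,1)=B(1,\varphi(d))$, the relation $\varphi_0=\Int(c)\circ\varphi$; the uniqueness of $B$ up to $\FF^\times$ then handles the ``former implies latter'' claim for $\KK=\FF$ for free. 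What you do instead is essentially re-derive a special case of Theorem~\ref{th:involutions_from_B} and Corollary~\ref{cor:involutions_from_B}: Skolem--Noether together with \cite[Lemma 3.3]{E10} to produce a homogeneous $c_0$ with $\Int(c_0)\circ\varphi=\varphi_0$, followed by an analysis of the scalar $c^{-1}\varphi_0(c)\in\KK_e^\times$. Your type-invariance step for $\KK=\FF$, $\chr{\FF}\ne 2$ --- that $\psi_0$ preserves the dimension of the space of symmetric elements, so $\varphi$ has the same type as $\varphi_0$, hence $c$ cannot be skew --- is exactly the input the paper uses implicitly through Corollary~\ref{cor:involutions_from_B}(i), and your central-unit adjustments for $\KK\ne\FF$ (Hilbert~90 when $\KK$ is a field; a skew-symmetric unit from $\KK_f$ in the Type~II case) are sound. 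The paper's route buys brevity by reusing machinery already in place; yours is more self-contained and makes the mechanism transparent, at the cost of rebuilding parts of Theorem~\ref{th:involutions_from_B}.
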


\begin{proof}
We will apply Corollary \ref{cor:involutions_from_B} to $\cD$ as a graded right $\cD$-module of rank $1$. Since $\varphi\bydef\psi_0\varphi_0\psi_0^{-1}$ is a degree-preserving involution on $\cD=\End_\cD(\cD)$ of the same kind as $\varphi_0$, it is the adjunction with respect to a nondegenerate homogeneous $\varphi_0$-sesquilinear form $B$, which is automatically hermitian if $\KK=\FF$ (because $\varphi$ has the same type as $\varphi_0$ if $\chr{\FF}\ne 2$) and can be chosen hermitian otherwise. Looking at the ``matrix'' $c\bydef B(1,1)$, we get $\varphi_0(c)=c$ and $\varphi_0(d)c=B(d,1)=B(1,\varphi(d))=c\varphi(d)$ for all $d\in\cD$, which means $\varphi_0=\Int(c)\circ\varphi$. 
\end{proof}

It is easy to verify that if $(c,\psi_0)\in A_{\varphi_0}(\cD)$, $\varphi_0(c)=c$, and $d\in\cD^\times_\gr$ satisfies $\varphi_0(d)=\delta d$ for $\delta\in\{\pm 1\}$, then $d'\bydef c\psi_0(d)$ also satisfies $\varphi_0(d')=\delta d'$. It follows that
\begin{equation}\label{eq:Aphi+D}
A_{\varphi_0}^+(\cD)\bydef\{(c,\psi_0)\in\cD^\times_\gr\rtimes\Aut(\Gamma_\cD)\mid
\Int(c)\circ\psi_0\varphi_0\psi_0^{-1}=\varphi_0\text{ and }\varphi_0(c)=c\}
\end{equation}
is a subgroup of $A_{\varphi_0}(\cD)$, which leaves invariant the following subsets of $X_{\varphi_0}(\cD)$:
\begin{equation}\label{eq:Xp0d}
X_{\varphi_0}(\cD,\delta)\bydef\{\cO(d)\mid d\in\cD^\times_\gr,\,\varphi_0(d)=\delta d\}.
\end{equation}
By Lemma \ref{lm:Aphi+Dsurj}, the projection $A_{\varphi_0}^+(\cD)\to\Aut(\Gamma_\cD)$ is surjective. 
For example, if $\varphi_0$ commutes with all elements of $\Aut(\Gamma_\cD)$, then $A_{\varphi_0}^+(\cD)=\FF^\times\times\Aut(\Gamma_\cD)$.

For a given $q$-tuple $\ul{d}=(d_1,\ldots,d_q)$ of elements of $\cD^\times_\gr$, denote by $\wt{\Sigma}(\ul{d})$ the multiset $\{\cO(d_1),\ldots,\cO(d_q)\}$ of the corresponding orbits (of size $q$ if we count with multiplicity). The group $A_{\varphi_0}(\cD)$ acts on such multisets via its action on $X_{\varphi_0}(\cD)$.
 
Now Theorem \ref{th:equivalenceMM'} tells us that $\cM(\cD,\varphi_0,q,s,\ul{d},\delta)$ and $\cM(\cD,\varphi_0,q',s',\ul{d}',\delta')$ are equivalent as graded algebras with involution if and only if $q'=q$, $s'=s$, and there exists $(c_0,\psi_0)\in A_{\varphi_0}(\cD)$ that moves the multiset $\wt{\Sigma}(\ul{d})$ to $\wt{\Sigma}(\ul{d}')$ and satisfies $\varphi_0(c_0)=\delta\delta' c_0$. If $\KK\ne\FF$, the latter condition reads $\varphi_0(c_0)=c_0$, since in this case we use $\delta'=\delta=1$. If $\KK=\FF$, then we also have $\varphi_0(c_0)=c_0$ by Lemma \ref{lm:Aphi+Dsurj}. 

To summarize: 

\begin{corollary}\label{cor:equivalenceMM'cs}
If $(\cD,\varphi_0)$ is central simple as an algebra with involution, then the graded algebras with involution of the form $\cM(\cD,\varphi_0,q,s,\ul{d},\delta)$, subject to the convention that $\delta=1$ if $\KK\ne\FF$, are classified up to equivalence by $q$, $s$, $\delta$, and the orbit of the multiset $\wt{\Sigma}(\ul{d})$ in $X_{\varphi_0}(\cD,\delta)$ under the action of the group $A_{\varphi_0}^+(\cD)$.\qed
\end{corollary}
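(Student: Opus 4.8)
The plan is to derive Corollary \ref{cor:equivalenceMM'cs} as a direct translation of Theorem \ref{th:equivalenceMM'} into the group-action language set up in the preceding paragraphs, using that, since $(\cD,\varphi_0)$ is central simple as an algebra with involution, Corollary \ref{cor:involutions_from_B} allows us to fix the degree-preserving involution $\varphi_0$ once and for all; every $\cM(\cD,\varphi_0,q,s,\ul{d},\delta)$ in the statement already comes with this fixed $\varphi_0$, so in comparing two such algebras we may apply Theorem \ref{th:equivalenceMM'} with $\varphi'_0=\varphi_0$. The task then reduces to recognizing that conditions (i), (ii), (iii) of that theorem, read with $\varphi'_0=\varphi_0$, say exactly that there is $(c_0,\psi_0)\in A_{\varphi_0}^+(\cD)$ moving $\wt{\Sigma}(\ul{d})$ to $\wt{\Sigma}(\ul{d}')$ inside $X_{\varphi_0}(\cD,\delta)$, together with $q'=q$, $s'=s$, $\delta'=\delta$.

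For the forward direction I would proceed step by step. First, $q'=q$ and $s'=s$ are forced by Theorem \ref{th:equivalenceMM'}, and condition (i), now reading $\Int(c_0)\circ\psi_0\varphi_0\psi_0^{-1}=\varphi_0$, says precisely that $(c_0,\psi_0)\in A_{\varphi_0}(\cD)$. Next, condition (ii), $\varphi_0(c_0)=\delta\delta'c_0$: if $\KK\ne\FF$ then the standing convention $\delta=\delta'=1$ makes this $\varphi_0(c_0)=c_0$; if $\KK=\FF$, Lemma \ref{lm:Aphi+Dsurj} already forces $\varphi_0(c_0)=c_0$ from $(c_0,\psi_0)\in A_{\varphi_0}(\cD)$, whence $\delta\delta'=1$, i.e. $\delta'=\delta$. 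In either case $(c_0,\psi_0)\in A_{\varphi_0}^+(\cD)$ and $\delta'=\delta$, so $\delta$ is an invariant. Finally, writing $\tilde{d}_i=c_0\psi_0\bigl(c_id_i\varphi_0(c_i)\bigr)=(c_0,\psi_0)\cdot(c_i*d_i)$ with $c_i*d_i\in\cO(d_i)$, the compatibility of the $\cD^\times_\gr$-action $*$ with the $A_{\varphi_0}(\cD)$-action established just before the corollary gives $\cO(\tilde{d}_i)=(c_0,\psi_0)\cdot\cO(d_i)$; hence condition (iii), that $(d'_1,\dots,d'_q)$ is a permutation of $(\tilde{d}_1,\dots,\tilde{d}_q)$, is equivalent to the multiset equality $\wt{\Sigma}(\ul{d}')=(c_0,\psi_0)\cdot\wt{\Sigma}(\ul{d})$ in $X_{\varphi_0}(\cD,\delta)$.

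For the converse I would reverse this translation: starting from $(c_0,\psi_0)\in A_{\varphi_0}^+(\cD)$ with $(c_0,\psi_0)\cdot\wt{\Sigma}(\ul{d})=\wt{\Sigma}(\ul{d}')$, after a permutation one has $\cO(d'_i)=(c_0,\psi_0)\cdot\cO(d_i)$, so $d'_i=\tilde{c}_i*\bigl((c_0,\psi_0)\cdot d_i\bigr)=\tilde{c}_ic_0\psi_0(d_i)\varphi_0(\tilde{c}_i)$ for suitable $\tilde{c}_i\in\cD^\times_\gr$; setting $c_i:=\psi_0^{-1}(c_0^{-1}\tilde{c}_ic_0)$ and using both defining relations of $A_{\varphi_0}^+(\cD)$ — in particular $\varphi_0(c_0)=c_0$, which is exactly what lets the scalars $\tilde{c}_i$ and $\varphi_0(\tilde{c}_i)$ be carried through $\psi_0$ and conjugation by $c_0$ — one rewrites $d'_i$ as $c_0\psi_0\bigl(c_id_i\varphi_0(c_i)\bigr)$, recovering condition (iii), while (i) and (ii) hold because $(c_0,\psi_0)\in A_{\varphi_0}^+(\cD)$ and $\delta'=\delta$; Theorem \ref{th:equivalenceMM'} then yields the equivalence of $\cM(\cD,\varphi_0,q,s,\ul{d},\delta)$ and $\cM(\cD,\varphi_0,q',s',\ul{d}',\delta')$. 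Combining the two directions gives that these algebras are equivalent iff $q=q'$, $s=s'$, $\delta=\delta'$, and $\wt{\Sigma}(\ul{d})$, $\wt{\Sigma}(\ul{d}')$ lie in one $A_{\varphi_0}^+(\cD)$-orbit in $X_{\varphi_0}(\cD,\delta)$, which is the assertion. The only step requiring real care — essentially the only computation — is this converse bookkeeping converting an orbit equality back into condition (iii); everything else is a formal unwinding of Theorem \ref{th:equivalenceMM'} and of the definitions of $A_{\varphi_0}(\cD)$, $A_{\varphi_0}^+(\cD)$, $X_{\varphi_0}(\cD,\delta)$, $\cO(d)$ and $\wt{\Sigma}(\ul{d})$.
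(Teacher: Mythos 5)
Your proposal is correct and follows essentially the same approach as the paper: the paper's own justification is exactly the paragraph preceding the corollary, which unwinds conditions (i)--(iii) of Theorem \ref{th:equivalenceMM'} with $\varphi'_0=\varphi_0$ into membership in $A_{\varphi_0}^+(\cD)$ (via Lemma \ref{lm:Aphi+Dsurj} and the convention $\delta'=\delta$) together with the multiset orbit condition, using the compatibility $(c,\psi_0)\cdot\cO(d)=\cO\bigl((c,\psi_0)\cdot d\bigr)$ established just before. Your explicit bookkeeping for the converse, in particular the choice $c_i=\psi_0^{-1}(c_0^{-1}\tilde{c}_ic_0)$ and the use of $\varphi_0(c_0)=c_0$ to verify $\psi_0(\varphi_0(c_i))=\varphi_0(\tilde{c}_i)$, is a correct and slightly more detailed rendering of the same computation the paper leaves implicit.
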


\subsection{The real case}

We now specialize to the setting of Subsection \ref{sse:real_basics}: the ground field $\FF$ is $\RR$ (or any real closed field) and hence $\KK\in\{\RR,\CC,\wt{\CC}\}$. We will also assume that $\cD_e\subset\KK$. Recall that, unless $\KK=\wt{\CC}$, we may choose $\varphi_0$ to be the distinguished involution or a member of the distinguished class (if $\cD_e=\KK=\CC$). 

\begin{proposition}\label{pr:realXp0}
Assume that $(\cD,\varphi_0)$ is central simple as an algebra with involution over $\RR$ and that $\cD_e\subset\KK$. If $\cD_e=\CC$, denote $\cH_t\bydef\{d\in\cD_t\mid\varphi_0(d)=d\}$, which is an $\RR$-subspace of dimension $1$, for any $t\in T\bydef\supp\cD$. Then, for the left action of $\cD^\times_\gr$ on itself via $c*d=cd\varphi_0(c)$, we have the following orbits:
\begin{enumerate}
\item[(1)] If $\KK=\RR$ and $\varphi_0$ is the distinguished involution, then the set $\cD_t\smallsetminus\{0\}$, for any $e\ne t\in T$, is an orbit, and there are two orbits in $\cD_e\smallsetminus\{0\}$, namely, $\cO(1)=\RR_{>0}$ and $\cO(-1)=\RR_{<0}$;
\item[(2)] If $\KK=\CC$, we have two cases:
\begin{enumerate}
\item[(a)] If $\cD_e=\RR$ and $\varphi_0$ is the distinguished involution, then the set $\cD_t\smallsetminus\{0\}$, for any $t\notin\{e,f\}\bydef\supp\KK$, is an orbit, and there are two orbits in each of the sets $\cD_e\smallsetminus\{0\}$ and $\cD_f\smallsetminus\{0\}$, namely, $\cO(1)$, $\cO(-1)$, $\cO(\bi)$ and $\cO(-\bi)$;
\item[(b)] If $\cD_e=\CC$ and $\varphi_0$ belongs to the distinguished class, then each set $\bigcup_{t\in S}\cH_t\smallsetminus\{0\}$, for a nontrivial coset $S$ of $T^{[2]}$ in $T$, is an orbit, and there are two orbits in $\bigcup_{t\in T^{[2]}}\cH_t\smallsetminus\{0\}$, namely, $\cO(1)$ and $\cO(-1)$;
\end{enumerate}
\item[(3)] If $\KK=\wt{\CC}$, then each set $\cD_t\smallsetminus\{0\}$, for any $t\in T$, is an orbit.
\end{enumerate}  
\end{proposition}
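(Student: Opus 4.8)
The plan is to compute the orbit of a single homogeneous element degree by degree, using three structural facts about the action: it is a genuine left action of the group $\cD^\times_\gr$ on the set $\cD^\times_\gr$; it shifts degrees by squares, $c*\cD_t\subset\cD_{u^2t}$ for $c\in\cD_u$; and it preserves $\varphi_0$-symmetry, since $\varphi_0(c*d)=c\,\varphi_0(d)\,\varphi_0(c)$, so $\varphi_0$-symmetric (resp.\ $\varphi_0$-skew) homogeneous elements move among themselves.

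First I would handle cases (1), (2a) and (3) together, since in all three $\cD_e\cong\RR$, so every homogeneous component is $1$-dimensional over $\RR$ and $T$ is an elementary abelian $2$-group by Lemma \ref{lm:2elementary}; hence $u^2=e$ and every orbit lies inside a single $\cD_t\smallsetminus\{0\}=\RR^\times x_t$ (choose generators $x_t$). Writing $\varphi_0|_{\cD_u}$ as multiplication by $\veps(u)\in\{\pm1\}$ (as in \eqref{eq:def_eta}) and setting $\nu(u)\bydef\sgn\!\bigl(x_u\varphi_0(x_u)\bigr)=\veps(u)\sgn(x_u^2)\in\{\pm1\}$, a short computation with the commutation rule \eqref{eq:def_beta} shows that for $c=\gamma x_u$, $\gamma\in\RR^\times$, the element $c*x_t$ is a nonzero real multiple of $x_t$ of sign $\nu(u)\beta(u,t)$. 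Since $u=e$ already produces all of $\RR_{>0}x_t$, the orbit $\cO(x_t)$ equals $\cD_t\smallsetminus\{0\}$ precisely when $\nu(u)\beta(u,t)=-1$ for some $u\in T$, and otherwise $\cO(x_t)=\RR_{>0}x_t$ while $\cO(-x_t)=\RR_{<0}x_t$. For the distinguished involution (cases (1) and (2a)) one has $\nu\equiv1$, so the dichotomy is governed by whether $t\in\rad\beta=\supp Z(\cD)$, which is $\{e\}$ in case (1) and $\{e,f\}$ in case (2a); together with the identification $\cD_f=\RR\bi$, $\bi^2=-1$, this yields the lists as stated. For case (3) I would instead exhibit one sign-reversing element: any $0\ne j\in\KK_f$ is $\varphi_0$-skew and satisfies $j^2\in\RR_{>0}$ (otherwise $\RR1+\RR j$ would be a field isomorphic to $\CC$, not to $\wt\CC$), so $\nu(f)=\sgn(-j^2)=-1$, while $\beta(f,\cdot)\equiv1$ because $f\in\rad\beta$; hence $j*x_t$ is a negative real multiple of $x_t$ for every $t$, and each $\cD_t\smallsetminus\{0\}$ is a single orbit.

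The substantive case is (2b), where $\cD_e=\CC=Z(\cD)$ and $\beta\colon T\times T\to\CC^\times$ is nondegenerate. Using Hilbert 90 for $\CC/\RR$ I would fix a symmetric $0\ne x_s\in\cD_s$ for each $s\in T$, so $\cH_s=\RR x_s$. For $c=wx_v$ with $w\in\CC^\times$ the action gives $c*x_t=|w|^2\,x_vx_tx_v$, and $x_vx_tx_v$ is symmetric, hence a real multiple $\sigma(v,t)\,x_{v^2t}$ of the chosen generator; therefore $\cO(x_t)\subset\bigcup_{s\in T^{[2]}t}\bigl(\cH_s\smallsetminus\{0\}\bigr)$, and in degree $s$ it equals $\bigcup_{v\,:\,v^2t=s}\RR_{>0}\,\sigma(v,t)\,x_s$. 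The crux is to decide when both signs of $\sigma$ occur. Fixing $v_0$ with $v_0^2t=s$ and, for $v\in T_{[2]}$, normalising $x_v^2=1$ (legitimate because $v^2=e$ and $\varphi_0$ lies in the distinguished class), one computes $(x_vx_{v_0})*x_t=x_v\bigl(x_{v_0}x_tx_{v_0}\bigr)x_v=\sigma(v_0,t)\,\beta(v,s)\,x_s$, so replacing $v_0$ by $vv_0$ multiplies the sign of $\sigma$ by $\beta(v,s)\in\{\pm1\}$. Since $v\mapsto\beta(v,s)$ is a character of $T_{[2]}$ that is nontrivial exactly when $s\notin(T_{[2]})^\perp$, and $(T_{[2]})^\perp=T^{[2]}$ by Lemma \ref{lm:perp}, it follows that: if $t\notin T^{[2]}$ then every $s\in T^{[2]}t$ lies outside $T^{[2]}$, both signs occur, and $\cO(x_t)=\bigcup_{s\in T^{[2]}t}(\cH_s\smallsetminus\{0\})$, an orbit depending only on the coset $S=T^{[2]}t$; whereas for $t\in T^{[2]}$ no sign flip is available within $T^{[2]}$, and one checks separately that $\cO(1)\cup\cO(-1)$ exhausts $\bigcup_{s\in T^{[2]}}(\cH_s\smallsetminus\{0\})$ — each $\lambda x_s$ with $s=v^2$ is $\pm(wx_v)\varphi_0(wx_v)$ for a suitable $w$ — and that $\cO(1)\ne\cO(-1)$, because $-1=c\varphi_0(c)$ with $c\in\cD_v$ would force $v^2=e$, hence $c\varphi_0(c)=|w|^2x_v^2\in\RR_{>0}$ by the distinguished-class positivity, a contradiction.

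The cases with $\cD_e\cong\RR$ are essentially routine once the preliminary observations are recorded; the main obstacle is case (2b), where the difficulty is to keep track of the \emph{signs} of the real scalars $\sigma(v,t)$ coming from the triple products $x_vx_tx_v$ and to package the availability of a sign-reversing element into the orthogonality identity $(T_{[2]})^\perp=T^{[2]}$ together with the positivity $x_v^2>0$ for $v\in T_{[2]}$. Case (3) becomes short once one notices that the ``imaginary unit'' of $\wt\CC$ squares to $+1$, in contrast with the $\CC$ appearing in case (2a).
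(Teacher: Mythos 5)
Your argument is correct and follows essentially the same route as the paper: the commutation identity $c*d=\beta(s,t)\,dc\,\varphi_0(c)$, the $2$-elementariness of $T$ in cases (1), (2a), (3), the positivity $c\varphi_0(c)\in\RR_{>0}$ coming from the distinguished involution (or distinguished class), and Lemma~\ref{lm:perp} to decide when a sign-reversing element exists in case (2b). The only presentational difference is in (2b): the paper first observes that there can be at most two orbits in $\bigcup_{t'\in S}\cH_{t'}\smallsetminus\{0\}$ and then exhibits a single sign-reversing element $c\in\cD_s$ with $s\in T_{[2]}$ and $\beta(s,t)=-1$, whereas you track the real scalars $\sigma(v,t)$ in every degree of the coset explicitly and show both signs occur; these are equivalent, and your version is merely somewhat more verbose.
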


\begin{proof}
Recall the alternating bicharacter $\beta$ defined by equation \eqref{eq:def_beta}. For any $s,t\in T$, we have 
\begin{equation}\label{eq:c*d}
c*d=cd\varphi_0(c)=\beta(s,t)dc\varphi_0(c)\in\cD_{ts^2}\;\text{ for all }0\ne c\in\cD_s\text{ and }0\ne d\in\cD_t.
\end{equation}
Assume we are not in case (2b). Then $T$ is $2$-elementary, so all elements in an orbit have the same degree. It is clear that there can be at most two orbits in $\cD_t\smallsetminus\{0\}$. In case (3), we have $c^2\in\RR_{>0}$ for any $0\ne c\in\KK_f$, hence $c*d=dc(-c)\in\RR_{<0}d$, so there is actually only one orbit. In cases (1) and (2a), since $\varphi_0$ is the distinguished involution, we have $c\varphi_0(c)\in\RR_{>0}$ for all $c\in\cD^\times_\gr$. Hence, if $d$ is central, we get $c*d=dc\varphi_0(c)\in\RR_{>0}d$ for all $c\in\cD^\times_\gr$, which means that $d$ and $-d$ are not in the same orbit. On the other hand, if $0\ne d\in\cD_t$ for $t\notin\supp\KK=\rad\beta$, then we can find $s\in T$ with $\beta(s,t)=-1$, and then equation \eqref{eq:c*d} gives $c*d=-dc\varphi_0(c)\in\RR_{<0}d$, so there is only one orbit in $\cD_t\smallsetminus\{0\}$.

Now consider case (2b). From equation \eqref{eq:c*d}, we see that the orbit of an element $0\ne d\in\cH_t$ intersects $\cH_{t'}$  for any $t'$ in the coset $S\bydef tT^{[2]}$. Therefore, there can be at most two orbits in $\bigcup_{t'\in S}\cH_{t'}\smallsetminus\{0\}$. Since $\varphi_0$ belongs to the distinguished class, we have $c\varphi_0(c)\in\RR_{>0}$ for any $0\ne c\in\cD_s$ with $s\in T_{[2]}$. If $S$ is a nontrivial coset, i.e., $t\notin T^{[2]}$, then, by Lemma \ref{lm:perp}, we can find $s\in T_{[2]}$ such that $\beta(s,t)=-1$. But then, as above, equation \eqref{eq:c*d} gives $c*d\in\RR_{<0}d$, so there is only one orbit. On the other hand, the element $c*1=c\varphi_0(c)$ cannot be $-1$ for any $c\in\cD^\times_\gr$, because either its degree is not $e$ or else it belongs to $\RR_{>0}$.
\end{proof}

\begin{remark}
It is clear from the proof that the distinguished involution in cases (1) and (2a) is characterized by the property that $\cO(1)\ne\cO(-1)$. The same is true for the distinguished class of involutions in case (2b). Moreover, given any involution $\varphi_0$ in this class, the normalized elements of $\cO(1)$ form a basis of the graded subalgebra $\bigoplus_{t\in T^{[2]}}\cD_t$, which is called the \emph{distinguished basis} (associated to $\varphi_0$) in \cite{BKR_inv}.
\end{remark}

Proposition \ref{pr:realXp0} gives, in the real case, an explicit description of the orbit sets $X_{\varphi_0}(\cD,\delta)\subset X_{\varphi_0}(\cD)$, defined by equation \eqref{eq:Xp0d}. 
It is convenient to introduce the following map:
\[
p:X_{\varphi_0}(\cD)\to T/T^{[2]},\;\cO(d)\mapsto (\deg d)T^{[2]}.
\]
It is clear from the definition of the action $*$ that this map is well defined. In all cases except (2b), the codomain of $p$ is just $T$, and it is partitioned into subsets $T_{\varphi_0}(+1)$ and $T_{\varphi_0}(-1)$ defined by
\[
T_{\varphi_0}(\delta)\bydef\{t\in T\mid\varphi_0(d)=\delta d\text{ for all }d\in\cD_t\}.
\]
When $\varphi_0$ is the distinguished involution (cases (1a) and (2a)), we will abbreviate these sets as $T_+$ and $T_-$.
Then Proposition \ref{pr:realXp0} tells us this: 
\begin{enumerate}
\item[(1)] If $\KK=\RR$, then $p$ maps $X_{\varphi_0}(\cD,-1)$ bijectively onto $T_-$ and $X_{\varphi_0}(\cD,+1)$ surjectively onto $T_+$, with all elements of $T_+$ except $e$ having a unique preimage and $e$ having two preimages: $e_+\bydef\cO(1)$ and $e_-\bydef\cO(-1)$.
\item[(2)] If $\KK=\CC$, then $p$ maps $X_{\varphi_0}(\cD,+1)$ surjectively onto $T_+$ if $\cD_e=\RR$ and onto $T/T^{[2]}$ if $\cD_e=\CC$, with all elements except one, namely, $e$ and $T^{[2]}$, respectively, having a unique preimage and this element having two preimages: $e_+\bydef\cO(1)$ and $e_-\bydef\cO(-1)$.
\item[(3)] If $\KK=\wt{\CC}$, then $p$ maps $X_{\varphi_0}(\cD,+1)$ bijectively onto $T_{\varphi_0}(+1)$.
\end{enumerate}

Now we take into account the action of the subgroup $A_{\varphi_0}^+(\cD)\subset\cD^\times_\gr\rtimes\Aut(\Gamma_\cD)$, defined by equation \eqref{eq:Aphi+D}, on the set $X_{\varphi_0}(\cD)$ via $(c,\psi_0)\cdot\cO(d)=\cO(c\psi_0(d))$. This subgroup also acts on the set $T/T^{[2]}$ via the following action of $\cD^\times_\gr\rtimes\Aut(\Gamma_\cD)$ on $T$: 
$(c,\psi_0)\cdot t=\deg\bigl(c\psi_0(d)\bigr)$ for any $0\ne d\in\cD_t$. Our map $p$ is $A_{\varphi_0}^+(\cD)$-equivariant:
\[
\begin{split}
&p\bigl((c,\psi_0)\cdot\cO(d)\bigr)=p\bigl(\cO(c\psi_0(d))\bigr)=\deg\bigl(c\psi_0(d)\bigr)T^{[2]}\\
&=\bigl((c,\psi_0)\cdot(\deg d)\bigr)T^{[2]}=(c,\psi_0)\cdot\bigl((\deg d)T^{[2]}\bigr)=(c,\psi_0)\cdot p(\cO(d)).
\end{split}
\]
It follows that $A_{\varphi_0}^+(\cD)$ must fix the trivial coset in $T/T^{[2]}$ in all cases except (3), and permute its preimages $e_+$ and $e_-$. (This is obvious in cases (1) and (2a), because $A_{\varphi_0}^+(\cD)=\RR^\times\times\Aut(\Gamma_\cD)$ in these cases.)
Note that the element $(-1,\id_\cD)$ interchanges $e_+$ and $e_-$ and acts trivially on $T$. 
Therefore, if $\KK\ne\wt{\CC}$, we have the following invariant: 

\begin{df}\label{df:sign_multiset}
The \emph{signature} of a multiset in $X_{\varphi_0}(\cD,+1)$ is the nonnegative integer $|n^0_+-n^0_-|$ where $n^0_+$ and $n^0_-$ are the multiplicities of the elements $e_+$ and $e_-$, respectively. 
Explicitly, for the multiset $\wt{\Sigma}(\ul{d})$ associated to a $q$-tuple $\ul{d}=(d_1,\ldots,d_q)$ of symmetric elements of $\cD^\times_\gr$, the numbers $n^0_+$ and $n^0_-$ are as follows. In cases (1) and (2a), $n^0_+$ (respectively, $n^0_-$) is the number of positive (respectively, negative) real numbers among the elements $d_i$, whereas in case (2b), we have to consider all $d_i$ with degrees in $T^{[2]}$: if $t_i\bydef\deg d_i\in T^{[2]}$ then $d_i$ counts for $n^0_+$ (respectively, $n^0_-$) if the real number $cd_i\varphi_0(c)$ is positive (respectively, negative) for some (and hence any) $0\ne c\in\cD_s$ with $s^2=t_i^{-1}$. We will use the notation: $|n^0_+-n^0_-|=\mathrm{signature}(\ul{d})$.
\end{df}

Note that the three cases in the above definition are precisely those in which the (ungraded) algebra with involution $\cM(\cD,\varphi_0,q,s,\ul{d},\delta)$ can have a signature. Recall that if $\varphi$ is an orthogonal involution on $M_n(\RR)$, a symplectic involution on $M_n(\HH)$, or a second kind involution on $M_n(\CC)$, then the signature of $\varphi$ is defined as the absolute value of the signature of any hermitian matrix $\Psi$ such that $\varphi$ is given by $X\mapsto\Psi^{-1}\wb{X}^T\Psi$. Hence, the signature of the involution $\varphi$ on our $\cM(\cD,\varphi_0,q,s,\ul{d},\delta)$ is always defined in cases (2a) and (2b), while in case (1) it is defined if and only if $\delta=1$, because we chose $\varphi_0$ to be the distinguished involution of $\cD$, which is of the correct type to have a signature (see Subsection \ref{sse:real_basics}). The relationship between the signatures of $\ul{d}$ and $\varphi$ follows from formulas (19) and (20) in \cite{BKR_Lie}. If $\cD$ is isomorphic to $M_\ell(\Delta)$ as an ungraded algebra, where $\Delta\in\{\RR,\CC,\HH\}$, then
\begin{equation}\label{eq:signatures}
\mathrm{signature}(\varphi)=\frac{\ell}{\sqrt{|T^{[2]}|}}\,\mathrm{signature}(\ul{d}).
\end{equation}

Recall that the image of a finite multiset in a set $X$ under a mapping $p:X\to Y$ is the multiset in $Y$ obtained by counting all preimages of a given point with multiplicity. Formally, if $\wt{\kappa}:X\to\ZZ_{\ge 0}$ is the multiplicity function defining a multiset in $X$, then the multiplicity function $\kappa:Y\to\ZZ_{\ge 0}$ defining its image in $Y$ is given by $\kappa(y)=\sum_{x\in p^{-1}(y)}\wt{\kappa}(x)$ for all $y\in Y$. 

\begin{df}\label{df:simpler_multiset}
For a $q$-tuple $\ul{d}=(d_1,\ldots,d_q)$ of elements of $\cD^\times_\gr$ satisfying $\varphi_0(d_i)=\delta d_i$, denote by $\Sigma(\ul{d})$ the image of the multiset $\wt{\Sigma}(\ul{d})$ under $p:X_{\varphi_0}(\cD,\delta)\to T/T^{[2]}$. In other words, $\Sigma(\ul{d})$ is the multiset $\{t_1 T^{[2]},\ldots,t_q T^{[2]}\}$ where $t_i=\deg d_i$.   
\end{df}

Since the unordered pair $\{n^0_+,n^0_-\}$ as in in Definition \ref{df:sign_multiset} can be recovered from $|n^0_+-n^0_-|=\mathrm{signature}(\ul{d})$ and $n^0_++n^0_-$, which is the multiplicity of the trivial coset $T^{[2]}$ in $\Sigma(\ul{d})$, we can now  forget about $X_{\varphi_0}(\cD,\delta)$ and consider only $T/T^{[2]}$. The action of the group $\cD^\times_\gr\rtimes\Aut(\Gamma_\cD)$ on this set is defined via its action on $T$, which factors through the natural action of its quotient group $T\rtimes W(\Gamma_\cD)$, namely, $(s,\alpha)\cdot t=s\alpha(t)$ for all $s,t\in T$ and $\alpha\in W(\Gamma_\cD)$. Therefore, in all cases it is sufficient to consider the image of the subgroup $A_{\varphi_0}^+(\cD)$ in $T\rtimes W(\Gamma_\cD)$. Note that, by Lemma~\ref{lm:Aphi+Dsurj}, the projection of this image to $W(\Gamma_\cD)$ is surjective.

Recall that, except in case (2b), we have a quadratic form $\mu$ on $T$, regarded as a vector space over $GF(2)$, which is defined by condition \eqref{eq:def_mu} and defines the distinguished involution by $x\mapsto\mu(t)x$ for all $x\in\cD_t$. In cases (1) and (2a), the element $e$ is fixed, so the image of $A_{\varphi_0}^+(\cD)$ is contained in the subgroup $W(\Gamma_\cD)$ and hence must be equal to it, because the projection is surjective. By Proposition \ref{pr:Weyl_group_R}, we have $W(\Gamma_\cD)=\Aut(T,\mu)$.

In case (2b), the element $T^{[2]}$ of $T/T^{[2]}$ is fixed, which means that, for any $(c,\psi_0)\in A_{\varphi_0}^+(\cD)$, we have $c\in T^{[2]}$. Therefore, the action of $A_{\varphi_0}^+(\cD)$ on $T/T^{[2]}$ goes through the projection $A_{\varphi_0}^+(\cD)\to W(\Gamma_\cD)$ and the natural action of $W(\Gamma_\cD)$. This projection is surjective, and $W(\Gamma_\cD)$ is computed in  Proposition \ref{pr:Weyl_group_C_as_R}: $\Aut(T,\beta)$ if $T$ is $2$-elementary and $\Aut(T,\beta)\rtimes\langle\tau\rangle$ otherwise. However, the action of $\tau$ is trivial modulo $T^{[2]}$, so we only need to consider the action of $\Aut(T,\beta)$ on $T/T^{[2]}$.

In case (3), the distinguished involution is trivial on $\KK$, so we have to take $\varphi_0$ defined by $x\mapsto\eta(t)x$ for all $x\in\cD_t$, where $\eta\bydef\chi_0\mu$ and $\chi_0:T\to\{\pm 1\}$ is any fixed character satisfying $\chi_0(f)=-1$, so that $\eta$ is a quadratic form with the same polarization as $\mu$ but satisfies $\eta(f)=-1$. An element $\psi_0\in\Aut(\Gamma_\cD)$ commutes with the distinguished involution, but not necessarily with our $\varphi_0$:
\[
(\psi_0\varphi_0\psi_0^{-1})(x)=\eta(\alpha^{-1}(t))x=\chi_0(\alpha^{-1}(t))\chi_0(t)^{-1}\varphi_0(x)\;\text{ for all }x\in\cD_t\text{ and }t\in T,
\]
where $\alpha$ is the image of $\psi_0$ in $W(\Gamma_\cD)=\Aut(T,\mu)$. Since the projection $A_{\varphi_0}^+(\cD)\to\Aut(\Gamma_\cD)$ is surjective and $\cD$ is central simple as an algebra with involution, there must be a symmetric element $c\in\cD^\times_\gr$, determined up to a factor in $\RR^\times$, such that $(c,\psi_0)\in A_{\varphi_0}^+(\cD)$. 
Indeed, $c$ is any nonzero element in $\cD_{t_\alpha}$ where $t_\alpha$ is the unique element of $T$ determined by the conditions $\eta(t_\alpha)=1$ and $\beta(t_\alpha,t)=\chi_0(\alpha^{-1}(t))^{-1}\chi_0(t)$ for all $t\in T$. Therefore, the action of $A_{\varphi_0}^+(\cD)$ on $T$ goes through the projection $A_{\varphi_0}^+(\cD)\to W(\Gamma_\cD)$ and the following ``twisted action'' of $W(\Gamma_\cD)$: 
\begin{equation}\label{eq:twisted_case3}
\alpha\cdot t\bydef\alpha(t)t_\alpha\;\text{ for all } \alpha\in W(\Gamma_\cD)\text{ and }t\in T,
\end{equation} 
which is the composition of the group monomorphism $\iota:W(\Gamma_\cD)\to T\rtimes W(\Gamma_\cD)$, $\alpha\mapsto(t_\alpha,\alpha)$, and the natural action of $T\rtimes W(\Gamma_\cD)$.

Thus, in all cases, we have reduced the problem of equivalence to affine actions of certain groups on vector spaces over $GF(2)$. Case (1) is especially easy: we have the natural action of the orthogonal group $\Aut(T,\mu)$ on $T$. The following result will allow us to compress information to the quotient $\wb{T}\bydef T/\rad\beta$ in cases (2a) and (3), and also simplify the action in case (3). Since we regard $T$ as a vector space, \emph{we will now switch to additive notation for $T$}. If a vector space $V$ over $GF(2)$ has a quadratic form $Q$ (possibly singular), we will denote the stabilizer of $Q$ in $\GL(V)$ by $\Ort(V,Q)$ or simply $\Ort(V)$, and the stabilizer of the polarization $\langle\cdot,\cdot\rangle$ of $Q$ (an alternating form, possibly degenerate) by $\SP(V)$. 

\begin{lemma}\label{lm:actions}
Let $Q$ be a quadratic form on a finite-dimensional vector space $V$ over $GF(2)$ whose polarization $\langle\cdot,\cdot\rangle$ is nondegenerate. Let $T=V\oplus\langle f\rangle$ and extend $Q$ to a quadratic form on $T$ in two ways: $Q_0(f)=0$ and $Q_1(f)=1$, both having the same polarization, also denoted by $\langle\cdot,\cdot\rangle$, whose radical is spanned by $f$ (so $Q_0$ is singular and $Q_1$ is nonsingular). Then:
\begin{enumerate}
\item[(i)] There exists a group isomorphism $\iota$ from $V\rtimes\SP(V)\simeq\SP(T)$ onto the stabilizer of $Q_1$ in the affine group $T\rtimes\GL(T)$ such that $\pi\bigl(\iota(z)\cdot t\bigr)=z\cdot\pi(t)$ for all $z\in V\rtimes\SP(V)$ and $t\in T$, where $\cdot$ denotes the natural action of affine groups on $T$ and $V$, and $\pi:T\to V$ is the projection. 
\item[(ii)] This isomorphism maps the subgroup $V\rtimes\Ort(V)\simeq\Ort(T,Q_0)$ onto the stabilizer of $Q_1$ in $T\rtimes\Ort(T,Q_0)$.
\item[(iii)] This isomorphism maps the subgroup $\SP(V)$ onto $\Ort(T,Q_1)$.
\end{enumerate}
\end{lemma}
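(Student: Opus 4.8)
The plan is to construct the isomorphism $\iota$ explicitly and then verify the three claimed properties by routine (if slightly tedious) linear algebra over $GF(2)$. First I would set up the identifications $\SP(T) \simeq V \rtimes \SP(V)$ and $\Ort(T,Q_0) \simeq V \rtimes \Ort(V)$. Since $\langle\cdot,\cdot\rangle$ on $T$ has radical $\langle f\rangle$, any $g \in \SP(T)$ fixes $f$ (it must preserve the radical, which is one-dimensional, and $f$ is its only nonzero vector), hence induces $\bar g \in \SP(V)$ after projecting; the ``translation part'' is recorded by $v \mapsto g(v) - \bar g(v) \in \langle f\rangle \cong GF(2)$, which is easily checked to be linear in $v$, i.e., an element of $V^* \cong V$ (using the nondegenerate form to identify $V^*$ with $V$). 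This gives the semidirect product decomposition; the subgroup fixing $Q_0$ corresponds to $V \rtimes \Ort(V)$ because $Q_0(g(v)) = Q_0(v)$ forces, after writing $g(v) = \bar g(v) + z(v)f$, the condition $Q(\bar g(v)) + \langle \bar g(v), z \rangle = Q(v)$, which combined with $\bar g \in \SP(V)$ says exactly $\bar g \in \Ort(V)$ and $z \in \ker(\text{nothing})$—i.e., $z$ is unconstrained, giving the full $V$ factor. (One double-checks the corner case: $\Ort(V) \subseteq \SP(V)$ since the polarization of $Q$ is the bilinear form being preserved.)

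Next I would define the map $\iota\colon V \rtimes \SP(V) \to T \rtimes \GL(T)$. An element of $V\rtimes\SP(V)$ is a pair $(w,h)$ with $w \in V$, $h \in \SP(V)$; I send it to the affine transformation of $T$ whose linear part $L_{(w,h)}$ acts by $L_{(w,h)}(v) = h(v) + Q(h(v))f + \langle h(v), ?\rangle\cdots$—more precisely, the linear part should be the element of $\SP(T)$ corresponding to $(z, h)$ under the decomposition above, where the translation-part cocycle $z \in V$ is chosen so that $L_{(w,h)}$ preserves $Q_1$, i.e., $Q(h(v)) + \langle h(v), z\rangle = Q(v)$ for all $v$; since $h \in \SP(V)$, the function $v \mapsto Q(h(v)) + Q(v)$ is a linear functional on $V$ (its difference across a sum is $\langle h(v), h(v')\rangle + \langle v, v'\rangle = 0$), so such a $z$ exists and is unique, call it $z_h$. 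Then the translation part of $\iota(w,h)$ is $w + Q(w)f \in T$ (chosen to land on the quadric $Q_1 = 1$ shifted appropriately, so that $Q_1$ is preserved as a \emph{set}; one needs $Q_1$-preservation of the whole affine map, which amounts to $Q_1(L(t) + (w + Q(w)f)) = Q_1(t)$, an identity I would expand once and check). The condition $\pi(\iota(z)\cdot t) = z \cdot \pi(t)$ is then immediate from the construction, since projecting to $V$ kills all the $f$-components I inserted and leaves $v \mapsto h(v) + w$.

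For part (ii), I would observe that $\iota$ restricts to $V \rtimes \Ort(V)$ exactly when the $\GL(T)$-part of $\iota(w,h)$ lies in $\Ort(T,Q_0)$; by the decomposition above this happens iff $h \in \Ort(V)$, and then one checks the \emph{affine} map also fixes $Q_0$ as a set—this reduces to $Q_0(w + Q(w)f) = Q(w) + 0 = Q(w)$ matching the required shift, which is automatic. For part (iii), the subgroup $\SP(V) \hookrightarrow V \rtimes \SP(V)$ (with trivial translation part, $w=0$) maps under $\iota$ to affine maps with linear part in $\SP(T)$ and translation part $0 + Q(0)f = 0$—i.e., honest linear maps—that preserve $Q_1$; so they lie in $\Ort(T,Q_1)$, and conversely every element of $\Ort(T,Q_1)$ fixes $f$ (as $Q_1(f) = 1$ and $f$ spans the radical of the polarization, so $f$ is the unique radical vector with $Q_1 = 1$... actually $f$ is the unique nonzero radical vector, and it has $Q_1(f)=1$, so it's fixed), hence descends to $\SP(V)$ and in fact to $\Ort(T,Q_1)/\langle f\text{-fixing}\rangle$; a dimension/order count (or direct argument: the induced form on $V \cong T/\langle f\rangle$ is $Q$, and an element of $\Ort(T,Q_1)$ induces an isometry of that, which is all of $\SP(V)$ because... no—it induces an element of $\Ort(V,Q)$? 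This needs care) confirms the image is exactly $\Ort(T,Q_1)$.

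The main obstacle I anticipate is part (iii), specifically pinning down precisely which subgroup of $\GL(T)$ preserves the \emph{nonsingular} quadratic form $Q_1$ and matching it with $\SP(V)$ rather than $\Ort(V)$—the point being that a nonsingular quadratic form with one-dimensional radical-of-the-polarization has a \emph{larger} isometry group than one might naively expect, because the radical direction $f$ provides extra freedom: one can add any linear functional's worth of $f$ to the action on $V$ without disturbing $Q_1$, provided the $V$-part only preserves the polarization. This is exactly the classical fact that $\Ort_{2n+1}(GF(2)) \cong \SP_{2n}(GF(2))$, and the bulk of the honest work is verifying that the cocycle $z_h$ constructed above is well-defined, that $(w,h)\mapsto \iota(w,h)$ is a group homomorphism (the cocycle identity $z_{h_1 h_2} = z_{h_1}\circ h_2 + z_{h_2}$ type relation must hold, which follows from $Q \circ h$ being $Q$ plus a functional and chasing definitions), and bijectivity (injectivity is clear from the $\pi$-equivariance plus tracking $f$-components; surjectivity from the structure theory of $\SP(T)$ recalled at the start). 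I would present the homomorphism check and the $Q_1$-invariance verification as the two computational lemmas, each a few lines, and leave the rest to ``straightforward verification.''
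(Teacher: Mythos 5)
The overall plan — decompose $\SP(T)\simeq V\rtimes\SP(V)$, define a cocycle to correct for the failure of a symplectic map to preserve $Q$, and splice in the translation $w+Q(w)f$ — is the same as the paper's, and your identification of $\Ort(T,Q_0)\simeq V\rtimes\Ort(V)$ is correct. But your explicit definition of $\iota$ has a genuine error, and it is precisely at the step you deferred (``an identity I would expand once and check''): the affine map you write down does not preserve $Q_1$.

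Concretely, you choose the cocycle $z_h$ so that the \emph{linear part} $L$ preserves $Q_1$, and then compose with translation by $w+Q(w)f$. Expanding the identity you postponed:
\[
Q_1\bigl(L(t)+w+Q(w)f\bigr)=Q_1(L(t))+Q_1\bigl(w+Q(w)f\bigr)+\langle L(t),\,w+Q(w)f\rangle
=Q_1(t)+0+\langle h(\pi(t)),w\rangle,
\]
since $Q_1(w+Q(w)f)=Q(w)+Q(w)=0$ and $f$ lies in the radical. The residual term $\langle h(\pi(t)),w\rangle$ vanishes for all $t$ only when $w=0$, so for $w\ne 0$ your $\iota(w,h)$ is \emph{not} in the stabilizer of $Q_1$. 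The underlying misconception is that the translation by $w+Q(w)f$ is $Q_1$-preserving because $Q_1(w+Q(w)f)=0$; in fact the only pure translation preserving $Q_1$ is by $0$ (the condition $Q_1(x+t)=Q_1(x)$ forces $t$ to lie in the radical with $Q_1(t)=0$, but $Q_1(f)=1$). The fix is exactly what the paper does: the cocycle of the linear part must depend on $w$, namely be $w+z_h$ rather than $z_h$ (in the paper's notation, $v_\alpha=v+v_{\bar\alpha}$, not $v_{\bar\alpha}$). Then the linear part alone does \emph{not} preserve $Q_1$, but the extra term $\langle w,h(\pi(t))\rangle$ it produces cancels against $\langle h(\pi(t)),w\rangle$ coming from the translation, using symmetry of the alternating form in characteristic $2$. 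Without this correction, the group homomorphism property and parts (ii)–(iii) (except the $w=0$ part of (iii), which happens to survive) are not established.
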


\begin{proof}
For an element $\bar{\alpha}\in\SP(V)$, the quadratic form $Q\circ\bar{\alpha}^{-1}$ has the same polarization as $Q$, so we can measure how far $\bar{\alpha}$ is from being in the orthogonal group $\Ort(V)$ by looking at the \emph{linear form} $Q\circ\bar\alpha^{-1}-Q$. Since $\langle.,.\rangle$ is nondegenerate, there is a unique $v_{\bar{\alpha}}\in V$ such that $Q(\bar{\alpha}^{-1}(u))-Q(u)=\langle v_{\bar{\alpha}},u\rangle$ for all $u\in V$ or, equivalently, $Q(u)-Q(\bar{\alpha}(u))=\langle v_{\bar{\alpha}},\bar\alpha(u)\rangle$ for all $u\in V$. As a function of $\bar{\alpha}$, the expression $Q\circ\bar\alpha^{-1}-Q$ is a $1$-coboundary of $\SP(V)$ with values in the space of quadratic forms on $V$. Since the isomorphism $V\to V^*$ sending $v\mapsto\langle v,\cdot\rangle$ is $\SP(V)$-equivariant, it follows that the map $\bar{\alpha}\mapsto v_{\bar{\alpha}}$ is a $1$-cocycle of $\SP(V)$ with values in $V$, and hence the map $(v,\bar{\alpha})\to (v+v_{\bar{\alpha}},\bar \alpha)$ is an automorphism of $V\rtimes \SP(V)$.

In the same vein, given any $\alpha\in\SP(T)$, $Q_1\circ\alpha^{-1}-Q_1$ is a linear form that annihilates $f$, and 
hence there is an element $t_\alpha\in T$ such that $Q_1(x)-Q_1(\alpha(x))=\langle t_\alpha,\alpha(x)\rangle$ for all 
$x\in T$. This element $t_\alpha$ is unique if we impose $Q_1(t_\alpha)=0$, and we will do it in what follows. 
(In fact, one can check that the map $\alpha\mapsto t_{\alpha}$ is a $1$-cocycle.) 

Now, the stabilizer $S$ of $Q_1$ in $\AGL(T)=T\rtimes\GL(T)$ consists of all pairs $(t,\alpha)$ such that 
$Q_1(t+\alpha(x))=Q_1(x)$ for all $x\in T$. Putting $x=0$, we get $Q_1(t)=0$, and our condition becomes   
$\langle t,\alpha(x)\rangle=Q_1(x)-Q_1(\alpha(x))$ for all $x\in T$. Linearizing in $x$, we get 
$0=\langle x,y\rangle-\langle \alpha(x),\alpha(y)\rangle$ for all $x,y\in T$. 
We conclude that $(t,\alpha)$ lies in $S$ if and only if $\alpha\in\SP(T)$ and $t=t_\alpha$.
Therefore, the projection $\AGL(T)\to\GL(T)$ maps $S$ isomorphically onto $\SP(T)$, and the inverse isomorphism $\SP(T)\to S$ is given by $\alpha\mapsto(t_\alpha,\alpha)$.

On the other hand, any $\alpha\in\SP(T)$ fixes $f$ and, identifying $V$ with $T/\langle f\rangle$, induces an element $\bar{\alpha}\in\SP(V)$. It follows that there is a unique element $v_\alpha\in V$ such that $\alpha(u)=\bar{\alpha}(u)+\langle v_\alpha,\bar{\alpha}(u)\rangle f$ for all $u\in V$, and this gives a group isomorphism $\SP(T)\to V\rtimes\SP(V)$, $\alpha\mapsto (v_\alpha,\bar\alpha)$. Also, for any $u\in V$, we have
\[ 
\begin{split}
Q(u)&=Q_1(u)=\langle t_\alpha,\alpha(u)\rangle+Q_1(\alpha(u)) 
=\langle \pi(t_\alpha),\bar{\alpha}(u)\rangle+Q(\bar{\alpha}(u))+\langle v_\alpha,\bar{\alpha}(u)\rangle \\
&=\langle \pi(t_\alpha)+v_\alpha,\bar{\alpha}(u)\rangle+Q(\bar{\alpha}(u)),
\end{split}
\]
and this gives $v_{\bar{\alpha}}=\pi(t_\alpha)+v_\alpha$, or $\pi(t_\alpha)=v_\alpha+v_{\bar{\alpha}}$.

We have obtained a chain of group isomorphisms:
\[
\begin{array}{ccccccl}
S&\simeq& \SP(T)&\simeq&V\rtimes\SP(V)&\simeq&V\rtimes\SP(V)\\
(t_\alpha,\alpha)&\leftrightarrow&\alpha&\leftrightarrow
&(v_\alpha,\bar{\alpha})&\leftrightarrow
&(v_\alpha+v_{\bar{\alpha}},\bar{\alpha})=(\pi(t_\alpha),\bar{\alpha})
\end{array}
\]
Composing them from right to left, we define the desired group isomorphism 
$\iota:V\rtimes \SP(V)\to S$, $(v,\bar\alpha)\mapsto (t_\alpha,\alpha)$ where $t_\alpha=v+Q(v)f$ and
$\alpha\in\SP(V)$ is defined by $f\mapsto f$ and $u\mapsto\bar{\alpha}(u)+\langle v+v_{\bar{\alpha}},\bar{\alpha}(u)\rangle f$ for all $u\in V$.
Indeed, for any $t\in T$ and $\alpha\in\SP(V)$, we have $\pi((t_\alpha,\alpha)\cdot t)=\pi(t_\alpha+\alpha(t))=(\pi(t_\alpha),\bar{\alpha})\cdot \pi(t)$, as desired. This proves part (i).

Part (ii) follows since $\alpha$ lies in $\Ort(T,Q_0)$ if and only if $\bar\alpha$ lies in $\Ort(V)$, if and only if $v_{\bar{\alpha}}=0$. As to part (iii), $\alpha$ lies in $\Ort(T,Q_1)$ if and only if $t_\alpha=0$, if and only if
$\pi(t_\alpha)=0$.
\end{proof}

Part (iii) implies the well-known fact $\Ort(2m+1,2)\simeq\SP(2m,2)$ and allows us to replace the action of $\Aut(T,\mu)$ on $T$ in case (2a) by the action of $\Aut(\wb{T},\bar{\beta})$ on $\wb{T}$, since the quotient map $\pi:T\to\wb{T}$ gives a bijection of $T_+$ onto $\wb{T}$. Similarly, in case (3), we identify $\wb{T}$ with $V\bydef\ker\chi_0$, which is a complement for $\rad\beta$ in $T$, and apply part (ii) to replace the twisted action of $\Aut(T,\mu)$ on $T$, given by equation \eqref{eq:twisted_case3}, by the natural action of $\wb{T}\rtimes\Aut(\wb{T},\bar{\mu})$ on $\wb{T}$. So, in these cases we will use the image of the multiset $\Sigma(\ul{d})$  (see Definition \ref{df:simpler_multiset}) in $\wb{T}$.

\begin{remark}\label{rm:compare_with_C}
Part (i) can be applied to classify graded algebras with involution like in our case (3), but over an algebraically closed field. This gives an alternative approach to the classification of fine gradings on simple Lie algebras of series $A$, where appears the natural action of $\wb{T}\rtimes\Aut(\wb{T},\bar{\beta})$ on $\wb{T}$ (compare with \cite[Theorem 3.35]{EKmon}, but note that $T$ there corresponds to our $\wb{T}$). The twisted action of $\Aut(T,\beta)$ on $T$, which is used to classify graded algebras with involution like in our case (1), but over an algebraically closed field (see \cite[Theorem 3.42]{EKmon}), and, consequently, appears in the classification of fine gradings on simple Lie algebras of series $C$ and $D$ (as $T$ is trivial in series $B$), corresponds to the action $\bar{\alpha}\cdot v\bydef\bar{\alpha}(v)+v_{\bar{\alpha}}$ for $v\in V$ and $\bar{\alpha}\in\SP(V)$, where $v_{\bar{\alpha}}$ is defined as in the proof of Lemma \ref{lm:actions}. Of course, our cases (2a) and (2b) have no analogs over an algebraically closed field.
\end{remark}

Finally, the group $\Aut(T,\beta)$ can be rather unpleasant in case (2b), but the following result describes its image in $\Aut(T/T^{[2]})$, i.e., in the general linear group of the vector space $V\bydef T/T^{[2]}$ over $GF(2)$. The images of the elements of even order in any symplectic basis $\{a_1,b_1,\ldots,a_m,b_m\}$ of $(T,\beta)$ constitute a basis of $V$. It is convenient to choose the symplectic basis separately for each primary component of $T$, so the orders are prime powers, and only the powers of $2$ contribute to $V$. For each integer $i\ge 0$, let $V_i$ be the subspace of $V$ spanned by the images of $a_j$ and $b_j$ whose order is a power of $2$ that does not exceed $2^i$. These subspaces form a flag $\cF$ in $V$ whose \emph{length}, by which we mean the number of nontrivial quotients $W_i\bydef V_i/V_{i-1}$, is at most $\log_2\bigl(\exp(M)\bigr)$, where $M$ is the $2$-primary component of $T$ and $\exp(M)$ is its exponent (the maximum order of elements). Moreover, each nontrivial quotient $W_i$ carries a nondegenerate alternating form $\langle\cdot,\cdot\rangle_i$ for which the images of the $a_j$ and $b_j$ whose order is exactly $2^i$ constitute a symplectic basis. These objects can be defined in a basis-free manner: 
\begin{align}
&\cF=\{V_i\}_{i=0,1,\ldots}\;\text{ with }V_i\bydef T_{[2^i]}+T^{[2]}/T^{[2]},\label{eq:def_F}\\
&W_i\bydef V_i/V_{i-1}\simeq T_{[2^i]}+T^{[2]}/T_{[2^{i-1}]}+T^{[2]},\nonumber
\end{align}
where the form $\langle\cdot,\cdot\rangle_i$ on $W_i=V_i/V_{i-1}\simeq T_{[2^i]}/\bigl(T_{2^{[i-1]}}+(T_{[2^i]}\cap T^{[2]})\bigr)$ is induced by $\beta^{2^{i-1}}$. It is clear from this definition that the action of $\Aut(T,\beta)$ on $V$ stabilizes the flag $\cF$ and each symplectic form $\langle\cdot,\cdot\rangle_i$.

\begin{proposition}\label{pr:flag}
Let $\beta$ be a nondegenerate alternating bicharacter on a finite abelian group $T$ and consider the natural action of $\Aut(T,\beta)$ on the vector space $V\bydef T/T^{[2]}$. Let $\cF$ be the flag in $V$ defined by equation \eqref{eq:def_F}. Then the image of $\Aut(T,\beta)$ in $\GL(V)$ is equal to the stabilizer $\SP(V,\cF)$ of the flag $\cF$ and the symplectic forms on each of its nontrivial quotients.
\end{proposition}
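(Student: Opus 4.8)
The plan is to prove the two inclusions separately. One direction is immediate: as observed in the paragraph preceding the proposition, every $\alpha\in\Aut(T,\beta)$ stabilizes each $T_{[2^i]}$ and each $T^{[2^i]}$ (they are characteristic subgroups) and preserves each bicharacter $\beta^{2^{i-1}}$, so its image in $\GL(V)$ stabilizes the flag $\cF$ of \eqref{eq:def_F} and each form $\langle\cdot,\cdot\rangle_i$; hence the image lies in $\SP(V,\cF)$. All the work is in the reverse inclusion, i.e.\ in lifting an arbitrary element of $\SP(V,\cF)$ to an automorphism of $(T,\beta)$.

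First I would reduce to the case where $T$ is a finite abelian $2$-group. The primary decomposition $T=\prod_p T_p$ is preserved by every automorphism, $\beta$ is trivial between $T_p$ and $T_q$ for $p\ne q$, so $\Aut(T,\beta)=\prod_p\Aut(T_p,\beta|_{T_p})$; since $V=T/T^{[2]}$ and $\cF$ depend only on the $2$-primary component $M\bydef T_2$, we may take the automorphisms to be the identity off $M$. Choosing a symplectic basis of $(M,\beta)$ adapted to the exponent filtration and grouping the basis pairs by order gives an \emph{orthogonal} decomposition $M=\bigoplus_{i\ge 1}M_i$, $M_i\simeq(\ZZ_{2^i})^{2m_i}$, with $\beta$ nondegenerate alternating on each $M_i$ and trivial between distinct summands. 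Then, directly from the definitions, $W_i\simeq M_i/2M_i$, $V=\bigoplus_i W_i$ and $V_i=\bigoplus_{j\le i}W_j$, so that with respect to this grading of $V$ (with the $W_i$ of larger $i$ listed first) the group $\SP(V,\cF)$ is exactly the group of block lower‑triangular matrices whose diagonal blocks are symplectic, i.e.\ $\SP(V,\cF)=U\rtimes L$ with $L=\prod_i\SP(W_i)$ the block‑diagonal part and $U$ its unipotent radical (block lower‑unitriangular matrices). It therefore suffices to show that both $L$ and $U$ lie in the image; note that "diagonal part'' is a homomorphism $\SP(V,\cF)\to L$.

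For $L$: each $\Aut(M_i,\beta|_{M_i})\simeq\SP_{2m_i}(\ZZ_{2^i})$ embeds into $\Aut(M,\beta)$ by acting as the identity on the other summands (orthogonality makes this preserve $\beta$), and the reduction map $\SP_{2m_i}(\ZZ_{2^i})\to\SP_{2m_i}(GF(2))=\SP(W_i)$ is surjective (smoothness of the symplectic group scheme, or: lift symplectic transvections), acting trivially on $W_j$ for $j\ne i$; so $L$ is in the image. For $U$: after the symplectic basis has been fixed we may regard $\beta$ as taking values in $\ZZ_{2^N}$, $2^N=\exp M$, via a chosen identification of roots of unity, so that for $c\in M$ and $t\in M$ the scalar multiple $\beta(c,t)\,c$ makes sense. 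Given consecutive indices $i>j$ of the filtration and $c_i\in M_i$, $c_j\in M_j$, put $c=c_i+c_j$ (of order $2^i$) and consider $\tau_c\colon t\mapsto t+\beta(c,t)\,c$. A short computation using that $\beta$ is alternating shows $\tau_c\in\Aut(M,\beta)$; and reducing modulo $2M$, using orthogonality of the $M_k$ together with order estimates such as $\beta(c_j,t)c_i\in 2^{\,i-j}M_i\subseteq 2M$, one checks that $\tau_c$ stabilizes $\cF$ and that, up to a factor in $L$ (two transvections, on $W_i$ and on $W_j$), it equals the elementary unipotent $W_i\to W_j$ sending $\bar t\mapsto\langle\bar c_i,\bar t\rangle_i\,\bar c_j$. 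As $c_i,c_j$ vary these realize all rank‑one homomorphisms $W_i\to W_j$ between consecutive quotients, hence (taking products, and using that $\SP_{2m}(GF(2))$ is transitive on nonzero vectors so that $L$‑conjugation sweeps out the blocks, and commutators produce the non‑consecutive blocks) the subgroup of $\GL(V)$ generated by $L$ together with all the $\tau_c\bmod 2M$ is exactly $U\rtimes L=\SP(V,\cF)$. Combined with the first paragraph this gives the asserted equality.

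The classical inputs — surjectivity of $\SP_{2m}(\ZZ_{2^i})\to\SP_{2m}(GF(2))$ and transitivity of $\SP_{2m}(GF(2))$ on nonzero vectors — are standard. The one step that genuinely needs care, and which I expect to be the main obstacle, is the reduction‑mod‑$2M$ bookkeeping for $\tau_c$: showing that it stabilizes the whole flag $\cF$ (not merely the two relevant steps) and pinning down exactly which graded pieces $W_k$ its off‑diagonal part lands in — this is where the orthogonality of the $M_i$ and the order estimates for scalar multiples like $\beta(c_j,t)c_i$ are essential.
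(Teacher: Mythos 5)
Your proposal is correct in outline and takes a genuinely different route. Both proofs reduce to the $2$-primary part $M$, fix a symplectic basis blocked by order, decompose $\SP(V,\cF)=U\rtimes L$, and lift the Levi part $L$ and the unipotent part $U$ separately. The divergence is in how the lifts are produced. The paper is ring-theoretic: it works inside $R=\End(M)$ with the involution $\sigma$ induced by $\beta$, represents everything by integer block matrices, and proves Hensel-type iteration lemmas (Lemma~\ref{lm:symd} together with Corollaries~\ref{cor:symd1} and \ref{cor:symd2}, phrased for an arbitrary nilpotent ideal of a commutative ring); in particular its naive unipotent lift $1+\tilde{e}_{ij}(a)-\tilde{e}_{ji}(a^*)$ is \emph{not} in $\Aut(M,\beta)$ and has to be corrected with a diagonal factor constructed from Corollary~\ref{cor:symd2}. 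You instead produce unipotent lifts as honest symplectic transvections $\tau_c$ on $M$ itself, which lie in $\Aut(M,\beta)$ exactly (no correction needed), and strip off the already-realized $L$-factor to isolate the elementary unipotent. Your route is more intrinsic and avoids the matrix bookkeeping; the paper's is more self-contained (its Corollary~\ref{cor:symd1} also reproves the surjectivity $\SP_{2m}(\ZZ/2^i\ZZ)\twoheadrightarrow\SP_{2m}(GF(2))$ that you quote as standard). There is, however, one genuine slip you would need to fix: the scaling of $\beta$ in the transvection has to depend on the order of $c$, not on $\exp M$. For $c$ of order $2^r$, the image of $\beta(c,\cdot)$ is the group of $2^r$-th roots of unity; one should identify it with $\ZZ_{2^r}$ to get a scalar $\lambda(c,t)\in\ZZ_{2^r}$ and set $\tau_c(t)=t+\lambda(c,t)c$. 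Under the global identification with $\ZZ_{2^N}$, $2^N=\exp M$, the value $\beta(c,t)$ is a multiple of $2^{N-r}$, so $\beta(c,t)c$ vanishes for every $c$ with $N\ge 2r$ and the construction collapses for all but the top block (and even for $r<N$ it carries a spurious factor $2^{N-r}$). Your order estimate $\beta(c_j,t)c_i\in 2^{i-j}M_i$ is precisely what the $o(c)$-dependent normalization gives, so you clearly had the right thing in mind; only the sentence introducing $\ZZ_{2^N}$ needs to be replaced. Two further small remarks: $\bar{\tau}_c$ actually contributes a single diagonal transvection factor, on $W_i$ only, since for $t\in M_j$ the scalar $\lambda(c,t)$ is already a multiple of $2^{i-j}$ and hence $\tau_c$ is trivial on $W_j$ modulo $2M$; and the same $\tau_c$ works for arbitrary $i>j$, so the commutator step for non-consecutive blocks, while harmless, is not needed.
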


We postpone the proof of Proposition \ref{pr:flag} until Subsection \ref{sse:flag} and summarize what we have proved here:

\begin{theorem}\label{th:equivalenceMM'real}
Let $\cD$ be a finite-dimensional graded-division algebra over $\RR$, with support $T$ and center $\KK$, that satisfies $\cD_e\subset\KK$ and admits a degree-preserving involution that makes it central simple as an algebra with involution over $\RR$. Then, subject to the convention that, if $\KK\ne\wt{\CC}$, we choose the involution $\varphi_0$ to satisfy $x\varphi_0(x)\in\RR_{>0}$ for all $0\ne x\in \cD_t$ with $t^2=e$ and that, if $\KK\ne\RR$, we use only hermitian forms (i.e., choose $\delta=1$), the graded algebras with involution of the form $\cM(\cD,\varphi_0,q,s,\ul{d},\delta)$ are classified up to equivalence by $q$, $s$, $\delta$, $\mathrm{signature}(\ul{d})$ (see Definition \ref{df:sign_multiset}) and the orbit of the multiset $\{\deg(d_1) H,\ldots,\deg(d_q)H\}$ in the elementary abelian $2$-group $V\bydef T/H$ under the action of $A$, where the subgroup $H\subset T$ and the group $A$ that acts on $V$ are as follows: 
\begin{enumerate}
\item[(1)] If $\KK=\RR$, then $H$ is trivial and $A=\Aut(T,\mu)$ where $\mu$ is the quadratic form on $T$ defined by condition \eqref{eq:def_mu};
\item[(2)] If $\KK=\CC$, we have two cases:
\begin{enumerate}
\item[(a)] If $\cD_e=\RR$, then $H=\supp\KK$ and $A=\Aut(V,\bar{\beta})$ where $\bar{\beta}$ is the nondegenerate alternating bicharacter on $V$ induced by the alternating bicharacter $\beta$ on $T$ defined by equation \eqref{eq:def_beta};
\item[(b)] If $\cD_e=\CC$, then $H=T^{[2]}$ and $A=\Aut(T,\beta)$, whose image in $\GL(V)$ is the stabilizer of the flag $\cF$ defined by equation \eqref{eq:def_F} and the symplectic forms on each of its nontrivial quotients (see Proposition \ref{pr:flag}); 
\end{enumerate}
\item[(3)] If $\KK=\wt{\CC}$, then $H=\supp\KK$ and $A=V\rtimes\Aut(V,\bar{\mu})$ where $\bar{\mu}$ is the quadratic form on $V$ induced by $\mu$. 
\end{enumerate}  
\end{theorem}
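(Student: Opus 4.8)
The plan is to assemble the general equivalence criterion of Corollary~\ref{cor:equivalenceMM'cs} with the explicit computations carried out in this subsection. By Corollary~\ref{cor:equivalenceMM'cs}, once $\varphi_0$ is fixed as prescribed, the graded algebras with involution $\cM(\cD,\varphi_0,q,s,\ul{d},\delta)$ are classified up to equivalence by $q$, $s$, $\delta$, and the orbit of the multiset $\wt{\Sigma}(\ul{d})$ in $X_{\varphi_0}(\cD,\delta)$ under $A_{\varphi_0}^+(\cD)$. So the task reduces to (a) describing $X_{\varphi_0}(\cD,\delta)$ and the $A_{\varphi_0}^+(\cD)$-action explicitly in the real case, and (b) extracting from these data the pair of manageable invariants in the statement, namely $\mathrm{signature}(\ul{d})$ and a multiset-orbit in a vector space over $GF(2)$.

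For (a) I would invoke Proposition~\ref{pr:realXp0}, which lists the $*$-orbits in $\cD^\times_\gr$ in each case $\KK\in\{\RR,\CC,\wt{\CC}\}$, together with the equivariant map $p\colon X_{\varphi_0}(\cD)\to T/T^{[2]}$. The structural facts to record are: outside the trivial coset of $T/T^{[2]}$ (resp., outside $T^{[2]}$ in case (2b)) the map $p$ is injective, so there $\wt{\Sigma}(\ul{d})$ carries no more information than $\Sigma(\ul{d})$; over the trivial coset there are exactly two preimages $e_+=\cO(1)$ and $e_-=\cO(-1)$, genuinely distinct because of the normalization $x\varphi_0(x)\in\RR_{>0}$, and interchanged by $(-1,\id_\cD)\in A_{\varphi_0}^+(\cD)$, which acts trivially on $T$. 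Hence the orbit of $\wt{\Sigma}(\ul{d})$ is determined by the orbit of $\Sigma(\ul{d})$ in $T/T^{[2]}$ together with the unordered pair $\{n^0_+,n^0_-\}$ of multiplicities over the trivial coset; since $n^0_++n^0_-$ is the multiplicity of the trivial coset in $\Sigma(\ul{d})$, the residual datum is precisely $|n^0_+-n^0_-|=\mathrm{signature}(\ul{d})$. (In case (3) there is no trivial coset, so no signature appears.)

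For (b) the action of $A_{\varphi_0}^+(\cD)$ on $T/T^{[2]}$ factors through its image in $T\rtimes W(\Gamma_\cD)$, which surjects onto $W(\Gamma_\cD)$ by Lemma~\ref{lm:Aphi+Dsurj}, and I would go through the five situations. If $\KK=\RR$, then $T$ is $2$-elementary, the trivial coset is fixed, so the image is all of $W(\Gamma_\cD)=\Aut(T,\mu)$ by Proposition~\ref{pr:Weyl_group_R}, acting naturally on $T=T/T^{[2]}$; here $H$ is trivial. If $\KK=\CC$ and $\cD_e=\RR$, the same reasoning gives $\Aut(T,\mu)$ acting on $T$, and Lemma~\ref{lm:actions}(iii) identifies this with the natural action of $\Aut(\bar{T},\bar{\beta})$ on $\bar{T}=T/\supp\KK$, onto which $\pi$ maps $T_+$ bijectively, so $H=\supp\KK$. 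If $\KK=\CC$ and $\cD_e=\CC$, then $T$ need not be $2$-elementary; fixing the coset $T^{[2]}$ forces the $\cD^\times_\gr$-part of any stabilizer element into $T^{[2]}$, so the action on $V=T/T^{[2]}$ factors through $W(\Gamma_\cD)$, which is $\Aut(T,\beta)$ up to the automorphism $\tau$ that acts trivially modulo $T^{[2]}$ (Proposition~\ref{pr:Weyl_group_C_as_R}), and the image of $\Aut(T,\beta)$ in $\GL(V)$ is given by Proposition~\ref{pr:flag}; here $H=T^{[2]}$. If $\KK=\wt{\CC}$, then $\varphi_0$ is not trivial on $\KK$, so a generator of $W(\Gamma_\cD)$ must be corrected by a symmetric homogeneous $c$ (unique up to $\RR^\times$), yielding the twisted action \eqref{eq:twisted_case3} of $\Aut(T,\mu)$ on $T$; Lemma~\ref{lm:actions}(ii), with $\bar{T}$ identified with the complement $V=\ker\chi_0$ of $\rad\beta$, converts this into the natural action of $V\rtimes\Aut(V,\bar{\mu})$ on $V$, so $H=\supp\KK$. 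Collecting these identifications yields exactly the statement.

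I expect the case $\KK=\CC$ with $\cD_e=Z(\cD)\simeq\CC$ (case (2b)) to be the main obstacle: it is the only case with a non-$2$-elementary support, the orbit description in Proposition~\ref{pr:realXp0}(2b) relies on the duality $T^{[2]}=(T_{[2]})^\perp$ of Lemma~\ref{lm:perp}, the relevant involution is only determined up to a class (so one must also recall from Subsection~\ref{sse:real_basics} that members of that class give isomorphic graded algebras with involution), and pinning down the image of $\Aut(T,\beta)$ in $\GL(V)$ requires the flag-stabilizer description of Proposition~\ref{pr:flag}, whose proof is itself deferred. Everything else is bookkeeping in combining the quoted results.
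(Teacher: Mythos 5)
Your proposal is correct and follows essentially the same route as the paper, which presents this theorem as a summary of the preceding discussion rather than with a separate proof: Corollary~\ref{cor:equivalenceMM'cs} reduces to $A_{\varphi_0}^+(\cD)$-orbits of multisets in $X_{\varphi_0}(\cD,\delta)$, Proposition~\ref{pr:realXp0} and the map $p$ isolate the signature as the only datum lost in passing to $\Sigma(\ul{d})$ in $T/T^{[2]}$, and then Propositions~\ref{pr:Weyl_group_R}, \ref{pr:Weyl_group_C_as_R}, \ref{pr:flag} together with Lemma~\ref{lm:actions}(ii),(iii) and the twisted action~\eqref{eq:twisted_case3} identify the acting group case by case. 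Only a small slip in your parenthetical on case (3): the trivial coset of $T/T^{[2]}=T$ does exist there; what is actually true, by Proposition~\ref{pr:realXp0}(3), is that $p$ is a bijection (so $\cO(1)=\cO(-1)$), hence the trivial coset has a single preimage and no signature invariant arises.
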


Since the alternating forms in cases (2a) and (2b) and the polarizations of the quadratic forms in cases (1) and (3) are nondegenerate, Witt's Extension Theorem (which is valid for quadratic forms even in characteristic $2$, see e.g. \cite[Theorem 8.3]{EKM}) tells us that any isometry between subspaces extends to an isometry of the entire space: $T$ in case (1), $V=T/\langle f\rangle$ in cases (2a) and (3), and each nontrivial quotient $V_i/V_{i-1}$ in case (2b). 

The case $(q,s)=(1,0)$ gives the classification of graded algebras with involution 
$(\cD,\varphi)$ up to equivalence. It follows from Witt's Extension Theorem that $A$ acts transitively on $T_+\smallsetminus\{e\}$ and $T_-$ in case (1), on $V\smallsetminus\{0\}$ in case (2a), and on each nonempty $V_i\smallsetminus V_{i-1}$ in case (2b). It is obvious that $A$ acts transitively on the entire $V$ in case (3). To summarize:
\begin{corollary}
Under the conditions of Theorem \ref{th:equivalenceMM'real}, assume $\cD\ne\KK$. Then the number of equivalence classes of graded algebras with involution
$(\cD,\varphi)$ per each equivalence class of $\cD$ is as follows:
\begin{enumerate}
\item[(1)] If $\KK=\RR$, there are $3$ classes: the distinguished involution and all other involutions of each of the two types (orthogonal and symplectic);
\item[(2a)] If $\KK=\CC$ with nontrivial grading, there are $2$ classes: the distinguished involution and all other involutions (of the second kind);
\item[(2b)] If $\KK=\CC$ with trivial grading, there are $\mathrm{length}(\cF)+1$ classes: the distinguished class and one class for each nontrivial quotient in $\cF$;  
\item[(3)] If $\KK=\wt{\CC}$, there is only $1$ class.\qed
\end{enumerate}
\end{corollary}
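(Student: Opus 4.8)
The plan is to derive the statement from Theorem~\ref{th:equivalenceMM'real} applied with $(q,s)=(1,0)$. The first step is to note that the graded algebras with involution $\cM(\cD,\varphi_0,1,0,(d_1),\delta)$, as $d_1$ ranges over $\cD^\times_\gr$ with $\varphi_0(d_1)=\delta d_1$ (and $\delta=1$ whenever $\KK\ne\RR$), exhaust all $(\cD,\varphi)$ up to equivalence: indeed $\cM(\cD,\varphi_0,1,0,(d_1),\delta)$ is just $\cD$ itself equipped with the involution $\Int(d_1^{-1})\circ\varphi_0$, and since $\cD$ is a graded-division algebra, Theorem~\ref{th:completeness} together with Corollary~\ref{cor:involutions_from_B} and the fixed choice of $(\cD,\varphi_0)$ made in Subsection~\ref{sse:real_basics} shows that every degree-preserving involution on $\cD$ arises in this way. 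Hence the number we want is exactly the number of equivalence classes of these $\cM(\cD,\varphi_0,1,0,(d_1),\delta)$, which Theorem~\ref{th:equivalenceMM'real} describes.

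The second step is to unwind the invariants of that theorem for a one-entry tuple $\ul d=(d_1)$. The multiset $\{\deg(d_1)H\}$ degenerates to a single point of $V\bydef T/H$, so the only relevant datum is its $A$-orbit. I would then check that the invariants $\delta$ and $\mathrm{signature}(\ul d)$ carry no further information: in case~(1), the relation $\varphi_0(d_1)=\delta d_1$ together with $\dim\cD_{t_1}=1$ forces $\delta=\mu(\deg d_1)$, which is constant on $A$-orbits, while $\mathrm{signature}(\ul d)$ equals $1$ if $\deg d_1=e$ and $0$ otherwise; in cases~(2a), (2b), (3) we have $\delta=1$, and in the first two of these $\mathrm{signature}(\ul d)$ is $1$ precisely when $\deg(d_1)H$ is the trivial coset (by Definition~\ref{df:sign_multiset}), $0$ otherwise. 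One also has to observe that $e_+$ and $e_-$ are identified, because $(-1,\id_\cD)$ lies in $A_{\varphi_0}^+(\cD)$, interchanges $\cO(1)$ and $\cO(-1)$, and acts trivially on $V$. Conversely every point of $V$ is the image of some admissible $d_1$: an arbitrary $\deg d_1$ works in case~(1) (with $\delta=\mu(\deg d_1)$); in cases~(2a) and (3) the relevant level set $T_+$, resp. $T_{\varphi_0}(+1)$, maps onto $\wb T$; in case~(2b) each $\cD_t$ contains a symmetric element. Therefore, by Theorem~\ref{th:equivalenceMM'real}, the equivalence classes of $(\cD,\varphi)$ are in bijection with the $A$-orbits on $V$, and it remains only to count these.

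The third step is the orbit count, which is supplied by Witt's Extension Theorem as quoted after Theorem~\ref{th:equivalenceMM'real}. In case~(1), $A=\Aut(T,\mu)$ acting on $V=T$ has the orbits $\{e\}$, $\{t\ne e:\mu(t)=+1\}$ and $\{t:\mu(t)=-1\}$ (the last two nonempty, except that the middle one is empty for $\cD\simeq\HH$), corresponding respectively to the distinguished involution, the other involutions of the same type as the distinguished one, and the involutions of the opposite type (using formula~\eqref{eq:signatures} to match signatures). In case~(2a), $A=\Aut(\wb T,\bar\beta)$ acting on $V=\wb T$ has the two orbits $\{0\}$ and $\wb T\smallsetminus\{0\}$, the former giving the distinguished involution and the latter all other second-kind involutions. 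In case~(3), $A=V\rtimes\Aut(V,\bar\mu)$ contains the translation subgroup $V$, which already acts transitively on $V$, so there is a single orbit. In case~(2b), Proposition~\ref{pr:flag} identifies the image of $A=\Aut(T,\beta)$ in $\GL(V)$ with the stabilizer $\SP(V,\cF)$ of the flag $\cF=\{V_i\}$ and of the nondegenerate symplectic forms on its nontrivial quotients $W_i$; its orbits on $V$ are $\{0\}$ together with the layers $V_i\smallsetminus V_{i-1}$, one for each nontrivial quotient, hence $\mathrm{length}(\cF)+1$ in total, which matches ``the distinguished class and one class for each nontrivial quotient''.

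The step I expect to be the main obstacle is establishing, in case~(2b), that $\SP(V,\cF)$ acts transitively on each layer $V_i\smallsetminus V_{i-1}$. This is the only point that is not a direct citation: it requires Witt's theorem applied to the symplectic quotient $W_i=V_i/V_{i-1}$, combined with the surjectivity of the restriction map $\SP(V,\cF)\to\prod_j\SP(W_j)$ and the observation that the unipotent kernel of this restriction acts transitively on the fibre of $V_i\to W_i$ over any fixed nonzero vector. Everything else — the bookkeeping that reduces the invariants $\delta$ and $\mathrm{signature}(\ul d)$ to the $V$-coordinate, and the verification that the listed orbits are nonempty (which can fail for a single orbit only when $\cD$ is very small) — is routine.
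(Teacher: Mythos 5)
Your proposal is correct and takes essentially the same route as the paper: specialize Theorem~\ref{th:equivalenceMM'real} to $(q,s)=(1,0)$, note that the remaining invariants collapse onto the $A$-orbit of the single element $\deg(d_1)H$ in $V$, and count orbits via Witt's Extension Theorem. Two places where you are in fact more careful than the paper's own brief justification deserve mention. First, you observe that in case~(1) the orbit $\{t\ne e:\mu(t)=+1\}$ is empty precisely when $\cD\simeq\HH$ (the only case with $T\ne\{e\}$ and $T_+=\{e\}$), so the count there is $2$ rather than $3$; the corollary as stated does not acknowledge this degenerate subcase, and your remark is a genuine correction, not a redundancy. Second, for case~(2b) the paper simply says the transitivity of $A$ on each $V_i\smallsetminus V_{i-1}$ ``follows from Witt's Extension Theorem,'' but Witt applied to the symplectic quotient $W_i$ only gives transitivity on $W_i\smallsetminus\{0\}$; your additional step — surjectivity of $\SP(V,\cF)\to\prod_j\SP(W_j)$ plus transitivity of the unipotent kernel on the fibres of $V_i\to W_i$ — is exactly what is needed to pass from the quotient to the layer, and it is indeed the only nontrivial verification. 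The bookkeeping in your second step (checking that $\delta$ and $\mathrm{signature}(\ul d)$ are determined by the $A$-orbit, including the observation that $(-1,\id_\cD)$ identifies $e_+$ with $e_-$) is correct and matches what the paper leaves implicit.
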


Theorem \ref{th:equivalenceMM'real} completes the classification of fine gradings on finite-dimensional central simple algebras with involution over $\RR$, given in Theorem \ref{th:fine_real}, by determining which of the graded algebras with involution therein are equivalent to each other:

\begin{corollary}\label{cor:fine_real}
The graded algebras with involution appearing in different items of Theorem \ref{th:fine_real} are not equivalent to each other. Within each item, they are classified up to equivalence by the following invariants, where $T=\supp\cD$ and $H=\supp Z(\cD)$: 
\begin{enumerate}
\item[(1)] Central simple algebras over $\RR$ with a first kind involution:
\begin{itemize}
\item $\cM(2m;\RR;q,s,\ul{d},\delta)$ 
by $m$, $q$, $s$, $\delta$, $\mathrm{signature}(\ul{d})$ in the case $\delta=1$, and the orbit of the multiset $\{\deg d_1,\ldots,\deg d_q\}$ in $T_\delta$ under the action of the orthogonal group $\Ort_+(2m,2)$ with trivial Arf invariant;

\item $\cM(2m;\HH;q,s,\ul{d},\delta)$ 
by $m$, $q$, $s$, $\delta$, $\mathrm{signature}(\ul{d})$ in the case $\delta=-1$, and the orbit of the multiset $\{\deg d_1,\ldots,\deg d_q\}$ in $T_{-\delta}$ under the action of the orthogonal group $\Ort_-(2m,2)$ with nontrivial Arf invariant;
\end{itemize}

\item[(2)] Central simple algebras over $\CC$ with a second kind involution:
\begin{itemize}
\item $\cM^{\mathrm{(I)}}(\ell_1,\ldots,\ell_m;\CC;q,s,\ul{d})$ 
by the multiset $\{\ell_1,\ldots,\ell_m\}$, $q$, $s$, $\mathrm{signature}(\ul{d})$, and the orbit of the multiset $\{\deg(d_1)T^{[2]},\ldots,\deg(d_q)T^{[2]}\}$ in the vector space $V\bydef T/T^{[2]}$ over $GF(2)$ under the action of the group $\SP(V,\cF)$ defined in Proposition \ref{pr:flag};

\item $\cM^{\mathrm{(II)}}(2m+1;\CC;q,s,\ul{d})$ 
by $m$, $q$, $s$, $\mathrm{signature}(\ul{d})$, and the orbit of the multiset 
$\{\deg(d_1)H,\ldots,\deg (d_q)H\}$ in $T/H$ under the action of the symplectic group $\SP(2m,2)$;
\end{itemize}

\item[(3)] Nonsimple algebras with an exchange involution:
\begin{itemize}
\item $\cM^{\mathrm{(I)}}(2m;\RR;k)$ by $m$ and $k$;

\item $\cM^{\mathrm{(I)}}(2m;\HH;k)$ by $m$ and $k$;

\item $\cM^{\mathrm{(II)}}(2m+1;\RR;q,s,\ul{d})$ by $m$, $q$, $s$, and the orbit of the multiset \\
$\{\deg(d_1)H,\ldots,\deg (d_q)H\}$ in $T/H$ under the action of the affine orthogonal group $\AO_+(2m,2)$ 
with trivial Arf invariant;

\item $\cM^{\mathrm{(II)}}(2m+1;\HH;q,s,\ul{d})$ by $m$, $q$, $s$, and the orbit of the multiset \\
$\{\deg(d_1)H,\ldots,\deg (d_q)H\}$ in $T/H$ under the action of the affine orthogonal group $\AO_-(2m,2)$ 
with nontrivial Arf invariant.\qed
\end{itemize}
\end{enumerate}
\end{corollary}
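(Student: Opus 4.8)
The plan is to assemble the corollary from ingredients already in place: Theorem~\ref{th:fine_real}, which lists every fine grading up to equivalence; Theorem~\ref{th:equivalenceMM'real}, which settles equivalence within the families $\cM(\cD,\varphi_0,q,s,\ul{d},\delta)$; and Theorem~\ref{th:equivalence_with_anti}(1), which says that an equivalence between graded algebras of the form $\End_\cD(\cV)$ induces an equivalence between the underlying graded-division algebras. The first task is to separate the eight bullets of Theorem~\ref{th:fine_real}. An equivalence of graded algebras with involution is, in particular, an isomorphism of the underlying ungraded algebras with involution, and it maps the center (a graded subalgebra) onto the center, sending a trivially graded center to a trivially graded one. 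Hence the following are invariants of the equivalence class: whether $\cR$ is simple; the $\RR$-algebra $Z(\cR)$; the division algebra $\Delta\in\{\RR,\CC,\HH\}$ in the Wedderburn decomposition of $\cR$ (or of a minimal two-sided ideal of $\cR$, when $\cR$ is not simple); and, when $Z(\cR)\ne\RR$, the Type (I or~II) of the grading on $Z(\cR)$. On the eight bullets these data take eight pairwise different values: $Z(\cR)$ distinguishes items (1), (2), (3) (item~(3) being also singled out by non-simplicity); within item~(1), and within each of the Type~I and Type~II halves of item~(3), the value of $\Delta$ ($\RR$ versus $\HH$) separates the two bullets; and within item~(2), and between the Type~I and Type~II halves of item~(3), the Type does. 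So no two of the eight families are equivalent, and it remains to classify each family internally.

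For the six bullets of the form $\cM(\cD,\varphi_0,q,s,\ul{d},\delta)$ --- all of items (1) and (2) and the two $\cM^{\mathrm{(II)}}$-bullets of item~(3) --- the graded-division algebra $\cD$ and the involution $\varphi_0$ are fixed by the bullet, $\varphi_0$ being precisely a choice of the kind used in Theorem~\ref{th:equivalenceMM'real} (see Corollary~\ref{cor:involutions_from_B} and the discussion preceding that theorem), so Theorem~\ref{th:equivalenceMM'real} applies directly: two members of a bullet are equivalent iff they share $q$, $s$, $\delta$, $\mathrm{signature}(\ul{d})$, and the orbit of the multiset $\{\deg(d_i)H\}_i$ in $V=T/H$ under the group $A$ named there. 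The remaining step is to match $H$ and $A$ with the data in Corollary~\ref{cor:fine_real}. For $\KK=\RR$ one has $H$ trivial, $V=T$; since $\cD_e=\RR$, every $d_i$ with $\varphi_0(d_i)=\delta d_i$ has $\deg d_i\in T_\delta=\{t:\mu(t)=\delta\}$, a set preserved by $A=\Aut(T,\mu)$, so the orbit may be read inside $T_\delta$, and $A$ is $\Ort_+(2m,2)$ for $\cD(2m;+1)$ and $\Ort_-(2m,2)$ for $\cD(2m;-1)$, since $\Arf(\mu)$ equals $+1$, resp. $-1$, for these algebras (Subsection~\ref{sse:real_basics}). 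For $\KK=\CC$: case~(2) of Theorem~\ref{th:equivalenceMM'real} with $\cD_e=\RR$ (the $\cM^{\mathrm{(II)}}$-bullet) gives $H=\supp\KK$, $A=\Aut(V,\bar\beta)=\SP(2m,2)$; with $\cD_e=\CC$ (the $\cM^{\mathrm{(I)}}$-bullet) it gives $H=T^{[2]}$ and $A=\Aut(T,\beta)$, acting on $V$ through $\SP(V,\cF)$ by Proposition~\ref{pr:flag}. For $\KK=\wt{\CC}$, case~(3) gives $H=\supp\KK$ and $A=V\rtimes\Aut(V,\bar\mu)$, the affine orthogonal group $\AO_+(2m,2)$, resp. $\AO_-(2m,2)$, according as $\Arf(\bar\mu)$ is $+1$ for $\cD(2m+1;+1)$ or $-1$ for $\cD(2m+1;-1)$. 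Finally, $m$ (equivalently $\supp\cD$) is an invariant of each bullet since an equivalence of the $\cM$'s induces an equivalence of the $\cD$'s (Theorem~\ref{th:equivalence_with_anti}(1)); in the $\cM^{\mathrm{(I)}}$-bullet of item~(2) the multiset $\{\ell_1,\dots,\ell_m\}$ of prime powers is recovered from $\supp\cD=\prod_j\ZZ_{\ell_j}^2$; and the renaming $\cM(2m;\HH;q,s,\ul{d},\delta)=\cM(\cD(2m;-1),*,q,s,\ul{d},-\delta)$ explains the sign of $\delta$ in the $\HH$-bullets and the fact that the signature of $\varphi$ on $\cR$ is defined exactly when the displayed $\delta$ is $+1$ in the $\RR$-bullets and $-1$ in the $\HH$-bullets, $\varphi$ being then orthogonal, resp. symplectic, on $\cR$ (cf.\ Corollary~\ref{cor:involutions_from_B} and Definition~\ref{df:sign_multiset}).

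It remains to handle the two bullets $\cM^{\mathrm{(I)}}(2m;\RR;k)$ and $\cM^{\mathrm{(I)}}(2m;\HH;k)$, for which $k\ge3$ by Theorem~\ref{th:FineMex}. An equivalence $\cM^\ex(\cD,k)\to\cM^\ex(\cD',k')$ permutes the two minimal graded ideals $\cS\times 0$ and $0\times\cS^\op$; since $\cD$ carries a degree-preserving involution one has $\cD^\op\cong\cD$ as graded algebras (up to relabeling), so in either case the equivalence yields an equivalence of graded algebras $\cM(\cD,k)\to\cM(\cD',k')$, whence $\cD\sim\cD'$ by Theorem~\ref{th:equivalence_with_anti}(1) and $k=k'$ on comparing dimensions; the converse is immediate. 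As $\cD(2m;\pm1)$ is determined up to equivalence by its support $\ZZ_2^{2m}$ regarded as a quadratic space over $GF(2)$ (Subsection~\ref{sse:real_basics}), the invariants of each of these two bullets are exactly $m$ and $k$. Throughout, I expect the main obstacle to be organizational rather than conceptual: reconciling the parametrizations of Theorems~\ref{th:fine_real} and \ref{th:equivalenceMM'real} --- the $\delta$-renaming, the distinction between $T$ and $T_\delta$, the passage from $T$ to $\wb{T}$ built into cases~(2) (with $\cD_e=\RR$) and (3) via Lemma~\ref{lm:actions}, and the identifications of the classical groups --- along with the one external input, Proposition~\ref{pr:flag}, which is proved in the next subsection.
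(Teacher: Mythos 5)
Your proposal is correct and follows the same approach the paper implicitly uses (the corollary is stated with no proof, being an immediate consequence of Theorems~\ref{th:fine_real} and \ref{th:equivalenceMM'real} together with Proposition~\ref{pr:flag} and the reduction to a fixed $\cD$ via Theorem~\ref{th:equivalence_with_anti}(1)). Your extra care in separating the eight bullets by the invariants $Z(\cR)$, $\Delta$, simplicity and Type, and in handling the exchange bullets by using that $\cS'^{\op}$ is equivalent to $\cS'$ as a graded algebra via a degree-permuting antiautomorphism, simply spells out what the paper leaves unsaid.
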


Similarly to Definitions \ref{df:M} and \ref{df:Mex}, we will denote the grading on the algebra with involution $\cM(2m;\RR;q,s,\ul{d},\delta)$ by $\Gamma_\cM(2m;\RR;q,s,\ul{d},\delta)$, etc. In other words, $\Gamma_\cM(2m;\RR;q,s,\ul{d},\delta)$ is $\Gamma_\cM(\cD,\varphi_0,q,s,\ul{d},\delta)$ where $\cD=\cD(2m;+1)$ and $\varphi_0$ is the matrix transposition. We will denote in the same way the corresponding isomorphism class of gradings on any algebra with involution $(\cR,\varphi)$ that is isomorphic to $\cM(2m;\RR;q,s,\ul{d},\delta)$, etc.

To conclude this subsection, we show how the flag $\cF$ in $V\bydef T/T^{[2]}$ can be used to determine the universal group $U=\wt{G}^0(T,q,s,\ul{t})$ of the grading $\Gamma_\cM(\cD,\varphi_0,q,s,\ul{d},\delta)$, where $t_j\bydef\deg d_j$, $1\le j\le q$. Note that the flag $\cF$ is defined in all cases, but it has length $1$ if $T$ is a nontrivial elementary $2$-group. 

Recall the presentation of $U$ from Proposition \ref{pr:G0_is_universal}, where we will take $i_0=1$, i.e., set $u_1=e$. 
If $s>0$, the relation involving $u_{q+2j-1}$ and $u_{q+2j}$ reads $u_{q+2j-1}=u_{q+2j}^{-1}u_2$ if $q=0$ and $u_{q+2j-1}=u_{q+2j}^{-1}t_1^{-1}$ if $q>0$, so in any case we get
\[
U\simeq U'\times\ZZ^s
\]
where $\ZZ^s$ is freely generated by $u_{q+2},u_{q+4},\ldots,u_{q+2s}$ and $U'$ is the subgroup of $U$ generated by $T$ and the symbols $u_2,\ldots,u_q$ if $q>1$ and simply $T$ if $q\le 1$. There is nothing further to do in the case $q\le 1$, so assume $q>1$. Let $M$ be the $2$-primary component of $T$. The isomorphism class of $M$ is determined by the partition of the integer $\log_2|M|$ that records the logarithms of the orders of the factors (in descending order) of  any decomposition of $M$ as a direct product of cyclic groups. It will be more convenient to use the dual partition $\nu_1\ge\nu_2\ge\ldots$ (obtained by reflecting the Young diagram with respect to the diagonal), which has the following meaning: $\nu_i$ is the number of cyclic groups in the decomposition of $M$ that have order at least $2^i$. Hence this dual partition is directly related to the dimensions of the quotients of $\cF$: $\dim W_i=\nu_i-\nu_{i+1}$.

Now, the defining relations of $U'$ read $u_j^2=t_j t_1^{-1}$, so $U'$ is obtained from $T$ by a sequence of extensions of index $2$. In particular, $U'$ is the torsion part of $U$. To determine its isomorphism class, consider the group homomorphism $\tau:\ZZ_2^{q-1}\to T/T^{[2]}$ defined by sending the $j$-th element of the standard basis to $t_{j+1}t_1^{-1}T^{[2]}$. (In fact, $\tau$ can be identified with the element of $\Ext_\ZZ(\ZZ_2^{q-1},T)$ associated to $U'$ as an extension of $\ZZ_2^{q-1}$ by $T$.) 
To compute the isomorphism class of $U'$ explicitly, we define a flag of subspaces of $\ZZ_2^{q-1}$ by pulling $\cF$ back along $\tau$:
\[
U_i\bydef\tau^{-1}(V_{i-1})\text{ for }i\ge 1\text{ and }U_0\bydef 0.
\]
Then, choosing a basis of $\ZZ_2^{q-1}$ adapted to this flag, we can present $U'$ with new generators $x_2,\ldots,x_q$ such that the defining relations are $x_j^2=t'_j$ where the elements $t'_j\in T$ satisfy the following: the first $\dim U_1$ of them are equal to $e$, the next $\dim(U_2/U_1)$ of them have order $2$ and their $T^{[2]}$-cosets are linearly independent, and so on. Adding one generator $x_j$ at a time results in adding a cyclic factor of order $2$ or replacing a cyclic factor of order $o(t'_j)$ by one of order $2o(t'_j)$. Therefore, the partition $\nu'_1\ge\nu'_2\ge\ldots$ that determines the isomorphism class of the $2$-primary component of $U'$ is given by $\nu'_i=\nu_i+\dim(U_i/U_{i-1})$. The odd primary components of $U'$ are the same as those of $T$, so we get the following generalization of equation (3.33) from \cite{EKmon}:
\begin{equation}\label{eq:iso_type_U}
n'_i(p)=\begin{cases}
n_i(2)+\dim(U_i/U_{i-1})-\dim(U_{i+1}/U_i) & \text{if $p=2$ and $q>1$}; \\
n_i(p) & \text{otherwise};
\end{cases}
\end{equation}
where $n_i(p)$ and $n'_i(p)$ denote the number of cyclic factors of prime power order $p^i$ in $T$ and in $U'$, respectively.

\subsection{Examples}\label{sse:examples}

The first example shows that the complexification of a fine grading on a central simple real algebra (associative with involution or Lie) may fail to be fine and the second example will be used in Section \ref{se:Lie} to classify fine gradings on the real forms of the simple complex Lie algebra of type $D_4$. The universal groups can be computed by formula \eqref{eq:iso_type_U} and the signatures are given by formula \eqref{eq:signatures}.

\begin{example}\label{ex:complexification_not_fine}
Up to equivalence, the fine gradings on $M_9(\CC)$ with a second kind involution of signature $1$ or $3$ are the following:
\begin{enumerate}
\item[$\boxed{T=\ZZ_2}$] $\Gamma_{\cM}^{\mathrm{(II)}}(1;\CC;q,s,\ul{d})$, where $q\in\{1,3,5,7,9\}$, $s=\frac{9-q}{2}$, and the $q$-tuple $\ul{d}$ consists of $\pm 1$, with the number of 1's being $\frac{q+1}{2}$ for signature $1$ and $\frac{q+3}{2}$ for signature $3$ (in the latter case $q\ne 1$); the universal group is $\ZZ_2^q\times\ZZ^s$. 

\item[$\boxed{T=\ZZ_3^2}$] $\Gamma_{\cM}^{\mathrm{(I)}}(3;\CC;q,s,\ul{d})$, where $q\in\{1,3\}$, $s=\frac{3-q}{2}$, and the $q$-tuple $\ul{d}$ consists of $\pm 1$, with the number of 1's $\frac{q+1}{2}$ for signature $1$ and $3$ for signature $3$ (in the latter case $q=3$); the universal group is $\ZZ_3^2\times\ZZ_2^{q-1}\times\ZZ^s$. 

\item[$\boxed{T=\ZZ_3^4}$] $\Gamma_{\cM}^{\mathrm{(I)}}(3,3;\CC;1,0,(1))$ for signature $1$; the universal group is $\ZZ_3^4$.

\item[$\boxed{T=\ZZ_9^2}$] $\Gamma_{\cM}^{\mathrm{(I)}}(9;\CC;1,0,(1))$ for signature $1$; the universal group is $\ZZ_9^2$.
\end{enumerate}
The complexification of the graded algebra with involution $\cM^{\mathrm{(I)}}(3;\CC;q,s,\ul{d})$ is isomorphic to $(\cS\times\cS^\op,\ex)$ where the complex algebra $\cS=M_3(\CC)\otimes_\CC\cD(3;\CC)$ has a grading by $\ZZ_2^2\times\ZZ_3^2$ or $\ZZ\times\ZZ_3^2$, which can be refined by splitting its $2$-dimensional components to obtain the fine grading $\Gamma_{\cM}\bigl(\cD(3;\CC),3\bigr)$ by $\ZZ^2\times\ZZ_3^2$. It follows from the transfer results discussed in Section \ref{se:Lie} that the same remarks apply to the restrictions of these gradings to the real Lie algebras $\frsu(5,4)$ and $\frsu(6,3)$ and their complexification $\frsl_9(\CC)$.
\end{example}

\begin{example}\label{ex:M8}
Up to equivalence, the fine gradings on the real forms of $M_8(\CC)$ with an orthogonal involution are the gradings 
$\Gamma_\cM(2m;\Delta;q,s,\ul{d},+1)$ with $\Delta\in\{\RR,\HH\}$, $2^m(q+2s)=8$, and the degrees of $d_1,\ldots,d_q$ forming a multiset $\Sigma$ (as in Definition \ref{df:simpler_multiset} and part (1) of Corollary \ref{cor:fine_real}) indicated below, where in each case we take representatives of the orbits of multisets of size $q$ in $T_+$ for $\Delta=\RR$ and in $T_-$ for $\Delta=\HH$.
The signature of the involution is defined only for $\Delta=\RR$.
\begin{enumerate}

\item[$\boxed{T=\{e\}}$] $m=0$, $\Delta=\RR$, $q\in\{0,2,4,6,8\}$, $s=\frac{8-q}{2}$, and the $q$-tuple $\ul{d}$ consists of real numbers, so the signature can be $0,2,\ldots,q$; the universal group is $\ZZ^4$ for $q=0$ and $\ZZ_2^{q-1}\times\ZZ^s$ for $q>0$. 

\item[$\boxed{T=\ZZ_2^2}$] $m=1$, $\Delta\in\{\RR,\HH\}$, $q\in\{0,2,4\}$, $s=\frac{4-q}{2}$, and we have the following representatives of the orbits of multisets, where for the elements of $T$ we use the notation $a=(\bar{1},\bar{0})$, $b=(\bar{0},\bar{1})$ and $c=(\bar{1},\bar{1})$:
\begin{itemize}
\item $\cD\simeq M_2(\RR)$, $T_+=\{e,a,b\}$, $\Ort_+(2,2)\simeq S_2$ permutes $a$ and $b$, so 
\begin{itemize}
\item $\Sigma=\{\}$: signature $0$; universal group $\ZZ_2^2\times\ZZ^2$;
\item $\Sigma=\{a,a\}$: signature $0$; universal group $\ZZ_2^3\times\ZZ$;
\item $\Sigma=\{a,b\}$: signature $0$; universal group $\ZZ_2\times\ZZ_4\times\ZZ$;
\item $\Sigma=\{e,a\}$: signature $2$; universal group $\ZZ_2\times\ZZ_4\times\ZZ$;
\item $\Sigma=\{e,e\}$: signature $0$ or $4$ (depending on whether the real numbers $d_1$ and $d_2$ have the same or opposite signs); universal group $\ZZ_2^3\times\ZZ$;
\item $\Sigma=\{a,a,a,a\}$: signature $0$; universal group $\ZZ_2^5$;
\item $\Sigma=\{a,a,a,b\}$: signature $0$; universal group $\ZZ_2^3\times\ZZ_4$;
\item $\Sigma=\{a,a,b,b\}$: signature $0$; universal group $\ZZ_2^3\times\ZZ_4$;
\item $\Sigma=\{e,a,a,a\}$: signature $2$; universal group $\ZZ_2^3\times\ZZ_4$;
\item $\Sigma=\{e,a,a,b\}$: signature $2$; universal group $\ZZ_2\times\ZZ_4^2$;
\item $\Sigma=\{e,e,a,a\}$: signature $0$ or $4$; universal group $\ZZ_2^3\times\ZZ_4$;
\item $\Sigma=\{e,e,a,b\}$: signature $0$ or $4$; universal group $\ZZ_2\times\ZZ_4^2$;
\item $\Sigma=\{e,e,e,a\}$: signature $2$ or $6$; universal group $\ZZ_2^3\times\ZZ_4$;
\item $\Sigma=\{e,e,e,e\}$: signature $0$, $4$ or $8$; universal group $\ZZ_2^5$;
\end{itemize} 

\item $\cD\simeq \HH$, $T_-=\{a,b,c\}$, $\Ort_-(2,2)\simeq S_3$ permutes $a$, $b$ and $c$, so 
\begin{itemize}
\item $\Sigma=\{\}$: universal group $\ZZ_2^2\times\ZZ^2$;
\item $\Sigma=\{a,a\}$: universal group $\ZZ_2^3\times\ZZ$;
\item $\Sigma=\{a,b\}$: universal group $\ZZ_2\times\ZZ_4\times\ZZ$;
\item $\Sigma=\{a,a,a,a\}$: universal group $\ZZ_2^5$;
\item $\Sigma=\{a,a,a,b\}$: universal group $\ZZ_2^3\times\ZZ_4$;
\item $\Sigma=\{a,a,b,b\}$: universal group $\ZZ_2^3\times\ZZ_4$;
\item $\Sigma=\{a,a,b,c\}$: universal group $\ZZ_2\times\ZZ_4^2$.
\end{itemize}
\end{itemize}

\item[$\boxed{T=\ZZ_2^4}$] $m=2$, $\Delta\in\{\RR,\HH\}$, $q\in\{0,2\}$, $s=\frac{2-q}{2}$, and in the case $q=2$ we must have $\deg d_1\ne\deg d_2$ (otherwise the grading is not fine), so we have the following possibilities:
\begin{itemize}
\item $\cD\simeq M_4(\RR)$, $\Ort_+(4,2)$ acts transitively on the set $T_+\smallsetminus\{e\}$ and on pairs $(t_1,t_2)$ of distinct elements in this set that have a fixed value of $\beta(t_1,t_2)$ (by Witt's Extension Theorem), so 
\begin{itemize}
\item $\Sigma=\{\}$: signature $0$; universal group $\ZZ_2^4\times\ZZ$;
\item $\Sigma=\{t_1,t_2\}$ with $t_1\ne t_2$, $t_i\in T_+\smallsetminus\{e\}$, $\beta(t_1,t_2)=1$ or $\beta(t_1,t_2)=-1$: signature $0$; universal group $\ZZ_2^3\times\ZZ_4$;
\item $\Sigma=\{e,t\}$ with $t\in T_+\smallsetminus\{e\}$: signature $4$, universal group $\ZZ_2^3\times\ZZ_4$;  
\end{itemize}

\item $\cD\simeq M_2(\HH)$, $\Ort_-(4,2)$ acts transitively on $T_-$ and on pairs $(t_1,t_2)$ of distinct elements in $T_-$ that have a fixed value of $\beta(t_1,t_2)$, so
\begin{itemize}
\item $\Sigma=\{\}$: universal group $\ZZ_2^4\times\ZZ$;
\item $\Sigma=\{t_1,t_2\}$ with $t_1\ne t_2$, $t_i\in T_-$, $\beta(t_1,t_2)=1$ or $\beta(t_1,t_2)=-1$: universal group $\ZZ_2^3\times\ZZ_4$.
\end{itemize}
\end{itemize}

\item[$\boxed{T=\ZZ_2^6}$] $m=3$, $\Delta\in\{\RR,\HH\}$, $q=1$, $s=0$, so we have the following possibilities:
\begin{itemize}
\item $\cD\simeq M_8(\RR)$, $\Ort_+(6,2)$ acts transitively on $T_+\smallsetminus\{e\}$, so
\begin{itemize}
\item $\Sigma=\{t\}$ with $t\in T_+\smallsetminus\{e\}$: signature $0$, universal group $\ZZ_2^6$;
\item $\Sigma=\{e\}$: signature $8$, universal group $\ZZ_2^6$;
\end{itemize}

\item $\cD\simeq M_4(\HH)$, $\Ort_-(6,2)$ acts transitively on $T_-$, so 
\begin{itemize}
\item $\Sigma=\{t\}$ with $t\in T_-$: universal group $\ZZ_2^6$.
\end{itemize}

\end{itemize}

\end{enumerate}
Over $\CC$, there is no signature and fine gradings on simple algebras with an orthogonal or symplectic involution are classified (see Remark \ref{rm:compare_with_C}) by the orbits of multisets under a twisted action of $\Aut(T,\beta)$. This action depends on the arbitrary choice of $\varphi_0$ (as there is no distinguished involution), and if we choose the transposition on $\cD\simeq M_2(\CC)$, the group $\SP(2,2)\simeq S_3$ permutes $e$, $a$ and $b$ (see \cite[Example 3.44]{EKmon}). Therefore, in the list above, the gradings, for example, with multisets $\{a,a,b,b\}$ and $\{e,e,a,a\}$, have equivalent complexifications (regardless of signature).
\end{example}

Example \ref{ex:M8} is summarized in Table \ref{tb:M8}, where all gradings with equivalent complexifications appear in one row. Because of our intended application to the real forms of the Lie algebra $\frso_8(\CC)$, there is one more piece of information added: each grading is marked as inner or outer according to the corresponding action on $\frso_8(\CC)$ of the group of complex characters of the universal group. 

To see this distinction in terms of associative algebras with involution, one can look at the corresponding Clifford algebra. If $V$ is a finite-dimensional vector space over a field $\FF$ of characteristic different from $2$ and $b$ is a nondegenerate symmetric bilinear form on $V$, then $b$ allows us to define, on the one hand, an involution $\sigma$ on the algebra $\cA=\End_\FF(V)$ and, on the other hand, the Clifford algebra $\Cl(V,b)$, which is actually a superalgebra because it has a canonical $\ZZ_2$-grading. It turns out (see \cite[\S 8.B]{KMRT}) that one can go directly from $(\cA,\sigma)$ to the even part of the Clifford algebra. This construction of $\Cl(\cA,\sigma)$ directly from $(\cA,\sigma)$ has two important advantages: it works for any central simple algebra $\cA$ and any orthogonal involution $\sigma$, and it is functorial with respect to isomorphisms of algebras with involution. As a result, group actions and gradings on $(\cA,\sigma)$ pass to $\Cl(\cA,\sigma)$, which itself has a canonical involution induced by $\sigma$. 

The algebra $\Cl(\cA,\sigma)$ is always semisimple, and its center is $\FF$ if the degree of $\cA$ is odd and an \'etale algebra of dimension $2$ if the degree is even. In the latter case, we can sort $G$-gradings on $(\cA,\sigma)$ into Type I (or inner) and Type II (or outer) depending on whether the induced grading on the center of $\Cl(\cA,\sigma)$ is trivial or nontrivial. The support of this latter grading is generated by an element $h\in G$ of order at most $2$, which is not affected by extending scalars to the algebraic closure and, in the case of algebraically closed $\FF$, was computed in \cite[Lemma 33]{EK_Israel}. For our graded algebras with involution $\cM(\cD,\varphi_0,q,s,\ul{d},\delta)$, with even dimension and $\cD_e=\FF=Z(\cD)$, the center of the Clifford algebra is trivially graded if and only if one of the following holds: $|T|>4$, or $|T|=1$ and $q=0$, or $|T|=4$ and all three permissible values in $T$ (to make the involution orthogonal) have multiplicities  of the same parity in the multiset $\Sigma(\ul{d})$.

\begin{table}
\centering
\caption{Fine gradings and their universal groups for the real forms of $M_8(\CC)$ with an orthogonal involution, grouped in rows according to the equivalence class of their complexification. Notation: $M_{5+3}(\RR)$ indicates that the involution on $M_8(\RR)$ has signature $5-3=2$, etc., and the gray color marks inner gradings.}
\label{tb:M8}
\begin{tabular}{|l|c|c|c|c|c|c|}
\hline
& 
$M_{4+4}(\RR)$ & $M_{5+3}(\RR)$ & $M_{6+2}(\RR)$ & $M_{7+1}(\RR)$ & $M_{8+0}(\RR)$ & $M_{4}(\HH)$ \\
\hline
\rowcolor{lightgray}
$\ZZ^4$ &
$1$ &&&&& \\
\hline
$\ZZ_2\times\ZZ^3$ &
$1$ & $1$ &&&& \\
\hline
$\ZZ_2^3\times\ZZ^2$ &
$1$ & $1$ & $1$ &&& \\
\hline
$\ZZ_2^5\times\ZZ$ &
$1$ & $1$ & $1$ & $1$ && \\
\hline
$\ZZ_2^7$ &
$1$ & $1$ & $1$ & $1$ & $1$ & \\
\hline
\rowcolor{lightgray}
$\ZZ_2^2\times\ZZ^2$ &
$1$ &     &     &     &     & $1$ \\
\hline
\rowcolor{lightgray}
$\ZZ_2^3\times\ZZ$ &
$2$ &     & $1$ &     &     & $1$ \\
\hline
$\ZZ_2\times\ZZ_4\times\ZZ$ &
$1$ & $1$ &     &     &     & $1$ \\
\hline
\rowcolor{lightgray}
$\ZZ_2^5$ &
$2$ &     & $1$ &     & $1$ & $1$ \\
\hline 
\rowcolor{lightgray}
$\ZZ_2^3\times\ZZ_4$ &
$2$ &     & $1$ &     &     & $1$ \\
\hline 
$\ZZ_2^3\times\ZZ_4$ &
$1$ & $2$ &     & $1$ &     & $1$ \\
\hline 
$\ZZ_2\times\ZZ_4^2$ &
$1$ & $1$ & $1$ &     &     & $1$ \\
\hline 
\rowcolor{lightgray}
$\ZZ_2^4\times\ZZ$ &
$1$ &     &     &     &     & $1$ \\
\hline
\rowcolor{lightgray}
$\ZZ_2^3\times\ZZ_4$ &
$2$ &     & $1$ &     &     & $2$ \\
\hline
\rowcolor{lightgray}
$\ZZ_2^6$ &
$1$ &     &     &     & $1$ & $1$ \\
\hline
\end{tabular}

\end{table}

\subsection{Proof of Proposition \ref{pr:flag}}\label{sse:flag}

Given a finite abelian $2$-group $M$ (the $2$-primary component of $T$) and a nondegenerate alternating bicharacter 
$\beta:M\times M\to\CC^\times$, the group $\Aut(M,\beta)$ acts on the $GF(2)$-vector space $V\bydef M/M^{[2]}$, and we are interested in the image of $\Aut(M,\beta)$ in $\GL(V)$. 

We will view $M$ as a $\ZZ$-module, so $M^{[2]}=2M$ is the radical of $M$. Consider the ring homomorphism $\pi:\End(M)\to\End(V)$ that sends $r\in R\bydef\End(M)$ to $\bar{r}\in\End(V)$ defined by $\bar{r}(x+2M)=r(x)+2M$. Then 
\[
K\bydef\ker\pi=\{r\in R\mid rM\subset 2M\}
\] 
is a nilpotent ideal of $R$, as $K^i M\subset 2^i M$. Recall the flag $\cF=\{V_i\}_{i=0,1,\ldots}$ in $V$ from equation \eqref{eq:def_F}: $V_i\bydef M_{[2^i]}+2M/2M$; it is induced by the socle filtration $M_i=M_{[2^i]}$ of $M$. 
We claim that the image of $\pi$ is
\[
\End(V,\cF)\bydef\{s\in\End(V)\mid sV_i\subset V_i\text{ for all }i\}.
\]
Indeed, since each $M_i$ is an $R$-submodule of $M$, we get $\pi(R)\subset\End(V,\cF)$. To show the other inclusion, write $M$ as the direct product of cyclic subgroups $\langle c_j\rangle$ and note that the elements $v_j\bydef c_j+2M$ constitute a basis of $V$ adapted to the flag $\cF$: $V_i=\lspan{v_j\mid o(c_j)\le 2^i}$. Given $s\in\End(V,\cF)$, we have $s(v_j)=\sum_i s_{ij}v_i$ where $s_{ij}\in\{0,1\}$ and $s_{ij}=0$ whenever $o(c_i)>o(c_j)$. It follows that the mapping $c_j\mapsto\sum_i s_{ij}c_i$ extends to an endomorphism $r$ of $M$, which satisfies $\pi(r)=s$.

To make computations, we will represent the elements of $M$ by integer vectors and elements of $R=\End(M)$ by integer matrices. It will be convenient to divide them into blocks as follows. Let $2^{m_1}<2^{m_2}<\ldots<2^{m_\ell}$ be the orders of the cyclic subgroups appearing in the decomposition of $M$, so $\ell=\mathrm{length}(\cF)$ and $2^{m_\ell}$ is the exponent of $M$. We will label the generators $c_j$ so that the first $n_1$ of them have order $2^{m_1}$, the next $n_2$ have order $2^{m_2}$, and so on. Then, with respect to these generators, the elements of $M$ are represented by block vectors of the form $x=(x_i)_{1\le i\le\ell}$ where $x_i\in\ZZ^{n_i}$, and the elements of $R$ are represented by block matrices of the form $A=(a_{ij})_{1\le i,j\le\ell}$ where $a_{ij}\in M_{n_i\times n_j}(\ZZ)$ and $a_{ij}\equiv 0\pmod{2^{m_i-m_j}}$ for $i>j$. Note that $x$ and $x'$ represent the same element of $M$ if and only if $x_i\equiv x'_i\pmod{2^{m_i}}$ for all $i$ and, consequently, $A$ and $A'$ represent the same element of $R$ if and only if $a_{ij}\equiv a'_{ij}\pmod{2^{m_i}}$ for all $i,j$. 

The corresponding basis $\{v_j\}_j$ of $V$ is then also partitioned into blocks, each giving a basis of one of the nontrivial quotients $W_{m_i}=V_{m_i}/V_{m_{i-1}}$ of $\cF$ ($1\le i\le\ell$, with $m_0\bydef 0$). If we represent the elements of $\End(V)$ by matrices with respect to $\{v_j\}_j$, then the homomorphism $\pi$ is simply the reduction of entries mod $2$. Note that, for any $r\in R$, $\pi(r)$ is represented by an upper block-triangular matrix, whose diagonal blocks represent the operators induced by $\pi(r)$ on $W_{m_i}$. In particular, $\pi$ maps the multiplicative group $\Aut(M)$ of $R$ onto the stabilizer of $\cF$ in $\GL(V)$. 

To take into account $\beta$, consider the involution $\sigma$ that $\beta$ defines on the ring $R$: $\beta(ru,v)=\beta(u,\sigma(r)v)$ for all $u,v\in M$ and $r\in R$. Then 
\[
\Aut(M,\beta)=\{r\in\Aut(M)\mid \sigma(r)=r^{-1}\}=\{r\in R\mid \sigma(r)r=1\}.
\]
We will use a symplectic basis of $(M,\beta)$ as the generating set $\{c_j\}_j$. Ordering these elements appropriately and recalling Remark \ref{rm:change_roots}, we may assume that $\beta(u,v)=e^{2\pi\bi\langle x,y\rangle/2^{m_\ell}}$ where $x,y$ are vectors representing $u,v\in M$ and 
\[
\langle x,y \rangle=x^T B y\;\text{ where }B=\diag\bigl(2^{m_\ell-m_1}J_{n_1},\ldots,
2^{m_\ell-m_{\ell-1}} J_{n_{\ell-1}},J_{n_\ell}\bigr)
\]
and $J_n\bydef\left[\begin{smallmatrix} 0 & I_{n/2} \\ -I_{n/2} & 0\end{smallmatrix}\right]$ for any even $n$ (so $J_n^{-1}=J_n^T=-J_n$). In particular, if $u,v\in M_{[2^{m_i}]}$ then $\langle x,y\rangle\equiv 2^{m_\ell-m_i}x_i^T J_{n_i} y_i\pmod{2^{m_\ell-m_i+1}}$. So, as expected, the $i$-th block of the basis $\{v_j\}_j$ of $V$ gives a symplectic basis of $W_{m_i}$ with respect to the form $\langle\cdot,\cdot\rangle_{m_i}$ induced by $\beta^{2^{m_i-1}}$ on $W_{m_i}$.

Hence, if $r\in R$ is represented by matrix $A$ and $\sigma(r)$ by $A'$ then $A^T B\equiv BA'\pmod{2^{m_\ell}}$. For the $(i,j)$-th block, this gives $a'_{ij}\equiv 2^{m_i-m_j}a_{ji}^*\pmod{2^{m_i}}$, where 
\begin{equation}\label{eq:standard_symplectic}
C^*\bydef J_n^{-1}C^T J_m\;\text{ for any $m\times n$ matrix $C$ with even $m$ and $n$}.
\end{equation}
Note that, for $i<j$, we have $a_{ji}\equiv 0\pmod{2^{m_j-m_i}}$, hence 
$a_{ji}^*\equiv 0\pmod{2^{m_j-m_i}}$ as well, so $2^{m_i-m_j}a_{ji}^*\in\ZZ$.

As $a_{ij}\equiv 0\pmod{2^{m_i-m_j}}$ for $i>j$, any $r\in R$ is a sum of elements of the form 
\[
\tilde{e}_{ij}(a)\bydef\begin{cases}
e_{ij}(a) & \text{if }i\le j, \\
e_{ij}\bigl(2^{m_i-m_j}a\bigr) & \text{if }i>j,
\end{cases}
\]
where $a\in M_{n_i\times n_j}(\ZZ)$ and $e_{ij}(b)$ is the element represented by the matrix whose $(i,j)$-th block is $b$ and all other blocks are $0$. Using this notation, we get
\begin{equation}\label{eq:weird_symplectic}
\sigma(\tilde{e}_{ij}(a))=\tilde{e}_{ji}(a^*)\;\text{ for any }a\in M_{n_i\times n_j}(\ZZ).
\end{equation}
In particular, if $r=\sum_{i,j}\tilde{e}_{ij}(a_{ij})$ belongs to $\Aut(M,\beta)$ then $a_{ii}^*a_{ii}\equiv 1\pmod{2}$, i.e., $a_{ii}$ mod $2$ belongs to $\SP(n_i,2)$ for all $i$, which means that $\pi(r)$ belongs to the stabilizer $\SP(V,\cF)$ of $\cF$ and all symplectic forms $\langle\cdot,\cdot\rangle_{m_i}$, as expected. We are now going to prove that $\pi$ maps $\Aut(M,\beta)$ onto $\SP(V,\cF)$.

First observe that $\SP(V,\cF)=U\rtimes S$ where $U$ is the subgroup of all operators of the form $1+z$ with $z\in\End(V)$ represented by a strictly upper block-triangular matrix and $S$ is the subgroup of all operators represented by block-diagonal matrices consisting of symplectic blocks. To prove that both $S$ and $U$ are contained in the image of $\Aut(M,\beta)$, we will need the following lemma. But first we introduce some notation: if $\sigma$ is an involution on a ring $A$, we write
\[
\operatorname{Symd}(A)=\operatorname{Symd}(A,\sigma)\bydef(\id_A+\sigma)(A)=\{a+\sigma(a)\mid a\in A\}.
\] 
This is clearly a subgroup of the additive group $\operatorname{Sym}(A,\sigma)$ of symmetric elements. It is also easy to check that if $b\in\operatorname{Symd}(A)$ then $\sigma(a)ba\in\operatorname{Symd}(A)$ for any $a\in A$.

\begin{lemma}\label{lm:symd}
Let $\KK$ be a commutative unital ring and $\sigma$ be the standard symplectic involution on $A=M_n(\KK)$, i.e., $\sigma(a)=a^*$ where $a^*$ is given by equation \eqref{eq:standard_symplectic}. 
\begin{enumerate}
\item[(i)] $1\in\operatorname{Symd}(A)$.
\item[(ii)] The $\KK$-module $\operatorname{Symd}(A)$ is free and has a free complement in $A$.
\item[(iii)] For any even $m$ and $a\in M_{m\times n}(\KK)$, we have $a^*a\in\operatorname{Symd}(A)$.
\item[(iv)] For any $I\triangleleft\KK$, we have $\operatorname{Symd}(A)\cap IA=\operatorname{Symd}(IA)$.
\item[(v)] For any $I\triangleleft\KK$ and $c\in 1+\operatorname{Symd}(IA)$, there exists $u\in 1+IA$ such that $u^*cu\in 1+\operatorname{Symd}(I^2 A)$.
\end{enumerate}
\end{lemma}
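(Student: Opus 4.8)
The plan is to reduce the entire lemma to one explicit description of $\operatorname{Symd}(A)$, namely the identity
\[
\operatorname{Symd}\bigl(M_n(\KK)\bigr)=J_n^{-1}\Alt_n(\KK),
\]
where $\Alt_n(\KK)$ is the $\KK$-submodule of \emph{alternating} matrices (skew-symmetric with zero diagonal). The inclusion ``$\subseteq$'' is immediate: for $c\in M_n(\KK)$ the matrix $J_n(c+c^*)=J_nc+c^TJ_n$ is skew-symmetric (using $J_n^T=-J_n$), and each of its diagonal entries is $\sum_k c_{ki}\bigl((J_n)_{ik}+(J_n)_{ki}\bigr)=0$. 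Conversely, given $S\in\Alt_n(\KK)$ write $S=U-U^T$ with $U$ the strictly upper triangular part of $S$; then $c\bydef J_n^{-1}U$ satisfies $c+c^*=J_n^{-1}(U-U^T)=J_n^{-1}S$, where I use $J_n^2=-I_n$. With this in hand, (i) is free because $1=J_n^{-1}J_n$ and $J_n$ is alternating; and (ii) follows because left multiplication by $J_n^{-1}$ is a $\KK$-module automorphism of $M_n(\KK)$ that carries the elementary free decomposition $M_n(\KK)=\Alt_n(\KK)\oplus(\text{upper triangular matrices})$ onto $A=\operatorname{Symd}(A)\oplus C$ with $C\bydef J_n^{-1}\cdot(\text{upper triangular matrices})$, both summands free (of ranks $\binom n2$ and $\binom{n+1}2$).

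For (iii) I would compute $a^*a=J_n^{-1}\bigl(a^TJ_ma\bigr)$ and observe that $S\bydef a^TJ_ma$ is alternating: $S^T=-S$ because $J_m^T=-J_m$, and $S_{ii}=(ae_i)^TJ_m(ae_i)=0$ because $J_m$ is an alternating bilinear form; hence $a^*a\in J_n^{-1}\Alt_n(\KK)=\operatorname{Symd}(A)$. For (iv), the inclusion $\operatorname{Symd}(IA)\subseteq\operatorname{Symd}(A)\cap IA$ is trivial; for the reverse, the displayed characterization holds verbatim with $\KK$ replaced by $I$ (so $\operatorname{Symd}(IA)=J_n^{-1}\Alt_n(I)$), and since $J_n^{\pm1}$ preserve $M_n(I)$ we get $\operatorname{Symd}(A)\cap IA=J_n^{-1}\bigl(\Alt_n(\KK)\cap M_n(I)\bigr)=J_n^{-1}\Alt_n(I)=\operatorname{Symd}(IA)$.

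The heart of the matter is (v), and it is the only step requiring a genuine computation, because over a general commutative ring $\operatorname{Symd}(I^2A)$ is strictly smaller than $\operatorname{Sym}(I^2A)$ (they coincide only when $2$ is a non-zero-divisor or a unit), so one cannot simply note that the error term $u^*cu-1$ is symmetric with entries in $I^2$. Given $c=1+b$ with $b\in\operatorname{Symd}(IA)$, I would write $b=w+w^*$ with $w\in IA$ (possible by definition of $\operatorname{Symd}$) and set $u\bydef 1-w\in 1+IA$. Expanding,
\[
u^*cu=1+\bigl(-w^2-w^{*2}\bigr)+\bigl(-w^*w+w^*w^2+w^{*2}w\bigr).
\]
The first bracket equals $-\bigl(w^2+(w^2)^*\bigr)$, hence lies in $\operatorname{Symd}(I^2A)$ because $w^2\in I^2A$. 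For the second bracket, put $z\bydef w^*w$; by (iii) and (iv), $z\in\operatorname{Symd}(I^2A)$, and in particular $z$ is symmetric, so $(zw)^*=w^*z$ and the bracket equals $-z+\bigl(zw+(zw)^*\bigr)$, which again lies in $\operatorname{Symd}(I^2A)$ since $zw\in I^3A\subseteq I^2A$. Thus $u^*cu\in 1+\operatorname{Symd}(I^2A)$. I expect the bookkeeping in this last expansion, together with the precise use of (iii) and (iv) to land the cross terms in $\operatorname{Symd}$ rather than merely in $\operatorname{Sym}$, to be the one delicate point; the remaining steps are formal.
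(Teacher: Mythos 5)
Your proof is correct and follows essentially the same route as the paper: the explicit description of $\operatorname{Symd}(A)$ as (a translate of) the module of alternating matrices, the choice $u=1-w$ in part (v), and the same expansion of $u^*cu-1$ grouped so that $w^*w$ and the cross terms land in $\operatorname{Symd}(I^2A)$ by (iii) and (iv). The paper writes $\operatorname{Symd}(A)$ in $2\times 2$ block form while you package it as the equivalent identity $\operatorname{Symd}(A)=J_n^{-1}\Alt_n(\KK)$, which is a slightly cleaner way of saying the same thing and does streamline (i)--(iv), but the mathematics is the same at every step.
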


\begin{proof}
For $a=\left[\begin{smallmatrix}\alpha & \beta \\ \gamma & \delta\end{smallmatrix}\right]$ with $\alpha,\beta,\gamma,\delta\in M_{n/2}(\KK)$, we have $a^*=\left[\begin{smallmatrix}\delta^T & -\beta^T \\ -\gamma^T & \alpha^T\end{smallmatrix}\right]$, so
\[
\operatorname{Symd}(A)=\left\{\left[\begin{smallmatrix}\alpha & \beta \\ \gamma & \alpha^T\end{smallmatrix}\right]
\mid\text{$\beta$ and $\gamma$ are skew-symmetric with zero diagonal}\right\}.
\]
Therefore, $\operatorname{Symd}(A)$ contains the identity matrix and has a $\KK$-basis consisting of certain sums and differences of matrix units. Also, the $\KK$-submodule 
\[
\left\{\left[\begin{smallmatrix}\alpha & \beta \\ \gamma & 0\end{smallmatrix}\right]
\mid\text{$\beta$ and $\gamma$ are upper triangular}\right\}
\]
is a complement of $\operatorname{Symd}(A)$ and has a basis consisting of matrix units. 
This completes parts (i) and (ii). Since for any $b\in\operatorname{Symd}\bigl(M_m(\KK)\bigr)$ and $a\in M_{m\times n}(\KK)$, we have $a^*ba\in\operatorname{Symd}(A)$, part (iii) follows from (i).

To prove part (iv), observe that, since $\sigma$ is $\KK$-linear, we have $\operatorname{Symd}(IA)=I\operatorname{Symd}(A)$. On the other hand, $\operatorname{Symd}(A)\cap IA=I\operatorname{Symd}(A)$, too, because the $\KK$-module $\operatorname{Symd}(A)$ is a direct summand of $A$.

Finally, in part (v), there exists $x\in IA$ such that $c=1+x+x^*$. Letting $u=1-x$, we get
\[
u^*cu=(1-x^*)(1+x+x^*)(1-x)=1-x^*x-x^2-(x^*)^2+x^*(x+x^*)x,
\]
hence $u^*cu-1\in\operatorname{Symd}(A)\cap I^2 A=\operatorname{Symd}(I^2 A)$.
\end{proof}

\begin{corollary}\label{cor:symd1}
Let $I$ be a nilpotent ideal in a commutative unital ring $\KK$. Then, for any even $n$, the ring homomorphism $M_n(\KK)\to M_n(\KK/I)$ of entry-wise reduction mod $I$ maps $\SP(n,\KK)$ onto $\SP(n,\KK/I)$.
\end{corollary}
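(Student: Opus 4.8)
The plan is to reduce the statement, by a short induction on the nilpotency index of $I$, to the case of a square-zero ideal, and then to read off the result from Lemma \ref{lm:symd}. First I would record that entry-wise reduction $M_n(\KK)\to M_n(\KK/I)$ is a well-defined group homomorphism $\SP(n,\KK)\to\SP(n,\KK/I)$, since the symplectic involution $a\mapsto a^*$ of \eqref{eq:standard_symplectic} is just conjugation of $a^T$ by the fixed matrix $J_n$ and hence commutes with reduction mod $I$; thus only surjectivity is at issue. If $I^N=0$ with $N\ge 2$, then $\KK\twoheadrightarrow\KK/I$ factors as $\KK\twoheadrightarrow\KK/I^{N-1}\twoheadrightarrow\KK/I$, where the first kernel $I^{N-1}$ squares to zero because $(I^{N-1})^2=I^{2N-2}\subset I^N=0$; so by induction on $N$ it suffices to prove the assertion when $I^2=0$.

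For the square-zero case, set $A\bydef M_n(\KK)$ and take $\bar c\in\SP(n,\KK/I)$. I would lift $\bar c$ entry-wise to some $c_0\in A$. Since reduction commutes with $*$, the relation $\bar c^*\bar c=1$ gives $c_0^*c_0\in 1+IA$. The key point is that $c_0^*c_0-1$ actually lies in $\operatorname{Symd}(A)$, not merely in the group of symmetric elements: Lemma \ref{lm:symd}(iii) (applied with $m=n$) gives $c_0^*c_0\in\operatorname{Symd}(A)$, Lemma \ref{lm:symd}(i) gives $1\in\operatorname{Symd}(A)$, and then Lemma \ref{lm:symd}(iv) yields $c_0^*c_0-1\in\operatorname{Symd}(A)\cap IA=\operatorname{Symd}(IA)$, i.e.\ $c_0^*c_0\in 1+\operatorname{Symd}(IA)$. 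Now Lemma \ref{lm:symd}(v) supplies $u\in 1+IA$ with $u^*(c_0^*c_0)u\in 1+\operatorname{Symd}(I^2A)=\{1\}$, and I would finish by putting $c\bydef c_0u$: this satisfies $c^*c=u^*c_0^*c_0u=1$, so $c\in\SP(n,\KK)$, while $c\equiv c_0\equiv\bar c\pmod I$ since $u\equiv 1\pmod I$. Hence the reduction map is surjective.

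Everything here is routine bookkeeping once Lemma \ref{lm:symd} is available; the one genuinely substantive step --- and what I would flag as the main obstacle --- is getting $c_0^*c_0-1$ into $\operatorname{Symd}(A)$ rather than only into $\operatorname{Sym}(A)$. That is exactly what makes the symplectic correction $u=1-z$, with $z+z^*=c_0^*c_0-1$, available, and it is the reason parts (i) and (iii) of Lemma \ref{lm:symd} were isolated: over a ring in which $2$ is not invertible --- precisely the situation relevant here, where $\KK$ is a quotient of $\ZZ$ by a power of $2$ --- the subgroup $\operatorname{Symd}(A)$ is strictly smaller than the group of symmetric elements, so a naive symmetrization would not work.
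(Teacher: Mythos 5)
Your proof is correct and uses the same key ingredients as the paper --- namely Lemma~\ref{lm:symd}, in particular parts (i), (iii), (iv) and (v) --- and the same symplectic ``Newton correction'' $u$ built from an element $z$ with $z+z^*=c_0^*c_0-1$. The only difference is organizational: the paper iterates Lemma~\ref{lm:symd}(v) in place, pushing the error from $I^{2^k}A$ to $I^{2^{k+1}}A$ until nilpotency kills it, whereas you first reduce to the square-zero case by an induction on the nilpotency index of $I$ and then apply (v) once; this is a cosmetic rearrangement of the same argument.
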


\begin{proof}
As in Lemma \ref{lm:symd}, consider the ring $A=M_n(\KK)$ with the standard symplectic involution $*$. The kernel of the reduction homomorphism is $M_n(I)=IA$.  
Given an element $\bar{a}\in\SP(n,\KK/I)$, pick $a_0\in A$ such that $\bar{a}=a_0+IA$. 
Then $a_0^* a_0-1\in\operatorname{Symd}(A)\cap IA=\operatorname{Symd}(IA)$ by Lemma \ref{lm:symd}. 
Iterating part (v) of the said lemma, we can construct a sequence of elements $\{a_k\}_{k=0,1,\ldots}$ in $A$ such that $a_k^* a_k\in 1+\operatorname{Symd}\bigl(I^{2^k}A\bigr)$ and $a_k-a_0\in IA$. 
Indeed, if we already found $a_k$, then there exists $u_k\in 1+I^{2^k}A$ such that $u_k^* (a_k^* a_k) u_k\in 1+\operatorname{Symd}\bigl((I^{2^k})^2 A\bigr)$, so we can take $a_{k+1}=a_k u_k$. Since $I$ is nilpotent, we get $a_k^* a_k=1$ for sufficiently large $k$, so $a_k$ is a preimage of $\bar{a}$ in $\SP(n,\KK)$. 
\end{proof}

In particular, the reduction mod $2$ maps $\SP(n,\ZZ/2^{m}\ZZ)$ onto $\SP(n,2)$. This implies that the subgroup $S$ of $\SP(V,\cF)$ is contained in the image of $\Aut(M,\beta)$. Indeed, if $s=\sum_i e_{ii}(\bar{a}_i)$ with $\bar{a}_i\in\SP(n_i,2)$ then, for each $i$, there exists $a_i\in M_{n_i}(\ZZ)$ such that $a_i^* a_i\equiv 1\pmod{2^{m_i}}$ and $a_i\mapsto \bar{a}_i$ under the reduction mod $2$. By equation \eqref{eq:weird_symplectic}, the element $r=\sum_i\tilde{e}_{ii}(a_i)=\sum_i e_{ii}(a_i)\in R$ satisfies $\sigma(r)r=1$, so $r\in\Aut(M,\beta)$. By construction, $\pi(r)=s$.

To prove that $U$ is contained in the image of $\Aut(M,\beta)$, we will use the following: 
\begin{corollary}\label{cor:symd2}
Let $I$ be a nilpotent ideal in a commutative unital ring $\KK$. Then, for any even $n$, the mapping $M_n(\KK)\to M_n(\KK)$, $a\mapsto a^*a$, with $*$ being the standard symplectic involution, maps $1+I M_n(\KK)$ onto $1+\operatorname{Symd}\bigl(I M_n(\KK)\bigr)$.
\end{corollary}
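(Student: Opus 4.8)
The plan is to establish the equality of the two sets --- the image of $1+IM_n(\KK)$ under the norm map $a\mapsto a^*a$ and the set $1+\operatorname{Symd}\bigl(IM_n(\KK)\bigr)$ --- by proving the two inclusions separately, with Lemma~\ref{lm:symd} doing all the work and the surjectivity argument patterned on the proof of Corollary~\ref{cor:symd1}. Throughout I would write $A=M_n(\KK)$ and, for an ideal $J\triangleleft\KK$, abbreviate $\operatorname{Symd}(JA)\bydef\operatorname{Symd}(JA,*)$, noting that $JA=M_n(J)$.

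For the inclusion ``$\subseteq$'', take $a=1+x$ with $x\in IA$ and expand
\[
a^*a=(1+x^*)(1+x)=1+(x+x^*)+x^*x .
\]
Here $x+x^*\in\operatorname{Symd}(IA)$ by definition, and $x^*x\in\operatorname{Symd}(A)$ by part~(iii) of Lemma~\ref{lm:symd}, while also $x^*x\in I^2A\subseteq IA$; hence $x^*x\in\operatorname{Symd}(A)\cap IA=\operatorname{Symd}(IA)$ by part~(iv). Since $\operatorname{Symd}(IA)$ is an additive subgroup, $a^*a-1\in\operatorname{Symd}(IA)$.

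For the inclusion ``$\supseteq$'' (surjectivity), let $c\in 1+\operatorname{Symd}(IA)$. I would iterate part~(v) of Lemma~\ref{lm:symd}: first pick $u_0\in 1+IA$ with $u_0^*cu_0\in 1+\operatorname{Symd}(I^2A)$; then, applying~(v) with the ideal $I^2$ in place of $I$, pick $u_1\in 1+I^2A$ with $u_1^*(u_0^*cu_0)u_1\in 1+\operatorname{Symd}(I^4A)$; continuing, after $k+1$ steps the element $w_k\bydef u_0u_1\cdots u_k$ satisfies $w_k^*cw_k\in 1+\operatorname{Symd}\bigl(I^{2^{k+1}}A\bigr)$. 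Because $I$ is nilpotent, $I^{2^{k+1}}=0$ for $k$ large, so for such $k$ we get $w_k^*cw_k=1$, that is, $c=(w_k^*)^{-1}w_k^{-1}=(w_k^{-1})^*w_k^{-1}$, using that $*$ is an anti-automorphism. Finally $w_k\in 1+IA$, and since $IA=M_n(I)$ is a nilpotent ideal of $A$ the set $1+IA$ is a group under multiplication, so $u\bydef w_k^{-1}\in 1+IA$ and $u^*u=c$.

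No step presents a genuine obstacle: the entire content is packaged in Lemma~\ref{lm:symd}, and the only care needed is the bookkeeping in the iteration --- applying~(v) at each stage to the already conjugated element $w_{k-1}^*cw_{k-1}$ and to the correct ideal $I^{2^k}$ --- exactly as in the proof of Corollary~\ref{cor:symd1}.
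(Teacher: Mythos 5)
Your proposal is correct and follows the same strategy as the paper's own proof: the ``$\subseteq$'' inclusion by expanding $(1+x)^*(1+x)$ and invoking parts (iii)--(iv) of Lemma~\ref{lm:symd}, and surjectivity by iterating part~(v) until nilpotence of $I$ forces the conjugated element to equal~$1$. The only difference is cosmetic bookkeeping (you accumulate the $u_j$ into a product $w_k$, whereas the paper lets its $u_k$ denote the running product directly).
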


\begin{proof}
Let $A=M_n(\KK)$. For any $x\in IA$, we have $(1+x)^*(1+x)=1+x+x^*+x^*x\in 1+\operatorname{Symd}(IA)$ by Lemma \ref{lm:symd}. Given $c\in 1+\operatorname{Symd}(IA)$, we iterate part (v) of the said lemma to construct a sequence of elements $\{u_k\}_{k=1,2,\ldots}$ in $1+IA$ such that $u_k^* c u_k\in 1+\operatorname{Symd}(I^{2^k}A)$. For sufficiently large $k$, we get $u_k^* c u_k=1$ and hence $c=a^*a$ for $a=u_k^{-1}$.
\end{proof}

Since $U$ is generated by the elements of the form $u_{ij}(\bar{a})\bydef 1+e_{ij}(\bar{a})$ where $i<j$ and $\bar{a}\in M_{n_i\times n_j}(\ZZ/2\ZZ)$, it is sufficient to find a preimage for each such element. As the initial approximation, we pick any $a\in M_{n_i\times n_j}(\ZZ)$ such that $a\mapsto\bar{a}$ under the reduction mod $2$ and take $r_0=1+\tilde{e}_{ij}(a)-\tilde{e}_{ji}(a^*)$. Since $\tilde{e}_{ji}(a^*)=e_{ji}(2^m a^*)$ with $m\bydef m_j-m_i>0$, we have $\pi(r_0)=u_{ij}(\bar{a})$. By equation \eqref{eq:weird_symplectic}, we get:
\[
\sigma(r_0)r_0=\bigl(1+\tilde{e}_{ij}(a)-\tilde{e}_{ji}(a^*)\bigr)\bigl(1-\tilde{e}_{ij}(a)+\tilde{e}_{ji}(a^*)\bigr)
=1+e_{ii}(b)+e_{jj}(c),
\] 
where $b=2^m aa^*\in\operatorname{Symd}\bigl(2^m M_{n_i}(\ZZ)\bigr)$ and $c=2^m a^*a\in\operatorname{Symd}\bigl(2^m M_{n_j}(\ZZ)\bigr)$ by Lemma \ref{lm:symd}. Applying Corollary \ref{cor:symd2} to $M_{n_i}(\ZZ/2^{m_i}\ZZ)$ and $M_{n_j}(\ZZ/2^{m_j}\ZZ)$, we can find $x\in 2^{m} M_{n_i}(\ZZ)$ and $y\in 2^{m} M_{n_j}(\ZZ)$ such that $(1+x)^*(1+b)(1+x)\equiv 1\pmod{2^{m_i}}$ and $(1+y)^*(1+c)(1+y)\equiv 1\pmod{2^{m_j}}$.
Let $r=r_0\bigl(1+e_{ii}(x)+e_{jj}(y)\bigr)$. Then $\pi(r)=\pi(r_0)=u_{ij}(\bar{a})$ and, by equation \eqref{eq:weird_symplectic}, we get: 
\[
\begin{split}
\sigma(r)r
&=\bigl(1+e_{ii}(x^*)+e_{jj}(y^*)\bigr)\bigl(1+e_{ii}(b)+e_{jj}(c)\bigr)\bigl(1+e_{ii}(x)+e_{jj}(y)\bigr) \\
&=e_{ii}\bigl((1+x)^*(1+b)(1+x)\bigr)+e_{jj}\bigl((1+y)^*(1+c)(1+y)\bigr)+\sum_{k\ne i,j}e_{kk}(1)=1.
\end{split}
\]
The proof of Proposition \ref{pr:flag} is complete.\qed

\section{Gradings on classical simple Lie algebras}\label{se:Lie}

Since the support of a grading by any group on a simple Lie algebra generates an abelian subgroup, \emph{we will assume in this section that all grading groups are abelian}. Under this assumption, gradings can be transferred from one finite-dimensional algebra (with any number of multilinear operations) to another (possibly with a different set of operations) via any homomorphism of their automorphism group schemes. First we will review this technique in general terms and also how it applies to transfer the classification of gradings (all gradings up to isomorphism or fine gradings up to equivalence) between a finite-dimensional associative algebra with involution $(\cR,\varphi)$ that is central simple as an algebra with involution and the corresponding central simple Lie algebra $\cL$ coming from $\Skew(\cR,\varphi)$, namely, the quotient of the commutator subalgebra by the center. This was used in \cite{BKR_Lie} to classify all gradings on real forms of classical Lie algebras (except $D_4$) up to isomorphism, and here we will use it to obtain a classification of fine gradings up to equivalence as an immediate consequence of our results in the previous sections. 

\subsection{Transfer of gradings by abelian groups}\label{sse:transfer}

We follow the functorial approach to affine group schemes as presented in 
\cite[Appendix A]{EKmon}, \cite[Chapter VI]{KMRT}, or \cite{Waterhouse}. 

An affine group scheme over a field $\FF$ is a representable group-valued functor 
$\Gs$ defined on 
the category $\Alg_\FF$ of unital, commutative, associative algebras over $\FF$. 
For example, given a finite-dimensional algebra $\cA$ over $\FF$, which has any number of 
multilinear operations, the \emph{automorphism group scheme} $\AAut(\cA)$ assigns to any unital,
commutative, associative $\FF$-algebra $R$ the group of automorphisms of the $R$-algebra 
$\cA_R\bydef \cA\otimes_\FF R$. To any homomorphism of $\FF$-algebras 
$f:R\rightarrow S$, the functor $\AAut(\cA)$ assigns the group homomorphism
given by the extension of scalars from $R$ to $S$ via $f$. Picking a basis of $\cA$ and thinking of $R$-linear endomorphisms of $\cA_R$ as matrices, one can express the condition of being an $R$-algebra automorphism by polynomial equations, which implies that the functor $\AAut(\cA)$ is representable.

By Yoneda's Lemma, the representing object $\cH$ of an affine group scheme $\Gs$ over $\FF$ becomes a Hopf algebra, so there is a natural isomorphism $\Gs\equiv \Hom_{\Alg_\FF}(\cH,.)$, and the multiplication in $\Gs(R)$, which is called \emph{the group of $R$-points of $\Gs$}, is determined by the comultiplication $\Delta$ of $\cH$. In this way, the categories of affine group schemes and of commutative Hopf algebras over $\FF$ are antiequivalent.

\smallskip

Given an abelian group $G$, any $G$-grading $\Gamma:\cA=\bigoplus_{g\in G}\cA_g$  
determines a homomorphism of affine group schemes
\[
\eta_\Gamma:G^D\longrightarrow \AAut(\cA)
\]
where $G^D$ is the \emph{diagonalizable group scheme} represented by the group
algebra $\FF G$, with its natural structure of Hopf algebra defined by  
$\Delta(g)=g\otimes g$ for all $g\in G$. The homomorphism $\eta_\Gamma$ is 
determined as follows. For any $R$ in $\Alg_\FF$, the corresponding group homomorphism  
$(\eta_\Gamma)_R:\Hom_{\Alg_\FF}(\FF G,R)\rightarrow \Aut_R(\cA\otimes_\FF R)$
is given by
\[
(\eta_\Gamma)_R(f)(x\otimes r)=x\otimes f(g)r
\]
for all $g\in G$, $x\in\cA_g$, $r\in R$, and $f\in\Hom_{\Alg_\FF}(\FF G,R)$.

Conversely, a homomorphism $\eta:G^D\rightarrow \AAut(\cA)$ defines 
a \emph{generic automorphism} $\rho$ of $\cA\otimes_\FF\FF G$, as an algebra over $\FF G$,
given by the image under $\eta$ of the identity map 
$\id_{\FF G}\in G^D(\FF G)=\Hom_{\Alg_\FF}(\FF G,\FF G)$.  
Then $\cA$ becomes $G$-graded:  $\cA=\bigoplus_{g\in G}\cA_g$ where
\[
\cA_g=\{a\in\cA\mid  \rho(a\otimes 1)=a\otimes g\}.
\]
(The set of `eigenvectors' in $\cA$ with `eigenvalue' $g$ for the generic automorphism.)

\smallskip

Now consider the $H$-grading ${}^\alpha\Gamma$ on $\cA$ induced by a homomorphism of abelian groups $\alpha: G\rightarrow H$. We have a Hopf algebra homomorphism $\FF G\rightarrow \FF H$ defined by $g\mapsto\alpha(g)$ for all $g\in G$, which we also denote by $\alpha$, and hence a homomorphism of affine group schemes $\alpha^D:H^D\rightarrow G^D$. 
The composition $\eta_\Gamma\circ\alpha^D:H^D\rightarrow \AAut(\cA)$ is precisely 
$\eta_{{}^\alpha\Gamma}$.

The \emph{diagonal group scheme} $\Diags(\Gamma)$ is the diagonalizable group scheme whose group of $R$-points, 
for any $R$ in $\Alg_\FF$, consists of those automorphisms of the scalar extension $\cA_R=\cA\otimes_\FF R$ that act by a scalar on each homogeneous component:
\[
\Diags(\Gamma)(R)\bydef\{f\in\Aut_R(\cA_R)\mid
f|_{\cA_g\otimes_\FF R}\in R^\times\,\id_{\cA_g\otimes_\FF R}\}.
\]
Up to isomorphism, $\Diags(\Gamma)$ is $U^D$ where $U$ is the universal group of 
$\Gamma$ (see Subsection \ref{sse:universal}).
The morphism $\eta_\Gamma$ factors through $\Diags(\Gamma)$ and hence induces a homomorphism of Hopf algebras in the
reverse direction: $\FF U\rightarrow \FF G$, or equivalently a group homomorphism 
$\alpha:U\rightarrow G$, already considered in Subsection \ref{sse:universal}. 

As a consequence, the grading $\Gamma$ is fine if and only if $\Diags(\Gamma)$
is a maximal diagonalizable subgroupscheme of $\AAut(\cA)$.

\smallskip

The automorphism group of $\cA$ is the group of $\FF$-points 
$\Aut(\cA)=\AAut(\cA)(\FF)$, and it acts by conjugation on $\AAut(\cA)$. Namely,
$\psi\in\Aut(\cA)$ defines a homomorphism $\Ad_\psi:\AAut(\cA)\to\AAut(\cA)$
as follows:
$
f\mapsto \psi_R\circ f\circ\psi_R^{-1}
$
for all $R$ in $\Alg_\FF$ and $f\in \Aut_R(\cA_R)$, where $\psi_R\bydef\psi\otimes_\FF\id_R$.

\begin{proposition}[{\cite[Propositions 1.36 and 1.37]{EKmon}}]\label{pr:Isom_Equiv}
Let $\cA$ be a finite-dimensional algebra over a field $\FF$.
\begin{enumerate} 
\item Let $G$ be an abelian group and let $\Gamma$ and $\Gamma'$ be two $G$-gradings on $\cA$, with associated homomorphisms $\eta_\Gamma,\eta_{\Gamma'}:G^D\to \AAut(\cA)$. Then $\Gamma$ and $\Gamma'$ are isomorphic if and only if 
there is an automorphism $\psi\in\Aut(\cA)$ such that $\Ad_\psi\circ\eta_\Gamma=\eta_{\Gamma'}$.

\item Let $\Gamma$ and $\Gamma'$ be two fine abelian group gradings on $\cA$. 
Then $\Gamma$ and $\Gamma'$ are equivalent if and only if there is an automorphism
$\psi\in\Aut(\cA)$ such that $\Ad_\psi\bigl(\Diags(\Gamma)\bigr)=\Diags(\Gamma')$.
\end{enumerate}
\end{proposition}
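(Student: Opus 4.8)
The plan is to deduce both parts from the dictionary recalled above between homomorphisms of affine group schemes and gradings, whose linchpin is that the homomorphism $\eta_\Gamma\colon G^D\to\AAut(\cA)$ is faithfully recorded by its value at the generic point $\id_{\FF G}\in G^D(\FF G)=\Hom_{\Alg_\FF}(\FF G,\FF G)$, namely by the generic automorphism $\rho_\Gamma\bydef\eta_\Gamma(\id_{\FF G})$ of $\cA_{\FF G}\bydef\cA\otimes_\FF\FF G$, which acts by $a\otimes 1\mapsto a\otimes g$ for $a\in\cA_g$. First I would record the elementary Yoneda-type observation that every $R$-point $f$ of $G^D$, viewed as an $\FF$-algebra map $\FF G\to R$, satisfies $f=G^D(f)(\id_{\FF G})$, so that naturality of $\eta_\Gamma$ forces $\eta_\Gamma$ on $R$-points to be obtained from $\rho_\Gamma$ by base change along $f$; consequently two homomorphisms $G^D\to\AAut(\cA)$ are equal as soon as they agree at $\id_{\FF G}$, and likewise $\Ad_\psi\circ\eta_\Gamma=\eta_{\Gamma'}$ holds if and only if $\psi_{\FF G}\,\rho_\Gamma\,\psi_{\FF G}^{-1}=\rho_{\Gamma'}$.

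For part (1) I would then simply unwind this last equation. Evaluating both sides on a $\Gamma'$-homogeneous element $a'\otimes 1$ with $a'\in\cA'_g$: the right-hand side gives $a'\otimes g$, whereas the left-hand side gives $a'\otimes h$, where $h$ is the $\Gamma$-degree of $\psi^{-1}(a')$. Thus the equation is equivalent to $\psi^{-1}(\cA'_g)\subseteq\cA_g$ for all $g\in G$, and a dimension count promotes this to $\psi(\cA_g)=\cA'_g$ for all $g$, i.e. to $\psi$ being an isomorphism $(\cA,\Gamma)\to(\cA,\Gamma')$ of $G$-graded algebras; the reverse implication is obtained by reading the computation backwards. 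This settles (1).

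For part (2) I would pass to the universal groups. Let $U=U(\Gamma)$ and let $\wt{\Gamma}$ be the realization of $\Gamma$ as a $U$-grading; since $\Diags$ depends only on the set of homogeneous components, $\Diags(\Gamma)=\Diags(\wt{\Gamma})\cong U^D$, and a $U^D$-comodule structure on a vector space is precisely a $U$-grading of it, given by its weight-space decomposition. Hence the subgroupscheme $\Diags(\Gamma)\subseteq\AAut(\cA)$ puts a canonical $U$-grading on $\cA$, namely $\wt{\Gamma}$, so that $\Diags(\Gamma)$ recovers exactly the homogeneous components of $\Gamma$; similarly $\Diags(\Gamma')$ recovers those of $\Gamma'$. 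If $\psi$ is an equivalence $\Gamma\to\Gamma'$, it carries $\{\cA_g\}$ onto $\{\cA'_h\}$, so conjugation by $\psi$ turns the condition ``acts by a scalar on each $\cA_g\otimes_\FF R$'' into ``acts by a scalar on each $\psi(\cA_g)\otimes_\FF R$'', which is the defining condition of $\Diags(\Gamma')(R)$; hence $\Ad_\psi\bigl(\Diags(\Gamma)\bigr)=\Diags(\Gamma')$. Conversely, if $\Ad_\psi\bigl(\Diags(\Gamma)\bigr)=\Diags(\Gamma')$, then $\psi$ is a morphism from $\cA$ as a $\Diags(\Gamma)$-module to $\cA$ as a $\Diags(\Gamma')$-module along the isomorphism $\Ad_\psi\colon\Diags(\Gamma)\to\Diags(\Gamma')$, and therefore carries weight spaces to weight spaces; thus $\psi$ maps the homogeneous components of $\Gamma$ onto those of $\Gamma'$ and is an equivalence.

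The only point that really requires care is the first one: one must check that morphisms of affine group schemes out of $G^D$, and diagonalizable subgroupschemes of $\AAut(\cA)$ together with their conjugates by elements of $\Aut(\cA)$, are faithfully encoded by the generic automorphism and its conjugates, and that reading off ``the'' grading of an abstract diagonalizable subgroupscheme --- as the weight-space decomposition of the induced comodule --- returns exactly the original set of homogeneous subspaces. Once this dictionary is pinned down, the remaining steps are the routine verifications sketched above, and I do not anticipate any obstacle there.
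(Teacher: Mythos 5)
The paper does not prove this proposition itself but cites \cite[Propositions 1.36 and 1.37]{EKmon}; your argument is a correct reconstruction of the standard proof and, as far as I can tell, is essentially the one given there: encode $\eta_\Gamma$ by the generic automorphism $\rho_\Gamma$ via Yoneda, unwind $\Ad_\psi\circ\eta_\Gamma=\eta_{\Gamma'}$ at the generic point for part (1), and for part (2) observe that $\Diags(\Gamma)\simeq U^D$ recovers exactly the set of homogeneous components of a fine grading as its weight decomposition, so conjugating $\Diags(\Gamma)$ onto $\Diags(\Gamma')$ is the same as sending components to components.

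One small imprecision in part (1): you write that evaluating the left side on $a'\otimes 1$ gives $a'\otimes h$ ``where $h$ is the $\Gamma$-degree of $\psi^{-1}(a')$'', which tacitly assumes $\psi^{-1}(a')$ is already $\Gamma$-homogeneous. The clean version is to expand $\psi^{-1}(a')=\sum_h b_h$ with $b_h\in\cA_h$, so the left-hand side is $\sum_h \psi(b_h)\otimes h$; comparing with $a'\otimes g$ and using that $\{1\otimes h\}_{h\in G}$ is an $\FF$-basis of $\FF G$ forces $\psi(b_h)=0$ for $h\neq g$, hence $b_h=0$, i.e.\ $\psi^{-1}(\cA'_g)\subseteq\cA_g$ for all $g$. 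The direct-sum decompositions then upgrade this to equality without even needing finite dimensionality. Otherwise the proposal is sound.
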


\smallskip

Now, if $\cB$ is another finite-dimensional algebra, 
any homomorphism of affine group schemes
$\theta:\AAut(\cA)\to \AAut(\cB)$ sends gradings on $\cA$ to gradings on $\cB$.
More precisely, if $G$ is an abelian group and $\Gamma$ is a $G$-grading on
$\cA$ with associated homomorphism $\eta_\Gamma:G^D\to \AAut(\cA)$, then
the composition $\theta\circ\eta_\Gamma:G^D\to\AAut(\cB)$ defines a $G$-grading on $\cB$,
denoted by $\theta(\Gamma)$.

For any group homomorphism $\alpha:G\to H$,
it follows that $\theta({}^\alpha\Gamma)={}^\alpha\bigl(\theta(\Gamma)\bigr)$,
since $\theta\circ(\eta_\Gamma\circ\alpha^D)=(\theta\circ\eta_\Gamma)\circ\alpha^D$.
Moreover, \cite[Theorem 1.38]{EKmon} shows that if two $G$-gradings $\Gamma$ and
$\Gamma'$ on $\cA$ are isomorphic, or weakly isomorphic, so are the gradings
$\theta(\Gamma)$ and $\theta(\Gamma')$ on $\cB$. In particular, we obtain the
first part of the next result. The second part is proved in 
\cite[Theorem 1.39]{EKmon}.

\begin{theorem}\label{th:transfer}
Let $\cA$ and $\cB$ be two finite-dimensional algebras over a field $\FF$, and let
$\theta:\AAut(\cA)\to\AAut(\cB)$ be an isomorphism of affine group schemes.
\begin{enumerate} 
\item Let $G$ be an abelian group and let $\Gamma$ and
$\Gamma'$ be two $G$-gradings on $\cA$. Then $\Gamma$ and $\Gamma'$ 
are isomorphic if and only if so are $\theta(\Gamma)$ and $\theta(\Gamma')$. 

\item Let $\Gamma$ and $\Gamma'$ be two fine  abelian group gradings on $\cA$. 
Then $\Gamma$ and $\Gamma'$ are equivalent if and only so are $\theta(\Gamma)$ and $\theta(\Gamma')$.
\end{enumerate}
\end{theorem}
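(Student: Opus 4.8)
The statement to prove is Theorem~\ref{th:transfer}: for two finite-dimensional algebras $\cA$ and $\cB$ over $\FF$ connected by an isomorphism of affine group schemes $\theta:\AAut(\cA)\to\AAut(\cB)$, two $G$-gradings on $\cA$ (for abelian $G$) are isomorphic iff their images under $\theta$ are, and two fine abelian group gradings are equivalent iff their images are.

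The plan is to reduce everything to Proposition~\ref{pr:Isom_Equiv}, which characterizes isomorphism and equivalence of gradings in terms of the conjugation action of $\Aut(\cA)=\AAut(\cA)(\FF)$ on $\AAut(\cA)$. The key observation is that an isomorphism of affine group schemes $\theta$ is equivariant for these conjugation actions once we use $\theta$ itself to match up the groups of $\FF$-points: namely, for any $\psi\in\Aut(\cA)$ we have $\theta\circ\Ad_\psi=\Ad_{\theta(\psi)}\circ\theta$ as homomorphisms $\AAut(\cA)\to\AAut(\cB)$, where $\theta(\psi)\bydef\theta_\FF(\psi)\in\Aut(\cB)$. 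This is a direct verification: evaluating both sides at an $R$-point $f\in\Aut_R(\cA_R)$ and using that $\theta$ is a natural transformation (so $\theta_R$ commutes with extension of scalars and with composition of $R$-algebra maps), one gets $\theta_R(\psi_R f\psi_R^{-1})=\theta_\FF(\psi)_R\,\theta_R(f)\,\theta_\FF(\psi)_R^{-1}$. Since $\theta$ is an isomorphism, $\theta_\FF:\Aut(\cA)\to\Aut(\cB)$ is a bijection, so conjugation by an arbitrary element of $\Aut(\cB)$ corresponds, via $\theta$, to conjugation by a uniquely determined element of $\Aut(\cA)$.

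From here the two parts follow quickly. For part (1): by Proposition~\ref{pr:Isom_Equiv}(1), $\Gamma\cong\Gamma'$ iff there is $\psi\in\Aut(\cA)$ with $\Ad_\psi\circ\eta_\Gamma=\eta_{\Gamma'}$; applying $\theta$ and using the equivariance identity gives $\Ad_{\theta(\psi)}\circ(\theta\circ\eta_\Gamma)=\theta\circ\eta_{\Gamma'}$, i.e. $\Ad_{\theta(\psi)}\circ\eta_{\theta(\Gamma)}=\eta_{\theta(\Gamma')}$, so $\theta(\Gamma)\cong\theta(\Gamma')$. The converse is the same argument applied to $\theta^{-1}$, which is again an isomorphism of affine group schemes (and $\theta^{-1}(\theta(\Gamma))=\Gamma$ because $\theta^{-1}\circ\theta\circ\eta_\Gamma=\eta_\Gamma$). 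For part (2): by Proposition~\ref{pr:Isom_Equiv}(2), two fine gradings $\Gamma,\Gamma'$ are equivalent iff there is $\psi\in\Aut(\cA)$ with $\Ad_\psi\bigl(\Diags(\Gamma)\bigr)=\Diags(\Gamma')$ as subgroupschemes of $\AAut(\cA)$. One needs the compatibility $\theta\bigl(\Diags(\Gamma)\bigr)=\Diags(\theta(\Gamma))$: since $\Diags(\Gamma)$ is, up to isomorphism, $U^D$ for $U$ the universal group of $\Gamma$, and is the diagonalizable subgroupscheme through which $\eta_\Gamma$ factors, the image $\theta\bigl(\Diags(\Gamma)\bigr)$ is a diagonalizable subgroupscheme of $\AAut(\cB)$ through which $\eta_{\theta(\Gamma)}$ factors; that $\theta(\Gamma)$ is again fine (so that Proposition~\ref{pr:Isom_Equiv}(2) applies to it) follows from the characterization ``$\Gamma$ fine $\iff$ $\Diags(\Gamma)$ maximal diagonalizable'' together with the fact that $\theta$, being an isomorphism, carries maximal diagonalizable subgroupschemes to maximal diagonalizable subgroupschemes. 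Then the equivariance identity turns $\Ad_\psi\bigl(\Diags(\Gamma)\bigr)=\Diags(\Gamma')$ into $\Ad_{\theta(\psi)}\bigl(\Diags(\theta(\Gamma))\bigr)=\Diags(\theta(\Gamma'))$, giving equivalence of $\theta(\Gamma)$ and $\theta(\Gamma')$; the converse again uses $\theta^{-1}$.

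The main obstacle is bookkeeping rather than a conceptual difficulty: one must carefully justify the equivariance identity $\theta\circ\Ad_\psi=\Ad_{\theta_\FF(\psi)}\circ\theta$ at the level of functors (checking it on $R$-points for every $R$, using naturality of $\theta$), and then be precise about what ``$\theta\bigl(\Diags(\Gamma)\bigr)$'' means and why it again coincides with $\Diags$ of the transported grading and inherits finiteness. Since all of this is already packaged in Propositions~\ref{pr:Isom_Equiv} and the cited results of \cite{EKmon} (Theorems~1.38 and~1.39 there), the proof is short; indeed the excerpt indicates that part~(1) follows from \cite[Theorem~1.38]{EKmon} and part~(2) from \cite[Theorem~1.39]{EKmon}, so the write-up mainly needs to record the equivariance observation and invoke those results.
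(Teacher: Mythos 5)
Your proof is correct and takes essentially the same approach as the paper, which merely cites \cite[Theorems~1.38 and~1.39]{EKmon}; your write-up just fills in the argument behind that citation (the equivariance identity $\theta\circ\Ad_\psi=\Ad_{\theta_\FF(\psi)}\circ\theta$, the identity $\theta(\Diags(\Gamma))=\Diags(\theta(\Gamma))$ obtained by viewing $\Diags(\Gamma)$ as the schematic image of $\eta_\Gamma$, preservation of fineness via maximality, and the use of $\theta^{-1}$ for the converses), all of which is consistent with the paper's reliance on Proposition~\ref{pr:Isom_Equiv}.
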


Therefore, the problems of classification of $G$-gradings up to isomorphism, or the 
classification of fine gradings up to equivalence, on $\cA$ and on $\cB$ are 
equivalent. This `transfer' property has been crucial in the works cited in the Introduction to
classify gradings on the classical simple Lie algebras of series $A$, $B$, $C$ and $D$ (except $D_4$) over an 
algebraically closed field of characteristic different from $2$ or over a real closed field, 
by transferring the problem to associative algebras with involution, as we will outline in the next subsection. 
It has been used, too, for types $G_2$ or $F_4$ (see \cite[Chapters 4 and 5]{EKmon}), 
as the automorphism group scheme of a simple Lie algebra of type $G_2$ (resp., $F_4$) 
over a field of characteristic different from $2$ and $3$ (resp., different from $2$)
is isomorphic to the automorphism group scheme of a Cayley (or octonion) algebra (resp., Albert algebra).

There is an analogous transfer property for \emph{twisted forms} of finite-dimensional algebras (with any number of multilinear operations), since they are also controlled by the automorphism group schemes, via Galois cohomology (see e.g. \cite{Waterhouse}). Recall that, given an algebra $\cA$, an algebra $\cB$ of the same kind (i.e., with the same number and arity of multilinear operations) is called a \emph{twisted form} of $\cA$ if $\cA$ and $\cB$ become isomorphic after extending scalars to an algebraic closure $\overline{\FF}$ of the ground field: $\cA_{\overline{\FF}}\simeq \cB_{\overline{\FF}}$, where $\cA_{\overline{\FF}}\bydef \cA\otimes_\FF \overline{\FF}$.

\subsection{Automorphism group schemes of classical simple Lie algebras}

Assume in this subsection that the characteristic of the ground field 
$\FF$ is not $2$.

Let $\cR$ be a finite-dimensional central simple associative algebra
over $\FF$, endowed with an involution $\varphi$ of the first kind. 
By definition, $(\cR,\varphi)$ is a twisted form of $\bigl(M_n(\FF),t\bigr)$ if $\varphi$ is orthogonal
and of $\bigl(M_n(\FF),t_s\bigr)$ if $\varphi$ is symplectic, where $t$ denotes the transposition $X\mapsto X^T$ and 
$t_s$ denotes the involution $X\to J^{-1}X^T J$, with $J=\left[\begin{smallmatrix} 0&I_{n/2}\\ -I_{n/2}&0\end{smallmatrix}\right]$. Recall that $n$ is called the degree of $\cR$ (the square root of its dimension); it must be even if $\varphi$ is symplectic.

Consider the Lie algebra $\cL\bydef\Skew(\cR,\varphi)$ of the skew-symmetric elements of $\cR$ relative to $\varphi$ and let $\theta:\AAut(\cR,\varphi)\rightarrow \AAut(\cL)$ be the homomorphism of affine group schemes obtained by restriction.

\begin{theorem}\label{th:BCD}
Let $\cR$ be a finite-dimensional central simple associative algebra
over a field $\FF$ of characteristic not $2$, endowed with an involution $\varphi$ of the first kind.
If $\varphi$ is orthogonal, assume the degree of $\cR$ is $\geq 5$ and $\neq 8$, and if $\varphi$ is symplectic, assume it is $\geq 4$. Then the homomorphism $\theta:\AAut(\cR,\varphi)\rightarrow \AAut(\cL)$ is an isomorphism.
\end{theorem}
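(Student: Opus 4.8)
The plan is to realize $\theta$ as a morphism of affine group schemes that is bijective on $R$-points after every base change, which is equivalent to checking it over an algebraic closure $\Falg$; over $\Falg$ the problem becomes the well-known fact that, for the classical Lie algebras of type $B$, $C$ and $D$ outside the excluded small-rank and triality cases, the automorphism group scheme of the Lie algebra coincides with that of the underlying split algebra with involution. Concretely, after scalar extension to $\Falg$ the pair $(\cR,\varphi)$ becomes $\bigl(M_n(\Falg),t\bigr)$ or $\bigl(M_n(\Falg),t_s\bigr)$, and $\cL_{\Falg}=\Skew\bigl(M_n(\Falg),\varphi\bigr)$ is the split simple Lie algebra $\frso_n$ (type $B_{(n-1)/2}$ for $n$ odd, $D_{n/2}$ for $n$ even) or $\frsp_n$ (type $C_{n/2}$). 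So the first step is the standard reduction: a homomorphism of affine group schemes over $\FF$ is an isomorphism if and only if the induced homomorphism of Hopf algebras is an isomorphism, and this can be detected after a faithfully flat base change, in particular after extending to $\Falg$; hence it suffices to prove the statement for the split pairs.

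Second, I would verify injectivity of $\theta_{\Falg}$ on $R$-points for all $R$ (equivalently, that $\ker\theta=1$ as a group scheme). An element of $\AAut(\cR,\varphi)(R)$ that acts trivially on $\cL_R=\Skew(\cR,\varphi)_R$ must act trivially on the associative subalgebra these skew elements generate. Since $\mathrm{char}\,\FF\ne 2$ and the degree is in the stated range, the skew elements generate $\cR$ as an associative algebra (this is classical for $n\ge 3$, using commutators of skew matrix units $E_{ij}-\varepsilon E_{ji}$, and is where the hypotheses $n\ge 5$ resp. $n\ge 4$ give comfortable room), so the automorphism is the identity; thus $\ker\theta$ is trivial. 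The third step is surjectivity (equivalently, smoothness plus equality of Lie algebras, or a direct dimension/weight argument): every automorphism of $\cL_R$ comes from one of $(\cR_R,\varphi_R)$. Over $\Falg$ this is the statement that $\AAut(\frso_n)$ for $n\ge 5$, $n\ne 8$, equals $\mathbf{PGO}_n$ (resp.\ $\AAut(\frsp_n)=\mathbf{PGSp}_n$ for $n\ge 4$), which is the content of, e.g., \cite[\S 23--26]{KMRT}; the excluded values are exactly those with extra automorphisms: $D_2\cong A_1\times A_1$, $D_3\cong A_3$, $D_4$ (triality), $B_2\cong C_2$, $C_1\cong A_1$, and the type $A$ degeneracies below the bound. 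The cleanest way to organize this is: both $\AAut(\cR,\varphi)$ and $\AAut(\cL)$ are smooth affine group schemes over $\FF$ (smoothness of $\AAut(\cL)$ for the classical types in characteristic $\ne 2$ is standard, or one invokes \cite[\S 25]{KMRT}); $\theta$ induces an injection on $\Falg$-points by the previous step applied to $\Falg$-algebras; the differential $d\theta:\Der(\cR,\varphi)\to\Der(\cL)$ is an isomorphism of Lie algebras because $\Der(\cR,\varphi)\cong\cL_{\mathrm{ad}}$ (inner derivations, the center being killed) and likewise $\Der(\cL)=\cL_{\mathrm{ad}}$ for these types; and then a smooth monomorphism of smooth affine group schemes of the same dimension with isomorphic Lie algebras is an isomorphism.

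The fourth step is simply to descend: since $\theta_{\Falg}$ is an isomorphism and $\theta$ is defined over $\FF$, $\theta$ itself is an isomorphism of affine group schemes over $\FF$. I expect the main obstacle to be the surjectivity/smoothness input, i.e.\ precisely pinning down that no ``exotic'' automorphisms of $\cL$ appear outside the excluded ranks; this is exactly why the hypotheses exclude degree $8$ (the $D_4$ triality phenomenon) and the low degrees (type coincidences $B_2\cong C_2$, $D_3\cong A_3$, $D_2\cong A_1^2$, $C_1\cong A_1$, and degrees $\le 2$ in type $A$), and it is the one place where one genuinely has to quote the structure theory of automorphism groups of classical Lie algebras rather than argue by hand. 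The injectivity and the identification of derivations are routine given $\mathrm{char}\,\FF\ne2$ and the degree bounds. Note that this theorem covers series $B$, $C$ and $D$; the case of series $A$ (where $\cL=\Skew(\cR,\varphi)$ modulo center with $\varphi$ of the \emph{second} kind, or $\cL=\frsl$ obtained from $\cR\times\cR^{\op}$) is handled separately since there $\AAut(\cL)$ is strictly larger than $\AAut(\cR,\varphi)$ by an outer automorphism of order $2$, which is why the present statement is stated only for first-kind involutions.
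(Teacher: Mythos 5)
Your proposal is essentially the same route the paper takes: the paper cites \cite[Theorem 3.7]{EKmon} for the facts over $\Falg$ (bijectivity of $\theta_{\Falg}$ on points and of $\dc\theta_{\Falg}$ on derivations) together with smoothness of $\AAut(\cR,\varphi)$, and then invokes the group-scheme criterion \cite[Theorem A.50]{EKmon} to conclude over $\FF$; you unwind the same ingredients by hand (injectivity via skew elements generating $\cR$, surjectivity via $\AAut(\frso_n)\simeq\PGOs_n$ and $\AAut(\frsp_n)\simeq\PGSPs_n$ from \cite{KMRT}, plus the derivation comparison). One caution: your final ``cleanest organization'' sentence --- ``a smooth monomorphism of smooth affine group schemes of the same dimension with isomorphic Lie algebras is an isomorphism'' --- is false as stated for disconnected targets (the inclusion of the identity component is a counterexample), and disconnectedness really occurs here in the $D$ case since both $\PGOs_n$ and $\AAut(\frso_n)$ have two components; you need surjectivity on $\Falg$-points as an additional hypothesis, which is exactly what \cite[Theorem A.50]{EKmon} requires and what your appeal to the structure theory does supply, so it is an imprecision in the summary rather than a gap in the argument.
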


\begin{proof}
The proof of \cite[Theorem 3.7]{EKmon} shows that $\theta_{\Falg}:\Aut_\Falg(\cR_{\Falg},\varphi_\Falg)
\rightarrow \Aut(\cL_\Falg)$ is a group  isomorphism and that 
$\dc\theta_{\Falg}:\Der(\cR_{\Falg},\varphi_\Falg)\rightarrow\Der(\cL_\Falg)$ is an isomorpism of Lie algebras. 
Besides, $\AAut(\cR,\varphi)$ is smooth, because so is $\AAut(\cR_{\Falg},\varphi_\Falg)$. From \cite[Theorem A.50]{EKmon} we conclude that $\theta$ is an isomorphism.
\end{proof}

It follows from Theorem \ref{th:BCD} that the twisted forms of 
the orthogonal Lie algebra $\frso_n(\FF)$ for $n\geq 5$, $n\neq 8$, i.e., the central simple Lie algebras of type $B_r$ ($r\geq 2$) or $D_r$ ($r\geq 3$, $r\neq 4$, with $D_3=A_3$) are, up to isomorphism, the
Lie algebras $\Skew(\cR,\varphi)$ for central simple associative algebra $\cR$ of degree $n\geq 5$, $n\neq 8$, endowed with an orthogonal involution $\varphi$. In the same vein, the twisted forms of $\frsp_n(\FF)$, i.e., the central simple Lie algebras of type $C_r$ ($r\geq 2$, with $C_2=B_2$) are, up to isomorphism, the Lie algebras $\Skew(\cR,\varphi)$ for central simple associative algebras $\cR$ of (even) degree $n\geq 4$, endowed with a symplectic involution $\varphi$.

\smallskip

For series $A$ we need involutions of the second kind. Let $(\cR,\varphi)$ be a finite-dimensional central simple algebra with such an involution and let $\KK=Z(\cR)$. Then $\KK$ is an \'etale algebra of dimension $2$ over $\FF$, and hence it is either isomorphic to $\FF\times\FF$ or a quadratic separable field extension of $\FF$. By restriction we get a quotient map $\pi:\AAut(\cR,\varphi)\rightarrow \AAut(\KK)$. Since $\chr{\FF}\ne 2$, this latter group scheme is isomorphic to $\bmu_2\simeq C_2$, which is smooth. Denote the kernel of
$\pi$ by $\AAut_\KK(\cR)$. Here $(\cR,\varphi)$ is a twisted form of
$\bigl(M_n(\FF)\times M_n(\FF)^{\op},\ex\bigr)$. For this latter algebra with involution,
$\KK=\FF\times\FF$ and $\ker\pi$ is naturally isomorphic to 
$\AAut\bigl(M_n(\FF)\bigr)\simeq \PGLs_n(\FF)$, which is smooth, too.
We conclude that $\AAut(\cR,\varphi)$ is smooth.

\begin{remark}\label{re:autos_anti}
In \cite[Chapter 3]{EKmon}, in order to deal with simple Lie algebras of series $A$ over an algebraically closed field, the group scheme $\AAntaut\bigl(M_n(\FF)\bigr)$ is constructed from automorphisms and
antiautomorphisms, and used instead of the automorphism group scheme $\AAut\bigl(M_n(\FF)\times M_n(\FF)^{\op},\ex\bigr)$, which we consider here. There is a homomorphism $\Theta:\overline{\AAut}\bigl(M_n(\FF)\bigr)\rightarrow 
\AAut\bigl(M_n(\FF)\times M_n(\FF)^{\op},\ex\bigr)$ that takes any automorphism $\psi$ of $M_n(R)$ (for any object $R$ in $\AlgF$) to the automorphism $(x,y)\to (\psi(x),\psi(y))$ of $\bigl(M_n(R)\times M_n(R)^\op,\ex\bigr)$, and any antiautomorphism $\psi$ of $M_n(R)$ to the automorphism $(x,y)\to (\psi(y),\psi(x))$. It is easy to check that $\Theta_\FF$ is a group isomorphism and $\dc\Theta$ is a Lie algebra isomorphism (the Lie algebras of both $\AAntaut\bigl(M_n(\FF)\bigr)$ and $\AAut\bigl(M_n(\FF)\times M_n(\FF)^{\op},\ex\bigr)$ are isomorphic to $\Der\bigl(M_n(\FF)\bigr)\simeq\frpgl_n(\FF)$). From \cite[Theorem A.50]{EKmon} it follows that $\Theta$ is an isomorphism.
\end{remark}

With $(\cR,\varphi)$ as above, consider the Lie algebras
$\cK=\Skew(\cR,\varphi)$ and $\cL=[\cK,\cK]/([\cK,\cK]\cap Z(\cR))$.
In the split case $\bigl(M_n(\FF)\times M_n(\FF)^{\op},\ex\bigr)$,
$\cK=\{(x,-x)\mid x\in M_n(\FF)\}$, which is isomorphic to 
$\frgl_n(\FF)$ by projecting onto the first component, so $\cL$ 
turns out to be isomorphic to the projective special linear Lie algebra 
$\frpsl_n(\FF)$. Let $\theta:\AAut(\cR,\varphi)\rightarrow \AAut(\cL)$ be the homomorphism obtained by restricting and passing modulo the center.

We call an algebra of the form $\cS\times\cS^{\op}$, with $\cS$ a
finite-dimensional central simple associative algebra, a \emph{central simple associative algebra} over $\FF\times\FF$. Its \emph{degree} is the degree of the algebra $\cS$ (the square root of its dimension).

\begin{theorem}\label{th:A}
Let $\KK$ be an \'etale quadratic algebra of dimension $2$ over a field 
$\FF$ of characteristic not $2$. Let $\cR$ be a finite-dimensional central simple algebra over $\KK$ of degree $n\geq 3$, endowed with an involution $\varphi$ of the second kind. Assume that the characteristic of $\FF$ is not $3$ if $n=3$. Then the homomorphism $\theta:\AAut(\cR,\varphi)\rightarrow \AAut(\cL)$ is an isomorphism.
\end{theorem}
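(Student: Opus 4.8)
The plan is to mimic the proof of Theorem~\ref{th:BCD}, reducing everything to the algebraically closed case and an infinitesimal statement, and then invoking \cite[Theorem~A.50]{EKmon} to conclude that $\theta$ is an isomorphism of affine group schemes. Recall that we have already observed that $\AAut(\cR,\varphi)$ is smooth (the kernel of the quotient map $\pi:\AAut(\cR,\varphi)\to\AAut(\KK)\simeq\bmu_2$ is isomorphic, over a splitting field of $\KK$, to $\PGLs_n$, which is smooth, and $\bmu_2$ is smooth in characteristic $\neq 2$, so the extension is smooth). Hence it suffices to check that (a) $\theta_{\Falg}:\Aut_\Falg(\cR_\Falg,\varphi_\Falg)\to\Aut(\cL_\Falg)$ is a group isomorphism, and (b) the differential $\dc\theta_\Falg:\Der(\cR_\Falg,\varphi_\Falg)\to\Der(\cL_\Falg)$ is an isomorphism of Lie algebras.

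For step (a), after extending scalars to $\Falg$ we are in the split situation: $\KK_\Falg\simeq\Falg\times\Falg$ and $(\cR_\Falg,\varphi_\Falg)\simeq\bigl(M_n(\Falg)\times M_n(\Falg)^\op,\ex\bigr)$, with $\cL_\Falg\simeq\frpsl_n(\Falg)$ a simple Lie algebra of type $A_{n-1}$ (simplicity and $\frpsl_n=\frsl_n$ require $n\geq 3$ and, when $n$ equals the characteristic, would need care, but the statement as given is over $\FF$ of characteristic $\neq 2$, and for $A_{n-1}$ the problematic primes are handled by the degree restrictions and the hypothesis that the characteristic is $\neq 3$ when $n=3$). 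This is exactly the content of \cite[Theorem~3.7]{EKmon} combined with Remark~\ref{re:autos_anti}: via the isomorphism $\Theta$, automorphisms of $(M_n\times M_n^\op,\ex)$ correspond to automorphisms and antiautomorphisms of $M_n$, and the restriction/quotient map to $\Aut(\frpsl_n)$ is an isomorphism because every automorphism of $\frpsl_n(\Falg)$ for $n\geq 3$ is either inner (induced by conjugation by an element of $\GL_n$) or the composition of an inner one with $-X^T$, which corresponds precisely to an antiautomorphism of $M_n$. So step (a) is essentially a citation, once the hypotheses are seen to place us in the range where $\frpsl_n$ behaves like a Chevalley algebra of type $A_{n-1}$ with full automorphism group $\PGLs_n\rtimes C_2$.

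For step (b), again after base change to $\Falg$, the Lie algebras of both group schemes are the derivation algebras, and $\dc\theta_\Falg$ is the natural restriction map. One has $\Der(M_n(\Falg)\times M_n(\Falg)^\op,\ex)\simeq\Der(M_n(\Falg))\simeq\frpgl_n(\Falg)$ (as in Remark~\ref{re:autos_anti}), while $\Der(\cL_\Falg)=\Der(\frpsl_n(\Falg))\simeq\frpgl_n(\Falg)$ since $\frpsl_n$ is central simple as a Lie algebra in this range and all its derivations are inner (the first Lie algebra cohomology vanishes). The map $\dc\theta_\Falg$ sends the class of $\ad_x$ on $\cR$ to $\ad_x$ on $\cL$, which is exactly the identification of $\frpgl_n$ with itself; injectivity and surjectivity then follow from the fact that the adjoint representation of $\frpgl_n$ on $\frpsl_n$ is faithful and exhausts $\Der(\frpsl_n)$. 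This is precisely what is extracted from the proof of \cite[Theorem~3.7]{EKmon}. Having (a) and (b), and smoothness of $\AAut(\cR,\varphi)$, \cite[Theorem~A.50]{EKmon} yields that $\theta$ is an isomorphism.

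The main obstacle is not any single computation but making sure the hypotheses $n\geq 3$ and $\chr\FF\neq 3$ when $n=3$ genuinely suffice to keep $\frpsl_n(\Falg)$ simple with automorphism group scheme equal to $\PGLs_n\rtimes C_2$ and derivation algebra $\frpgl_n$: the small case $n=3$ in characteristic $3$ is the one excluded (there $\frpsl_3$ degenerates), and one must verify that no other characteristic causes $\frsl_n$ and $\frpsl_n$ to diverge in a way that breaks the isomorphism --- but since $\chr\FF\nmid n$ is only an issue for $n=3$ in characteristic $3$ among the allowed characteristics when $n$ is not too large, and \cite[Theorem~3.7]{EKmon} already isolates exactly this exception, the argument goes through verbatim by the same reasoning as Theorem~\ref{th:BCD}.
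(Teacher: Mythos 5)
Your proposal follows the paper's proof almost verbatim: reduce to the algebraically closed case, verify that $\theta_{\Falg}$ is a group isomorphism on $\Falg$-points and that $\dc\theta_{\Falg}$ is a Lie algebra isomorphism on derivations, note smoothness of $\AAut(\cR,\varphi)$ (established just before the statement), and invoke \cite[Theorem A.50]{EKmon}. The one slip is the citation: for series $A$ the relevant result in the monograph is \cite[Theorem 3.9]{EKmon} (which handles $\AAntaut(M_n(\FF))$ and is exactly the statement that Remark~\ref{re:autos_anti} translates into the $\AAut(M_n\times M_n^\op,\ex)$ setting), not \cite[Theorem 3.7]{EKmon}, which is the orthogonal/symplectic case underlying Theorem~\ref{th:BCD}. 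Aside from that misattribution, the substance and structure of your argument coincide with the paper's.
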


\begin{proof}
The proof of \cite[Theorem 3.9]{EKmon}, together with Remark 
\ref{re:autos_anti}, shows that 
$\theta_{\Falg}:\Aut(\cR_{\Falg},\varphi_\Falg)\rightarrow \Aut(\cL_{\Falg})$ is a group isomorphism and that 
$\dc\theta_\Falg:\Der(\cR_\Falg,\varphi_\Falg)\rightarrow \Der(\cL_\Falg)$ is an isomorphism
of Lie algebras. Since $\AAut(\cR,\varphi)$ is smooth, we conclude from
\cite[Theorem A.50]{EKmon} that $\theta$ is an isomorphism.
\end{proof}

We note that the restriction $\chr{\FF}\ne 3$ if $n=3$ cannot be omitted (see e.g. \cite[\S 4.3]{EKmon}). 
It follows from Theorem \ref{th:A} that the twisted forms of $\frpsl_n(\FF)$ with $n\geq 3$ (except in the case $\chr{\FF}= 3=n$, see \cite{CE16}) are, up to isomorphism, the Lie algebras $\cL=[\cK,\cK]/([\cK,\cK]\cap Z(\cR))$ above. 

As for $\frpsl_2(\FF)=\frsl_2(\FF)$, the situation is simpler. The twisted forms are the Lie algebras $\cL=[\cQ,\cQ]$, where $\cQ$ is a central simple algebra of degree $2$, i.e., a quaternion algebra. The restriction
homomorphism $\AAut(\cQ)\rightarrow \AAut(\cL)$ is an isomorphism, as follows from the first part of \cite[Theorem 3.9]{EKmon}.

A grading $\Gamma$ by an abelian group $G$ on a classical simple Lie algebra $\cL$ can be \emph{inner} or \emph{outer} depending on whether or not the image of the corresponding homomorphism $\eta_\Gamma:G^D\to\AAut(\cL)$ is contained in the group scheme of inner automorphisms $\InAAut(\cL)$, which is the connected component of $\AAut(\cL)$. For type $A_r$  ($r\ge 2$), this is determined by the $G$-grading on $(\cR,\varphi)$ in Theorem \ref{th:A} given by the composition $\theta^{-1}\circ\eta_\Gamma$: $\Gamma$ is inner if the grading on $\cR$ is of Type~I (i.e., trivial on $\KK$) 
and outer if the grading on $\cR$ is of Type~II.


Finally, the situation for the central simple Lie algebras of type $D_4$,
i.e., the twisted forms of $\frso_8(\FF)$, is more involved due to the triality phenomenon. 
To any such Lie algebra $\cL$, one can attach a cubic \'etale algebra $\LL$ over $\FF$ (see e.g. \cite[\S 45.C]{KMRT}) and a quotient map $\pi:\AAut(\cL)\rightarrow\AAut(\LL)$, whose kernel is $\InAAut(\cL)$ (see \cite[\S 2]{EK_G2D4}), thus obtaining a short exact sequence
\[
1\longrightarrow \InAAut(\cL)\longrightarrow \AAut(\cL)
\overset{\pi}{\longrightarrow} \AAut(\LL)\longrightarrow 1
\]
Any such $\LL$ is a twisted form of $\FF\times\FF\times\FF$, hence $\AAut(\LL)$ is the \'etale group scheme  
obtained from a continuous action of the profinite group $\Gal(\Fsep/\FF)$ on the symmetric group $S_3$, where 
$\Fsep$ is a separable closure of $\FF$ (see e.g. \cite[Proposition~20.16]{KMRT}). 

If $\Gamma$ is a grading on $\cL$ by an abelian group $G$ and $\eta_\Gamma:G^D\rightarrow \AAut(\cL)$ is
the corresponding homomorphism, then the composition $\pi\circ\eta_\Gamma$ gives
a $G$-grading on $\LL$, whose support is a subgroup $H$ of $G$ corresponding to the image of $\pi\circ\eta_\Gamma$, which is a diagonalizable subgroupscheme of $\AAut(\LL)$. This subgroupscheme corresponds to a $\Gal(\Fsep/\FF)$-invariant abelian subgroup of $S_3$, so the order of $H$ is at most $3$. 
We will say that $\Gamma$ is of \emph{Type I, II, or III} according to the value of $|H|$.

We are particularly interested in the case where $\LL=\FF\times\KK$ for 
a quadratic \'etale algebra $\KK$, as this is the case that appears over the real numbers. For $\LL$ of this form, there is a central simple associative algebra $\cR$ of  degree $8$, endowed with an orthogonal involution $\varphi$ such that $\cL$ is isomorphic to $\Skew(\cR,\varphi)$ and $\KK$ to the center of the Clifford algebra $\Cl(\cR,\varphi)$ (see \cite[\S 43.C]{KMRT}).

\subsection{Classification of fine gradings on real forms of classical simple Lie algebras except $D_4$}

In view of the transfer discussed in the previous two subsections (see Theorems \ref{th:transfer}, \ref{th:BCD} and \ref{th:A}), the following results are an immediate consequence of Theorem \ref{th:fine_real} and Corollary \ref{cor:fine_real}, so we merely state them for convenience of future reference. We are using notation introduced in Theorem \ref{th:fine_real}. 

In each case, we will be restricting the grading from the graded associative algebra with involution $(\cR,\varphi)$ of the appropriate form $\cM(\cD,\varphi_0,q,s,\ul{d},\delta)$ in Definition~\ref{df:M} or $\cM^\ex(\cD,k)$ in Definition~\ref{df:Mex} to the derived subalgebra $\cL$ of the Lie algebra of skew-symmetric elements $\Skew(\cR,\varphi)$. Disregarding the grading, we have $\cR\simeq M_k(\cD)$, with $k=q+2s$ and $\varphi(X)=\Phi^{-1}\varphi_0(X^T)\Phi$, in the first case and $\cR\simeq M_k(\cD)\times M_k(\cD)^\op$, with the exchange involution, in the second case. 
The matrix $\Phi$, given by equation \eqref{eq:PhiMDpqsdd}, is the product of $\diag(d_1,\ldots,d_k)$ and the permutation matrix of the permutation $\sigma$ of $\{1,\ldots,k\}$ that fixes $1,\ldots,q$ and interchanges $q+2r-1$ and $q+2r$ for $1\le r\le s$, where $d_1,\ldots,d_q$ are the entries of the $q$-tuple $\ul{d}$ and, for $1\le r\le s$, we take $d_{q+2r-1}\bydef 1_\cD$ and $d_{q+2r}$ is either $1_\cD$ in the case of hermitian $\Phi$ or $-1_\cD$ in the case of skew-hermitian $\Phi$. 

The relevant graded-division algebras $\cD$ were introduced in Subsection \ref{sse:real_basics}. To compute the components of $\cL$, it will be convenient to fix a representative in each homogeneous component of $\cD$. This will also allow us to simplify the parameter $\ul{d}$ by choosing $d_1,\ldots,d_q$ from among these representatives, except that in the cases where the signature of $\ul{d}$ is defined (see Definition \ref{df:sign_multiset}) we will have to allow $-1_\cD$ as well as $+1_\cD$ for entries of degree $e$. We will always choose the number of $1$'s, denoted $n^0_+$, greater than or equal to the number of $(-1)$'s, denoted $n^0_-$, so $\mathrm{signature}(\ul{d})=n^0_+ - n^0_-$.

\subsubsection{Series A: inner gradings on special linear Lie algebras}

Consider the graded algebra with involution $(\cR,\varphi)=\cM^{\mathrm{(I)}}(2m;\Delta;k)$ where $\Delta\in\{\RR,\HH\}$. 
By definition, $\cR=\cS\times\cS^\op$, with exchange involution, where $\cS=\cM(\cD,k)$ with $\cD=\cD(2m;\pm 1)$.
The projection $\cR\to\cS$ gives an isomorphism of graded Lie algebras $\Skew(\cR,\varphi)\to\cS^{(-)}$, so it suffices to consider $\cS$.
Recall that $\cD(2m;+1)$ is the real graded-division algebra with support $T=\ZZ_2^{2m}$ and $1$-dimensional components spanned by the following elements:
\begin{equation}\label{eq:fix_Xt_real}
X_{(\wb{i_1},\wb{j_1},\ldots,\wb{i_m},\wb{j_m})}=X^{i_1} Y^{j_1}\otimes\cdots\otimes X^{i_m} Y^{j_m}\in M_2(\RR)\otimes\cdots\otimes M_2(\RR)\simeq M_{\ell}(\RR),
\end{equation}
where $X=\left[\begin{smallmatrix}-1 & 0 \\ 0 & 1\end{smallmatrix}\right]$, $Y=\left[\begin{smallmatrix} 0 & 1 \\ 1 & 0\end{smallmatrix}\right]$, and $\ell=2^m$.
The definition of $\cD(2m;-1)$ is the same except that the last factor $M_2(\RR)$ is replaced by $\HH$ and the matrices $X$ and $Y$ are replaced by the quaternions 
$\mathbf{i}$ and $\mathbf{j}$ in this last factor, so $\cD(2m;-1)\simeq M_\ell(\HH)$ where $\ell=2^{m-1}$. Hence we may identify $\cS$ with $M_k(\RR)\otimes\cD\simeq M_n(\Delta)$, where $n=k\ell$.
 
The grading of $\cS$ restricts to an inner grading on its derived Lie subalgebra 
$\cL$, 
which we denote by $\Gamma_{\mathfrak{sl}_n(\Delta)}^{\mathrm{(I)}}(m)$.
The universal group is $T\times(\ZZ^k)_0\simeq\ZZ_2^{2m}\times\ZZ^{k-1}$, where $(\ZZ^k)_0$ is the subgroup of $\ZZ^k$ generated by the differences of its standard basis elements $e_1,\ldots,e_k$, and the homogeneous components of the grading are given by
\begin{align*}
&\cL_e = \lspan{E_{1,1}-E_{2,2},\ldots,E_{k-1,k-1}-E_{k,k}}\otimes 1_\cD, \\
&\cL_t = \lspan{E_{1,1},E_{2,2},\ldots,E_{k,k}}\otimes X_t\;\text{ for }e\ne t\in T, \\
&\cL_{(t,e_i-e_j)} = \RR E_{i,j}\otimes X_t\;\text{ for }i\ne j\text{ and }t\in T,
\end{align*} 
so we have $\dim\cL_e=k-1$, $\dim\cL_t=k$ for $e\ne t\in T$ ($2^{2m}-1$ components), and the remaining $2^{2m}k(k-1)$ components of dimension $1$.

\begin{theorem}\label{th:AI_RH}
Any inner fine grading on a real special linear Lie algebra of type $A_r$ is equivalent to exactly one of the gradings 
$\Gamma_{\mathfrak{sl}_n(\Delta)}^{\mathrm{(I)}}(m)$ where either $\Delta=\RR$, $n=r+1$, $m\ge 0$ or $\Delta=\HH$, $2n=r+1$, $m\ge 1$, and in both cases $2^m$ is a divisor of $r+1$ and $2^{-m}(r+1)\ge 3$.\qed
\end{theorem}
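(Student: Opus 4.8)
The plan is to derive Theorem~\ref{th:AI_RH} as an immediate consequence of the transfer machinery together with the classification in Theorem~\ref{th:fine_real} and Corollary~\ref{cor:fine_real}, restricted to the items that produce \emph{inner} gradings on series~$A$. First I would recall that, by Theorem~\ref{th:A}, for a finite-dimensional central simple associative algebra $\cR$ over an \'etale quadratic algebra $\KK$ with a second-kind involution $\varphi$ of degree $n\ge 3$ (and $\chr\FF\ne 3$ if $n=3$, automatic over $\RR$), the restriction homomorphism $\theta:\AAut(\cR,\varphi)\to\AAut(\cL)$ onto the automorphism group scheme of the associated simple Lie algebra $\cL$ of type $A_{n-1}$ is an isomorphism. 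Hence by Theorem~\ref{th:transfer}(2), fine gradings on $\cL$ up to equivalence correspond bijectively to fine gradings on $(\cR,\varphi)$ up to equivalence. Moreover, as noted at the end of the discussion of Theorem~\ref{th:A}, the grading on $\cL$ is inner precisely when the corresponding grading on $(\cR,\varphi)$ is of Type~I, i.e.\ the grading on $\KK$ is trivial. Over $\RR$, Type~I with trivial grading on $\KK=\wt\CC$ means $\cR$ is not graded-simple; the relevant items of Theorem~\ref{th:fine_real} are thus exactly the nonsimple algebras with exchange involution of the form $\cM^{\mathrm{(I)}}(2m;\RR;k)$ and $\cM^{\mathrm{(I)}}(2m;\HH;k)$, with $k\ge 3$.

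Next I would unwind these models. For $(\cR,\varphi)=\cM^{\mathrm{(I)}}(2m;\Delta;k)=\cM^\ex(\cD,k)$ with $\cD=\cD(2m;\pm 1)$, the projection onto the first factor identifies the graded Lie algebra $\Skew(\cR,\ex)$ with $\cS^{(-)}$ where $\cS=\cM(\cD,k)\simeq M_k(\RR)\otimes\cD\simeq M_n(\Delta)$, $n=k\ell$, $\ell=2^m$ for $\Delta=\RR$ and $\ell=2^{m-1}$ for $\Delta=\HH$. The derived subalgebra $\cL=[\cS^{(-)},\cS^{(-)}]$ modulo its center is the projective special linear algebra; but it is more convenient to work with the trace-zero algebra $\frsl_n(\Delta)$ directly, which is what the statement uses --- here one should note that over $\RR$ with $\Delta\in\{\RR,\HH\}$ the relevant simple Lie algebra of type $A_{n-1}$ (for $\Delta=\RR$) or $A_{2n-1}$ (for $\Delta=\HH$) is realized by $\frsl_n(\Delta)$, and this identification is compatible with gradings. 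Using the fixed homogeneous basis elements $X_t$ of $\cD$ from equation~\eqref{eq:fix_Xt_real} and the elementary grading on $M_k(\RR)$, one reads off the homogeneous components of $\cL$ exactly as displayed just before the theorem, and one computes the universal group: by Proposition~\ref{pr:G0_is_universal} (or \cite[Proposition 2.35]{EKmon} for $\cM^\ex$), the universal group of $\Gamma_\cM(\cD,k)$ is $T\times\ZZ^{k-1}$, and passing to the trace-zero Lie algebra does not change it, giving $\ZZ_2^{2m}\times\ZZ^{k-1}$.

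Then I would translate the fineness and equivalence conditions. Fineness of $\Gamma_{\cM^\ex}(\cD,k)$: by Theorem~\ref{th:FineMex} (for $\cD_e=\RR$, $T$ a $2$-elementary $2$-group, $\cD$ central simple over $\RR$) the grading is fine if and only if $k\ge 3$ --- this is why $k\ge 3$ appears. Equivalence: by Corollary~\ref{cor:fine_real}, part~(3), the gradings $\cM^{\mathrm{(I)}}(2m;\RR;k)$ are classified up to equivalence simply by $m$ and $k$, and likewise $\cM^{\mathrm{(I)}}(2m;\HH;k)$ by $m$ and $k$; moreover different items of Theorem~\ref{th:fine_real} are never equivalent, so an $\RR$-type grading is never equivalent to an $\HH$-type one. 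Transferring along $\theta$ and the projection to $\frsl_n(\Delta)$, this yields precisely the asserted classification: a complete, non-redundant list indexed by $(\Delta,m,k)$ with $k\ge 3$, rewritten via $n=k\ell$ as the condition that $2^m\mid (r+1)$ (resp.\ $2^m\mid\tfrac{r+1}{2}$, absorbed into ``$2^m$ divides $r+1$'' together with $\Delta=\HH$) and $2^{-m}(r+1)=k\ge 3$ (resp.\ the analogous inequality).

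\textbf{Main obstacle.} The only genuine subtlety --- everything else is bookkeeping already licensed by the quoted results --- is the passage between the three nearby Lie-algebraic objects: $\cS^{(-)}=M_n(\Delta)^{(-)}$, its derived subalgebra, the quotient by the center (which is what Theorem~\ref{th:A} literally controls), and the trace-zero algebra $\frsl_n(\Delta)$ named in the statement. I would handle this by checking that the center of $\cS^{(-)}$ is homogeneous (it is $\RR 1_\cS$ when $\Delta=\RR$, and $0$ when $\Delta=\HH$ since $M_n(\HH)$ has no central grouplike scalars of trace zero in the relevant sense) and sits in the identity component, so that gradings descend to and lift from the simple quotient without loss; and that for $\Delta=\RR$ the trace-zero subalgebra $\frsl_n(\RR)$ maps isomorphically (as a graded algebra) onto that quotient, while for $\Delta=\HH$ the reduced-norm/reduced-trace zero algebra already \emph{is} simple of type $A_{2n-1}$. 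Once this identification is pinned down, the component description and the universal group computation are routine, and the theorem follows.
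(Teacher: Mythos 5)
Your proposal follows essentially the same route as the paper: transfer via Theorems~\ref{th:A} and~\ref{th:transfer}, read off the inner (Type~I) items $\cM^{\mathrm{(I)}}(2m;\Delta;k)$ from Theorem~\ref{th:fine_real}, apply Theorem~\ref{th:FineMex} for the constraint $k\ge 3$ and Corollary~\ref{cor:fine_real} part~(3) for the classification by $(m,k)$, then convert $k\ell=n$ into the arithmetic conditions of the statement. The paper itself simply declares the result ``an immediate consequence'' of these ingredients, and your proposal correctly fills in the bookkeeping.

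One small imprecision: you claim the center of $\cS^{(-)}=M_n(\HH)^{(-)}$ is $0$. It is in fact $\RR\,1_\cS$, since $M_n(\HH)$ is central simple over~$\RR$. What you actually need --- and what does hold --- is that $\RR\,1_\cS\cap\frsl_n(\Delta)=0$ for $\Delta\in\{\RR,\HH\}$ because the (reduced) trace of $1$ is nonzero in characteristic~$0$, so that $\frsl_n(\Delta)$ is already simple and coincides with $\frpsl_n(\Delta)$, the Lie algebra that Theorem~\ref{th:A} literally controls. Once that is stated correctly, your ``main obstacle'' disappears: there is no passage between genuinely different Lie algebras to justify over a real closed field, and the graded identification of $\frsl_n(\Delta)$ with the derived subalgebra of $\cS^{(-)}$ is immediate because the center is a graded ideal sitting in degree~$e$.
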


\subsubsection{Series A: inner gradings on special unitary Lie algebras}

Consider $(\cR,\varphi)=\cM^{\mathrm{(I)}}(\ell_1,\ldots,\ell_m;\CC;q,s,\ul{d})$. Here $\cD=\cD(\ell_1,\ldots,\ell_m;\CC)$, which is the complex graded-division algebra with support $T=\ZZ_{\ell_1}^2\times\cdots\times\ZZ_{\ell_m}^2$ and components of complex dimension $1$ spanned by the following elements:
\begin{equation}\label{eq:sym_elements_bad_D}
\begin{split}
X_{(\wb{i_1},\wb{j_1},\ldots,\wb{i_m},\wb{j_m})} &= e^{-i_1 j_1 \pi\mathbf{i}/\ell_1} X_{\ell_1}^{i_1} Y_{\ell_1}^{j_1}\otimes\cdots\otimes e^{-i_m j_m \pi\mathbf{i}/\ell_m} X_{\ell_m}^{i_m} Y_{\ell_m}^{j_m} \\
&\in M_{\ell_1}(\CC)\otimes_\CC\cdots\otimes_\CC M_{\ell_m}(\CC)\simeq M_{\ell}(\CC),
\end{split}
\end{equation}
where $0\le i_r,j_r<\ell_r$ for $1\le r\le m$, $X_n\bydef\diag(e^{2\pi\mathbf{i}/n},e^{4\pi\mathbf{i}/n},\ldots,e^{2(n-1)\pi\mathbf{i}/n},1)$, $Y_n$ is the permutation matrix of the cycle $(1,\ldots,n)$, and $\ell=\ell_1\cdots\ell_m$. The commutation relations between the elements defined by equation \eqref{eq:sym_elements_bad_D} are given by the nondegenerate alternating bicharacter
\[
\beta\bigl((\wb{i_1},\wb{j_1},\ldots,\wb{i_m},\wb{j_m}),(\wb{i'_1},\wb{j'_1},\ldots,\wb{i'_m},\wb{j'_m})\bigr)
=e^{(i_1 j'_1-i'_1 j_1)2\pi\mathbf{i}/\ell_1}\cdots e^{(i_m j'_m-i'_m j_m)2\pi\mathbf{i}/\ell_m},
\]
and the `phase factors' in equation \eqref{eq:sym_elements_bad_D} are chosen so that all elements $X_t$ ($t\in T$) are symmetric with respect to the (second kind) involution $\varphi_0$ defined on $\cD$ (see equation \eqref{eq:inv_bad_D}).

We may identify $\cR$ with $M_k(\RR)\otimes\cD\simeq M_n(\CC)$, where $n=k\ell$. For a fixed $n$, the integer $q$ is determined by the other parameters: $q=\frac{n}{\ell}-2s$.
The involution $\varphi$ is of the second kind with signature given by equation \eqref{eq:signatures}. We can choose the $q$-tuple $\ul{d}$ to consist of elements $X_t$ as above, with $t\notin T^{[2]}$, and possibly some $\pm 1$, with $n^0_+\ge n^0_-$. Then the elements $d_i$ are determined by their degrees $t_i$ except when $t_i=e$, while $n_+^0$ and $n_-^0$ are determined by the number of $t_i$ that are equal to $e$ and by the signature of $\varphi$. The restriction of the grading of $\cR$ to the derived Lie subalgebra $\cL$ of $\mathrm{Skew}(\cR,\varphi)$ is inner and will be denoted by 
$\Gamma_{\mathfrak{su}(n_+,n_-)}^{\mathrm{(I)}}(\ell_1,\ldots,\ell_m;s,\ul{t})$, where $n_+ + n_- = n$ and 
$n_+ - n_- = 2^{m_0}\bigl(n^0_+ - n^0_-\bigr)$ where $m_0$ is the number of $\ell_j$'s that are even. 

The universal group $U$ is generated by $T$ and the symbols $u_1,\ldots,u_k$ subject to relations in Proposition \ref{pr:G0_is_universal}. The free part of $U$ is $\ZZ^s$ and the torsion part is $T$ if $q\le 1$, and otherwise a certain extension of $\ZZ^{q-1}$ by $T$, which can be computed by formula \eqref{eq:iso_type_U}. The homogeneous components of $\cL$ are computed from the components of $\cR$, which are given by equations \eqref{eq:kd} through \eqref{eq:2d}. We get $\dim\cL_e=k-1$, $\dim\cL_t=k$ for $e\ne t\in T$ ($\ell^2-1$ components), $2s\ell^2$ components of dimension $1$, and the remaining $\Bigl(\frac{k(k-1)}{2}-s\Bigr)\ell^2$ components of dimension $2$:
\begin{align*}
&\cL_e = \operatorname{span}_0\{E_{1,1},\ldots,E_{q,q},E_{q+1,q+1}+E_{q+2,q+2},\ldots\}\otimes \mathbf{i}\,1_\cD \\
&\phantom{\cL_e=}\oplus\lspan{E_{q+1,q+1}-E_{q+2,q+2},\ldots}\otimes 1_\cD, \\
&\cL_t = \lspan{E_{1,1}\otimes\beta(t_1,t)^{-1/2}\mathbf{i}X_t,\ldots,E_{q,q}\otimes\beta(t_q,t)^{-1/2}\mathbf{i}X_t} \\
&\phantom{\cL_t=}\oplus\lspan{(E_{q+1,q+1}+E_{q+2,q+2})\otimes\mathbf{i}X_t,(E_{q+1,q+1}-E_{q+2,q+2})\otimes X_t,\ldots},
\;t\ne e, \\
&\cL_{t u_i u_j^{-1}} = \RR E_{i,j}\otimes \mathbf{i}X_t\;\text{ for }\{i,j\}=\{q+2r-1,q+2r\}, 1\le r\le s, \\
&\cL_{t u_i u_j^{-1}} = \operatorname{span}\,\{E_{i,j}\otimes X_t-E_{\sigma(j),\sigma(i)}\otimes d_j^{-1} X_t d_i, \\
&\phantom{\cL_{t u_i u_j^{-1}} = \operatorname{span}\,\{}
E_{i,j}\otimes \mathbf{i}X_t+E_{\sigma(j),\sigma(i)}\otimes \mathbf{i}\,d_j^{-1} X_t d_i\}, 
\end{align*} 
where $t\in T$, the subscript $0$ in the first equation refers to trace $0$, and in the last equation we take $i<j$ and $(i,j)\ne(q+2r-1,q+2r)$.

\begin{theorem}\label{th:AI_C}
Any inner fine grading on a special unitary Lie algebra of type~$A_r$, $r\ge 2$, is equivalent to some 
$\Gamma_{\mathfrak{su}(n_+,n_-)}^{\mathrm{(I)}}(\ell_1,\ldots,\ell_m;s,\ul{t})$ where $n_+ + n_- = n\bydef r+1$, $n_+\ge n_-$, $\ell_1,\ldots,\ell_m$ are prime powers, not all equal to $2$, such that $\ell\bydef\ell_1\cdots\ell_m$ is a divisor of $n$ and $2^{m_0}$ is a divisor of $n_+ - n_-$, where $m_0$ is the number of powers of $2$ among $\ell_1,\ldots,\ell_m$, $s$ is a  nonnegative integer such that $q\bydef \frac{n}{\ell}-2s\ge 0$, with $s\ne 0$ if $n=2\ell$ (i.e., 
$(q,s)\neq (2,0)$), and $\ul{t}=(t_1,\ldots,t_q)$ is a $q$-tuple of elements of $T=\ZZ_{\ell_1}^2\times\cdots\times\ZZ_{\ell_m}^2$ such that each $t_i$ is either $e$ or does not belong to $T^{[2]}$, with the total number of $t_i$ that are equal to $e$ being no less and of the same parity as $2^{-m_0}(n_+ - n_-)$.
Moreover, two such gradings, $\Gamma_{\mathfrak{su}(n_+,n_-)}^{\mathrm{(I)}}(\ell_1,\ldots,\ell_m;s,\ul{t})$
and $\Gamma_{\mathfrak{su}(n_+,n_-)}^{\mathrm{(I)}}(\ell'_1,\ldots,\ell'_{m'};s',\ul{t}')$,
are equivalent if and only if $m=m'$, $\ell_j=\ell'_j$ (up to a permutation), $s=s'$, 
and the multisets $\{t_1 T^{[2]},\ldots,t_q T^{[2]}\}$ and $\{t'_1 T^{[2]},\ldots,t'_q T^{[2]}\}$ in the vector space $V\bydef T/T^{[2]}$ of dimension $2m_0$ over $GF(2)$ lie in the same orbit under the action of the group $\SP(V,\cF)$ defined in Proposition \ref{pr:flag}.\qed
\end{theorem}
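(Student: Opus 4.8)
The plan is to deduce Theorem \ref{th:AI_C} entirely from the associative results, using the transfer machinery of Subsection \ref{sse:transfer}. First I would invoke Theorem \ref{th:A}, which gives an isomorphism of affine group schemes $\theta:\AAut(\cR,\varphi)\to\AAut(\cL)$ for $(\cR,\varphi)$ central simple with a second-kind involution and $\cL$ the associated special linear Lie algebra (with the extra hypothesis $\chr\FF\ne 3$ if $n=3$, automatic over $\RR$). By Theorem \ref{th:transfer}(2), equivalence classes of fine gradings on $\cL$ are in bijection with equivalence classes of fine gradings on $(\cR,\varphi)$; moreover, by the remark following Theorem \ref{th:A}, a grading on $\cL$ is inner precisely when the corresponding grading on $(\cR,\varphi)$ is of Type~I, i.e.\ trivial on $\KK=Z(\cR)$. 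Since over $\RR$ a second-kind involution forces $\KK\simeq\CC$, the inner fine gradings on $\cL$ correspond exactly to the Type~I fine gradings on central simple $\CC$-algebras with involution over $\RR$; by Theorem \ref{th:fine_real}(2), these are, up to equivalence, the gradings $\Gamma_\cM^{\mathrm{(I)}}(\ell_1,\ldots,\ell_m;\CC;q,s,\ul{d})$ (the family built on $\cD(\ell_1,\ldots,\ell_m;\CC)$, with $\cD_e=Z(\cD)\simeq\CC$). This already yields the list of representatives in the first paragraph of the theorem.

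Next I would translate the parameters. Disregarding the grading, $\cR\simeq M_n(\CC)$ with $n=k\ell$, $k=q+2s$, $\ell=\ell_1\cdots\ell_m$, so $\ell\mid n$ and $q=\frac n\ell-2s\ge 0$; the exclusion of $(q,s)=(2,0)$ is exactly Proposition \ref{pr:GMfineq2s0_2}, which says $\Gamma_\cM(\cD,\varphi_0,2,0,\ul d,\delta)$ is never fine when $\cD_e=Z(\cD)\simeq\CC$. The signature of $\varphi$ is linked to $\mathrm{signature}(\ul d)$ by formula \eqref{eq:signatures}: with $\cD\cong M_\ell(\CC)$ as an ungraded algebra and $|T^{[2]}|=2^{2m_0}$ (where $m_0$ counts the even $\ell_j$), we get $\mathrm{signature}(\varphi)=\frac{\ell}{2^{m_0}}\,\mathrm{signature}(\ul d)$, hence $n_+-n_-=2^{m_0}(n^0_+-n^0_-)$. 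The normalization $d_i\in\{X_t:t\notin T^{[2]}\}\cup\{\pm1\}$ with $n^0_+\ge n^0_-$ is justified by Proposition \ref{pr:realXp0}(2b): the orbits of the $*$-action are the sets $\bigcup_{t\in S}\cH_t\setminus\{0\}$ for nontrivial cosets $S$ of $T^{[2]}$, together with $\cO(1)$ and $\cO(-1)$ inside $\bigcup_{t\in T^{[2]}}\cH_t\setminus\{0\}$, so a symmetric $d_i$ can always be replaced by $X_{t_i}$ if $t_i\notin T^{[2]}$ and by $\pm1$ otherwise; the constraint ``$t_i=e$ or $t_i\notin T^{[2]}$'' in the statement is this together with the observation that $T^{[2]}$ is $2$-elementary as a subset only at $e$ here — more precisely, once $d_i$ is normalized, $\deg d_i\in\{e\}\cup(T\setminus T^{[2]})$. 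Then I would record the universal group via Proposition \ref{pr:G0_is_universal} and formula \eqref{eq:iso_type_U}, and the homogeneous components by specializing equations \eqref{eq:kd}--\eqref{eq:2d} and passing to $\Skew$ and then to the derived subalgebra modulo centre; this is routine linear algebra and I would not belabor it.

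The equivalence criterion is the substantive part, and it comes from Corollary \ref{cor:equivalenceMM'cs} together with the real computations of Subsection ``The real case''. By that corollary, two members $\cM(\cD,\varphi_0,q,s,\ul d,\delta)$ and $\cM(\cD,\varphi_0,q',s',\ul d',\delta')$ are equivalent iff $q=q'$, $s=s'$, $\delta=\delta'$, and the multisets $\wt\Sigma(\ul d)$, $\wt\Sigma(\ul d')$ lie in one orbit of $A^+_{\varphi_0}(\cD)$ on $X_{\varphi_0}(\cD,\delta)$. Since $\KK\ne\FF$ here, $\delta=1$ is forced; the pair $\{n^0_+,n^0_-\}$ — equivalently $\mathrm{signature}(\ul d)$ together with the multiplicity of $T^{[2]}$ in $\Sigma(\ul d)$ — is an invariant by Definition \ref{df:sign_multiset} and the discussion that $(-1,\id_\cD)$ swaps $e_+,e_-$ while acting trivially on $T$; and modulo this, everything is controlled by the action on the multiset $\Sigma(\ul d)$ in $V=T/T^{[2]}$. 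In case (2b) the relevant group is the image of $A^+_{\varphi_0}(\cD)$ in $\GL(V)$, which the analysis there identifies with the image of $\Aut(T,\beta)=W(\Gamma_\cD)$ (Proposition \ref{pr:Weyl_group_C_as_R}: the extra generator $\tau$ acts trivially mod $T^{[2]}$), and Proposition \ref{pr:flag} identifies that image with $\SP(V,\cF)$. Assembling these gives exactly the stated criterion. Finally I would note that $\ell_j=\ell'_j$ up to permutation and $m=m'$ are forced because $\cD$ and $\cD'$ must be equivalent graded-division algebras, and over $\RR$ with prime-power $\ell_j$ this data is the equivalence invariant of $\cD$. The main obstacle, technically, is keeping the bookkeeping between $\mathrm{signature}(\ul d)$, the multiplicity of $T^{[2]}$ in $\Sigma(\ul d)$, and the pair $(n_+,n_-)$ consistent with formula \eqref{eq:signatures}; conceptually there is nothing new, since every ingredient has been established in the associative part of the paper and in Subsection \ref{sse:transfer}.
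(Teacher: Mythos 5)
Your proof is correct and takes essentially the same approach as the paper: the paper itself presents Theorem \ref{th:AI_C} as an immediate consequence of Theorem \ref{th:fine_real} and Corollary \ref{cor:fine_real} via the transfer isomorphism of Theorems \ref{th:A} and \ref{th:transfer}, which is exactly the chain of reductions you give. The only minor slip is calling $\cL$ the "associated special linear Lie algebra" in the second sentence --- for a second-kind involution over $\RR$ with $\KK\simeq\CC$ this is the special unitary Lie algebra $\mathfrak{su}(n_+,n_-)$ --- but the rest of your argument uses the correct object, and your bookkeeping between $\mathrm{signature}(\ul d)$, the multiplicity of $T^{[2]}$ in $\Sigma(\ul d)$, and $(n_+,n_-)$ matches formula \eqref{eq:signatures} and Definition \ref{df:sign_multiset}.
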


\subsubsection{Series A: outer gradings on special linear Lie algebras}

Consider $(\cR,\varphi)=\cM^{\mathrm{(II)}}(2m+1;\Delta;q,s,\ul{d})$ where $\Delta\in\{\RR,\HH\}$. Here $\cD=\cD(2m+1;\pm 1)$, which is the real graded-division algebra $\cD(2m;\pm 1)\otimes\wt{\CC}$ with support $T=\ZZ_2^{2m}\times\ZZ_2$, where $\ZZ_2^{2m}$ is the support of the first factor and $\ZZ_2$ is the support of the center $\wt{\CC}$ of $\cD$. 
Thus $\cD\simeq M_\ell(\Delta)\times M_\ell(\Delta)$, where $\ell=2^m$ if $\Delta=\RR$ and $\ell=2^{m-1}$ if $\Delta=\HH$, with $1$-dimensional homogeneous components spanned by the elements $X_{(v,\bar{0})}\bydef (X_v,X_v)$ and $X_{(v,\bar{1})}\bydef(X_v,-X_v)$, $v\in\ZZ_2^{2m}$, where $X_v\in M_\ell(\Delta)$ is given by equation \eqref{eq:fix_Xt_real} if $\Delta=\RR$ and similarly for $\Delta=\HH$. The involution $\varphi_0$ defined on $\cD$ maps $X_{(v,\bar{0})}\mapsto\mu(v)X_{(v,\bar{0})}$ and $X_{(v,\bar{1})}\mapsto-\mu(v)X_{(v,\bar{1})}$, where $\mu(v)=(-1)^{Q(v)}$ and 
\begin{equation}\label{eq:explicit_Q}
Q\bigl((\wb{i_1},\wb{j_1},\ldots,\wb{i_m},\wb{j_m})\bigr)=\begin{cases}
i_1 j_1+\cdots+i_m j_m & \text{if }\Delta=\RR;\\
i_1 j_1+\cdots+i_m j_m+i_m^2+j_m^2 & \text{if }\Delta=\HH.
\end{cases}
\end{equation}

Hence we may identify $\cR$ with $M_n(\Delta)\times M_n(\Delta)$, where $n=k\ell$. For a fixed $n$, the parameter $q$ is determined: $q=\frac{n}{\ell}-2s$. Also, we can choose the $q$-tuple $\ul{d}$ to consist of elements of the form $(X_v,\pm X_v)$ as above where the sign is chosen to make the element symmetric. Then each $d_i$ is determined by the projection $t_i$ of its degree $(t_i,Q(t_i))\in\ZZ_2^{2m}\times\ZZ_2$ to $\wb{T}=\ZZ_2^{2m}$. Here we read $Q(v)$ mod $2$, so it is a quadratic form on the vector space $\wb{T}$. Its polarization 
\begin{equation}\label{eq:explicit_inner_prod}
\langle (\wb{i_1},\wb{j_1},\ldots,\wb{i_m},\wb{j_m}),(\wb{i'_1},\wb{j'_1},\ldots,\wb{i'_m},\wb{j'_m})\rangle
=i_1 j'_1 - i'_1 j_1+\cdots+i_m j'_m - i'_m j_m
\end{equation}
gives the commutation relations among the elements $(X_v,\pm X_v)$ since $X_v X_{v'}=\beta(v,v')X_{v'}X_v$ for all $v,v'\in\wb{T}$ where $\beta(v,v')=(-1)^{\langle v,v'\rangle}$.

The involution $\varphi$ interchanges the two factors $M_n(\Delta)$. Therefore, the projection $\cR\to M_n(\Delta)$ gives an isomorphism of Lie algebras $\Skew(\cR,\varphi)\to\frgl_n(\Delta)$, which transports the grading to $\frgl_n(\Delta)$. The restriction of this grading to the derived Lie subalgebra $\cL=\frsl_n(\Delta)$ is outer and will be denoted by $\Gamma_{\mathfrak{sl}_n(\Delta)}^{\mathrm{(II)}}(m;s,\ul{t})$.
Again, the universal group $U$ is generated by $T$ and the symbols $u_1,\ldots,u_k$, and formula \eqref{eq:iso_type_U} gives
\begin{equation}\label{eq:univ_AII}
U\simeq\ZZ_2^{2m+1-2\dim T_0+\max(0,q-1)}\times\ZZ_4^{\dim T_0}\times\ZZ^s,
\end{equation} 
where $T_0$ is spanned by the differences of the elements $(t_1,Q(t_1)),\ldots,(t_q,Q(t_q))$ in $\ZZ_2^{2m+1}$. 
The homogeneous components of $\cL$ are computed by projecting the components of $\Skew(\cR,\varphi)$ to $M_n(\Delta)\simeq M_k(\RR)\otimes M_\ell(\Delta)$, which gives
\begin{align*}
&\cL_{(e,\bar{0})} = \lspan{E_{q+1,q+1}-E_{q+2,q+2},\ldots}\otimes X_e, \\
&\cL_{(e,\bar{1})} = \operatorname{span}_0\{E_{1,1},\ldots,E_{q,q},E_{q+1,q+1}+E_{q+2,q+2},\ldots\}\otimes X_e, \\
&\cL_{(v,\bar{z})} = \lspan{E_{i,i}\mid 1\le i\le q,\,\beta(t_i,v)\mu(v)=-(-1)^z}\otimes X_v \\
&\phantom{\cL_{(v,\bar{z})}=} \oplus\lspan{E_{q+1,q+1}-(-1)^z \mu(v) E_{q+2,q+2},\ldots}\otimes X_v,\;v\ne e, \\
&\cL_{(v,Q(v)+\bar{1}) u_i u_j^{-1}} = \RR E_{i,j}\otimes X_v\;\text{ for }\{i,j\}=\{q+2r-1,q+2r\}, 1\le r\le s, \\
&\cL_{(v,\bar{z}) u_i u_j^{-1}} = \RR\bigr(E_{i,j}\otimes X_v-(-1)^z \mu(v) E_{\sigma(j),\sigma(i)}\otimes X_{t_j}^{-1} X_v X_{t_i}\bigr),  
\end{align*} 
where $v\in\wb{T}$, $z\in\ZZ$, the subscript $0$ in the second equation refers to trace $0$, and in the last equation we take $i<j$ and $(i,j)\ne(q+2r-1,q+2r)$ and extend the $q$-tuple $\ul{t}$ to a $k$-tuple by setting $t_i\bydef e$ for $i>q$. Thus we get that $\dim\cL_f=q+s-1$ for $f=(e,\bar{1})$, $\dim\cL_t$ for $f\ne t\in T$ ($2^{2m+1}-1$ components) can range from $s$ to $q+s$: 
\[
\dim\cL_{(v,\bar{z})}=s+|\{i\mid 1\le i\le q,\,Q(t_i+v)=Q(t_i)+\bar{z}+\bar{1}\}|,
\]
and the remaining $2^{2m}k(k-1)$ components have dimension $1$.

\begin{theorem}\label{th:AII_RH}
Any outer fine grading on a real special linear Lie algebra of type $A_r$, $r\ge 2$,  
is equivalent to some 
$\Gamma_{\mathfrak{sl}_n(\Delta)}^{\mathrm{(II)}}(m;s,\ul{t})$ where either $\Delta=\RR$, 
$n=r+1$, $m\ge 0$ or $\Delta=\HH$, $2n=r+1$, $m\ge 1$, and in both cases $2^m$ is a divisor of 
$r+1$, $s$ is a nonnegative integer such that $q\bydef 2^{-m}(r+1)-2s\ge 0$, and $\ul{t}=(t_1,\ldots,t_q)$ is a $q$-tuple of elements of $\ZZ_2^{2m}$ satisfying $t_1\ne t_2$ if $s=0$ and $q=2$. 
Moreover, two such gradings, $\Gamma_{\mathfrak{sl}_n(\Delta)}^{\mathrm{(II)}}(m;s,\ul{t})$
and $\Gamma_{\mathfrak{sl}_n(\Delta)}^{\mathrm{(II)}}(m';s',\ul{t}')$,
are equivalent if and only if $m=m'$, $s=s'$, and the multisets $\{t_1,\ldots,t_q\}$ and $\{t'_1,\ldots,t'_q\}$ in $\ZZ_2^{2m}$ lie in the same orbit under the action of the affine orthogonal group associated to the quadratic form $Q$ given by equation \eqref{eq:explicit_Q}.\qed 
\end{theorem}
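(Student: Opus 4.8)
The statement is an immediate consequence of the transfer machinery (Theorems~\ref{th:transfer}, \ref{th:A} and Remark~\ref{re:autos_anti}) applied to the associative classification already obtained, so the proof is essentially bookkeeping. First I would recall the set-up: we are looking at outer fine gradings on $\cL=\frsl_n(\Delta)$ with $\Delta\in\{\RR,\HH\}$, and by Theorem~\ref{th:A} (together with Remark~\ref{re:autos_anti} to pass from antiautomorphisms of $M_n(\RR)$ to the exchange involution on $M_n(\Delta)\times M_n(\Delta)$) the automorphism group scheme $\AAut(\cL)$ is isomorphic to $\AAut(\cR,\varphi)$, where $(\cR,\varphi)$ is a finite-dimensional algebra with involution of the second kind that is central simple as an algebra with involution, with center $\KK\simeq\wt\CC=\RR\times\RR$ (this is precisely what makes the grading outer, i.e.\ Type~II). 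Under this isomorphism $\theta$, fine gradings correspond to fine gradings and equivalence to equivalence (Theorem~\ref{th:transfer}). Hence it suffices to read off, from Theorem~\ref{th:fine_real} and Corollary~\ref{cor:fine_real}, the classification of fine gradings on $(\cR,\varphi)$ of this specific kind.

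Second, I would invoke Theorem~\ref{th:fine_real}(3): the relevant graded algebras with involution are exactly $\cM^{\mathrm{(II)}}(2m+1;\RR;q,s,\ul{d})$ (with $\cD=\cD(2m+1;+1)\simeq M_{2^m}(\RR)\times M_{2^m}(\RR)$) and $\cM^{\mathrm{(II)}}(2m+1;\HH;q,s,\ul{d})$ (with $\cD=\cD(2m+1;-1)\simeq M_{2^{m-1}}(\HH)\times M_{2^{m-1}}(\HH)$), subject to the constraint $\deg d_1\ne\deg d_2$ when $(q,s)=(2,0)$. Disregarding the grading, $\cR\simeq M_n(\Delta)\times M_n(\Delta)$ with $n=k\ell=(q+2s)2^{m}$ (resp.\ $(q+2s)2^{m-1}$), which gives the divisibility condition ``$2^m\mid r+1$'' and the relation $q=2^{-m}(r+1)-2s$ (resp.\ with $2^{m-1}$); the condition $q\ge0$ and the exceptional constraint for $(q,s)=(2,0)$ are then exactly as stated. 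The involution $\varphi$ interchanges the two factors, so $\Skew(\cR,\varphi)$ projects isomorphically (as a graded Lie algebra) onto $\frgl_n(\Delta)$, and restricting to the derived subalgebra gives $\cL=\frsl_n(\Delta)$; this identifies $\Gamma_{\frsl_n(\Delta)}^{\mathrm{(II)}}(m;s,\ul t)=\theta\bigl(\Gamma_\cM^{\mathrm{(II)}}(2m+1;\Delta;q,s,\ul d)\bigr)$, with $\ul t$ recording the degrees of $d_1,\dots,d_q$ as elements of $\wb T\simeq\ZZ_2^{2m}$ (each $d_i$ being a symmetric element $(X_v,\pm X_v)$, determined by $v$). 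That these exhaust all outer fine gradings follows from Corollary~\ref{cor:completeness}, which forces any grading on $(\cR,\varphi)$ to be a coarsening of one of the $\cM$'s, hence any fine one to be equivalent to one of the fine members listed.

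Third, for the equivalence criterion I would apply Corollary~\ref{cor:fine_real}(3): the gradings $\cM^{\mathrm{(II)}}(2m+1;\RR;q,s,\ul d)$ are classified up to equivalence by $m$, $q$, $s$, and the orbit of the multiset $\{\deg(d_1)H,\dots,\deg(d_q)H\}$ in $T/H$ under the affine orthogonal group $\AO_+(2m,2)$ with trivial Arf invariant, and likewise for $\HH$ with $\AO_-(2m,2)$ and nontrivial Arf invariant. Here $H=\supp Z(\cD)=\supp\wt\CC\simeq\ZZ_2$ is the distinguished $\ZZ_2$, so $T/H\simeq\wb T=\ZZ_2^{2m}$, and the multiset in $\wb T$ is precisely $\{t_1,\dots,t_q\}$. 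The relevant quadratic form on $\wb T$ is the Arf-controlling form $Q$ of equation~\eqref{eq:explicit_Q} (trivial Arf in the $\RR$ case, nontrivial in the $\HH$ case, matching $\Ort_+$ vs.\ $\Ort_-$), and its polarization is~\eqref{eq:explicit_inner_prod}; the group acting is $\wb T\rtimes\Ort(\wb T,Q)$, i.e.\ the affine orthogonal group associated to $Q$. Gradings from the $\RR$ and $\HH$ families are never equivalent to each other (different $\Delta$, or equivalently different Arf invariant of the division algebra), and across different items of Theorem~\ref{th:fine_real} inequivalence is the first assertion of Corollary~\ref{cor:fine_real}. This yields exactly the stated ``if and only if''. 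The universal group formula~\eqref{eq:univ_AII} is a direct specialization of~\eqref{eq:iso_type_U} and need not be re-derived. The only point requiring a little care—the ``main obstacle,'' though it is minor—is checking that the affine-group action arising abstractly from $A_{\varphi_0}^{+}(\cD)$ in case~(3) of Theorem~\ref{th:equivalenceMM'real} is literally the affine orthogonal group of the concrete form $Q$ in~\eqref{eq:explicit_Q}; this is the content of Lemma~\ref{lm:actions}(ii) applied to the extension $T=\wb T\oplus\langle f\rangle$ with $Q_1=\eta$ nonsingular, together with the explicit description of $\varphi_0$ on $\cD(2m+1;\pm1)$ (the quadratic form $\mu$ on $\wb T$, extended nonsingularly by $\eta(f)=-1$), and it has already been carried out in the paragraphs preceding Theorem~\ref{th:equivalenceMM'real}.
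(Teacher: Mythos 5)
Your proposal is correct and follows exactly the route the paper intends: the paper states this theorem with a \qed and no written argument, citing it as an immediate consequence of the transfer results (Theorems~\ref{th:transfer} and~\ref{th:A}, Remark~\ref{re:autos_anti}) together with Theorem~\ref{th:fine_real}(3) and Corollary~\ref{cor:fine_real}(3), and you have supplied precisely the routine bookkeeping (matching $n=(q+2s)\ell$ with the type $A_r$, translating $\deg d_i\in T$ to $t_i\in\wb T$ via the symmetric normalization $d_i=(X_{t_i},\pm X_{t_i})$, and identifying the abstract action of $A^+_{\varphi_0}(\cD)$ with $\wb T\rtimes\Ort(\wb T,Q)$ via Lemma~\ref{lm:actions}(ii)). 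The only slight imprecision is the parenthetical ``(resp.\ with $2^{m-1}$)'': in the $\HH$ case one has $\ell=2^{m-1}$ and $2^{m-1}\mid n$, but since $2n=r+1$ this again yields $2^m\mid r+1$ and $q=2^{-m}(r+1)-2s$, exactly as in the statement.
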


\begin{remark}
The complexification of $\Gamma_{\mathfrak{sl}_n(\Delta)}^{\mathrm{(II)}}(m;s,\ul{t})$ is a fine grading, which would be  denoted by $\Gamma_A^{\mathrm{(II)}}(\ZZ_2^{2m},q,s,\ul{t})$ in \cite{EKmon}. Up to equivalence, these gradings exhaust all outer fine gradings on the complex simple Lie algebras of series $A$, and they are classified by $m$, $q$, $s$, and the orbit of the multiset $\{t_1,\ldots,t_q\}$ under the action of the affine symplectic group associated to the polarization of $Q$ (see \cite[Theorem 3.58]{EKmon}).
\end{remark}

\subsubsection{Series A: outer gradings on special unitary Lie algebras}

Consider $(\cR,\varphi)=\cM^{\mathrm{(II)}}(2m+1;\CC;q,s,\ul{d})$. Here $\cD=\cD(2m+1;\RR)$ is the real graded-division algebra $\cD(2m;+1)\otimes\CC$ with support $T=\ZZ_2^{2m}\times\ZZ_2$, where $\ZZ_2^{2m}$ is the support of the first factor and $\ZZ_2$ is the support of the center $\CC$ of $\cD$. Thus $\cD\simeq M_\ell(\CC)$, where $\ell=2^m$, with $1$-dimensional homogeneous components spanned by the elements $X_{(v,\bar{0})}\bydef X_v$ and $X_{(v,\bar{1})}\bydef\mathbf{i}X_v$, $v\in\ZZ_2^{2m}$, where $X_v\in M_\ell(\RR)$ is given by equation \eqref{eq:fix_Xt_real}. The (second kind) involution $\varphi_0$ defined on $\cD$ maps $X_{(v,\bar{0})}\mapsto\mu(v)X_{(v,\bar{0})}$ and $X_{(v,\bar{1})}\mapsto-\mu(v)X_{(v,\bar{1})}$, where $\mu(v)=(-1)^{Q(v)}$ and $Q$ is the quadratic form on $\wb{T}=\ZZ_2^{2m}$ given by equation \eqref{eq:explicit_Q}. 

Hence we may identify $\cR\simeq M_k(\RR)\otimes M_\ell(\CC)$ with $M_n(\CC)$, where $n=k\ell$. As in the case of outer gradings on special linear Lie algebras, $q=\frac{n}{\ell}-2s$ and we can choose $\ul{d}$ to consist of symmetric elements of the form $X_v$ or $\mathbf{i}X_v$, except that in this case we also have to allow $(-1)$'s along with $1$'s. Then the elements $d_i$ are determined by the projections $t_i$ of their degrees to $\wb{T}$ except when $t_i=e$, while $n_+^0$ and $n_-^0$ are determined by the number of $t_i$ that are equal to $e$ and by the signature of $\varphi$, which is given by equation \eqref{eq:signatures}. 
The restriction of the grading of $\cR$ to the derived Lie subalgebra $\cL$ of $\mathrm{Skew}(\cR,\varphi)$ is outer and will be denoted by $\Gamma_{\mathfrak{su}(n_+,n_-)}^{\mathrm{(II)}}(m;s,\ul{t})$ where $n_+ + n_- = n$ and 
$n_+ - n_- = 2^{m}\bigl(n^0_+ - n^0_-\bigr)$.
The universal group is given by equation~\eqref{eq:univ_AII}, and the homogeneous components of $\cL$ are as follows:
\begin{align*}
&\cL_{(e,\bar{0})} = \lspan{E_{q+1,q+1}-E_{q+2,q+2},\ldots}\otimes X_{(e,\bar{0})}, \\
&\cL_{(e,\bar{1})} = \operatorname{span}_0\{E_{1,1},\ldots,E_{q,q},E_{q+1,q+1}+E_{q+2,q+2},\ldots\}\otimes X_{(e,\bar{1})} \\
&\cL_{(v,\bar{z})} = \lspan{E_{i,i}\mid 1\le i\le q,\,\beta(t_i,v)\mu(v)=-(-1)^z}\otimes X_{(v,\bar{z})} \\
&\phantom{\cL_{(v,\bar{z})}=} \oplus\lspan{E_{q+1,q+1}-(-1)^z \mu(v) E_{q+2,q+2},\ldots}\otimes X_{(v,\bar{z})},\;v\ne e, \\
&\cL_{(v,Q(v)+\bar{1}) u_i u_j^{-1}} = \RR E_{i,j}\otimes X_{(v,Q(v)+\bar{1})}\;\text{ for }\{i,j\}=\{q+2r-1,q+2r\}, 1\le r\le s, \\
&\cL_{(v,\bar{z}) u_i u_j^{-1}} = \RR\bigr(E_{i,j}\otimes X_v-(-1)^z \mu(v) E_{\sigma(j),\sigma(i)}\otimes d_j^{-1} X_{(v,\bar{z})} d_i\bigr),  
\end{align*} 
where $v\in\wb{T}$, $z\in\ZZ$, the subscript $0$ in the second equation refers to trace $0$, and in the last equation we take $i<j$ and $(i,j)\ne(q+2r-1,q+2r)$. The dimensions of the components are the same as in the case of special linear Lie algebras.

\begin{theorem}\label{th:AII_C}
Any outer fine grading on a special unitary Lie algebra of type~$A_r$, $r\ge 2$,  
is equivalent to some 
$\Gamma_{\mathfrak{su}(n_+,n_-)}^{\mathrm{(II)}}(m;s,\ul{t})$ where $n_+ + n_-=n\bydef r+1$, $n_+\ge n_-$, $2^m$ is a divisor of $\gcd(n,n_+ - n_-)$, $s$ is a nonnegative integer such that $q\bydef 2^{-m}n-2s\ge 0$, and $\ul{t}=(t_1,\ldots,t_q)$ is a $q$-tuple of elements of $\ZZ_2^{2m}$ such that the total number of $t_i$ that
are equal to $e$ is no less and of the same parity as $2^{-m}(n_+ - n_-)$ and also satisfying $t_1\ne t_2$ if $s=0$ and $q=2$.
Moreover, two such gradings, $\Gamma_{\mathfrak{su}(n_+,n_-)}^{\mathrm{(II)}}(m;s,\ul{t})$
and $\Gamma_{\mathfrak{su}(n_+,n_-)}^{\mathrm{(II)}}(m';s',\ul{t}')$,
are equivalent if and only if $m=m'$, $s=s'$, and the multisets $\{t_1,\ldots,t_q\}$ and $\{t'_1,\ldots,t'_q\}$ in $\ZZ_2^{2m}$ lie in the same orbit under the action of the symplectic group $\SP(2m,2)$ associated to the inner product given by equation \eqref{eq:explicit_inner_prod}.\qed
\end{theorem}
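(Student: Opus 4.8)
The plan is to deduce the theorem by transfer from the classification of fine gradings on associative algebras with involution, using the transfer machinery of Subsection~\ref{sse:transfer} together with the results of Sections~\ref{se:fine} and~\ref{se:equivalence}. First I would record the standard fact that the outer real forms of $\mathfrak{sl}_n(\CC)$, $n=r+1\ge 3$, of special unitary type are exactly the Lie algebras $\mathfrak{su}(n_+,n_-)$ with $n_++n_-=n$, and that each such $\cL$ is isomorphic to $[\cK,\cK]$ modulo its centre, where $\cK=\Skew(\cR,\varphi)$ for a pair $(\cR,\varphi)$ that is central simple as an algebra with involution over $\RR$, has centre $Z(\cR)\simeq\CC$ and involution $\varphi$ of the second kind; necessarily $\cR\simeq M_n(\CC)$. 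By Theorem~\ref{th:A} (which applies when $n=3$ because $\chr\RR\ne 3$), the restriction homomorphism $\theta\colon\AAut(\cR,\varphi)\to\AAut(\cL)$ is an isomorphism of affine group schemes, so Theorem~\ref{th:transfer}(2) identifies the fine gradings on $\cL$ up to equivalence with the fine gradings on $(\cR,\varphi)$ up to equivalence; moreover, as recalled after Theorem~\ref{th:A}, the outer gradings on $\cL$ are exactly those corresponding to gradings of Type~II on $(\cR,\varphi)$. It therefore remains to read off, through this dictionary and the explicit model of $\cD(2m+1;\RR)$ from Subsection~\ref{sse:real_basics}, what Theorems~\ref{th:fine_real} and Corollary~\ref{cor:fine_real} give for Type~II fine gradings on $(M_n(\CC),\varphi)$.

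Next I would apply Theorem~\ref{th:fine_real}: up to equivalence, those are the graded algebras with involution $\cM^{\mathrm{(II)}}(2m+1;\CC;q,s,\ul d)=\cM\bigl(\cD(2m+1;\RR),\varphi_0,q,s,\ul d,+1\bigr)$. Writing $\cD(2m+1;\RR)=M_2(\RR)^{\otimes m}\otimes\CC\simeq M_{2^m}(\CC)$ with support $T=\ZZ_2^{2m}\times\ZZ_2$, one gets $\cR\simeq M_k(\RR)\otimes\cD\simeq M_n(\CC)$ with $n=k\,2^m$ and $k=q+2s$, whence $2^m\mid n$ and $q=2^{-m}n-2s\ge 0$. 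Using Proposition~\ref{pr:realXp0} in case (2a), every orbit of the action $c\ast d=cd\varphi_0(c)$ on symmetric homogeneous elements of $\cD$ is represented either by $\pm1$ or, for each $e\ne v\in\ZZ_2^{2m}$, by the unique symmetric homogeneous element whose degree projects to $v$, namely $X_v$ or $\mathbf iX_v$ according to the sign of $\mu(v)=(-1)^{Q(v)}$ with $Q$ the quadratic form~\eqref{eq:explicit_Q} for $\Delta=\RR$; since Corollary~\ref{cor:fine_real} shows only the multiset of orbits matters, I may normalize $\ul d$ to this form and take $n^0_+\ge n^0_-$ (the element $(-1,\id_\cD)\in A_{\varphi_0}^+(\cD)$ swaps the two orbits of degree $e$), then set $t_i$ to be the image of $\deg d_i$ in $\wb T\bydef\ZZ_2^{2m}$. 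Because $T$ is $2$-elementary, $T^{[2]}=\{e\}$, so equation~\eqref{eq:signatures} gives $\mathrm{signature}(\varphi)=2^m\,\mathrm{signature}(\ul d)=2^m(n^0_+-n^0_-)$; identifying $\mathrm{signature}(\varphi)$ with $n_+-n_-$ yields $2^m\mid n_+-n_-$, hence $2^m\mid\gcd(n,n_+-n_-)$, and the number of entries of $\ul t$ equal to $e$, which is $n^0_++n^0_-$, is at least and of the same parity as $2^{-m}(n_+-n_-)$. The case $(q,s)=(2,0)$ is exceptional: Proposition~\ref{pr:GMfineq2s0} applies ($\cD$ has $1$-dimensional components and is central simple as an algebra with involution) and forces $\deg d_1\ne\deg d_2$, i.e.\ $t_1\ne t_2$. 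Restricting $\cM^{\mathrm{(II)}}(2m+1;\CC;q,s,\ul d)$ to $\cL$ and projecting the homogeneous components of $\Skew(\cR,\varphi)$ to $M_n(\CC)$ then produces the grading $\Gamma_{\mathfrak{su}(n_+,n_-)}^{\mathrm{(II)}}(m;s,\ul t)$ displayed above, with universal group~\eqref{eq:univ_AII}; this gives the existence half of the theorem.

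For the equivalence criterion I would invoke Corollary~\ref{cor:fine_real}(2): the algebras $\cM^{\mathrm{(II)}}(2m+1;\CC;q,s,\ul d)$ are classified up to equivalence by $m$, $q$, $s$, $\mathrm{signature}(\ul d)$ and the orbit of the multiset $\{\deg(d_1)H,\dots,\deg(d_q)H\}$ in $V\bydef T/H$ under $\SP(2m,2)$, where $H=\supp Z(\cD)$. Under the identification $T/H\simeq\ZZ_2^{2m}$ one has $\deg d_i\mapsto t_i$, so this multiset becomes $\{t_1,\dots,t_q\}$; fixing the ambient Lie algebra $\mathfrak{su}(n_+,n_-)$ fixes $n$ and $n_+-n_-$, hence fixes $q=2^{-m}n-2s$ and $\mathrm{signature}(\ul d)=2^{-m}(n_+-n_-)$ once $m$ and $s$ are fixed, so these two parameters drop out of the criterion, leaving exactly $m=m'$, $s=s'$ and equality of the $\SP(2m,2)$-orbits of $\{t_i\}$. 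Finally, by Theorem~\ref{th:equivalenceMM'real}(2a) the group acting on $V$ is $\Aut(V,\bar\beta)$ for the nondegenerate alternating bicharacter $\bar\beta$ induced by $\beta$, and a direct computation in the Kronecker-product model of $\cD(2m+1;\RR)$ identifies $\bar\beta(v,v')=(-1)^{\langle v,v'\rangle}$ with $\langle\cdot,\cdot\rangle$ the symplectic form~\eqref{eq:explicit_inner_prod}, so that $\Aut(V,\bar\beta)=\SP(2m,2)$ for precisely that form. I expect this last bookkeeping --- pinning down $Q$, the induced bicharacter $\bar\beta$ and the explicit symplectic form from the tensor model, and matching them against the abstract invariant supplied by Corollary~\ref{cor:fine_real} --- to be the only genuinely laborious step; the conceptual content is entirely contained in Theorems~\ref{th:A}, \ref{th:transfer}, \ref{th:fine_real} and Corollaries~\ref{cor:equivalenceMM'cs} and~\ref{cor:fine_real}.
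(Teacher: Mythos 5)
Your proposal is correct and takes essentially the same approach as the paper, which states this theorem as an immediate consequence of Theorems \ref{th:transfer}, \ref{th:A}, \ref{th:fine_real} and Corollary \ref{cor:fine_real} together with the explicit normalizations set up in the preamble to Theorem \ref{th:AII_C}. You have filled in the bookkeeping the paper leaves implicit --- identifying outer gradings with Type~II gradings on $(\cR,\varphi)$, normalizing $\ul{d}$ to symmetric representatives and $\pm 1$, deducing the divisibility and parity constraints from equation \eqref{eq:signatures}, and matching $\Aut(V,\bar\beta)$ from Theorem \ref{th:equivalenceMM'real}(2a) with $\SP(2m,2)$ for the form \eqref{eq:explicit_inner_prod}.
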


\subsubsection{Series B}

For series $B$, $C$ and $D$, we deal with central simple associative algebras over $\RR$, so we consider $(\cR,\varphi)=\cM(2m;\Delta;q,s,\ul{d},\delta)$. Here $\cD=\cD(2m;\pm 1)$. 

The special feature of series $B$ is that the degree of $\cR$ is odd, 
so it must be $M_n(\RR)$ with odd $n$. This forces $\cD=\RR$ (i.e., $\Delta=\RR$ and $m=0$). 
The involution is orthogonal, so $\delta=1$, and the signature of $\varphi$ is simply the signature of the $q$-tuple 
$\ul{d}$, which we can choose consisting of $\pm 1$, so $n^0_+ + n^0_- = q$. 
The restriction of the grading of $\cM(0;\RR;q,s,\ul{d},+1)$, with odd $q=n-2s$ and $\ul{d}$ as above, to its Lie subalgebra $\mathrm{Skew}(\cR,\varphi)$ will be denoted by $\Gamma_{\frso(n_+,n_-)}(s)$ where $n_{\pm} = n^0_{\pm}+s$. The universal group is $\ZZ_2^{q-1}\times\ZZ^s$, the homogeneous component of degree $e$ has dimension $s$ and the remaining $\frac{n(n-1)}{2}-s$ components have dimension $1$.

\begin{theorem}\label{th:B}
Any fine grading on a real form of type $B_r$ is equivalent to exactly one of the gradings $\Gamma_{\frso(n_+,n_-)}(s)$ where $n_+ + n_- = n\bydef 2r+1$, $n_+\ge n_-$, and $s$ is a nonnegative integer such that $q\bydef n-2s\ge n_+ - n_-$.\qed 
\end{theorem}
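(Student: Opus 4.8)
The plan is to derive Theorem~\ref{th:B} as a direct specialization of Theorem~\ref{th:fine_real} and Corollary~\ref{cor:fine_real} via the transfer technology of Subsection~\ref{sse:transfer}, exactly as announced. First I would invoke Theorems~\ref{th:transfer} and~\ref{th:BCD}: for a central simple real algebra $\cR$ of odd degree $n\ge 5$ with an orthogonal involution $\varphi$, the restriction homomorphism $\theta:\AAut(\cR,\varphi)\to\AAut(\cL)$ onto $\cL=\Skew(\cR,\varphi)$ (which equals its own derived subalgebra here, as $\cR$ has odd degree and there is no center to quotient out) is an isomorphism of affine group schemes, so the classification of fine gradings up to equivalence on real forms of type $B_r$ (all of which are the $\Skew(\cR,\varphi)$ by the discussion following Theorem~\ref{th:BCD}) coincides with that on the corresponding graded algebras with involution.

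Next I would feed in the list from Theorem~\ref{th:fine_real}, restricted to the case that $(\cR,\varphi)$ is central simple over $\RR$ with a first kind involution and $\cR$ has \emph{odd} degree. The only family in item~(1) producing odd degree is $\cM(2m;\RR;q,s,\ul d,\delta)\simeq M_{2^m(q+2s)}(\RR)$, and oddness of $n=2^m(q+2s)$ forces $m=0$ (so $\cD=\RR$, $T=\{e\}$) and $q$ odd. An odd-degree algebra admits no symplectic involution, so $\delta=1$; with $\cD=\RR$ and $T$ trivial the underlying ungraded algebra with involution is just $M_n(\RR)$ with an orthogonal involution, whose only invariant is the signature. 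By Definition~\ref{df:sign_multiset} and formula~\eqref{eq:signatures} (with $\ell=1$, $|T^{[2]}|=1$), the signature of $\varphi$ equals $\mathrm{signature}(\ul d)=|n^0_+-n^0_-|$, and we may take $\ul d$ to consist of $n^0_+$ entries equal to $1$ and $n^0_-$ entries equal to $-1$ with $n^0_+\ge n^0_-$ and $n^0_++n^0_-=q$. Then Corollary~\ref{cor:fine_real}~(1) tells us that, since $T_\delta=T=\{e\}$ and the orthogonal group $\Ort_+(0,2)$ is trivial, these gradings are classified up to equivalence by $m=0$, $q$, $s$ and $\mathrm{signature}(\ul d)$ alone; equivalently by $s$ and the pair $(n^0_+,n^0_-)$. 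Writing $n_\pm=n^0_\pm+s$, one has $n_++n_-=q+2s=n$, $n_+\ge n_-$, and the constraint $n^0_+\ge n^0_-\ge 0$ translates precisely to $q=n-2s\ge n_+-n_-$ (and $q\ge 0$, which is automatic once $q\ge n_+-n_-\ge 0$); moreover $q=n-2s$ is automatically odd since $n$ is odd. Conversely any such $(n_+,n_-,s)$ arises. This establishes the stated parametrization and the ``exactly one'' uniqueness.

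Finally I would record the explicit description of the grading $\Gamma_{\frso(n_+,n_-)}(s)$: it is the restriction to $\Skew(\cR,\varphi)$ of $\Gamma_\cM(0;\RR;q,s,\ul d,+1)$, so its homogeneous components come from equations~\eqref{eq:kd}--\eqref{eq:2d} with $\cD=\RR$, $T=\{e\}$, intersected with the skew-symmetric elements. The component of degree $e$ is spanned by the $s$ matrices $E_{q+2r-1,q+2r}-E_{q+2r,q+2r-1}$ (recalling $\Phi$ from~\eqref{eq:PhiMDpqsdd} with $\delta=1$, the skew-symmetric part of $\bigoplus_i\RR E_{ii}$ relative to the adjunction with respect to $\Phi$ has dimension $s$), all other $\tfrac{n(n-1)}{2}-s$ components being one-dimensional; and the universal group, by Proposition~\ref{pr:G0_is_universal} (or formula~\eqref{eq:iso_type_U} with trivial $T$), is $\ZZ_2^{q-1}\times\ZZ^s$. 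The only genuine point requiring care — and the ``main obstacle'', though a mild one — is the bookkeeping translating the parameters $(q,s,n^0_+,n^0_-)$ into $(n_+,n_-,s)$ together with the inequality $q\ge n_+-n_-$, and checking that the small-degree cases excluded by the hypothesis ``degree $\ge 5$'' in Theorem~\ref{th:BCD} (namely $B_1=A_1$, already covered by the simpler $\frsl_2$ transfer, and $B_2=C_2$, covered in the series-$C$ treatment) do not affect the statement, which is safely phrased for general $r$ since those coincidences give the same answer.
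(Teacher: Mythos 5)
Your proposal is essentially identical to the paper's own treatment, which is exactly the short paragraph of discussion immediately preceding the theorem (hence the \qed on the statement): the odd degree forces $m=0$, $\cD=\RR$, $T$ trivial, $\delta=1$; Corollary~\ref{cor:fine_real}(1) then leaves only $q$, $s$ and the signature as invariants; and the re-parametrization $n_\pm=n^0_\pm+s$ gives exactly the constraint $q\ge n_+-n_-$. Your description of the universal group $\ZZ_2^{q-1}\times\ZZ^s$ and of the component dimensions also matches the paper. One minor slip: you list $B_2=C_2$ among the cases excluded by the hypothesis of Theorem~\ref{th:BCD}, but $\frso_5$ corresponds to degree $5$, which does satisfy ``degree $\ge 5$ and $\ne 8$'' for orthogonal involutions; only $B_1=A_1$ (degree $3$) is genuinely outside the scope of Theorem~\ref{th:BCD}, and the paper indeed handles $r=1$ via the remark immediately following the theorem. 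This slip is harmless and does not affect the argument for $r\ge 2$.
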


\begin{remark}
This result is valid for $r=1$ and gives another parametrization of gradings for the real forms of type $A_1$:
$\frso(3,0)\simeq\frsu(2,0)\simeq\frsl_1(\HH)$ and $\frso(2,1)\simeq\frsu(1,1)\simeq\frsl_2(\RR)$.
\end{remark}

\begin{remark}
As mentioned in the Introduction, series $B$ can be treated in a more straightforward way. The central simple Lie algebras of type $B_r$ ($r\geq 2$) are the orthogonal Lie algebras $\frso(V,\qup)$, for a regular quadratic form $\qup$ on a vector space of dimension $n=2r+1$. The affine group scheme of automorphisms of $\frso(V,\qup)$ is the 
special orthogonal group scheme $\Os^+(V,\qup)$. Hence, given any abelian group $G$, a $G$-grading on 
$\frso(V,\qup)$ is given by a homomorphism $G^D\rightarrow \Os^+(V,\qup)$ or, equivalently,
by a grading $\Gamma: V=\bigoplus_{g\in G}V_g$ on the vector space $V$ such that the associated generic automorphism (see \S \ref{sse:transfer})
$\rho_\Gamma: V\otimes_\FF\FF G\rightarrow V\otimes_\FF\FF G$, 
$v\otimes 1\mapsto \ v\otimes g$ for $v\in V_g$, belongs to $\Os^+(V,\qup)(\FF G)$. 
These conditions on $\Gamma$ amount to the following:
\[
\qup(V_g,V_h)=0\;\text{ for all } g,h\in G\text{ with }gh\ne e\;\text{ and }\; \det\rho_\Gamma=1, 
\]
where $\qup(\cdot,\cdot)$ is the polarization of $\qup(\cdot)$. Now one can proceed as in \cite[\S 4.2]{DET21}.
\end{remark}

\subsubsection{Series C}

Here $\varphi$ is a symplectic involution, so we take $\delta=-1$. Recall that $\cD=\cD(2m;+1)$ for $\Delta=\RR$ and $\cD=\cD(2m;-1)$ for $\Delta=\HH$, so the involution $\varphi_0$ on $\cD$ is orthogonal if $\Delta=\RR$ and symplectic if $\Delta=\HH$, which implies that all entries in the $q$-tuple $\ul{d}$ must be skew-symmetric if $\Delta=\RR$ and symmetric if $\Delta=\HH$. The signature of $\varphi$ is defined in the case $\Delta=\HH$ and given by equation \eqref{eq:signatures}. 

We may identify $\cR\simeq M_k(\RR)\otimes M_\ell(\Delta)$ with $M_n(\Delta)$, where $n=k\ell$ and $\ell=2^m$ if $\Delta=\RR$ and $\ell=2^{m-1}$ if $\Delta=\HH$. We can choose the $q$-tuple $\ul{d}$ to consist of the elements $X_t$, $t\in T=\ZZ_2^{2m}$, given by equation \eqref{eq:fix_Xt_real}, except in the case $\Delta=\HH$ we have to allow $(-1)$'s along with $1$'s. Then the elements $d_i$ are determined by their degrees $t_i$ except when $\Delta=\HH$ and $t_i=e$, and in this latter case $n^0_+$ and $n^0_-$ are determined by the number of $t_i$ that are equal to $e$ and by the signature of $\varphi$.
Note that $t_i\in T_-$ if $\Delta=\RR$ and $t_i\in T_+$ if $\Delta=\HH$, where
\[
T_+=\{t\in T\mid Q(t)=\bar{0}\}\;\text{ and }\;T_-=\{t\in T\mid Q(t)=\bar{1}\},
\]
with $Q$ defined by equation \eqref{eq:explicit_Q}. In both cases, we have $Q(t_i)=\Arf(Q)+\bar{1}$, where $\Arf(Q)$ is the value that $Q$ takes more often: $\bar{0}$ if $\Delta=\RR$ and $\bar{1}$ if $\Delta=\HH$ (which correspond, respectively, to the values $+1$ and $-1$ of $\mu$).

The restriction of the grading of $\cM(2m;\Delta;q,s,\ul{d},-1)$ to its Lie subalgebra $\cL=\mathrm{Skew}(\cR,\varphi)$ will be denoted by $\Gamma_{\frsp(n)}(m;s,\ul{t})$ if $\Delta=\RR$ and $\Gamma_{\frsp(n_+,n_-)}(m;s,\ul{t})$ if $\Delta=\HH$, where in the second case $n_+ + n_- = n$ and $n_+ - n_- = \ell(n^0_+ - n^0_-)$. The universal group is generated by $T$ and the symbols $u_1,\ldots,u_k$, and formula \eqref{eq:iso_type_U} gives
\begin{equation}\label{eq:univ_CD}
U\simeq\ZZ_2^{2m-2\dim T_0+\max(0,q-1)}\times\ZZ_4^{\dim T_0}\times\ZZ^s,
\end{equation} 
where $T_0$ is spanned by the differences of the elements $t_1,\ldots,t_q$ in $\ZZ_2^{2m}$. 
The homogeneous components of $\cL$ are as follows:
\begin{align*}
&\cL_{t} = \lspan{E_{i,i}\mid 1\le i\le q,\,\beta(t_i,t)\mu(t)=-1}\otimes X_t \\
&\phantom{\cL_{t}=} \oplus\lspan{E_{q+1,q+1}-\mu(t) E_{q+2,q+2},\ldots}\otimes X_t, \\
&\cL_{t u_i u_j^{-1}} = \RR E_{i,j}\otimes X_t\;\text{ for }\{i,j\}=\{q+2r-1,q+2r\}, 1\le r\le s,\, Q(t)=\Arf(Q), \\
&\cL_{t u_i u_j^{-1}} = \RR\bigr(E_{i,j}\otimes X_t-\mu(t) E_{\sigma(j),\sigma(i)}\otimes d_j^{-1} X_t d_i\bigr),  
\end{align*} 
where $t\in T$ and in the last equation we take $i<j$ and $(i,j)\ne(q+2r-1,q+2r)$. Note that $d_{q+2r-1}\bydef 1$ and $d_{q+2r}\bydef -\Arf(\mu)$ for $1\le r\le s$. Thus we get that $\dim\cL_t$ for $t\in T$ ($2^{2m}$ components) can range from $s$ to $q+s$: 
\[
\dim\cL_{t}=s+|\{i\mid 1\le i\le q,\,Q(t_i+t)=\Arf(Q)\}|,
\]
and the remaining $2^{m}\bigl(2^{m-1}k(k-1)+s\bigr)$ components have dimension $1$.

\begin{theorem}\label{th:C}
Any fine grading on a real form of type $C_r$ 
is equivalent to 
\begin{itemize}
\item either $\Gamma_{\frsp(2r)}(m;s,\ul{t})$ with $m\ge 0$ and $2^m$ a divisor of $2r$, 
\item or $\Gamma_{\frsp(n_+,n_-)}(m;s,\ul{t})$ with $n_+ + n_- = r$, $n_+\ge n_-$, $m\ge 1$ and $2^{m-1}$ a divisor of $\gcd(r,n_+ - n_-)$, 
\end{itemize}
where in both cases $s$ is a nonnegative integer such that $q\bydef 2^{-m+1}r-2s\ge 0$ 
and $\ul{t}=(t_1,\ldots,t_q)$ is a $q$-tuple of elements of $T=\ZZ_2^{2m}$ that in the first case belong to $T_-$ and in the second case to $T_+$ with the total number of $t_i$ that are equal to $e$ being no less and of the same parity as $2^{-m+1}(n_+ - n_-)$, while in both cases satisfying $t_1\ne t_2$ if $s=0$ and $q=2$.
Moreover, these gradings are classified up to equivalence by $m$, $s$, and the orbit of the multiset
$\{t_1,\ldots,t_q\}$ under the action of the orthogonal group  
associated to the quadratic form $Q$.\qed 
\end{theorem}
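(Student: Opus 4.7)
The plan is to follow the transfer strategy already used for the other classical series. Real forms of type $C_r$ are, up to isomorphism, the Lie algebras $\cL=\Skew(\cR,\varphi)$ where $(\cR,\varphi)$ is a finite-dimensional central simple real algebra with symplectic (first kind) involution such that the degree of $\cR$ is $2r\ge 4$. By Theorem \ref{th:BCD}, the restriction map $\theta:\AAut(\cR,\varphi)\to\AAut(\cL)$ is an isomorphism of affine group schemes, so Theorem \ref{th:transfer} reduces the classification of fine abelian group gradings on $\cL$ up to equivalence to the same classification on $(\cR,\varphi)$.

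Applying Theorem \ref{th:fine_real} with the fixed value $\delta=-1$ (symplectic involution), the representatives of the equivalence classes of fine gradings on $(\cR,\varphi)$ are the graded algebras with involution $\cM(2m;\RR;q,s,\ul{d},-1)$ and $\cM(2m;\HH;q,s,\ul{d},-1)$. In the real case $\cD=\cD(2m;+1)$ with the transposition as $\varphi_0$ (orthogonal), and the skew-symmetry condition $\varphi_0(d_i)=-d_i$, combined with the one-dimensionality of the homogeneous components of $\cD$, translates via the quadratic form $\mu$ (equivalently $Q$, see \eqref{eq:explicit_Q}) into $t_i\bydef\deg d_i\in T_-=\{t\in T\mid Q(t)=\bar 1\}$. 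In the quaternionic case $\cD=\cD(2m;-1)$, whose distinguished involution is symplectic; by the very definition of $\cM(2m;\HH;\ldots)$, the parameter $\delta=-1$ becomes $-\delta=+1$ for this involution, so the $d_i$ must be symmetric and hence $t_i\in T_+=\{t\in T\mid Q(t)=\bar 0\}$. The exceptional restriction $t_1\ne t_2$ when $(q,s)=(2,0)$ is imposed by Proposition \ref{pr:GMfineq2s0}. The degree count $n=k\ell$, with $n=2r$ and $\ell=2^m$ for $\Delta=\RR$ and $n=r$, $\ell=2^{m-1}$ for $\Delta=\HH$ (since $M_n(\HH)$ has $\RR$-degree $2n$), yields $q+2s=2^{-m+1}r$ and hence the divisibility condition on $r$. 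In the quaternionic case, the signature formula \eqref{eq:signatures} with $|T^{[2]}|=1$ gives $n_+-n_-=\ell\,(n_+^0-n_-^0)$, forcing $2^{m-1}\mid(n_+-n_-)$ and the number of $t_i$ equal to $e$ to be no smaller than and of the same parity as $2^{-m+1}(n_+-n_-)$; no signature appears in the real case.

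For the equivalence statement, Corollary \ref{cor:fine_real} asserts that two such gradings are equivalent if and only if they share the values of $m$, $s$, the signature (where defined), and the orbit of the multiset $\{t_1,\ldots,t_q\}$ under the action of $\Ort_+(2m,2)$ in the real case and $\Ort_-(2m,2)$ in the quaternionic case. Since the quadratic form $Q$ of \eqref{eq:explicit_Q} has trivial Arf invariant precisely when $\Delta=\RR$ and nontrivial Arf invariant when $\Delta=\HH$, both groups coincide with the orthogonal group of $Q$, which yields the unified formulation in the theorem.

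The main difficulty, as in the preceding theorems of this section, will be purely bookkeeping: correctly matching $n$ with $r$, $\ell$ with $2^m$ or $2^{m-1}$, the substitution $\delta\mapsto-\delta$ in $\cM(2m;\HH;\ldots)$, and the translation of the symmetric-versus-skew-symmetric condition on $d_i$ into the membership of $t_i$ in $T_+$ or $T_-$. Once this is done, the description of the universal group (formula \eqref{eq:univ_CD}) and of the homogeneous components of $\cL$, both recorded just before the statement, follow from a direct calculation using equations \eqref{eq:kd}--\eqref{eq:2d}, and nothing beyond the results of Sections \ref{se:fine} and \ref{se:equivalence} is needed.
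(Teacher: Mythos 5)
Your proposal is correct and follows exactly the paper's route: transfer via Theorems \ref{th:transfer} and \ref{th:BCD} to the associative setting, then specialization of Theorem \ref{th:fine_real} and Corollary \ref{cor:fine_real} with $\delta=-1$, including the correct translation of the (skew-)symmetry of the $d_i$ into $t_i\in T_\mp$, the degree and signature bookkeeping, and the identification of $\Ort_\pm(2m,2)$ with the orthogonal group of $Q$. The paper indeed treats the theorem as an immediate consequence of these results, so nothing is missing.
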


\begin{remark}
This result is valid for $r=1$ and gives yet another parametrization of gradings for the real forms of type $A_1$: $\frsp(2)=\frsl_2(\RR)$ and $\frsp(1,0)=\frsl_1(\HH)$.
We also get another parametrization of gradings for type $B_2=C_2$.
\end{remark}

\subsubsection{Series D}

Here $\varphi$ is an orthogonal involution, so we take $\delta=1$. The situation is very similar to series $C$, but the roles of $\RR$ and $\HH$ are interchanged and also $\Arf(Q)$ should be replaced by $\Arf(Q)+\bar{1}$ and vice versa.

The restriction of the grading of $\cM(2m;\Delta;q,s,\ul{d},+1)$ to its Lie subalgebra $\cL=\mathrm{Skew}(\cR,\varphi)$ will be denoted by $\Gamma_{\frso(n_+,n_-)}(m;s,\ul{t})$ if $\Delta=\RR$ and $\Gamma_{\mathfrak{u}^*(n)}(m;s,\ul{t})$ if $\Delta=\HH$. The universal group is given by equation \eqref{eq:univ_CD}, and the homogeneous components are like in series $C$ (with the changes mentioned above). There are $2^{2m}$ components whose dimension can range from $s$ to $q+s$: 
\[
\dim\cL_{t}=s+|\{i\mid 1\le i\le q,\,Q(t_i+t)=\Arf(Q)+\bar{1}\}|,
\]
and the remaining $2^{m}\bigl(2^{m-1}k(k-1)-s\bigr)$ components have dimension $1$.

\begin{theorem}\label{th:D}
Any fine grading on a real form of type $D_r$ with $r=3$ or $r\ge 5$ 
is equivalent to 
\begin{itemize}
\item either $\Gamma_{\mathfrak{u}^*(r)}(m;s,\ul{t})$ with $m\ge 1$ and $2^{m-1}$ a divisor of $r$, 
\item or $\Gamma_{\frso(n_+,n_-)}(m;s,\ul{t})$ with $n_+ + n_- = 2r$, $n_+\ge n_-$, $m\ge 0$ and $2^m$ a divisor of $\gcd(2r,n_+ - n_-)$, 
\end{itemize}
where in both cases $s$ is a nonnegative integer such that $q\bydef 2^{-m+1}r-2s\ge 0$ 
and $\ul{t}=(t_1,\ldots,t_q)$ is a $q$-tuple of elements of $T=\ZZ_2^{2m}$ that in the first case belong to $T_-$ and in the second case to $T_+$ with the total number of $t_i$ that are equal to $e$ being no less and of the same parity as $2^{-m}(n_+ - n_-)$, while in both cases satisfying $t_1\ne t_2$ if $s=0$ and $q=2$.
Moreover, these gradings are classified up to equivalence by $m$, $s$, and the orbit of the multiset
$\{t_1,\ldots,t_q\}$ under the action of the orthogonal group  
associated to the quadratic form $Q$.\qed
\end{theorem}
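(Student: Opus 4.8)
The plan is to deduce Theorem \ref{th:D} as a direct consequence of the work already done, exactly in the spirit announced at the beginning of Subsection \ref{se:Lie}: combine the transfer machinery (Theorem \ref{th:transfer}) with the identification of automorphism group schemes (Theorem \ref{th:BCD}) and with the classification of fine gradings on central simple associative algebras with involution (Theorem \ref{th:fine_real}, Corollary \ref{cor:fine_real}). First I would recall that a real central simple Lie algebra of type $D_r$ with $r=3$ or $r\ge 5$ is, up to isomorphism, $\Skew(\cR,\varphi)$ for a central simple associative algebra $\cR$ over $\RR$ of degree $2r$ (note $2r\ge 6$ and $2r\ne 8$) equipped with an orthogonal involution $\varphi$; here one uses that $\cL=\Skew(\cR,\varphi)$ is already its own derived subalgebra and has trivial center in these dimensions, so no passage to a quotient is needed (contrast with series $A$). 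By Theorem \ref{th:BCD} the restriction map $\theta:\AAut(\cR,\varphi)\to\AAut(\cL)$ is an isomorphism of affine group schemes, and therefore, by Theorem \ref{th:transfer}(2), the equivalence classes of fine gradings on $\cL$ correspond bijectively to those on $(\cR,\varphi)$.

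Next I would invoke Theorem \ref{th:fine_real}: since $\varphi$ is orthogonal (hence of the first kind, so $\KK=\RR$) and $\cR\simeq M_{2r}(\RR)$, the graded algebras with involution realizing the fine gradings are precisely those of item (1) with $\delta=+1$, namely $\cM(2m;\RR;q,s,\ul d,+1)$ and $\cM(2m;\HH;q,s,\ul d,+1)$ (the nonsimple and second-kind families in items (2), (3) are excluded because $\cR$ is central simple over $\RR$). For these we have, disregarding the grading, $\cR\simeq M_{2^m\ell'}(\Delta)$ where the degree as a real algebra equals $2r$, which yields the divisibility conditions $2^m\mid 2r$ for $\Delta=\RR$ and $2^{m-1}\mid r$ for $\Delta=\HH$; the constraint $q+2s=2^{-m+1}r$ (respectively with the $\HH$ normalization) comes from $k=q+2s$ and $n=k\ell$. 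The signature bookkeeping, and in particular the statements ``$n_+-n_-=2^{-m}$ (resp.\ $2^{-m+1}$) times $n^0_+-n^0_-$'', follows from equation \eqref{eq:signatures} together with $|T^{[2]}|$ being the appropriate power of $2$; the parity-and-size condition on the number of $t_i$ equal to $e$ is exactly the translation of $\mathrm{signature}(\ul d)=|n^0_+-n^0_-|$ with $n^0_++n^0_-$ the multiplicity of $e$ in $\Sigma(\ul d)$, as spelled out in Definition \ref{df:sign_multiset}. The condition $\deg d_1\ne\deg d_2$ when $(q,s)=(2,0)$ is precisely the hypothesis of Proposition \ref{pr:GMfineq2s0}, guaranteeing fineness in the exceptional case excluded from Theorem \ref{th:GMfine}. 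The reason the roles of $\RR$ and $\HH$ are interchanged relative to series $C$ is that there $\varphi$ is symplectic ($\delta=-1$): by the rule \eqref{eq:Arf}, the distinguished involution $\varphi_0$ on $\cD(2m;+1)$ is orthogonal and on $\cD(2m;-1)$ is symplectic, and composing with $\delta$ swaps which $\Delta$ gives an orthogonal $\varphi$; correspondingly the admissible degrees $t_i$ land in $T_+$ versus $T_-$ and $\Arf(Q)$ is replaced by $\Arf(Q)+\bar 1$, as stated in the theorem and explained just before it.

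Finally, for the classification up to equivalence I would quote Corollary \ref{cor:fine_real}, item (1): within each family $\cM(2m;\Delta;q,s,\ul d,+1)$ the invariants are $m$, $q$, $s$, $\delta$ (fixed to $+1$ here), $\mathrm{signature}(\ul d)$, and the orbit of the multiset $\{\deg d_1,\ldots,\deg d_q\}$ in $T_{\pm}$ under the relevant orthogonal group $\Ort_{\pm}(2m,2)$ — which by Theorem \ref{th:equivalenceMM'real}(1) is $\Aut(T,\mu)$, the stabilizer of the quadratic form $Q$ (equivalently $\mu$) given by equation \eqref{eq:explicit_Q}; since $\mathrm{signature}(\ul d)$ together with the multiplicity of $e$ in the multiset recovers the unordered pair $\{n^0_+,n^0_-\}$, the pair $(n_+,n_-)$ and the multiset-orbit together form a complete set of invariants, so one may state it as ``classified by $m$, $s$, and the orbit of $\{t_1,\ldots,t_q\}$ under the orthogonal group of $Q$''. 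Transporting everything through the isomorphism $\theta$ preserves these invariants by Theorem \ref{th:transfer}(2). The only genuinely non-mechanical steps, which I would carry out explicitly rather than by citation, are: (a) computing the homogeneous components of $\cL$ from those of $\cR$ (equations \eqref{eq:kd}–\eqref{eq:2d}) after projecting to skew-symmetric elements, to justify the displayed dimension formula $\dim\cL_t=s+|\{i:Q(t_i+t)=\Arf(Q)+\bar 1\}|$ and the count $2^m(2^{m-1}k(k-1)-s)$ of one-dimensional components; and (b) verifying the universal-group formula \eqref{eq:univ_CD} via equation \eqref{eq:iso_type_U} applied to the elementary $2$-group $T=\ZZ_2^{2m}$, where $T_0$ is the span of the differences of the $t_i$. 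I expect step (a) — the explicit matching of homogeneous components of $\Skew(\cR,\varphi)$ with the $\Delta$-valued matrix blocks, keeping careful track of the signs $\mu(t)$ coming from $\varphi_0$ — to be the main bookkeeping obstacle, although it is entirely parallel to the computation already performed for series $C$; everything else is a transcription of earlier results with the $\RR\leftrightarrow\HH$ and $\Arf(Q)\leftrightarrow\Arf(Q)+\bar 1$ swap.
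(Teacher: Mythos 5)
Your proposal is correct and follows the same route the paper takes: transfer via Theorems \ref{th:transfer} and \ref{th:BCD}, reduce to item~(1) of Theorem \ref{th:fine_real} with $\delta=+1$ and Corollary \ref{cor:fine_real}, then transcribe the series-$C$ bookkeeping with the $\RR\leftrightarrow\HH$ and $\Arf(Q)\leftrightarrow\Arf(Q)+\bar 1$ swap, which you correctly trace to the interaction of the rule \eqref{eq:Arf} with $\delta$. Two small slips worth fixing: (i) you wrote ``$n_+-n_-=2^{-m}\,(n^0_+-n^0_-)$'' but equation \eqref{eq:signatures} with $\ell=2^m$ and $|T^{[2]}|=1$ gives $n_+-n_-=2^m(n^0_+-n^0_-)$, i.e.\ $n^0_+-n^0_-=2^{-m}(n_+-n_-)$, which is what the theorem uses; and (ii) the parenthetical ``(resp.\ $2^{-m+1}$)'' is spurious in series $D$, since only $\Delta=\RR$ carries a signature here --- in the $\mathfrak{u}^*$ case all $t_i\in T_-$ so none equal $e$ and there is no parity constraint at all.
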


\begin{remark} 
This gives another parametrization of gradings for type $A_3=D_3$.
\end{remark}

\subsection{Classification of fine gradings on real forms of $D_4$}

The phenomenon of triality (which is closely related to the existence of octonions) makes type $D_4$ different from other members of series $D$. First, the larger automorphism group scheme of the split simple Lie algebra of type $D_4$ affects the classification of simple Lie algebras of this type: there may be those among them that cannot be expressed as $\Skew(\cR,\varphi)$ for a central simple associative algebra $\cR$ of degree $8$ and an orthogonal involution $\varphi$ (if the ground field admits a cubic field extension), and those that can be expressed in this way  may be isomorphic to $\Skew(\cR,\varphi)$ for more than one nonisomorphic $(\cR,\varphi)$. This latter possibility is indeed realized over the field of real numbers: $\frso(6,2)$ is isomorphic to the algebra $\mathfrak{u}^*(4)$ of skew elements in $M_4(\HH)$.

Second, in regard to gradings, even if a simple Lie algebra $\cL$ of type $D_4$ can be expressed as $\Skew(\cR,\varphi)$, it may admit what we called Type III gradings, which do not come from any such $(\cR,\varphi)$. Furthermore, nonisomorphic (or nonequivalent) gradings on $(\cR,\varphi)$ may restrict to isomorphic (or equivalent) gradings on $\Skew(\cR,\varphi)$. As shown in \cite{DMV10,E10} (see also \cite[\S 6.1]{EKmon}), this already happens over the field of complex numbers: $\frso_8(\CC)$ admits three inequivalent fine gradings of Type III, with universal groups $\ZZ^2\times\ZZ_3$, $\ZZ_2^3\times\ZZ_3$ and $\ZZ_3^3$, and the fine gradings with universal group $\ZZ_2^3\times\ZZ_4$ corresponding to rows $10$ and $14$ in Table \ref{tb:M8}, which we now denote by $\Gamma^1_\CC$ and $\Gamma^2_\CC$, respectively, have equivalent restrictions to $\frso_8(\CC)$. 

A more conceptual approach, which works over any algebraically closed field $\FF$ of characteristic different from $2$, was introduced in \cite{EK_D4}. It transfers the problem of classifying gradings on $D_4$ to the so-called \emph{trialitarian algebras} (see \cite[\S 43.A]{KMRT}), whose definition is quite involved, but ultimately goes back to the fact that the so-called \emph{para-Hurwitz product} of the split Cayley algebra $\cC$, over any field of characteristic different from $2$, gives rise to an explicit isomorphism from the Clifford algebra of $\End_\FF(\cC)$, with involution defined by the polarization of the norm of $\cC$, onto $\End_\FF(\cC)\times\End_\FF(\cC)$. If the algebra with involution $\End_\FF(\cC)$ has a grading $\Gamma$ by an abelian group $G$, then this isomorphism defines a $G$-grading on $\End_\FF(\cC)\times\End_\FF(\cC)$. For $\Gamma$ of Type I, the center of the Clifford algebra is trivially graded, so both factors become graded, which gives us two new $G$-gradings, $\Gamma'$ and $\Gamma''$, on $\End_\FF(\cC)$. We call $(\Gamma,\Gamma',\Gamma'')$ a \emph{related triple of gradings}. There is an $S_3$-action on these triples defined as follows: $(1,2,3)$ simply permutes the components of the triple cyclically (so all three are of Type I), while $(2,3)$ moves $(\Gamma,\Gamma',\Gamma'')$ to $\bigl(\wb{\Gamma},\wb{\Gamma''},\wb{\Gamma'}\bigr)$, where bar denotes the image of the grading under the involutive automorphism of $\End_\FF(\cC)$ given by the conjugation with the standard involution of $\cC$ (which is an improper isometry of the norm). 

Similarly, an automorphism $f$ of the algebra with involution $\End_\FF(\cC)$ gives an automorphism of $\End_\FF(\cC)\times\End_\FF(\cC)$, which acts trivially on the center if and only if $f$ is \emph{proper}, i.e., given by the conjugation with a proper similarity of the norm. Thus any proper automorphism $f$ determines a related triple $(f,f',f'')$, and $S_3$ acts on these triples. Now, if $\Gamma_1$ and $\Gamma_2$ are Type I gradings on $\End_\FF(\cC)$ that are \emph{properly isomorphic} in the sense that $\Gamma_2$ is the image of $\Gamma_1$ under a proper automorphism, then there is a related triple of proper automorphisms that sends $(\Gamma_1,\Gamma'_1,\Gamma''_1)$ to $(\Gamma_2,\Gamma'_2,\Gamma''_2)$. It follows that $S_3$ acts on the set of triples $([\Gamma],[\Gamma'],[\Gamma''])$, where $[\Gamma]$ denotes the proper isomorphism class of a Type I grading $\Gamma$. 

Finally, $\frso(\cC)$ is isomorphic to the algebra of derivations of the algebra with involution $\End_\FF(\cC)$ and can be identified with the Lie algebra of related triples of derivations. This gives an $S_3$-action by outer automorphisms of $\frso(\cC)$, and the above $S_3$-action on $(\Gamma,\Gamma',\Gamma'')$ corresponds to this $S_3$-action on the restriction of $\Gamma$ to $\frso(\cC)$. Only the subgroup of $S_3$ generated by $(2,3)$ is present in the automorphism group of $\End_\FF(\cC)$, so the isomorphism class of a Type I grading on $\frso(\cC)$ may correspond to $1$, $2$ or $3$ isomorphism classes of Type I gradings on $\End_\FF(\cC)$, depending on the size of the corresponding $S_3$-orbit of $([\Gamma],[\Gamma'],[\Gamma''])$. On the other hand, for Type II gradings we have a one-to-one correspondence. All of these constructions commute with the functors induced by homomorphisms of grading groups, so we can replace ``isomorphism'' by ``weak isomorphism''. 

In particular, this explains why the two $\ZZ_2^3\times\ZZ_4$-gradings on $\frso_8(\CC)$, obtained by restricting $\Gamma^1_\CC$ and $\Gamma^2_\CC$ above, are equivalent to one another. In \cite[\S 7.3]{EK_Israel}, there are explicit formulas to compute, in the case of algebraically closed $\FF$, the parameter $T$ (the support of the corresponding graded-division algebra) of each of the two factors of the Clifford algebra associated to a graded algebra with involution of the form $\End_\cD(\cV)$ for which it makes sense (simple as an algebra, has an even dimension, an orthogonal involution and a Type I grading). Applying this to $\Gamma^1_\CC$, which itself has $T\simeq\ZZ_2^2$, we find that the remaining two entries, $\Gamma'_\CC$ and $\Gamma''_\CC$, in its related triple have $T\simeq\ZZ_2^4$ and, therefore, must be weakly isomorphic to $\Gamma^2_\CC$, since it is the only one on our list of fine gradings that has Type I, universal group $\ZZ_2^3\times\ZZ_4$ and $T\simeq\ZZ_2^4$. (Alternatively, we can start with $\Gamma^2_\CC$ and find that one of the remaining two entries in its related triple has $T\simeq\ZZ_2^2$ and the other has $T\simeq\ZZ_2^4$.) If we now denote by $[\Gamma]$ the proper weak isomorphism class of $\Gamma$, it follows that the stabilizer of $([\Gamma^1_\CC],[\Gamma'_\CC],[\Gamma''_\CC])$ under the $S_3$-action is contained in the subgroup generated by $(2,3)$. This stabilizer cannot be trivial, since in that case the weak isomorphism classes of $\Gamma^1_\CC$, $\Gamma'_\CC$ and $\Gamma''_\CC$ would all be distinct. So, we must actually have $[\wb{\Gamma^1_\CC}]=[\Gamma^1_\CC]$ and $[\wb{\Gamma'_\CC}]=[\Gamma''_\CC]\ne[\Gamma'_\CC]$. 

This approach works over $\RR$, but the situation is more complicated due to the presence of Galois actions. As shown in \cite{EK_G2D4}, only two of the fine gradings of Type III descend to real forms, namely, the gradings with universal groups $\ZZ^2\times\ZZ_3$ and $\ZZ_2^3\times\ZZ_3$ descend to $\frso(5,3)$ (which is related to the split Cayley algebra over $\RR$) and only the $\ZZ_2^3\times\ZZ_3$-grading descends to $\frso(7,1)$ (which is related to the division Cayley algebra over $\RR$). As to gradings of Types I and II, the following holds (using the notation in Table \ref{tb:M8}):
\begin{enumerate}
\item[(a)] For $\cL=\frso(4,4)$ (resp., $\frso(8,0)$) and $\cR=M_{4+4}(\RR)$ (resp., $M_{8+0}(\RR)$), we have a bijection between
\begin{itemize}
\item The isomorphism classes of Type I gradings on $\cL$ and the $S_3$-orbits of triples $([\Gamma],[\Gamma'],[\Gamma''])$ for $\cR$;
\item The isomorphism classes of Type II gradings on $\cL$ and those on $\cR$;
\end{itemize}

\item[(b)] For $\cL=\frso(5,3)$ (resp., $\frso(7,1)$) and $\cR=M_{5+3}(\RR)$ (resp., $M_{7+1}(\RR)$), we have a bijection between
\begin{itemize}
\item The isomorphism classes of Type I gradings on $\cL$ and those on $\cR$;
\item The isomorphism classes of Type II gradings on $\cL$ and those on $\cR$;
\end{itemize}

\item[(c)] For $\cL=\frso(6,2)$, we have a bijection between
\begin{itemize}
\item The isomorphism classes of Type I gradings on $\cL$ and those on $M_{6+2}(\RR)$;
\item The isomorphism classes of Type II gradings on $\cL$ and the disjoint union of those on $M_{6+2}(\RR)$ and on $M_4(\HH)$.
\end{itemize}
\end{enumerate}
Again, we may replace ``isomorphism'' by ``weak isomorphism''. As over $\CC$, Type II gradings cannot have Type III refinements because the subgroup $A_3$ of $S_3$ does not contain transpositions, while Type I gradings can have Type III refinements, but not those listed in Table \ref{tb:M8}, because none of their universal groups is a quotient of $\ZZ^2\times\ZZ_3$ or $\ZZ_2^3\times\ZZ_3$. The result is summarized in Table \ref{tb:D4}.

\begin{table}
\centering
\caption{Fine gradings and their universal groups for the real forms of $\frso_8(\CC)$, grouped in rows according to the equivalence class of their complexification. Notation: the gray color marks gradings of Type I and the dark gray marks those of Type III.}
\label{tb:D4}
\begin{tabular}{|l|c|c|c|c|c|}
\hline
& 
$\frso(4,4)$ & $\frso(5,3)$ & $\frso(6,2)$ & $\frso(7,1)$ & $\frso(8,0)$ \\
\hline
\rowcolor{lightgray}
$\ZZ^4$ &
$1$ &&&& \\
\hline
\rowcolor{lightgray}
$\ZZ_2^2\times\ZZ^2$ &
$1$ &     &     &     &     \\
\hline
\rowcolor{lightgray}
$\ZZ_2^3\times\ZZ$ &
$2$ &     & $1$ &     &      \\
\hline
\rowcolor{lightgray}
$\ZZ_2^5$ &
$2$ &     & $1$ &     & $1$  \\
\hline
\rowcolor{lightgray}
$\ZZ_2^3\times\ZZ_4$ &
$2$ &     & $2$ &     &      \\
\hline 
\rowcolor{lightgray}
$\ZZ_2^4\times\ZZ$ &
$1$ &     &     &     &      \\
\hline
\rowcolor{lightgray}
$\ZZ_2^6$ &
$1$ &     &     &     & $1$  \\
\hline
$\ZZ_2\times\ZZ^3$ &
$1$ & $1$ &&& \\
\hline
$\ZZ_2^3\times\ZZ^2$ &
$1$ & $1$ & $1$ && \\
\hline
$\ZZ_2^5\times\ZZ$ &
$1$ & $1$ & $1$ & $1$ & \\
\hline
$\ZZ_2^7$ &
$1$ & $1$ & $1$ & $1$ & $1$  \\
\hline
$\ZZ_2\times\ZZ_4\times\ZZ$ &
$1$ & $1$ & $1$ &     &     \\
\hline
$\ZZ_2^3\times\ZZ_4$ &
$1$ & $2$ & $1$ & $1$ &      \\
\hline 
$\ZZ_2\times\ZZ_4^2$ &
$1$ & $1$ & $2$ &     &      \\
\hline 
\rowcolor{gray}
$\ZZ^2\times\ZZ_3$ &
    & $1$ &&&  \\
\hline
\rowcolor{gray}
$\ZZ_2^3\times\ZZ_3$ &
    & $1$ &     & $1$ & \\
\hline
\end{tabular}

\end{table}

The only case that requires further comment is the number of equivalence classes of Type I gradings with universal group $\ZZ_2^3\times\ZZ_4$ on the split real form $\frso(4,4)$. Up to equivalence, there are $4$ such gradings on $M_{4+4}(\RR)$: two real forms of $\Gamma^1_\CC$, which we denote by $\Gamma^1_1$ and $\Gamma^1_2$, and two real forms of $\Gamma^2_\CC$, which we denote by $\Gamma^2_1$ and $\Gamma^2_2$. Consider the related triples $(\Gamma^1_j,\Gamma'_j,\Gamma''_j)$, $j=1,2$. We claim that $([\Gamma^1_1],[\Gamma'_1],[\Gamma''_1])$ and $([\Gamma^1_2],[\Gamma'_2],[\Gamma''_2])$ are not in the same $S_3$-orbit. Indeed, $\Gamma'_j$ and $\Gamma''_j$ are real forms of $\Gamma'_\CC$ and $\Gamma''_\CC$, respectively, which are weakly isomorphic to $\Gamma^2_\CC$, so the only elements of $S_3$ that could send $([\Gamma^1_1],[\Gamma'_1],[\Gamma''_1])$ to $([\Gamma^1_2],[\Gamma'_2],[\Gamma''_2])$ are the identity and $(2,3)$, which is impossible because $\Gamma^1_1$ and $\Gamma^1_2$ are not weakly isomorphic. Therefore, the orbits of $([\Gamma^1_1],[\Gamma'_1],[\Gamma''_1])$ and $([\Gamma^1_2],[\Gamma'_2],[\Gamma''_2)]$ are disjoint, so each of them accounts for the weak isomorphism class of its own $\Gamma^1_j$ and the weak isomorphism class of one of the two real forms of $\Gamma^2_\CC$. It also follows that $[\wb{\Gamma^1_j}]=[\Gamma^1_j]$ and $[\wb{\Gamma'_j}]=[\Gamma''_j]\ne[\Gamma'_j]$. 

In order to determine which of the two real forms of $\Gamma_\CC^2$ corresponds to
$\Gamma_1^1$ and which to $\Gamma_2^1$, we must consider some details of these gradings. To begin with, all four gradings have $24$ homogeneous components of dimension $1$ and $2$ homogeneous components of dimension $2$. Moreover, the sum
of the $2$-dimensional homogeneous components is a Cartan subalgebra, as this is
true after complexification (see \cite{E10} or \cite[\S 6.1]{EKmon}). 
For each of the four gradings, we are going to describe these $2$-dimensional components
(recall at this point Example \ref{ex:M8}):

\smallskip

\noindent$\boxed{\Gamma_1^1}$ $T=\ZZ_2^2=\langle a,b\rangle$, $\Sigma=\{e,e,a,a\}$, 
and the signature is $0$. In this case, $\cD=\cD(2;+1)$ is $M_2(\RR)$ with the following grading: $\cD=\cD_e\oplus\cD_a\oplus\cD_b\oplus\cD_c$ where $c=ab$ and $\cD_t=\RR X_t$ with $X_e=1$, 
$X_a=\left[\begin{smallmatrix} 1&0\\ 0&-1\end{smallmatrix}\right]$,
$X_b=\left[\begin{smallmatrix} 0&1\\ 1&0\end{smallmatrix}\right]$, 
$X_c=\left[\begin{smallmatrix} 0&1\\ -1&0\end{smallmatrix}\right]$, while 
$\varphi_0$ is the matrix transposition. The involution $\varphi$ is given by $\varphi(X)=\Phi^{-1}\varphi_0(X^T)\Phi$ for all $X\in M_4(\cD)\simeq M_4(\RR)\otimes \cD$ where $\Phi=\diag(1,-1,X_a,X_a)$. The $2$-dimensional homogeneous components in $\cL=\Skew(M_4(\cD),\varphi)\simeq \frso(4,4)$ are
\[
\cL_c=\lspan{E_{11},E_{22}}\otimes \RR X_c\;\text{ and }\;
\cL_b=\lspan{E_{33},E_{44}}\otimes \RR X_b.
\]
As $X_b^2=1=-X_c^2$, it turns out that the nonzero elements of $\cL_c$ are not 
ad-diagonalizable over $\RR$, while those of $\cL_b$ are all ad-diagonalizable.

\smallskip

\noindent$\boxed{\Gamma_2^1}$ $T=\ZZ_2^2$, $\Sigma=\{a,a,b,b\}$, $\cD$ and $\varphi_0$ are as in the previous case, while $\Phi=\diag(X_a,X_a,X_b,X_b)$. The $2$-dimensional components are 
\[
\cL_b=\lspan{E_{11},E_{22}}\otimes \RR X_b\;\text{ and }\;
\cL_a=\lspan{E_{33},E_{44}}\otimes \RR X_a.
\]
As $X_a^2=1=X_b^2$, all elements of the Cartan subalgebra $\cL_b\oplus\cL_a$
are ad-diagonalizable over $\RR$.

\smallskip

\noindent$\boxed{\Gamma_1^2}$ $T=\ZZ_2^4=\ZZ_2^2\times\ZZ_2^2$, where the first copy of
$\ZZ_2^2$ is generated by $a,b$ (with $c=ab$) and the second copy by $a',b'$ (with $c'=a'b'$), $\Sigma=\{a,a'\}$. 
In this case, $\cD=\cD(4;+1)$ is $M_2(\RR)\otimes M_2(\RR)$ where each factor is graded as above by the corresponding copy of $\ZZ_2^2$, while $\varphi_0$ is the matrix transposition on both factors. The involution $\varphi$ on $M_2(\cD)\simeq M_2(\RR)\otimes \cD$ is given by $\Phi=\diag(X_a\otimes 1,1\otimes X_{a'})$. The $2$-dimensional components in $\cL=\Skew(M_2(\cD),\varphi)\simeq \frso(4,4)$ are
\[
\cL_{cc'}=\lspan{E_{11},E_{22}}\otimes \RR (X_c\otimes X_{c'})\;\text{ and }\;
\cL_{bb'}=\lspan{E_{11},E_{22}}\otimes \RR (X_b\otimes X_{b'}).
\]
As $(X_b\otimes X_{b'})^2=1=(X_c\otimes X_{c'})^2$, all the elements in the Cartan subalgebra $\cL_{cc'}\oplus\cL_{bb'}$
are ad-diagonalizable over $\RR$.

\smallskip

\noindent$\boxed{\Gamma_2^2}$ $T=\ZZ_2^4$, $\Sigma=\{a',b'\}$, $\cD$ and $\varphi_0$ are as in the previous case, while $\Phi=\diag(1\otimes X_{a'},1\otimes X_{b'})$. The $2$-dimensional components are
\[
\cL_{cc'}=\lspan{E_{11},E_{22}}\otimes \RR (X_c\otimes X_{c'})\;\text{ and }\;
\cL_{c}=\lspan{E_{11},E_{22}}\otimes \RR (X_c\otimes 1).
\]
As $(X_c\otimes X_{c'})^2=1=-(X_c\otimes 1)^2$, it turns out that the nonzero elements of $\cL_c$ are not 
ad-diagonalizable over $\RR$, while those of $\cL_{cc'}$ are all ad-diagonalizable.

\medskip

Therefore, with the above labeling of the real forms of $\Gamma_\CC^1$ and $\Gamma_\CC^2$, we have that  
$\Gamma_1^1$ is equivalent to $\Gamma_2^2$, while $\Gamma_2^1$ is equivalent to $\Gamma_1^2$.

\bigskip

\end{document}